%% file: main-arxiv.tex
\documentclass[a4paper,UKenglish,cleveref, autoref, thm-restate]{lipics-v2021}
\usepackage{graphicx} 
\usepackage{boxedminipage}
\usepackage[dvipsnames]{xcolor}
\definecolor{nicered}{RGB}{204,0,0}
\definecolor{lightblue}{RGB}{153,204,255}
\usepackage{tikz}
\usetikzlibrary{calc}
\usetikzlibrary{fpu}
\usetikzlibrary{math}
\usetikzlibrary{decorations.pathreplacing,angles,quotes, calligraphy}
\usetikzlibrary{decorations.pathmorphing}
\tikzstyle{vertex}=[thin,circle,inner sep=0.cm, minimum size=1.7mm, fill=black, draw=black]
\tikzstyle{rvertex}=[thin,circle,inner sep=0.cm, minimum size=1.7mm, fill=nicered, draw=nicered]
\tikzstyle{edge}=[thick, draw = gray]
\tikzstyle{redge}=[ultra thick, draw = nicered]
 \tikzstyle{br} = [decorate, ultra thick, decoration = {calligraphic brace}]

\newcommand{\NP}{{\sf NP}}
\newcommand{\cP}{{\sf P}}

\newcommand{\stf}{{\sc Steiner Forest}}

\newcommand{\fl}[1]{\textcolor{black}{#1}}
\usepackage{regexpatch}
\makeatletter
\xpatchcmd\thmt@restatable{%
\csname #2\@xa\endcsname\ifx\@nx#1\@nx\else[{#1}]\fi
}{%
\ifthmt@thisistheone
\csname #2\@xa\endcsname\ifx\@nx#1\@nx\else[{#1}]\fi
\else
\csname #2\@xa\endcsname[{Restated}]
\fi}{}{}
\makeatother

\title{Steiner Forest for $H$-Subgraph-Free Graphs}
\titlerunning{Steiner Forest for $H$-Subgraph-Free Graphs}

\author{Tala Eagling-Vose}{Department of Computer Science, Durham University, Durham, UK}{tala.j.eagling-vose@durham.ac.uk}{https://orcid.org/0009-0008-0346-7032}{}
\author{David C. Kutner}{Department of Computer Science, Durham University, Durham, UK}{david.c.kutner@durham.ac.uk}{https://orcid.org/0000-0003-2979-4513}{Partly supported by EPSRC grant EP/T004878/1, \emph{Multilayer Algorithmics to Leverage Graph Structure (MultilayerALGS)}.}
\author{Felicia Lucke}{ENS Lyon, Lyon, France}{felicia.lucke@ens-lyon.fr}{https://orcid.org/0000-0002-9860-2928}{supported by SNSF Postdoc Mobility Grant 230578.}
\author{Barnaby Martin}{Department of Computer Science, Durham University, Durham, UK}{barnaby.d.martin@durham.ac.uk}{https://orcid.org/0000-0002-4642-8614}{supported by Leverhulme Trust Research Project Grant RPG-2024-182.}
\author{D\'aniel Marx}{CISPA Helmholtz Center for Information Security, Saarbr\"ucken, Germany}{marx@cispa.de}{https://orcid.org/0000-0002-5686-8314}{}
\author{Dani\"el Paulusma}{Department of Computer Science, Durham University, Durham, UK}{daniel.paulusma@durham.ac.uk}{https://orcid.org/0000-0001-5945-9287}{supported by Leverhulme Trust Research Project Grant RPG-2024-182.}
\author{Erik Jan van Leeuwen}{Department of Information and Computing Sciences, Utrecht University, Utrecht, The Netherlands}{e.j.vanleeuwen@uu.nl}{https://orcid.org/0000-0001-5240-7257}{}

\authorrunning{T. Eagling-Vose, D. Kutner, F. Lucke, D. Marx, B. Martin, D. Paulusma, E.J. van Leeuwen}

\Copyright{Tala Eagling-Vose, David Kutner, Felicia Lucke, D\'aniel Marx, Barnaby Martin, Dani\"el Paulusma, Erik Jan van Leeuwen}

\ccsdesc[500]{Mathematics of computing~Graph theory}
\ccsdesc[500]{Theory of computation~Graph algorithms analysis}
\ccsdesc[500]{Theory of computation~Problems, reductions and completeness}

\keywords{Steiner forest, \and forbidden subgraph, \and complexity dichotomy}

\category{}
\relatedversion{}
\acknowledgements{}

\nolinenumbers
\hideLIPIcs  

\EventEditors{John Q. Open and Joan R. Access}
\EventNoEds{2}
\EventLongTitle{42nd Conference on Very Important Topics (CVIT 2016)}
\EventShortTitle{CVIT 2016}
\EventAcronym{CVIT}
\EventYear{2016}
\EventDate{December 24--27, 2016}
\EventLocation{Little Whinging, United Kingdom}
\EventLogo{}
\SeriesVolume{42}
\ArticleNo{23}

\begin{document}
\maketitle

\begin{abstract} 
Our main result is a full classification, for every connected graph~$H$, of the computational complexity of \stf{} on $H$-subgraph-free graphs. To obtain this dichotomy, we establish the following new algorithmic, hardness, and combinatorial results:
\begin{itemize}
\item \textbf{Algorithms:} We identify two new classes of graph-theoretical structures that make it possible to solve \stf{} in polynomial time. Roughly speaking, our algorithms handle the following cases:
(1) a set $X$ of vertices of \emph{bounded} size that are pairwise connected by subgraphs of treewidth~$2$ or bounded size, possibly together with an independent set of arbitrary size that is connected to $X$ in an arbitrary way (``(entangled) bushes of $\ell$-lemons'');
(2) a set $X$ of vertices of \emph{arbitrary} size that are pairwise connected in a cyclic manner by subgraphs of treewidth~$2$ or bounded size (``cycle of $\ell$-lemons'').

\item \textbf{Hardness results:} We show that \stf{} remains \NP-complete for graphs with 2-deletion set number~3. (The \emph{$c$-deletion set number} is the size of a smallest cutset~$S$ such that every component of $G-S$ has at most~$c$ vertices.)

\item \textbf{Combinatorial results:} To establish the dichotomy, we perform a delicate graph-theoretic analysis showing that if $H$ is a path or a subdivided claw, then excluding $H$ as a subgraph either yields one of the two algorithmically favourable structures described above, or yields a graph class for which \NP-completeness of \stf{} follows from either our new hardness result or a previously known one.
\end{itemize}
Along the way to classifying the hardness for excluded subgraphs, we establish a dichotomy for graphs with $c$-deletion set number at most~$k$. Specifically, our results together with pre-existing ones show that \stf{} is polynomial-time solvable if
(1) $c=1$ and $k\geq 0$, or
(2) $c=2$ and $k\leq 2$, or
(3) $c\geq 3$ and $k=1$,
and is \NP-complete otherwise.
\end{abstract}

\section{Introduction}

We consider the classical \stf{} problem, which belongs to a broad class of graph problems where the input graph is augmented with additional data that specifies the desired characteristics of the output. In this problem, we are given a graph $G$ that contains a pre-defined {\it terminal set} $T=\{\{s_1,t_1\},\ldots,\{s_p,t_p\}\}$ of {\it terminal pairs} for some integer~$p\geq 1$. The goal is to ensure that for every pair $(s_i,t_i)$, there exists a (communication) path between $s_i$ and $t_i$. In other words, we seek a subgraph~$F$ of $G$ such that for each $i\in \{1,\ldots,p\}$, the vertices $s_i$ and $t_i$ lie in the same connected component of $F$. At  the same time, the number of edges of $F$, which represent the total communication cost, must not exceed a given budget~$k$. Consequently, we may assume that $F$ contains no cycles. This makes the problem a relaxation of the well-known {\sc Steiner Tree} problem, where all terminals must be connected within a single tree. Thus, $F$ is called a {\it Steiner forest} for $(G,T)$, and the  \stf{} problem is to decide if a given pair $(G,T)$ admits a Steiner forest with at most $k$ edges for some given integer~$k\geq 0$.

Both {\sc Steiner Tree} and \stf{} have been widely studied due to their applications in network design (see, e.g., the surveys~\cite{GK11,HR92,Lu21} and recent papers on approximability~\cite{AGHJM25b,AGHJM25,GT25} and parameterized complexity~\cite{FL25,GHKKO22}).
As illustrated by these papers, both problems are computationally hard even on highly restricted instances. This is especially true for the more general \stf{} problem, which inherits the hardness of {\sc Steiner Tree} but becomes computationally hard even more quickly. For instance, {\sc Steiner Tree} is solvable in polynomial time for graph classes of bounded treewidth~\cite{ALS91}. In contrast, \stf{}, while polynomial-time solvable for graphs of treewidth at most~$2$~\cite{BHM11}, is \NP-complete for graphs of treewidth~$3$~\cite{BHM11,Ga10}.
This leads to a natural research question:

\medskip
\noindent
{\it Can we still overcome the hardness of \stf{} via input restrictions?}

\medskip
\noindent
Bodlaender et al.~\cite{BJMOPPSV25} approached this question in a systematic way.
They first noted that the hardness gadget from~\cite{BHM11} for graphs of treewidth~$3$ has $3$-deletion set number~$2$ and proved that \stf{} becomes solvable in polynomial time for graphs with $2$-deletion set number~$2$. For $c\geq 1$, a  {\it $c$-deletion set} of $G$ is a set $S\subseteq V$ such that each connected component in $G-S$ has at most $c$ vertices. Specifically, a $1$-deletion set corresponds to a vertex cover. The {\it $c$-deletion set number} of~$G$ is the size of a smallest $c$-deletion set in $G$. 

Afterwards, Bodlaender et al.~\cite{BJMOPPSV25} studied {\it monotone} graph classes, that is, classes closed under vertex and edge deletion. Such classes include many well-known classes, such as planar and bipartite graphs, and are characterized by a set ${\cal H}$ of forbidden subgraphs. To better understand the algorithmic behaviour of such classes, it is natural to first consider the case where $|{\cal H}|=1$.
Bodlaender et al.~\cite{BJMOPPSV25} combined their polynomial-time result for graphs of $2$-deletion set number~$2$ with an {\sf FPT} algorithm for \stf{} parameterized by $1$-deletion set number~\cite{GHKKO22,BJMOPPSV25,FL25} to obtain polynomial-time algorithms for \stf{} on $H$-subgraph-free graphs for several graphs~$H$. Here, a graph~$G$ is {\it $H$-subgraph-free} if $H$ cannot be obtained from $G$ by vertex and edge deletions. However, the complexity status of {\sc Steiner Forest} on $H$-subgraph-free graphs is still open for many graphs $H$.
  
\medskip
\noindent
{\bf Our Results.} Building on the results of~\cite{BHM11,BJMOPPSV25}, we introduce new methodology to completely classify the complexity of \stf{} for $H$-subgraph-free graphs, for every connected graph~$H$. To explain our classification, let $P_r$ denote the path on $r$ vertices, and let $K_{1,3}$ denote the claw (4-vertex star). For $1\leq h\leq i\leq j$, let $S_{h,i,j}$ denote the {\it subdivided claw}, 
in which each of the three edges of the claw is subdivided $h-1$, $i-1$ and $j-1$ times, respectively (so that $S_{1,1,1}=K_{1,3}$). Moreover, let $G_1+G_2=(V(G_1)\cup V(G_2), E(G_1)\cup E(G_2))$ denote the disjoint union of two vertex-disjoint graphs $G_1$ and $G_2$.
We now present our first result:

\begin{restatable}{theorem}{MakeDichoThm}\label{t-dicho}
For a connected graph~$H$, \stf{} on $H$-subgraph-free graphs is polynomial-time solvable if $H\subseteq P_{11}$, $S_{1,3,6}$, $S_{2,2,7}$, $S_{2,3,5}$, $S_{2,4,4}$, or $S_{3,3,4}$, and \NP-complete otherwise. 
\end{restatable}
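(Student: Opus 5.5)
The proof splits into a hardness side and an algorithmic side, glued together by the structural reduction to paths and subdivided claws.

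\textbf{Hardness.} If $H$ contains a cycle, then $H$-subgraph-free graphs include all graphs of girth larger than $|V(H)|$. \stf{} stays \NP-complete on this class: I would take any known hardness instance, for example the treewidth-$3$ construction of~\cite{BHM11} or the known \NP-completeness of {\sc Steiner Tree} on graphs of large girth, and subdivide every edge a sufficient number of times. This only scales the budget in a controlled way, because terminals sit at original vertices and subdivided paths are used either entirely or not at all. If $H$ is a tree with a vertex of degree at least~$4$, I would apply the same subdivision trick to a hardness instance of maximum degree~$3$, obtained by replacing high-degree vertices by binary trees of zero-cost-equivalent paths. Again this needs care with the budget, but it is standard.

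So we may assume $H$ is a tree of maximum degree~$3$. If $H$ has two vertices of degree~$3$, the same subdivided subcubic hard instances exclude $H$, provided they are subdivided so that degree-$3$ vertices are far apart. This reduces us to $H$ being a path or a subdivided claw. For each minimal connected $H$ not contained in the six listed graphs, namely $P_{12}$ and the minimal subdivided claws not below the list (such as $S_{1,4,6}$, $S_{1,3,7}$, $S_{2,2,8}$ and so on), I would use the new result that \stf{} is \NP-complete on graphs with $2$-deletion set number~$3$. The key check is that this gadget, or the older gadget with $3$-deletion set number~$2$ from~\cite{BHM11}, is $H$-subgraph-free. This holds because any path or subdivided claw in a graph with a deletion set $S$ of size $k$ and components of size $c$ has length bounded by roughly $(k+1)c+k$, alternating through $S$ and the small components. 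The counting must be done exactly: with $k=3$ and $c=2$ the longest path has at most $3+4\cdot 2=11$ vertices, which explains the threshold $P_{11}$. Similarly, the leg-length budget of a subdivided claw is determined by how the three deletion vertices can be shared among the legs.

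\textbf{Polynomial side.} Since $H$-subgraph-freeness is monotone under $H\subseteq H'$, it suffices to handle the six maximal graphs. For each one, I would carry out a structural analysis of a connected $H$-subgraph-free graph $G$, after the standard preprocessing that handles components and blocks separately. Let $C$ be a longest cycle or a longest path. Because $P_{11}$ or the subdivided claw is forbidden, the vertices outside a bounded neighbourhood structure must attach in a very restricted way. The analysis should show that $G$ either
\begin{itemize}
\item has bounded $2$-deletion set number at most~$2$, or a $1$-deletion set of bounded size, which is solvable by~\cite{BJMOPPSV25} and the FPT algorithm; or
\item is an (entangled) bush of $\ell$-lemons, meaning a bounded set $X$ joined pairwise by treewidth-$2$ or bounded-size pieces plus an independent set attached to $X$; or
\item is a cycle of $\ell$-lemons.
\end{itemize}
In the last two cases the new algorithms apply. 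Those algorithms would guess the bounded interaction on $X$ (connectivity partitions of $X$), solve each lemon by the treewidth-$2$ algorithm of~\cite{BHM11} with boundary conditions, and, for the cycle case, run a dynamic program around the cycle, guessing the state of one cut point.

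\textbf{Main obstacle.} The main obstacle is this case analysis for the extremal claws $S_{2,2,7}$, $S_{1,3,6}$, $S_{2,3,5}$, $S_{2,4,4}$ and $S_{3,3,4}$. Long induced paths are allowed there, so the structure is not simply of bounded size. I would first fix a longest path $P$ and branch on its length. If $P$ is long, the forbidden claw forces every other vertex to attach near the ends or in parallel "lemon" fashion, which yields a cycle of lemons. If $P$ is short, we get a bounded set $X$ of high-degree vertices with everything else in small pieces, which yields a bush.
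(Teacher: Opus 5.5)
Your overall plan matches the paper's. You reduce to paths and subdivided claws; re-deriving Theorem~\ref{t-s} by subdivision instead of citing it is fine. You obtain the polynomial cases from the six maximal graphs via longest-path analyses, which is what Theorems~\ref{t-p11free}--\ref{t-s334} do. You take hardness from deletion-set gadgets. There is, however, a genuine gap in the hardness step for subdivided claws.

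The minimal connected graphs not contained in one of the six are $P_{12}$, $S_{1,1,8}$, $S_{1,3,7}$, $S_{1,4,5}$, $S_{2,3,6}$, $S_{3,3,5}$ and $S_{3,4,4}$. Your examples $S_{1,4,6}$ and $S_{2,2,8}$ are not minimal. Counting against the deletion set handles five of these, most cleanly via packings. $P_{12}$, $S_{1,3,7}$, $S_{2,3,6}$, $S_{3,3,5}$ and $S_{3,4,4}$ all contain $4P_3$. With a $2$-deletion set of size~$3$, every $P_3$ meets the deletion set, so $4P_3$ cannot occur; this is exactly how the paper uses Theorem~\ref{t-newhard}.

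But $S_{1,1,8}$ and $S_{1,4,5}$ have only $11$ vertices, and they fit into generic graphs of both types.
\begin{itemize}
\item Deletion set $\{d_1,d_2\}$, components of size~$3$: put the centre $c$ in a component $\{c,a_1,a_2\}$. Legs $a_1$, $a_2$, $d_1b_1b_2b_3d_2e_1e_2e_3$ give $S_{1,1,8}$. Legs $a_2$, $a_1d_1b_1b_2b_3$, $d_2e_1e_2e_3$ give $S_{1,4,5}$.
\item Deletion set $\{d_0,d_1,d_2\}$, components of size~$2$: put the centre at $d_0$. Legs of lengths $1$, $1$ and $a_1a_2d_1b_1b_2d_2e_1e_2$ give $S_{1,1,8}$. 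Legs of length $1$, $a_1a_2d_1b_1$ and $e_1e_2d_2f_1f_2$ give $S_{1,4,5}$.
\end{itemize}
The reduction graph of Theorem~\ref{t-newhard} contains both claws as soon as it has five binary constraints. Each $\beta_j$ is adjacent to all of $d_0,d_1,d_2$, and each $\alpha_j$ misses only one $d_\ell$, so any two $d$'s can be joined through some $\{\alpha_j,\beta_j\}$.

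So no argument based only on deletion-set parameters can settle $H=S_{1,1,8}$ or $H=S_{1,4,5}$, and your proof leaves these two cases open. The paper closes them using a property of the specific construction of~\cite{BHM11}. As recorded in Theorem~\ref{t-hard}, that construction is $S_{1,1,8}$- and $S_{1,4,5}$-subgraph-free. This follows from the structure of that gadget, not from its $3$-deletion set number being~$2$. You need to invoke, or prove, this fact to complete the hardness side.
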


\noindent
We obtain the new polynomial-time cases in Theorem~\ref{t-dicho} by giving
general algorithms that
{\it leverage the interplay between deletion sets and treewidth}. The algorithms generalize the known polynomial-time algorithms for graphs of $2$-deletion number~$2$~\cite{BJMOPPSV25}, graphs of treewidth~$2$~\cite{BHM11}, and graphs of bounded vertex cover number~\cite{GHKKO22,BJMOPPSV25,FL25}, both separately and \emph{simultaneously}.
We introduce novel types of structural decompositions, where a set of special vertices is pairwise connected by components that are either of bounded size or have treewidth at most~2. Section~\ref{s-techniques} defines these decompositions in terms of ``citruses'', ``lemons'', ``bushes'', etc., and proves the algorithmic results.

The main graph-theoretical part of our paper is showing that excluding any graph $H$ in the polynomial-time cases of Theorem~\ref{t-dicho} makes one of the algorithmic results applicable (Section~\ref{s-poly}).
Earlier work showed that the problem is polynomial-time solvable on $P_{9}$-subgraph-free graphs, thus we may assume that the graph contains a path on 9 vertices. We show that such a path in a 2-connected graph can be extended to a path on 11 vertices, unless a certain decomposition exist. Thus we may assume that the graph has a path on 11 vertices. With significantly more effort, we show that this path can be extended to one of the subdivided claws in Theorem~\ref{t-dicho}, unless again some decompositions exist.
 
For the negative side of Theorem~\ref{t-dicho}, we show that \stf{} is
\NP-complete even for graphs with $2$-deletion set number~$3$ and treewidth~$3$ (Section~\ref{s-hard}). The proof is by a reduction from a Constraint Satisfaction Problem (CSP) with domain size 3, where the value of the variables correspond to which of the three vertices of the $2$-deletion set a terminal is connected to in a solution. The key idea is that a simple gadget can easily express constraints of the form $(x=a)\to (y=a)$, which is sufficient to simulate an NP-hard CSP. This hardness result also yields a new dichotomy in the following sense:

\begin{restatable}{theorem}{MakeDeletionThm}\label{t-deletion}
For constants $c,k \geq 1$, \stf{} on graphs with $c$-deletion set number~$k$
is polynomial-time solvable if $c=1$, $k\geq 0$, or $c=2$, $k\leq 2$, or $c\geq 3$, $k=1$, and \NP-complete otherwise.
\end{restatable}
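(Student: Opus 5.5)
The plan is to split the statement into its polynomial and hard cases and handle each by combining known results with the hardness result announced above (Section~\ref{s-hard}).

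\textbf{Polynomial cases.}
\begin{itemize}
\item For $c=1$, a $1$-deletion set is a vertex cover. The {\sf FPT} algorithm for \stf{} parameterized by vertex cover number~\cite{GHKKO22,BJMOPPSV25,FL25} therefore runs in polynomial time whenever $k$ is a constant.
\item For $c=2$ and $k\le 2$, we invoke the polynomial-time algorithm of Bodlaender et al.~\cite{BJMOPPSV25} for $2$-deletion set number~$2$. Graphs with $2$-deletion set number at most~$1$ are included, since adding an arbitrary vertex to a deletion set keeps it a deletion set. Alternatively, these cases are special cases of the bush-of-lemons algorithms of Section~\ref{s-techniques}.
\item For $c\ge 3$ and $k=1$, let $S=\{v\}$. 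Then $G-v$ has components of size at most~$c$, so $G$ has treewidth at most $c$; this bound is useless for $c\ge 3$. Instead, each component $C$ of $G-v$ together with $v$ forms a block-like piece attached at the single cut vertex~$v$, and we solve the instance directly.
\end{itemize}

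For the last case, a terminal pair inside one component $C$ either is connected within $C$ or routes through $v$. A pair in different components must route through $v$. So for each component we guess, by brute force over its at most $c$ vertices, the cheapest subforest of $C+v$ under two options: the component is required to attach to $v$ (forced by cross pairs), or it need not be. We then sum over components, taking the cheaper feasible option where there is a choice. Since $c$ is constant, each component is processed in constant time, and there is no interaction between components except via the single vertex $v$. Hence the whole procedure runs in polynomial time. Alternatively, this case is covered by the ``bush'' algorithm with $|X|=1$ and bounded-size lemons.

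\textbf{Hard cases.} The complement of the polynomial cases consists of $c=2$ with $k\ge 3$, and $c\ge 3$ with $k\ge 2$. By monotonicity it suffices to prove \NP-completeness at the minimal parameter pairs, since increasing $c$ or $k$ only enlarges the class.
\begin{itemize}
\item For $c\ge 2$ and $k\ge 3$, use the new result that \stf{} is \NP-complete on graphs with $2$-deletion set number~$3$. Every such graph has $c$-deletion set number at most $3\le k$.
\item For $c\ge 3$ and $k\ge 2$, use the treewidth-$3$ hardness gadget of~\cite{BHM11}, which has $3$-deletion set number~$2$, as observed in~\cite{BJMOPPSV25}.
\end{itemize}

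One subtlety is that the theorem speaks of deletion set number \emph{exactly}~$k$ rather than at most~$k$. To handle this, we pad an instance by adding disjoint components that contain no terminals, for example cliques on $c+1$ vertices. Each such component raises the deletion set number by exactly one and does not change the answer. Membership in \NP{} is immediate.

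The main obstacle is the hardness for $2$-deletion set number~$3$, namely the CSP reduction of Section~\ref{s-hard}; here we take it as given. Among the remaining steps, the only one needing care is the $c\ge 3$, $k=1$ algorithm, where we must correctly account for forced attachments to $v$.
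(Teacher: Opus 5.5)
Your proof is correct and follows the paper's argument: the same results cover the same cases. For $c\ge 3$, $k=1$ your direct cut-vertex decomposition is just Lemma~\ref{l-2con} unpacked (the paper notes that the $2$-connected graphs in this hereditary class have at most $c+1$ vertices), and your padding with disjoint $K_{c+1}$'s to reach deletion set number \emph{exactly} $k$ supplies a detail the paper leaves implicit. One caveat: your aside that this case is an entangled bush with $|X|=1$ is false, since with a single stem vertex the flowering graph has no edges and only stars centred at that vertex qualify; this is harmless because your main argument does not rely on it.
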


\noindent
As explained in Section~\ref{s-hard}, Theorems~\ref{t-dicho} and~\ref{t-deletion} also rely on the results from \cite{BHM11,BJMOPPSV25,GHKKO22} given~below.

\medskip
\noindent
{\bf Related Work.}
Our paper belongs to a recent line of systematic research that considers classical graph problems on monotone classes, for the case when the set  of forbidden subgraphs~${\cal H}$ is finite. Whereas earlier examples~\cite{AK92,GP14,Ka12} were more scattered through the literature, Johnson et al.~\cite{JMOPPSV25} presented a unified approach.
They showed that, subject to $\cP\neq \NP$, every problem that is C123 (meaning it satisfies three conditions called C1, C2, and C3), is polynomial-time solvable on ${\cal H}$-subgraph-free graphs if and only if ${\cal H}$ contains a 
disjoint union of paths and subdivided claws. 

The family of C123-problems includes
 many partitioning, covering, and packing problems as well as width parameters and network design problems, in particular {\sc Steiner Tree}. The C1 condition is to be efficiently solvable on graphs of bounded treewidth.  As mentioned earlier, \stf{} fails C1, which justified the study in~\cite{BJMOPPSV25}. We refer to~\cite{EJLMP25,JMPPSV23,Lo26,LMPPSSV24} for partial classifications of problems on $H$-subgraph-free graphs that do satisfy C1 but not C2 or C3. Another well-known example of a problem that fails C1 is  {\sc Subgraph Isomorphism}, which is \NP-complete even for input pairs of path-width~$1$. Its complexity classification on $H$-subgraph-free graphs was settled by Bodlaender et al.~\cite{BHKKOO20} apart from 
 two open cases. 

We now discuss some specific results,
which directly motivate our contribution. Let 
$\cal S$ is the set of graphs for which every connected component is a path or subdivided claw. 

\begin{theorem}[\cite{BBJPPL21}]\label{t-s}
For a graph $H\notin {\cal S}$, {\sc Steiner Tree}, and thus \stf{}, are \NP-complete for $H$-subgraph-free graphs.
\end{theorem}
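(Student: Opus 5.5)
The statement is a known result cited from~\cite{BBJPPL21}. I would prove it by exhibiting, for every $H\notin{\cal S}$, a family of $H$-subgraph-free graphs on which {\sc Steiner Tree} is \NP-complete. The transfer to \stf{} is immediate. A {\sc Steiner Tree} instance with terminal set $\{t_1,\ldots,t_p\}$ is equivalent to the \stf{} instance with pairs $\{t_1,t_i\}$ for $i\geq 2$ and the same budget, because any forest connecting all these pairs contains a single tree spanning all terminals. Membership in \NP{} is clear for both problems, so only hardness needs an argument.

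The first step is a case split on why $H\notin{\cal S}$. Some connected component of $H$ is neither a path nor a subdivided claw. Such a component either contains a cycle, or is a tree with a vertex of degree at least~$4$, or is a tree with at least two vertices of degree~$3$. It suffices to handle one bad component, since if $G$ is $H'$-subgraph-free for a connected subgraph $H'$ of $H$, then $G$ is also $H$-subgraph-free. This reduces the task to three graph classes:
\begin{itemize}
\item graphs of girth larger than $|V(H)|$, which exclude every cycle of $H$;
\item graphs of maximum degree at most~$3$, which exclude a vertex of degree~$4$;
\item graphs in which any two vertices of degree~$3$ are at distance greater than $|V(H)|$, with maximum degree~$3$.
\end{itemize}
A graph in the third class cannot contain a tree of at most $|V(H)|$ vertices with two branch vertices.

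The key tool is hardness of {\sc Steiner Tree} on planar graphs of maximum degree~$3$. This is classical: it follows from the planar degree-bounded rectilinear or grid versions (Garey--Johnson), or from a reduction from {\sc Vertex Cover} on cubic planar graphs in which each vertex is replaced by a small gadget. With this in hand, the proof finishes by subdivision. Replace every edge of a subcubic instance $G$ by a path of length $L=|V(H)|+1$, and multiply the budget by~$L$. Then every cycle becomes longer than $|V(H)|$, and degree-$3$ vertices become pairwise far apart, while the maximum degree stays~$3$. Subdividing also preserves optimal solutions in the correct sense, as follows. A Steiner tree in the subdivided graph can be pruned so that all its leaves are terminals, which are original vertices. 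After pruning it uses whole subdivided edges only, so its cost is exactly $L$ times the cost of a Steiner tree in $G$, and vice versa. A single construction therefore covers all three cases simultaneously.

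The main obstacle is fixing a rigorous base hardness result for {\sc Steiner Tree} on subcubic graphs. If I cannot cite one directly, I would first try to reduce from general {\sc Steiner Tree} by replacing each high-degree vertex $v$ by a path of new vertices, each of degree at most~$3$. The path edges would get weight~$0$, simulated by scaling all other edges to length $M$ via subdivision, with $M$ larger than the total number of path edges. The budget then becomes $Mk+|{\rm path\ edges}|$, and integrality keeps the threshold exact. The delicate point is that $v$ may be a terminal, so it must be attached to one designated vertex of its replacement path. Checking that optimal solutions correspond under this scaling argument is where the care is needed.
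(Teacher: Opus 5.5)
Your proof is correct and follows essentially the standard argument behind the cited result, namely the C2/C3 route of the C123 framework mentioned in the paper's related work: you take hardness on subcubic graphs and show it survives subdividing every edge $|V(H)|+1$ times, so the resulting class is $H$-subgraph-free for every $H\notin{\cal S}$. One caveat is that the Garey--Johnson grid-graph hardness only gives maximum degree~$4$, so a degree-reduction step such as your fallback is genuinely needed; that fallback works as you describe, since planarity plays no role here and a terminal $v$ can be attached at an end vertex of its replacement path, which keeps every degree at most~$3$.
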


\noindent
Hence, we may assume that $H\in {\cal S}$.  Bodlaender et al.~\cite{BJMOPPSV25} observed several properties of the gadget of  Bateni, Hajiaghayi and Marx~\cite{BHM11} for proving \NP-completeness of \stf{} for graphs of treewidth~$3$.

\begin{theorem}[\cite{BHM11}]\label{t-hard}
\stf{} is \NP-complete for $(3K_{1,3},2K_{1,3}+P_4,K_{1,3}+2P_4,3P_4,\\ S_{1,1,8},S_{1,4,5})$-subgraph-free graphs with treewidth~$3$, treedepth~$4$, and $3$-deletion set number~$2$.
\end{theorem}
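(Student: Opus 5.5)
This theorem is attributed to~\cite{BHM11}, with the additional structural properties observed in~\cite{BJMOPPSV25}. The plan is therefore to revisit the Bateni--Hajiaghayi--Marx reduction and verify each listed property directly on the graphs it produces.

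\textbf{Step 1: recall the construction.} The source problem is an \NP-complete problem such as 3-\textsc{Sat}. The instance graph is built around two hub vertices $u$ and $v$. Each variable and each clause is encoded by a small gadget: a bounded-size component whose vertices attach only to $u$ and $v$. Terminal pairs are then placed so that a Steiner forest within budget $k$ must connect each gadget in one of a few canonical ways to $u$ or $v$. The choice of which hub a terminal reaches plays the role of a truth value, and consistency between occurrences is enforced through $u$ and $v$. \NP-hardness is exactly the correctness argument of~\cite{BHM11}, and membership in \NP{} is trivial, so nothing new is needed there.

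\textbf{Step 2: the deletion set.} Take $S=\{u,v\}$. By construction every component of $G-S$ is a gadget piece. I would check that each such piece has at most $3$ vertices, which gives $3$-deletion set number~$2$.

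\textbf{Step 3: treewidth and treedepth.} A component $C$ with $|C|\le 3$ together with $u$ and $v$ spans at most $5$ vertices. Build a tree decomposition with a root bag $\{u,v\}$ and, for each component $C$, a child bag $\{u,v\}\cup C$. Showing treewidth~$3$ needs each piece to be a path on at most three vertices, or something similar, so that bags of size $4$ suffice. For treedepth, put $u$ and $v$ at the top of an elimination tree and then each component of size at most $3$ with depth at most $2$ below them. That yields depth $2+2=4$.

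\textbf{Step 4: forbidden subgraphs.} This is the main obstacle, since it requires a case analysis. The key observation is that every vertex outside $\{u,v\}$ has degree at most $|C|-1+2\le 4$ and lies in a tiny component.
\begin{itemize}
\item \textbf{$S_{1,1,8}$ and $S_{1,4,5}$.} Any long path must alternate between the hubs and components. It can therefore contain at most two "hub visits", so long paths have bounded length, about $2\cdot 3+2+\dots$. I would bound the longest path and the longest legs of a subdivided claw given its centre, with cases: the centre is a hub, or the centre is a component vertex. This shows that $S_{1,1,8}$ and $S_{1,4,5}$ cannot fit.
\item \textbf{Disjoint unions ($3K_{1,3}$, $2K_{1,3}+P_4$, $K_{1,3}+2P_4$, $3P_4$).} A component of size at most $3$ contains no $K_{1,3}$ or $P_4$, so each copy of $K_{1,3}$ or $P_4$ must use $u$ or $v$. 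Hence there can be at most two vertex-disjoint copies.
\end{itemize}

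I expect the path-length and claw-leg counting in the first case of Step~4 to be the delicate part.
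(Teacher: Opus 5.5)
Your overall route of re-inspecting the Bateni--Hajiaghayi--Marx reduction and reading the properties off it is what the paper relies on. The paper does not reprove the statement; it cites \cite{BHM11}, with the structural properties observed in \cite{BJMOPPSV25}. Your argument for $3K_{1,3}$, $2K_{1,3}+P_4$, $K_{1,3}+2P_4$ and $3P_4$ is correct: each copy of $K_{1,3}$ or $P_4$ is connected on four vertices, so it must meet $\{u,v\}$.

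The genuine gap is the $S_{1,1,8}$/$S_{1,4,5}$ part of Step~4. The facts you use there are two hubs, components of $G-\{u,v\}$ on at most three vertices, degree at most~$4$, and at most two hub visits per path. These do not exclude the two claws.
\begin{itemize}
\item Path length alone cannot help. Paths on $3+1+3+1+3=11$ vertices are possible, while both claws have a longest path on only $10$ vertices.
\item The leg analysis fails on graphs with exactly this coarse structure. Take $P_3$-components $y_1y_2y_3$, $a_1a_2a_3$, $b_1b_2b_3$ and hub edges $y_2u$, $y_1v$, $a_1u$, $a_3v$, $b_1v$.
\begin{itemize}
\item Centre $y_2$ with leaves $y_1,y_3$ and leg $u,a_1,a_2,a_3,v,b_1,b_2,b_3$ is an $S_{1,1,8}$.
\item Centre $y_2$ with legs $y_3$; $u,a_1,a_2,a_3$; $y_1,v,b_1,b_2,b_3$ is an $S_{1,4,5}$.
\end{itemize}
\end{itemize}
This graph has $3$-deletion set $\{u,v\}$, treewidth at most~$3$ and treedepth at most~$4$. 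Nothing in your proposal distinguishes it from a BHM11 instance.

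So the exclusion must come from how the gadget vertices actually attach to the hubs, and your Step~1 never pins this down. Doing your own case analysis carefully shows what has to be checked.
\begin{itemize}
\item A hub cannot be the centre. From $u$, a leg avoiding $v$ has at most $3$ vertices, and the single leg through $v$ has at most $7$.
\item Take a centre $c$ in a component $C$ whose legs start at $u$, $v$ and a vertex of $C$. The legs then have at most $4$, $4$ and $2$ vertices.
\end{itemize}
Hence both $S_{1,1,8}$ and $S_{1,4,5}$ force a vertex outside $\{u,v\}$ with two neighbours in its own component and a neighbour in $\{u,v\}$. They also force long traversals of other $3$-vertex components entered from a hub neighbour. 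You must recall the actual construction and verify that no such vertex (or no such traversal) exists.

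The same dependence affects Step~3 more mildly:
\begin{itemize}
\item The bag $\{u,v\}\cup C$ has five vertices when $|C|=3$. You need $G[C\cup\{u,v\}]+uv\neq K_5$ and a refined width-$3$ decomposition containing a bag with $u,v$.
\item Your treedepth bound $2+2$ needs every component to have treedepth at most~$2$. A triangle component breaks this.
\end{itemize}
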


\noindent
They also proved the following polynomial-time results:

\begin{theorem}[\cite{BJMOPPSV25}]\label{t-pol}
For a graph~$H$, \stf{} on $H$-subgraph-free graphs is polynomial-time solvable if  
$H\subseteq 2K_{1,3}+P_3$, 		
$2P_4+P_3$, 					
$P_9$, or 					
$S_{1,1,4}$.
\end{theorem}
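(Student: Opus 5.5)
The plan is to reduce each of the four cases to one of three tractable situations: bounded vertex cover number (the {\sf FPT} algorithm parameterized by $1$-deletion set number~\cite{GHKKO22,BJMOPPSV25,FL25}), $2$-deletion set number at most~$2$, and treewidth at most~$2$~\cite{BHM11}. It suffices to treat $H$ itself rather than every subgraph of $H$, since $H'$-subgraph-free graphs are $H$-subgraph-free whenever $H'\subseteq H$.

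\textbf{Preprocessing.} First I would apply reduction rules that preserve optimal solutions and cannot create a copy of~$H$, since they only delete vertices or contract edges incident to vertices of degree at most~$1$.
\begin{itemize}
\item Treat each connected component of $G$ separately. If some pair $\{s_i,t_i\}$ lies in two different components, reject; otherwise the budget is split via a knapsack-style dynamic program over the components.
\item Delete non-terminal vertices of degree at most~$1$.
\item If a terminal $v$ of degree~$1$ has neighbour $u$, force the edge $uv$ into the solution. Do this by contracting $uv$, moving $v$'s pairs onto~$u$, and decreasing $k$ by one.
\end{itemize}
After these rules the input is a connected graph of minimum degree at least~$2$ that is still $H$-subgraph-free.

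\textbf{Structural step.} For such a graph I would show that one of the following holds:
\begin{enumerate}
\item[(a)] the vertex cover number is bounded by a constant $c_H$;
\item[(b)] $G$ has a $2$-deletion set of size at most~$2$;
\item[(c)] $G$ has treewidth at most~$2$.
\end{enumerate}
In each case a polynomial algorithm already exists.

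For $H\in\{2K_{1,3}+P_3,\,2P_4+P_3,\,S_{1,1,4}\}$, minimum degree~$2$ forces many disjoint long paths or claws unless the graph is very constrained. I would take a longest path or cycle $C$ and argue that every vertex outside a bounded neighbourhood of $C$ lies in a component of size at most~$2$ (two consecutive ear vertices) hanging off at most two high-degree vertices. Otherwise a claw can be extended into $S_{1,1,4}$, or two disjoint claws plus a $P_3$ appear. This should give (a) or (b) after a bounded number of branching guesses, namely which constant-size set of vertices is the deletion set.

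\textbf{The $P_9$ case.} This case is more delicate, because $P_9$-subgraph-free graphs include graphs of large vertex cover number. An example is many triangles, or many $4$-cycles, sharing one or two vertices. Here I would fix a longest path $P$, which has at most $8$ vertices, and a DFS tree, which has depth at most~$8$. Every vertex outside $V(P)$ has all its neighbours close to~$P$.

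Then I would show the following dichotomy. Either the vertices of $P$ carrying many ``private'' attached ears number at most two, which yields a $2$-deletion set of size~$2$ together with boundedly many extra vertices. Or the graph is a ``generalised theta'' built from boundedly many attachment vertices, which has treewidth at most~$2$ after removing a bounded set that we branch on.

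The main obstacle is this $P_9$ case analysis. The ears attached to $P$ can have length~$1$ or~$2$ and can attach to many different pairs of vertices of~$P$. One must verify that any three ``heavy'' attachment vertices, or any ear of length at least~$3$, extend $P$ to a $P_9$. I expect to do this by a careful exchange argument on the endpoints of~$P$, rerouting $P$ through an ear to lengthen it. The bounded leftover configurations would then be handled by brute-force guessing of how the bounded set is connected in the solution.
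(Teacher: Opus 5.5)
\textbf{There is a genuine gap, and it appears in two places.} Note first that the paper does not prove Theorem~\ref{t-pol}; it imports it from~\cite{BJMOPPSV25}.

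\textbf{The trichotomy fails without $2$-connectivity.} Your preprocessing only yields a connected graph of minimum degree at least~$2$, and at that level the trichotomy (a)/(b)/(c) is false. Let $G$ consist of $m\geq 3$ copies of $K_4$ sharing one vertex~$c$.
\begin{itemize}
\item The components of $G-c$ are triangles, so every $P_4$ and every $K_{1,3}$ in $G$ contains $c$, and a longest path has $7$ vertices.
\item Hence $G$ is $(2P_4+P_3)$-, $(2K_{1,3}+P_3)$- and $P_9$-subgraph-free.
\item It is connected with minimum degree~$3$, so none of your rules touches it.
\item Yet its vertex cover number is $2m+1$, it has no $2$-deletion set of size at most~$2$, and its treewidth is~$3$.
\end{itemize}
The same example refutes your heuristic that minimum degree~$2$ forces many disjoint claws or long paths. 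It also fits your ``generalised theta'' description for $P_9$, since $G-c$ has treewidth~$2$.

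The missing step is the reduction to $2$-connected graphs, Lemma~\ref{l-2con}, which comes from the same source~\cite{BJMOPPSV25}. Every $H$-subgraph-free algorithm in this paper assumes $2$-connectivity via that lemma. After applying it, the example above splits into blocks of constant size.

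\textbf{Your $P_9$ fallback is not a sound algorithmic principle.} A ``$2$-deletion set of size~$2$ together with boundedly many extra vertices'' is just a $2$-deletion set of bounded size. Theorem~\ref{t-newhard} shows \NP-completeness already for $2$-deletion set number~$3$. In that reduction, $d_0,d_1,d_2$ lie in distinct components of every solution with at most $k$ edges. So guessing how the bounded set is connected in the solution does nothing to remove the hardness. Similarly, ``treewidth at most~$2$ after deleting a bounded set'' covers the instances of Theorem~\ref{t-hard}, where deleting two vertices leaves components with at most three vertices.

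What you actually need is a structure lemma that places every $2$-connected $H$-subgraph-free graph \emph{exactly} into a class with a known polynomial algorithm. Examples are bounded vertex cover number or a $2$-deletion set of size at most~$2$. It is not enough to land in such a class only up to a bounded set. That lemma is the substance of the theorem, and your proposal leaves it unproven, explicitly so for~$P_9$.
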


\noindent
Our polynomial-time cases in Theorem~\ref{t-dicho} extend Theorem~\ref{t-pol}. The cases $H=2P_4+P_3$ and $H=P_9$ generalize to $H=P_{11}$, and $H=S_{1,1,4}$ generalizes to $H\in \{S_{1,3,6}, S_{2,2,7}, S_{2,3,5},S_{2,4,4},S_{3,3,4}\}$. 
The case $H=2K_{1,3}+P_3$ is not generalized. We discuss this case and directions for future work in Section~\ref{s-con}. On a final note,
Bodlaender et al.~\cite{BJMOPPSV25} also observed that for every graph~$H$, a polynomial-time algorithm for \stf{} on $H$-subgraph-free graphs implies one for $(H+P_2)$-subgraph-free graphs. This yields a small extension of Theorems~\ref{t-dicho} and~\ref{t-pol}.
 
\section{Preliminaries and Basic Observations}\label{s-pre}

Let $G=(V,E)$ be a graph.
For $v \in V$, we denote  the \emph{neighbourhood} of $v$ by $N(v) = \{u \in V(G)\; |\; uv \in E(G)\}$. For a set $S \subseteq V(G)$, we let $N(S) = \bigcup_{u \in S} N(u) \setminus S$. We write $G[S]$ for the subgraph of $G$ \emph{induced} by $S$. 
For $uv\in E$, the graph $G \cup \{uv\}$ is the graph with vertex set $V(G)$ and edge set $E(G) \cup \{uv\}$. We let $G-S = G[V\setminus S]$. If $S = \{u\}$, we write $G-u$. 
We \emph{identify} vertices $u$,$v$ by replacing them by a single vertex $w$ with neighbourhood $N(\{u,v\})$. 

A vertex $v$ is a \emph{cut vertex} in a connected graph $G$ if $G-v$ has more than one connected component. A graph with no cut vertices is \emph{$2$-connected}. A \emph{$2$-connected component} of a graph is a maximal subgraph that is $2$-connected.
The \emph{size} of a component is the number of vertices contained in it. The graph $C_r$ is a cycle on $r$ vertices. 

A \emph{tree decomposition} of $G$ is a tree $T$ with a bag $\beta(t) \subseteq V(G)$ associated with each $t \in V(T)$ such that: $\bigcup_{t\in V(T)} \beta(t) = V(G)$; for every edge $uv \in E(G)$, there is a $t \in V(T)$ for which $u,v\in\beta(t)$; for every vertex $v \in V(G)$, the set $\{t \in V(T) \mid v \in \beta(t)\}$ induces a connected subtree of $T$. The \emph{width} of the tree decomposition is $\max_{t \in V(T)} |\beta(t)|-1$. The \emph{treewidth} of $G$ is the minimum width of any tree decomposition of $G$.

A graph class is \emph{hereditary} if it is closed under deleting vertices.

\begin{lemma}[\cite{BJMOPPSV25}]\label{l-2con}
For every hereditary graph class ${\cal G}$, if {\sc Steiner Forest} is polynomial-time solvable for the subclass of $2$-connected graphs of ${\cal G}$, then it is polynomial-time solvable for ${\cal G}$.
\end{lemma}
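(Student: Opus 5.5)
The plan is to reduce a general instance $(G,T,k)$ with $G\in{\cal G}$ to a polynomial number of instances on $2$-connected graphs that are induced subgraphs of $G$, and hence lie in ${\cal G}$ because ${\cal G}$ is hereditary. Two easy reductions come first. If $G$ is disconnected, we handle each component separately. A pair $\{s_i,t_i\}$ whose vertices lie in different components makes the instance a no-instance. Since the solution is a forest with at most $k$ edges, the total cost is the sum of the minimum costs over the components. So it suffices to compute, for a connected graph $G$, the minimum size of a Steiner forest using the assumed polynomial-time algorithm, which we call via binary search on the budget.

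Next assume $G$ is connected and consider its block--cut tree. The key observation is that a minimum Steiner forest decomposes along blocks. For a terminal pair $\{s_i,t_i\}$, every $s_i$--$t_i$ path passes through the same sequence of blocks and cut vertices, namely those on the unique path in the block--cut tree. Moreover, any path between two vertices of a block $B$ stays inside $B$ once it has been shortcut. Therefore, for each block $B$ we define a projected terminal set $T_B$. For each pair $\{s_i,t_i\}$ whose block--cut path uses $B$, we add the pair $\{x,y\}$, where $x$ is the vertex through which the path enters $B$ and $y$ the vertex through which it leaves. These are either $s_i$ or $t_i$ themselves, or the cut vertices adjacent to $B$ on that path. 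Pairs with $x=y$ are discarded.

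We then need to show that $F$ is a Steiner forest for $(G,T)$ if and only if $F\cap E(B)$ is a Steiner forest for $(B,T_B)$ for every block $B$. For the forward direction, the part of an $s_i$--$t_i$ path in $F$ lying in $B$ is an $x$--$y$ walk in $B$. For the backward direction, concatenating the $x$--$y$ connections along the blocks on the block--cut path gives an $s_i$--$t_i$ walk. Since the edge sets of the blocks partition $E(G)$, the optimum equals $\sum_B \mathrm{opt}(B,T_B)$.

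Each block is either $2$-connected or a single edge $K_2$, which is trivial to handle. Each block is an induced subgraph of $G$, so it lies in ${\cal G}$. The number of blocks is linear, so the overall running time is polynomial. The main point to verify carefully is the decomposition equivalence, in particular that connections in one block cannot help another block, since blocks share at most one vertex. This is the step I would write out in full, though it is standard.
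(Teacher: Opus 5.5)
Your proposal is correct. The paper does not reprove this lemma; it cites \cite{BJMOPPSV25}, where the same decomposition is carried out by repeatedly splitting at a cut vertex $v$ and replacing each crossing pair $(s,t)$ by $(s,v)$ and $(v,t)$. Your block--cut tree version, with the optimum equal to the sum of the block optima, is just the unrolled form of that argument. The points that need care are all handled correctly in your outline: a simple path between two vertices of a block stays inside that block, blocks are induced subgraphs and hence lie in ${\cal G}$, and the optimum can be recovered from the decision oracle by binary search.
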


\noindent
The following result is folklore; see Section~\ref{s-fp-prelim} for a proof.

\begin{restatable}{lemma}{lemExponential}
\label{l-exponential}
\stf{} can be solved in $4^nn^{O(1)}$ time.
\end{restatable}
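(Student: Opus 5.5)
We plan to prove the lemma with a dynamic programme over partitions of the terminal set, or more simply by reducing to an enumeration of which vertices the forest uses and how they group into components. First we normalise: a minimum Steiner forest $F$ may be assumed acyclic, and deleting components of $F$ that contain no terminal pair shows that every component of $F$ is a Steiner tree for the set of terminals it contains. The task is therefore to choose a partition of the used vertex set into parts, each inducing a connected subgraph, such that each pair $\{s_i,t_i\}$ lies inside one part. The cost is then $\sum_{\text{parts } P}(|P|-1)$, because a connected set $P$ is spanned by a tree with $|P|-1$ edges.

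The key step is a subset dynamic programme. For every $U\subseteq V$ we first decide in polynomial time whether $G[U]$ is connected, which gives a table over the $2^n$ subsets. Next we define $f(W)$ for $W\subseteq V$ as the minimum of $\sum_P(|P|-1)$ over all partitions of $W$ into connected parts such that no terminal pair is split between two parts or has exactly one endpoint in $W$. We call such a $W$ \emph{valid}: for every pair, either both endpoints lie in $W$ or neither does.

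To compute $f$ we proceed as follows. Let $v$ be the minimum-index vertex of $W$. Then
\[
f(W)=\min_{P}\bigl(|P|-1+f(W\setminus P)\bigr),
\]
where $P$ ranges over the subsets of $W$ that contain $v$, for which $G[P]$ is connected, and which are closed under terminal pairs, meaning that if $P$ contains $s_i$ then it contains $t_i$, and vice versa. The base case is $f(\emptyset)=0$, and infeasible entries are set to $\infty$. The answer is the minimum of $f(W)$ over all $W$ that contain every terminal, compared against $k$.

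For the running time, the pairs $(P,W)$ with $P\subseteq W\subseteq V$ number $3^n$, and each pair takes polynomial work. This gives a total of $3^n n^{O(1)}\le 4^n n^{O(1)}$. A cruder argument already reaches $4^n$ directly: enumerate $W$ and $P$ independently over $2^n\cdot 2^n$ choices.

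The main point needing care is correctness. First, every Steiner forest of size at most $k$ yields such a partition of its non-isolated vertices plus its terminals, with cost at most its number of edges. Conversely, every such partition yields a forest by taking a spanning tree of each part. Since each part is closed under terminal pairs and all terminals are covered, each pair is connected within its part, so the forest connects every pair.
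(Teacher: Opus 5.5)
Your proposal is correct and follows essentially the same route as the paper: a subset dynamic programme over vertex sets that contain each terminal pair (school) either entirely or not at all, repeatedly peeling off one connected, pair-closed part $P$ at cost $|P|-1$ (a spanning tree of $G[P]$). The one small refinement is that you anchor $P$ at the minimum-index vertex of $W$, which gives a $3^n n^{O(1)}$ bound, whereas the paper bounds its enumeration of all pairs $S'\subset S$ directly by $4^n n^{O(1)}$.
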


\section{Technical Overview}
\fl{In the following we give an overview of the technical content of the paper.
We begin in Section~\ref{s-techniques} by defining our novel graph decomposition and sketch several algorithms leveraging it. 
We then outline in Section~\ref{s-poly} how they can be applied to obtain polynomial-time algorithms for $H$-subgraph-free graphs. 
We conclude the overview by presenting a hardness result in Section~\ref{s-hard} which shows that those techniques from Section~\ref{s-techniques} are in some sense tight.}

\subsection{\fl{Algorithmic Results on Citrus Bushes}}\label{s-techniques}
We first define the components that are allowed to remain when we remove a cutset.
Our terminology is inspired by biological citruses.
Full proofs for this section appear in Section~\ref{s-fp-techniques}.

\begin{restatable}{definition}{defWedge}
Let $G$ be a graph. A \emph{wedge} in $G$ is a set $L \subseteq V(G)$ with $|L| \geq 3$ for which there exist two vertices $x,y \in L$ such that $N(L \setminus\{x,y\}) = \{x,y\}$ and $G[L \setminus \{x,y\}]$ is connected. We call $x,y$ the \emph{ends} of the wedge. We let $G_L$ denote the graph $G[L]$ minus the edge $xy$ (if it exists). A wedge $L$ with ends $x,y$ is \emph{juicy} if $G_L \cup \{xy\}$ has treewidth~$2$. A wedge $L$ is \emph{seeded} if $|L|=4$. For any integer $\ell \geq 5$, a wedge $L$ is \emph{$\ell$-pulped} if $5 \leq |L| \leq \ell$.
\end{restatable}

\noindent
If $L \not= V(G)$ is a wedge with ends $x,y$, then $\{x,y\}$ forms a cutset in the graph, as the vertices of $L \setminus \{x,y\}$ cannot be adjacent to the vertices of $V(G) \setminus L$ by definition. Besides the case $L=V(G)$, another edge case is when $x$ is a cut vertex, in which case $y$ can be any vertex of~$L$. We note that any wedge with $|L| = 3$ is juicy. An $\ell$-pulped or seeded wedge may be a juicy wedge, in which case we will consider it to be a juicy wedge for the purposes of this paper. Moreover, by generalizing an argument of Bodlaender et al.~\cite{BJMOPPSV25}, we can show that we can always turn a seeded wedge into one or more juicy wedges with the same ends, without changing the problem:

\begin{restatable}{lemma}{lemWedgeTransform}
    \label{l-wedge-transform}
Let $G$ be a graph and $T$ a set of terminal pairs in $G$. Then we can compute in polynomial time a subgraph $G'$ of $G$ such that any seeded wedge in $G'$ is juicy and the size of a minimum Steiner forest for $(G,T)$ is equal to that of a minimum Steiner forest for $(G',T)$.
\end{restatable}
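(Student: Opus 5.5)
The plan is to describe exactly what a non-juicy seeded wedge looks like. Then I would delete one carefully chosen edge inside it without increasing the optimum, and iterate. Let $L=\{x,y,u,v\}$ be a seeded wedge with ends $x,y$. Since $G[\{u,v\}]$ is connected, $uv\in E(G)$. The graph $G_L\cup\{xy\}$ has four vertices, so it has treewidth at least~$3$ only if it is $K_4$. Hence $L$ fails to be juicy exactly when $u$ and $v$ are both adjacent to both $x$ and $y$ (the edge $xy$ itself is irrelevant). Deleting any single one of the five edges $uv,ux,uy,vx,vy$ makes $G_L\cup\{xy\}$ a $K_4$ minus an edge, which has treewidth~$2$. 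So each step only needs to choose which of these edges to delete.

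For the choice, I would use an exchange argument on a minimum Steiner forest $F$, looking only at $F$ restricted to the edges inside $L$. Since $u,v$ have no neighbours outside $L$, the only things that matter are which of $x,y,u,v$ are joined by $F\cap E(G_L)$ and how many edges this uses. There are only finitely many such patterns, so this is a finite case analysis.

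\textbf{Case 1: neither $u$ nor $v$ lies in a terminal pair other than possibly the pair $\{u,v\}$ itself, and $\{u,v\}\notin T$.} Delete $uv$. Every connection pattern realisable with $uv$ is realisable without it at no higher cost:
\begin{itemize}
\item $x$–$u$–$v$ becomes $x$–$u$, $x$–$v$;
\item a spanning tree of $L$ through $uv$ becomes the path $x$–$u$–$y$–$v$;
\item the same replacements work for patterns involving $y$.
\end{itemize}

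\textbf{Case 2: $\{u,v\}\in T$ and no other pair involves $u$ or $v$.} Here I would instead delete $vx$ and $vy$, making $v$ a pendant vertex on $u$; this removes two edges, which is still fine. The replacement argument is: if $F$ uses $v$ to route between $x$ and $y$, reroute through $u$, which lies in the same component of $F$ anyway since $v$ must reach $u$. Then attach $v$ by $uv$. Counting the degree of $v$ in $F$ shows the edge count does not go up.

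\textbf{Remaining case: $u$ or $v$ is paired with a terminal outside $\{u,v\}$.} Then in every solution the component containing that vertex also contains $x$ or $y$. The isolated-$\{u,v\}$ pattern, the only one that genuinely needs $uv$ in Case~1, cannot occur, so deleting $uv$ is safe again. I would also double-check patterns where $u$ and $v$ end up in different components, one attached to $x$ and the other to $y$; these do not use $uv$ at all.

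After each deletion the graph is a subgraph of the previous one with the same optimum. Deleting edges can change which sets are wedges, so I would simply repeat the procedure while some non-juicy seeded wedge exists. Each round removes at least one edge, so there are at most $|E(G)|$ rounds. Detecting seeded wedges is polynomial by checking all $4$-sets. The resulting $G'$ has no non-juicy seeded wedge, as required.

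The main obstacle I expect is the exhaustive verification of the exchange patterns, especially Case~2 and the mixed situations in which $u$ or $v$ is a terminal whose component in $F$ contains only one of $x,y$. There one must make sure the rerouting neither disconnects another pair routed through $x$–$v$–$y$ nor increases the number of edges. I would handle this by listing all forests on the four vertices $x,y,u,v$ together with the required connectivity of the terminals among them.
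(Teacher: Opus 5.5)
Your proposal is correct, and its skeleton matches the paper's. A seeded wedge is non-juicy exactly when $\{u,v\}$ is complete to $\{x,y\}$. One then deletes edges inside it by an exchange argument on a minimum Steiner forest and iterates at most $|E(G)|$ times.

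Where you differ is the case split.
\begin{itemize}
\item The paper asks only whether $(u,v)\in T$. If not, it deletes $uv$: by minimality, a forest using $uv$ must also use an edge from $\{u,v\}$ to $\{x,y\}$, so one swap suffices. If so, it deletes the single edge $ux$. That needs a three-subcase exchange, including the subcase where $u$ hangs on $x$, $v$ hangs on $y$, and $x,y$ are joined outside $L$.
\item You split instead on whether $u$ or $v$ must reach a vertex outside $\{u,v\}$. Your Case~1 and your remaining case then use the same deletion and the same exchange, so they merge into one rule: delete $uv$ unless the only pair involving $u$ or $v$ is $(u,v)$ itself. Only that isolated-school situation needs your Case~2, and deleting both $vx$ and $vy$ there is indeed safe.
\end{itemize}

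The mixed patterns you flag at the end need no enumeration of forests on $\{x,y,u,v\}$. Suppose a tree of $F$ containing $uv$ also contains $x$, say. Removing $uv$ splits it into a $u$-part and a $v$-part. Join whichever of $u,v$ lies in the part without $x$ to $x$. This edge exists by completeness, and it is not already in $F$. The result has exactly the same vertex partition into components and the same number of edges, so no pair is disconnected.

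Neither route is stronger. Yours makes the exchange uniform whenever the component of $uv$ must meet $\{x,y\}$, at the price of a separate case when $\{u,v\}$ forms its own school. The paper's needs only two cases, but handles $(u,v)\in T$ with a slightly more delicate exchange.
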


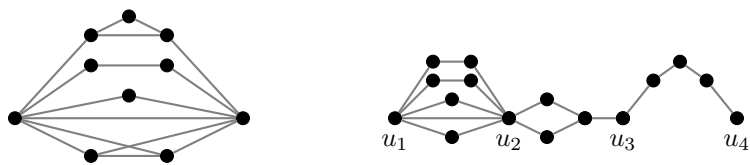
\begin{figure}
    \centering
    \input{figures/lemons.tex}
    \caption{Left: A lemon. Right: A citrus bush flowering from a $4$-vertex path $u_1u_2u_3u_4$, where each citrus is has only juicy wedges. Note that $\{u_1,u_2,u_3,u_4\}$ is the stem set.}
    \label{f-lemon}
\end{figure}

\noindent
We now consider the combination of wedges, see Figure~\ref{f-lemon} (left):

\begin{restatable}{definition}{defCitrus}
Let $G$ be a graph. A \emph{citrus} in $G$ is a pair of vertices $x,y \in V(G)$ and a 
set~$\mathcal{L}$ of wedges in $G$ with ends $x,y$ such that $\mathcal{L} \not= \emptyset$ or $xy \in E(G)$. We call $x,y$ again the \emph{ends} of the citrus.
The \emph{vesicle set} $V(x,y,\mathcal{L})$ of a citrus is the vertex set that is the union of $\{x,y\}$ with the union of the wedges in $\mathcal{L}$. A citrus is a \emph{lemon} if its wedges are all juicy or seeded. For any integer $\ell \geq 5$, a citrus is an \emph{$\ell$-lemon} if its wedges are all juicy, seeded, or $\ell$-pulped and the number of $\ell$-pulped wedges is bounded by $\ell$.
\end{restatable}
\noindent
Note that a citrus with ends $x,y \in V(G)$ may have a nonempty wedge set and $xy \in E(G)$ at the same time. The definition is also slightly broader than that of a vertex cutset of size~$2$, as any edge of $G$ can also be a citrus. Finally, a lemon is trivially an $\ell$-lemon for any $\ell \geq 5$.

\begin{restatable}{definition}{defCitrusBush}
A graph $G$ is a \emph{citrus bush} if there exists a set $X \subseteq V(G)$, a graph $G'$ with vertex set $X$, a partition $C_1,\ldots,C_{|E(G')|}$ of $V(G) \setminus X$, and a bijection $\sigma : E(G') \rightarrow \{1,\ldots,|E(G')|\}$, such that for every $xy \in E(G')$, there is a citrus in $G$ with ends $x,y$ and wedge set $\mathcal{L}_{xy}$ such that $V(x,y,\mathcal{L}_{xy}) = C_{\sigma(xy)} \cup \{x,y\}$. We similarly define a \emph{lemon bush} and for any integer $\ell \geq 5$, an \emph{$\ell$-lemon bush}.
In all cases, we say that $G$ \emph{flowers from} $G'$ and call $X$ the \emph{stem set} of $G$, see Figure~\ref{f-lemon} (right).
\end{restatable}

\noindent
It is important to note that some of $C_1,\ldots,C_{|E(G')|}$ may be empty, particularly if $xy \in E(G)$ for some $x,y \in X$. Moreover, a lemon bush is trivially an $\ell$-lemon bush for any integer $\ell \geq 5$.

We first provide the following general (but simple) result, which demonstrates that a large cutset and large components still lead to a polynomial-time algorithm.

\begin{restatable}{theorem}{thmLemonTWtwo}
\label{t-lemon-tw2}
Let $G$ be a lemon bush flowering from a graph $G'$ of treewidth at most~$2$. Then we can solve \stf{} on $G$ in polynomial time.
\end{restatable}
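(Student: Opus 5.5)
The plan is to reduce to the known polynomial-time algorithm for \stf{} on graphs of treewidth at most~$2$~\cite{BHM11}. First apply Lemma~\ref{l-wedge-transform} to replace $G$ by a subgraph in which every seeded wedge is juicy, without changing the optimum. We must check that the result is still a lemon bush flowering from (a subgraph of) $G'$. The transformation only deletes edges and vertices inside wedges and keeps the ends fixed, so each citrus remains a citrus with the same ends, possibly with fewer wedges; if a citrus becomes empty and $xy\notin E$, we delete $xy$ from $G'$, which keeps treewidth at most~$2$. We may therefore assume that every wedge of every citrus is juicy.

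Next we show that $G$ then has treewidth at most~$2$. Start from a tree decomposition $(T,\beta)$ of $G'$ of width at most~$2$. For each edge $xy\in E(G')$ and each wedge $L\in\mathcal{L}_{xy}$, the graph $G_L\cup\{xy\}$ has treewidth~$2$. Its tree decomposition can be chosen to contain a bag containing both $x$ and $y$: since $xy$ is an edge, some bag contains both. Pick a node $t$ of $T$ with $\{x,y\}\subseteq\beta(t)$, which exists because $xy\in E(G')$, and attach the wedge's decomposition by an edge from $t$ to a bag containing $\{x,y\}$. The interior vertices $L\setminus\{x,y\}$ appear only in the attached decomposition. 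The ends $x,y$ appear in a connected set of bags, because both sides contain them and the connecting edge joins bags that both contain them. Every edge of $G$ is either an edge between stem vertices, hence covered by $T$, or lies inside some $G[L]$, hence covered by the wedge's decomposition. This holds because $N(L\setminus\{x,y\})=\{x,y\}$ and the sets $C_i$ partition $V(G)\setminus X$. Repeating this for all wedges gives a decomposition of $G$ of width at most~$2$.

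Finally, run the treewidth-$2$ algorithm of~\cite{BHM11} on $G$. The only delicate points are two bookkeeping checks. One is that the wedge transformation genuinely preserves the bush structure, which I expect to follow from its proof being local to each seeded wedge. The other is the edge case where a wedge end is a cut vertex. Neither is a real obstacle; the main step is the gluing argument above.
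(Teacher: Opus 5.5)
Your proposal is correct and follows essentially the same route as the paper. Both apply Lemma~\ref{l-wedge-transform} to make every wedge juicy, attach a width-$2$ tree decomposition of each $G_L\cup\{xy\}$ to a bag of a width-$2$ decomposition of $G'$ that contains both ends, and then invoke the treewidth-$2$ algorithm of~\cite{BHM11} (Lemma~\ref{l-tw}); a minor imprecision is that the transformation deletes only edges and turns a non-juicy seeded wedge into one or two juicy wedges with the same ends rather than removing wedges, which does not affect your argument.
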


\begin{proof}[Proof Sketch]
First, use Lemma~\ref{l-wedge-transform} to turn any seeded wedge into one or more juicy wedges with the same ends. Any juicy wedge has a tree decomposition of width~$2$ with a bag containing the ends of the wedge. Since the ends are connected by an edge in $G'$, we can augment a tree decomposition of $G'$ of width~$2$ to a tree decomposition of $G$ of width~$2$. Then we apply the algorithm for graphs of treewidth~$2$ by Bateni, Hajiaghayi, and Marx~\cite{BHM11}.
\end{proof}

\noindent
We now consider $\ell$-lemon bushes. We first show that we can deal with $\ell$-lemon bushes individually, when the terminal set is ``local'' to the $\ell$-lemon. We need \stf{} to be polynomial-time solvable in three cases to derive very general results. First, we give a notion that describes the structure of a solution.

\begin{restatable}{definition}{defBiStemmed}
Let $G$ be a graph and let $\mathcal{L}$ be a citrus in $G$ with ends $x,y$. Let $T$ be a set of terminal pairs and let $F$ be a Steiner forest for $(G,T)$. Then $F$ is \emph{bi-stemmed} with respect to the citrus if there is no path in $F$ between $x$ and $y$ using only vertices in $V(x,y,\mathcal{L})$ and \emph{intertwined} if such a path does exist. 
\end{restatable}

\noindent
We also use the following definition. Let $G$ be a graph and let $D \subseteq V(G)$. Then $G \dagger D$ denotes the graph obtained from $G$ by identifying the vertices of $D$ to a single vertex~$d$. For a set $T$ of pairs of vertices in $G$, we use $T \dagger D$ to denote the set of pairs obtained from $T$ by replacing each occurrence of an element of $D$ in a pair by $d$.

\begin{restatable}{lemma}{lemLemonSupport}
\label{l-l-lemon-support}
Let $G$ be an $n$-vertex graph, let $\ell \geq 5$ be any integer, and suppose $G$
contains an $\ell$-lemon, ends $x,y$, and wedge set $\mathcal{L}$ such that $V(x,y,\mathcal{L}) = V(G)$. Let $T$ be a set of terminal pairs in $G$. Then we can compute, in $2^{O(\ell^2)} n^{O(1)}$ time: 
\begin{enumerate}[(i)]
\item\label{l-l-lemon-support-i} a minimum Steiner forest for $(G \dagger \{x,y\},T \dagger \{x,y\})$;
\item\label{l-l-lemon-support-ii} a minimum Steiner forest for $(G,T)$ over all Steiner forests for which the $\ell$-lemon is intertwined;
\item\label{l-l-lemon-support-iii} a minimum Steiner forest for $(G,T)$.
\end{enumerate}
\end{restatable}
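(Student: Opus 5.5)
Throughout, I first apply Lemma~\ref{l-wedge-transform} so that every seeded wedge is juicy. The $\ell$-lemon then consists of the ends $x,y$, possibly the edge $xy$, a set $\mathcal{J}$ of juicy wedges, and at most $\ell$ wedges $L_1,\dots,L_m$ ($m\le\ell$) that are $\ell$-pulped. The latter together contain at most $\ell^2$ vertices. Let $P=\bigcup_i L_i$. The overall strategy is to guess the behaviour of a solution on $P$, and to handle the juicy part by treewidth-$2$ arguments via the algorithm of Bateni, Hajiaghayi, and Marx~\cite{BHM11}.

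For (\ref{l-l-lemon-support-i}), consider $G\dagger\{x,y\}$. Every wedge now hangs off the single vertex $d$, which is a cut vertex (unless only one wedge exists). Since Steiner Forest decomposes over blocks once $d$ is fixed, I would split the instance along $d$. Each juicy wedge with $x,y$ identified is a minor of $G_L\cup\{xy\}$, so it has treewidth at most~$2$ and can be solved by~\cite{BHM11}. Each $\ell$-pulped wedge has at most $\ell$ vertices and is solved by Lemma~\ref{l-exponential} in $2^{O(\ell)}$ time. Terminal pairs spanning two blocks are handled in the standard way: a pair $\{s,t\}$ with $s,t$ in different blocks is replaced by pairs $\{s,d\}$ and $\{d,t\}$. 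This is valid because the only connection between blocks passes through $d$. The total forest is the union of the per-block forests.

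For (\ref{l-l-lemon-support-ii}), an intertwined solution contains an $x$--$y$ path inside the lemon. Contracting that path gives a correspondence with solutions in $G\dagger\{x,y\}$, except that we must pay for the connecting path. I would guess which part of the lemon carries this path: either the edge $xy$, or some $\ell$-pulped wedge $L_i$ (at most $\ell$ choices, each with an internal forest found by brute force), or some juicy wedge $L\in\mathcal{J}$ (polynomially many choices). Given the choice, I solve that wedge by requiring $x,y$ to be connected inside it, by adding the pair $\{x,y\}$ to its local instance. All other wedges are solved as in (\ref{l-l-lemon-support-i}), with $x\equiv y$, after redirecting cross pairs as above. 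The minimum over all guesses gives the answer.

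For (\ref{l-l-lemon-support-iii}), I take the better of (\ref{l-l-lemon-support-ii}) and the best bi-stemmed solution. In a bi-stemmed solution, every component of $F$ restricted to the lemon meets at most one of $x,y$. Hence every vertex of the lemon is labelled as attached to $x$, attached to $y$, or attached to neither, and terminal pairs in different wedges must be routed via the same end.

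The main obstacle is the interaction between the juicy wedges and the global choice of which terminals go to $x$ and which go to $y$. I would handle it as follows. The sum of the vertices of the pulped wedges is $O(\ell^2)$, so I enumerate the labelling on $P$ in $2^{O(\ell^2)}$ time. For the juicy part, consider the graph $G'$ obtained by deleting $P\setminus\{x,y\}$ and the edge $xy$. Each juicy wedge admits a width-$2$ tree decomposition with a bag containing $x$ and $y$, and these can be glued along $\{x,y\}$. So $G'$ has treewidth at most $2$, and \stf{} on it is solvable by~\cite{BHM11}. The enumerated labelling on $P$ translates into extra terminal pairs on $G'$: each terminal in $P$ labelled with an end is replaced by that end, and a pair $\{x,y\}$ would be forbidden. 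Forbidding that $x$ and $y$ are connected is not directly expressible, so instead I would use the generality of~\cite{BHM11}'s dynamic programming over the width-$2$ decomposition, whose states already record which bag vertices are connected. Restricting to states in which $x$ and $y$ are disconnected yields the minimum bi-stemmed solution. Verifying that this restriction is compatible with their algorithm is the delicate step.
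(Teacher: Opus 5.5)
Your parts (i) and (ii) are correct and essentially follow the paper's proof. Pairs between different wedges are redirected through $d$ (resp.\ through $x$), and each wedge is solved separately by \cite{BHM11} or Lemma~\ref{l-exponential}. In (ii) one guesses the wedge, or the edge $xy$, carrying the $x$--$y$ path and adds the pair $(x,y)$ there. Applying Lemma~\ref{l-wedge-transform} up front is harmless: its exchange steps preserve the connectivity partition of the forest, so they also preserve the minima in $G\dagger\{x,y\}$ and over intertwined forests. You could equally brute-force seeded wedges, as the paper does.

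The gap is in (iii). Your bi-stemmed computation depends on running \cite{BHM11} ``restricted to states in which $x$ and $y$ are disconnected'', and you leave this unverified. That algorithm is only available here as a black box. A generic dynamic program that records connectivity of bag vertices cannot suffice for \stf{}, since the problem is \NP-complete already at treewidth~$3$, so the treewidth-$2$ algorithm must rely on special structure. Nothing guarantees that its states can be filtered in this way, so as written the bi-stemmed case is not established.

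The missing observation is that you never need to enforce bi-stemmedness on the treewidth-$2$ part. Fix a branch $(X_b,Y_b,Z_b)$ labelling the interior vertices $C$ of the $\ell$-pulped wedges. Skip it unless $G[X_b\cup\{x\}]$ and $G[Y_b\cup\{y\}]$ are connected, no pair joins $X_b$ or $Y_b$ to a different class of $C$, and every pair meeting $Z_b$ lies inside one component of $G[Z_b]$. Then take:
\begin{itemize}
\item spanning trees of $G[X_b\cup\{x\}]$ and $G[Y_b\cup\{y\}]$;
\item brute-forced forests inside the components of $G[Z_b]$;
\item \emph{any} minimum Steiner forest of your redirected instance on $G'$, computed by \cite{BHM11} as a black box.
\end{itemize}

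The union is always a Steiner forest of $(G,T)$, even if the part in $G'$ joins $x$ and $y$. The two spanning trees meet the rest only in $x$ and in $y$ respectively, so no cycle is created, and every pair is satisfied through the redirection.

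Conversely, consider the branch matching an optimal bi-stemmed $F$. The edges of $F$ at $X_F$ (resp.\ $Y_F$) form a spanning tree of $G[X_F\cup\{x\}]$ (resp.\ $G[Y_F\cup\{y\}]$), of cost $|X_F|$ (resp.\ $|Y_F|$). The remaining edges of $F$ in $G'$ are feasible for the redirected instance, since $F$ avoids $xy$ and cannot route through $X_F$ or $Y_F$. So this branch costs at most $|E(F)|$, and the minimum over all branches together with the output of (ii) is optimal.

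This is exactly the paper's argument. It attaches the spanning trees $S_{X_b},S_{Y_b}$ to $G-C$, keeps the original terminals, and only checks that the combined forest is feasible.
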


\smallskip
\noindent
Lemma~\ref{l-l-lemon-support} is sufficient to prove general theorems, including a generalization of Theorem~\ref{t-lemon-tw2}.

\begin{restatable}{theorem}{thmPathbush}
\label{t-pathbush}
Let $\ell \geq 5$ be an integer, let $G'$ be a path, and let $G$ be an $\ell$-lemon bush flowering from $G'$. Then we can solve \stf{} on $G$ in $2^{O(\ell^2)} n^{O(1)}$ time.
\end{restatable}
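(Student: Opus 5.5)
We will use dynamic programming along the path $G'=u_1u_2\cdots u_m$, with Lemma~\ref{l-l-lemon-support} as the local subroutine for each $\ell$-lemon. For $i\in\{1,\ldots,m-1\}$ let $G_i$ be the subgraph of $G$ induced by the vesicle set of the citrus with ends $u_i,u_{i+1}$. Then $G_i$ is an $\ell$-lemon whose vesicle set is all of $V(G_i)$. Distinct $G_i$ share only the stem vertex $u_{i+1}$ between consecutive indices, and $u_i$ is a cut vertex of $G$ for $1<i<m$. Any Steiner forest $F$ splits into parts $F_i=F\cap E(G_i)$, and every terminal pair $\{s,t\}$ with $s\in G_i$, $t\in G_j$, $i<j$, must be connected through the chain of cut vertices $u_{i+1},\ldots,u_j$.

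\textbf{Step 1: reduce to local instances.} For each $i$ we need to know, inside $G_i$, which vertices must be connected. A pair with both ends in $G_i$ stays local. For a pair $s\in G_i$, $t\in G_j$ with $i<j$, we replace it by the local requirements $\{s,u_{i+1}\}$ in $G_i$, $\{u_{k},u_{k+1}\}$ in $G_k$ for $i<k<j$, and $\{u_j,t\}$ in $G_j$. This replacement is exact. Because $u_{i+1},\ldots,u_j$ are cut vertices separating $s$ from $t$, the $s$--$t$ path in $F$ must pass through all of them. It therefore connects $s$ to $u_{i+1}$ within $G_i$, each $u_k$ to $u_{k+1}$ within $G_k$, and $u_j$ to $t$ within $G_j$. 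Conversely, these local connections glue together to an $s$--$t$ path. If $s$ is itself a stem vertex, it lies in two lemons; we assign it to the appropriate side and the argument is unchanged.

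\textbf{Step 2: why no further interaction remains.} After Step~1 every requirement is local to a single $G_i$. Since the $G_i$ are edge-disjoint and meet only in cut vertices, $F$ is a Steiner forest for $(G,T)$ if and only if each $F_i$ is a Steiner forest for the induced local instance $(G_i,T_i)$. Here $T_i$ consists of the original pairs inside $G_i$ together with the new pairs from Step~1.

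\textbf{Step 3: solve and combine.} We apply Lemma~\ref{l-l-lemon-support}(iii) to each $(G_i,T_i)$ and take the union of the resulting forests. Its cost equals the sum of the local optima, and the sum of local optima is a lower bound on the optimum by Step~2, so the union is a minimum Steiner forest. The lemma runs in $2^{O(\ell^2)}n^{O(1)}$ time per lemon, and there are at most $n$ lemons, giving the claimed bound.

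\textbf{Main obstacle.} The reduction requires the separation claim in Step~1: every $G_i$ must attach to the rest of $G$ only through $u_i$ and $u_{i+1}$. This follows from the partition in the definition of a citrus bush together with $N(L\setminus\{x,y\})=\{x,y\}$ for each wedge $L$, since no edges can then leave a citrus other than through its ends. One degenerate case needs checking: a citrus consisting only of the edge $u_iu_{i+1}$. There Lemma~\ref{l-l-lemon-support} is trivial, because the only options are to use the edge or not. If the separation were in doubt, the fallback would be a left-to-right DP tracking whether $u_i$ is connected to $u_{i+1}$. That DP would use parts (i) and (ii) of the lemma, via the bi-stemmed/intertwined dichotomy, to evaluate the two states. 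Given the path structure, however, the direct decomposition above should suffice.
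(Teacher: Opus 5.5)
Your proposal is correct and follows the paper's route: the paper invokes Lemma~\ref{l-2con} to split $G$ at its cut vertices and then solves each $\ell$-lemon with Lemma~\ref{l-l-lemon-support}(iii). You carry out that cut-vertex reduction by hand at the stem vertices $u_2,\ldots,u_{m-1}$, projecting each terminal pair onto the chain of separators, which has the small advantage that every piece is exactly one whole $\ell$-lemon, so Lemma~\ref{l-l-lemon-support}(iii) applies verbatim.
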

\begin{proof}[Proof Sketch]
Note that each $\ell$-lemon of the bush forms a $2$-connected component of $G$. Hence, by Lemma~\ref{l-2con}, we can apply Lemma~\ref{l-l-lemon-support}.
\end{proof}

\begin{restatable}{theorem}{thmCyclebush}
\label{t-cyclebush}
Let $\ell \geq 5$ be an integer, let $G'$ be a cycle, and let $G$ be an $\ell$-lemon bush flowering from $G'$. Then we can solve \stf{} on $G$ in $2^{O(\ell^2)} n^{O(1)}$ time.
\end{restatable}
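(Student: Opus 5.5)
The plan is to reduce the cycle case to the path case by guessing how one citrus on the cycle behaves in an optimal solution. Let $G'$ be the cycle $x_1x_2\cdots x_mx_1$, and write $\mathcal{C}_i$ for the $\ell$-lemon with ends $x_i,x_{i+1}$ (indices mod $m$), with vesicle set $V_i$. We may assume $m\ge 3$; for smaller $m$ the graph is a single $\ell$-lemon, up to parallel citri that can be merged, and Lemma~\ref{l-l-lemon-support}(\ref{l-l-lemon-support-iii}) applies directly. Fix a minimum Steiner forest $F$ for $(G,T)$. The key observation is that $F$ is acyclic, so it cannot be intertwined on every $\mathcal{C}_i$: otherwise the $x_i$--$x_{i+1}$ paths inside the pairwise internally disjoint sets $V_i$ would together form a cycle. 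Hence some citrus, say $\mathcal{C}_m$ between $x_m$ and $x_1$, is bi-stemmed in $F$. We try all $m$ choices of this index and keep the best solution found.

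For a fixed choice, $F$ restricted to $V_m$ splits into a part attached to $x_m$ and a part attached to $x_1$, with no path between $x_m$ and $x_1$ inside $V_m$. The global connectivity between $x_1$ and $x_m$ is then provided only through the path-shaped remainder $\mathcal{C}_1,\dots,\mathcal{C}_{m-1}$. To exploit this, cut $\mathcal{C}_m$ open. Build a graph $G^\ast$ consisting of the $\ell$-lemon bush on the path $x_1x_2\cdots x_m$ (the citri $\mathcal{C}_1,\dots,\mathcal{C}_{m-1}$), together with a copy of $G[V_m]$ in which $x_m$ and $x_1$ are replaced by fresh copies $x_m'$ and $x_1'$. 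Then attach $x_1'$ to $x_1$ and $x_m'$ to $x_m$ by identification. Concretely, $V_m\setminus\{x_1,x_m\}$ is partitioned among the wedges, and each wedge's internal vertices must end up on one side.

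A cleaner formulation avoids building $G^\ast$. Because $F$ is bi-stemmed on $\mathcal{C}_m$, each wedge $L$ of $\mathcal{C}_m$ induces a forest in which no component meets both ends. So each wedge's interior is split into an $x_m$-side and an $x_1$-side, and terminal pairs inside $V_m$ are satisfied either locally or via the rest of the graph.

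I would handle this with a dynamic program along the path that carries a bounded amount of state, and this is the main obstacle. Define the state at $x_i$ to be a partition of $\{x_1,x_i\}$, recording whether $x_i$ is already connected to $x_1$ within the processed part; this state has constant size. Process the citri $\mathcal{C}_1,\dots,\mathcal{C}_{m-1}$ in order, using Lemma~\ref{l-l-lemon-support}(\ref{l-l-lemon-support-i})--(\ref{l-l-lemon-support-iii}) to compute the optimal costs of a citrus in each of its two modes. Intertwined mode corresponds to option (\ref{l-l-lemon-support-ii}). Bi-stemmed mode is captured through (\ref{l-l-lemon-support-i}) and (\ref{l-l-lemon-support-iii}) as in the proof of Theorem~\ref{t-pathbush}.

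The difficulty is the terminal pairs that cross between different citri. The path case avoids them through Lemma~\ref{l-2con}, but here the graph is $2$-connected. To handle them, I would first reduce crossing pairs: any pair $\{s,t\}$ with $s\in V_i$ and $t\in V_j$ forces $F$ to connect $s$ to a stem. So I would replace the pair by pairs $\{s,x\}$ and $\{x,t\}$ for suitably guessed stems $x$. The candidate stems are $x_i$ or $x_{i+1}$ for $s$, and likewise for $t$, and these two must lie in a common component of $F$. This bounds the interaction of each citrus to its two ends.

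The crucial claim to verify is that these guesses can be made consistently in the dynamic program. The state must additionally record which of the at most two components of $F$ touching $\{x_1,x_i\}$ each pending terminal has been routed to. I expect that this can be encoded by tracking, for each terminal, only whether it is attached to the ``$x_1$-component'' or the ``current component''. This keeps the table polynomial and gives total time $m\cdot 2^{O(\ell^2)}n^{O(1)}=2^{O(\ell^2)}n^{O(1)}$.
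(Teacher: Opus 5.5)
Your opening observation is correct, and it is also how the paper starts: acyclicity of $F$ forces some citrus to be bi-stemmed, and you may guess which. However, the proof has a genuine gap exactly at the step you flag as the main obstacle.

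As you describe it, the DP state records, for every pending terminal, whether it is routed to the ``$x_1$-component'' or the ``current component''. That is a binary vector over all pending terminals, i.e.\ up to $2^{p}$ states, not a polynomial table. The claim that the guesses can be made consistently is only stated as an expectation. Without it the transitions are not even well defined, because the terminal set you pass to Lemma~\ref{l-l-lemon-support} for a citrus depends on where its crossing terminals are routed. (The $G^\ast$ construction does not help either: re-identifying $x_1'$ with $x_1$ and $x_m'$ with $x_m$ just returns $G$, and it does not encode that $\mathcal{C}_m$ is bi-stemmed.)

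The missing idea is that only a bounded amount of the bi-stemmed pattern needs to be guessed. After that, every crossing pair has a forced route and the terminals can be projected deterministically.

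\begin{itemize}
\item With $\mathcal{C}_m$ bi-stemmed, a pair with both ends in $V_1\cup\dots\cup V_{m-1}$ must be joined through the stems between them, exactly as across the cut vertices of a path bush. So no guessing is needed for such pairs.
\item The only ambiguity concerns terminals of $V_m$: do they hang off $x_1$ or off $x_m$, and are $x_1,x_m$ joined around the cycle? This is determined by the first and last bi-stemmed citrus among $\mathcal{C}_1,\dots,\mathcal{C}_{m-1}$ (or by the fact that there is none), which costs only $O(m^2)$ guesses.
\item When the first and last coincide, that citrus and $\mathcal{C}_m$ must be solved jointly.
\end{itemize}

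This is precisely the paper's case split:
\begin{itemize}
\item Exactly one bi-stemmed $\ell$-lemon: identify its ends, apply Lemma~\ref{l-l-lemon-support}(\ref{l-l-lemon-support-i}) to it, and apply Theorem~\ref{t-pathbush} to the rest with the pair $(x,y)$ added.
\item Exactly two: identify the ends joined by the two intertwined arcs, so the two lemons become one $2\ell$-lemon handled by Lemma~\ref{l-l-lemon-support}(\ref{l-l-lemon-support-iii}); the arcs are path bushes.
\item At least three: guess two cyclically consecutive bi-stemmed lemons, identify the two ends joined by the intertwined arc between them, and solve the three resulting path bushes with Theorem~\ref{t-pathbush}.
\end{itemize}
In every branch the projection of the terminal pairs is deterministic, and that is what replaces your per-terminal state.
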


\begin{proof}[Proof Sketch]
Since $G'$ is a cycle, there is at least one $\ell$-lemon for which the solution is bi-stemmed. We consider three cases, depending on the number of bi-stemmed $\ell$-lemons in an optimum solution. We give the first case in some detail, and sketch the other two cases.

\smallskip\noindent\underline{\emph{Case 1:}} If there is an optimum solution with a single bi-stemmed $\ell$-lemon, we guess it. There is a path in the solution between the ends of the guessed $\ell$-lemon. After projecting the terminal set to it and identifying its ends, we apply Lemma~\ref{l-l-lemon-support}(\ref{l-l-lemon-support-i}) to find the part of the solution inside the guessed $\ell$-lemon. For all other $\ell$-lemons, we can apply Theorem~\ref{t-pathbush}.

In detail, we branch over all edges in $G'$. Let $xy$ be the edge of $G'$ corresponding to the current branch. Let $\mathcal{L}_{xy}$ denote the $\ell$-lemon corresponding to $xy$ and let $C_{xy} = V(x,y,\mathcal{L})$. We create two new sets $T_{xy}$ and $T'_{xy}$ of terminals as follows. Let $(s,t) \in T$. If $s,t \in C_{xy}$, then add $(s,t)$ to $T_{xy}$. If $s,t \not\in C_{xy}$, then add $(s,t)$ to $T'_{xy}$. If $s \in C_{xy}$ and $t \not\in C_{xy}$, then add $(s,x)$ to $T_{xy}$ and $(t,x)$ to $T'_{xy}$. Finally, add $(x,y)$ to $T'_{xy}$. 
Compute a minimum Steiner forest $\hat{F}_{xy}$ for $(G[C_{xy}] \dagger \{x,y\}, T_{xy} \dagger \{x,y\})$ and compute a minimum Steiner forest $\hat{F}'_{xy}$ for $(G-(C_{xy} \setminus\{x,y\}) - \{xy\}, T'_{xy})$. This can be done in $2^{O(\ell^2)} n^{O(1)}$ time: $G[C_{xy}]$ is a single $\ell$-lemon and thus we apply Lemma~\ref{l-l-lemon-support}(\ref{l-l-lemon-support-i}); $G-(C_{xy} \setminus\{x,y\}) - \{xy\}$ is an $\ell$-lemon bush flowering from a path and thus we apply Theorem~\ref{t-pathbush}. Output the minimum $\hat{F}_{xy} \cup \hat{F}'_{xy}$ over all edges $xy$ of $G'$; we note that this union is performed over the edges, as there is a clear mapping from the edges of $G[C_{xy}] \dagger \{x,y\}$ to those of $G[C_{xy}]$. We omit the proof of optimality in this sketch.

\smallskip\noindent\underline{\emph{Case 2:}} If there is an optimum solution with two bi-stemmed $\ell$-lemons, we guess both of them. Going around the cycle, there is a path in the solution from one end $y_1$ of the first guessed $\ell$-lemon to an end $x_2$ of the second guessed $\ell$-lemon and similarly a path from the other end $y_2$ of the second guessed $\ell$-lemon to the other end $x_1$ of the first guessed $\ell$-lemon. To find the part of the solution inside the two guessed $\ell$-lemons, we identify $y_1, x_2$ and $y_2,x_1$, reducing to a single $\ell$-lemon. After projecting the terminal set appropriately, we apply Lemma~\ref{l-l-lemon-support}(\ref{l-l-lemon-support-iii}). For all other $\ell$-lemons, they form two $\ell$-lemon bushes flowering from a path and thus we apply Theorem~\ref{t-pathbush} after projecting the terminal set appropriately and adding the pairs $(y_1,x_2)$ and $(y_2,x_1)$ respectively.

\smallskip\noindent\underline{\emph{Case 3:}} If there is an optimum solution with three or more bi-stemmed $\ell$-lemons, we guess two of them that are closest along the cycle. Going around the cycle, there is a path in the solution from one end $y_1$ of the first guessed $\ell$-lemon to an end $x_2$ of the second guessed $\ell$-lemon. The $\ell$-lemons between these ends form an $\ell$-lemon bush flowering from a path and thus we apply Theorem~\ref{t-pathbush} after projecting the terminal set appropriately and adding the pair $(y_1,x_2)$. For the guessed $\ell$-lemons themselves, we identify $y_1,x_2$ and apply Theorem~\ref{t-pathbush} to the resulting $\ell$-lemon bush flowering from a path of length~$2$ after we project the terminal set appropriately. The remaining $\ell$-lemons again form an $\ell$-lemon bush flowering from a path and thus we apply Theorem~\ref{t-pathbush}.
\end{proof}

\noindent
Next, we consider $\ell$-lemon bushes with a stem set of bounded size. In fact, our algorithm holds for a more general case, which generalizes all polynomial cases of Theorem~\ref{t-deletion}.

\begin{restatable}{definition}{defEntangledBush}
A graph $G$ is an \emph{entangled} citrus bush if there is an independent set $Y$ in $G$ such that $G-Y$ is a citrus bush with stem set $X$ and $N_G(Y) \subseteq X$. We call $Y$ the \emph{tangle set} and $X$ still the stem set of $G$. An \emph{entangled lemon bush} and for any integer $\ell \geq 5$, an \emph{entangled $\ell$-lemon bush} are defined similarly.
\end{restatable}
\noindent
Observe that if, for an entangled $\ell$-lemon bush with stem set $X$, every $\ell$-lemon has an empty wedge set, then $X$ forms a vertex cover of $G$. Moreover, if some vertex of $Y$ is adjacent to exactly two vertices of $X$, then it forms a juicy wedge and can alternatively be seen as part of the $\ell$-lemon bush.

\begin{restatable}{theorem}{thmSolvetangle}
\label{t-solvetangle}
Let $\ell \geq 5$ be an integer and let $G$ be an entangled $\ell$-lemon bush with a stem set of size $k$. Then we can solve \stf{} on $G$ in $2^{O(k^2\ell^2)} \cdot (n(k+1))^{O(k)}$ time.
\end{restatable}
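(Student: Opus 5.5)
We plan to guess the global connectivity pattern that an optimum Steiner forest $F$ induces on the stem set $X$, and then reduce to independent local problems. Concretely, we guess a partition $\mathcal{P}$ of $X$ into the classes of stem vertices that lie in the same tree of $F$; there are at most $k^{k}$ such partitions. Since $X$ is a separator of $G$ — every citrus interior and every tangle vertex only sees $X$ — we may also guess, for each citrus with ends $x,y$ in the same class, whether $F$ is intertwined or bi-stemmed on it. This costs at most $2^{\binom{k}{2}}$ further branches, as $G'$ has at most $\binom{k}{2}$ edges.

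Given these guesses, the cost of $F$ decomposes over the pieces, with one complication: connections between classes of $\mathcal{P}$ are forbidden, but the connectivity inside a class must be realised by some spanning structure. We handle this by additionally guessing, for each class, a spanning tree on the class in the ``quotient'' sense. For each class $P$ we guess which pairs $x,y\in P$ are joined through the interior of their citrus (intertwined ones) and which pairs are joined through a tangle vertex $v\in Y$. The tangle vertices cause the $(n(k+1))^{O(k)}$ factor. A tangle vertex used as a Steiner hub joining two or more stem vertices is a star centre, and in an acyclic forest at most $k-1$ tangle vertices can serve as hubs connecting distinct stem vertices. We therefore guess these hub vertices together with the subset of $X$ each is attached to, which gives at most $(n\cdot 2^{k})^{k}$ choices, polynomial for fixed $k$. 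Every other tangle vertex that is a terminal simply needs one edge to a neighbour in $X$ of its terminal's class (or is paired with a terminal it must reach), so it can be charged cheaply. A tangle terminal that is not connected to $X$ must have its partner equal to itself, which is trivial.

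With $\mathcal{P}$, the intertwined/bi-stemmed labels and the hub choices fixed, we replace every terminal pair by local pairs. A terminal $s$ inside citrus $xy$ whose partner lies elsewhere must connect to $x$ or $y$, which is one more binary guess per citrus side. To keep this polynomial, we instead encode it as: identify all vertices of each class of $\mathcal{P}$ that are already forced connected, then project each pair to the relevant end. Each citrus is then solved independently via Lemma~\ref{l-l-lemon-support}. Intertwined citruses use part~(\ref{l-l-lemon-support-ii}). For bi-stemmed citruses we want $F$ inside the citrus with $x,y$ in separate components, and each local terminal attached to whichever end its class dictates. Because $x$ and $y$ may be in the same class via another route, such a terminal may go to either end; we model this by identifying $x,y$ and applying part~(\ref{l-l-lemon-support-i}), which exactly minimises over forests in which each component touches at most one of $x,y$ or is consistent after identification. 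Summing the local optima plus hub edges and verifying that the resulting union is acyclic with the guessed connectivity gives a candidate; we output the best over all branches. Correctness is shown by restricting an optimum $F$ to each piece and observing that the restriction is feasible for the corresponding local instance under the correct guesses. Running time is $2^{O(k^2)}$ guesses times $(n(k+1))^{O(k)}$ hub guesses times $\binom{k}{2}\cdot 2^{O(\ell^2)}n^{O(1)}$ local work, within the bound after absorbing $\ell^2$ into $k^2\ell^2$.

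The main obstacle will be the bi-stemmed case when both ends lie in the same class of $\mathcal{P}$: the local forest must not create an $x$–$y$ path, yet terminals may legitimately hang from either end. I would first try to argue that identification (part~(\ref{l-l-lemon-support-i})) is exact here, because any forest in $G\dagger\{x,y\}$ lifts to one in $G$ with no $x$–$y$ path inside the citrus. The subtle point is that global acyclicity is irrelevant for minimality, since cycles can be dropped. The tangle vertices' interaction with terminal projection is the second delicate point.
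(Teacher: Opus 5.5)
There is a genuine gap in how you handle citruses whose two ends lie in \emph{different} classes of $\mathcal{P}$ (the paper's ``delicate'' lemons), and in how you handle the non-hub tangle terminals.

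\textbf{Where the argument fails.} Take a citrus with ends $x\in A$ and $y\in B$, where $A\neq B$. Then $F$ is necessarily bi-stemmed on it, and a terminal $s$ inside it whose partner lies outside must hang from $x$ or from $y$. Your projection ``to the relevant end'' assumes this end is fixed by your guesses, but it is not. Suppose the partner $t$ lies in another citrus with ends $x'\in A$, $y'\in B$, or is a non-hub tangle terminal adjacent to both $A$ and $B$. Then ``$s,t$ both attach to $A$'' and ``$s,t$ both attach to $B$'' are both consistent with $\mathcal{P}$, the labels and the hub choice.

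There can be unboundedly many such pairs, and the choices are coupled across different citruses: the cost inside citrus $xy$ depends on which subset of its terminals goes to $x$. So you can neither guess these choices nor solve each citrus independently. For the same reason, ``one edge to a neighbour in $X$ of its terminal's class'' is ill-defined for such a tangle terminal. Your sentence ``identify all vertices of each class \dots\ project each pair to the relevant end'' does not fix this, since you still solve each original citrus separately with $2^{O(\ell^2)}n^{O(1)}$ work.

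The case you single out as the main obstacle is actually harmless. A bi-stemmed citrus with both ends in the same class is handled by identifying $x,y$ and applying part~(i), which is exactly what the paper does.

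\textbf{The missing idea: treat each pair of classes jointly.} The paper does this in three steps.
\begin{enumerate}
\item It applies rejection rules: a school whose terminals lie in citruses that would force connecting classes not joined in the pattern is rejected.
\item It applies reduction rules that leave each ambiguous tangle terminal with at most one edge into each of its at most two candidate classes. A tangle vertex reduced to degree~$2$ is then reinterpreted as a juicy wedge between its two stem neighbours.
\item For every two classes $A,B$, all citruses with one end in $A$ and the other in $B$ are glued into a single super-lemon, by identifying their $A$-ends to one vertex $a$ and their $B$-ends to one vertex $b$. This super-lemon has at most $k^2\ell$ pulped wedges, and one call to Lemma~\ref{l-l-lemon-support} on it resolves all the coupled $A$-versus-$B$ decisions at once.
\end{enumerate}
This merging step is exactly where the $2^{O(k^2\ell^2)}$ factor comes from. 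Your running-time accounting, with only $2^{O(\ell^2)}$ local work per original citrus, confirms that this step is missing from your plan.
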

\begin{proof}[Proof Sketch]
Let $X$ be a stem set of $G$ and $Y$ a tangle set. Let $G' = G-Y$. Since $G'$ is an $\ell$-lemon bush, let $G''$ be the graph such that $G'$ flowers from $G''$ with stem set $X$. The intuition of the proof is to enumerate all possible patterns of a solution and find a minimum solution for each possible pattern. The pattern describes which vertices of $X$ are in the same component of a solution and which $\ell$-lemons must have an intertwined solution. At the same time, the pattern will prescribe exactly which vertices in $Y$ will connect the different components in $X$.

Let $H$ be the auxiliary graph on vertex set $X \cup Y$ and the following edges: for any $u,v \in X \cup Y$, $uv \in E(H)$ if and only if $uv \in E(G)$ or $uv \in E(G'')$. Branch on all subgraphs (\emph{patterns}) $P$ of $H$ such that $P$ is a forest, $X \subseteq V(P)$, and every vertex of $Y$ has degree at least~$2$ in $P$. For any such $P$, a Steiner forest $F$ of $(G,T)$ \emph{adheres} to $P$ if:
\begin{itemize}
\item if $uv \in E(P) \cap E(G'')$ (note that $u,v \in X$), then $F$ is intertwined with respect to the $\ell$-lemon corresponding to $uv$;
\item if $uv \in E(G'') \setminus E(P)$ (note that $u,v \in X$), then $F$ may be bi-stemmed with respect to the $\ell$-lemon corresponding to $uv$;
\item if $y \in Y \cap V(P)$, then $N_P(y) = N_F(y)$ (note that $N_P(y),N_F(y) \subseteq X$);
\item if $y \in Y \setminus V(P)$, then $y$ has degree at most~$1$ in $F$;
\end{itemize}
We can show that, for any minimum Steiner forest $F$ for $(G,T)$, there exists a pattern $P_F$ such that $F$ adheres to $P_F$.

We branch on the patterns; there are $(n(k+1))^{O(k)}$ of them. For each pattern, we will find a minimum Steiner forest for $(G,T)$ that adheres to the pattern (if one exists). Note that the connected components of the pattern induce a partition $\mathcal{X}$ of the vertices of $X$ (but any part of $\mathcal{X}$ does not necessarily induce a connected subgraph of $G''$). The main challenge is to decide where to connect the terminals in $Y \setminus V(P)$; call these bad terminals. By an analysis of the structure of a solution and several polynomial-time rejection and reduction rules, we can reduce the number of candidate parts of $\mathcal{X}$ that bad terminals $y$ can be connected to in the solution to at most two. If there is one candidate part, then we add an arbitrary edge from it to $y$ to the solution. If there are two candidates, then we can similarly reduce the degree of $y$ to $2$. Then we treat it as a juicy wedge between two vertices of $X$ (it may be that no edge exists in $G''$ between these vertices, but then we add the edge just for this branch). 

Finally, we find the solutions for the $\ell$-lemons. This requires a careful projection of the terminal set on the $\ell$-lemons, depending on the unique part to which a bad terminal has an edge. Then, for each $uv \in E(G'') \cap E(P)$, we need to find an intertwined solution for the $\ell$-lemon $\mathcal{L}_{uv}$ corresponding to $uv$. For each $uv \in E(G'') \setminus E(P)$ for which $u,v$ are in the same part of $\mathcal{X}$, we need to find a solution in $G[V(u,v,\mathcal{L}_{uv})] \dagger \{u,v\}$ for $\mathcal{L}_{uv}$, because the solution will contain a path that connects $u,v$ but has no edge in $G[V(u,v,\mathcal{L}_{uv})]$. For all remaining $uv$ in $E(G'')$, we note that their ends are between two parts of $\mathcal{X}$. For every pair of parts of $\mathcal{X}$, we group such $\ell$-lemons together into a super-lemon, which is a $k^2\ell$-lemon. Inside this super-lemon, we find a bi-stemmed solution. By application of Lemma~\ref{l-l-lemon-support}, these solutions can be found in $2^{O(k^2\ell^2)}n^{O(1)}$ time.
\end{proof}

\subsection{Polynomial-Time Results}\label{s-poly}

In this section, we discuss the polynomial cases from Theorem~\ref{t-dicho}.
Full proofs of these results can be found in Section~\ref{s-fp-poly}. 
In all our proof sketches, we assume that
for the graph~$H$ under consideration,
$G$ is a $H$-subgraph-free graph from an instance $(G,T)$ of {\sc Steiner Forest}. Moreover, by Lemma~\ref{l-2con}, we will always assume that $G$ is $2$-connected.

We start with the case $H=P_{11}$.

\begin{restatable}{theorem}{thmPIIfree}
\label{t-p11free}
\stf{} is polynomial-time solvable for $P_{11}$-subgraph-free graphs.
\end{restatable}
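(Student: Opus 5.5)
We follow the strategy outlined in the introduction. By Lemma~\ref{l-2con} we may assume $G$ is $2$-connected. If $G$ is $P_9$-subgraph-free, we are done by Theorem~\ref{t-pol}. So we may assume $G$ contains a path $P=v_1v_2\cdots v_r$ with $r\in\{9,10\}$ vertices, chosen to be a longest path; since $G$ is $P_{11}$-subgraph-free, $r\le 10$. The plan is to show that a $2$-connected graph whose longest path has $9$ or $10$ vertices is an entangled $\ell$-lemon bush with a stem set of bounded size, or an $\ell$-lemon bush flowering from a cycle or a path, for some constant $\ell$. We can then apply Theorem~\ref{t-solvetangle}, Theorem~\ref{t-cyclebush} or Theorem~\ref{t-pathbush}, respectively.

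\emph{Key steps.} First, for a $2$-connected graph with longest path $P$, standard longest-path arguments apply: the ends $v_1,v_r$ have all neighbours on $P$, and by $2$-connectivity and maximality, $G$ contains a long cycle through most of $P$. (In a $2$-connected graph with a path on $r$ vertices there is a cycle on roughly $r/2+1$ or more vertices; here this yields a cycle of length at least $6$.) Second, take a longest cycle $C$, of length at most $10$, and study the components of $G-V(C)$. Each such component $K$ attaches to $C$ at two or more vertices. Attachments at two vertices that are consecutive on $C$, or at distance small on $C$, would give a path on $|C|+|K|$-ish vertices. Combined with $P_{11}$-freeness, this bounds both the size and the longest path of every component $K$ (e.g.\ $K$ has no $P_3$ or so once $|C|\ge 8$). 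Third, we do a case analysis on $|C|\in\{6,\dots,10\}$.

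\emph{Case analysis.} If the cycle is long ($|C|\ge 9$), off-cycle components must be single vertices attaching to pairwise nonconsecutive cycle vertices, and they cannot have degree $\ge 3$ with spread attachments. Otherwise, rerouting gives $P_{11}$. This yields either a bounded-size vertex set $X=V(C)$ with an independent tangle set $Y$ (an entangled bush with empty wedges, so $|X|\le 10$ and we apply Theorem~\ref{t-solvetangle}), or a cycle of small wedges, i.e.\ an $\ell$-lemon bush flowering from a cycle. For shorter $C$, components of $G-V(C)$ can be larger, but they must hang off at two vertices $x,y$ as wedges of bounded size (hence $\ell$-pulped, seeded, or juicy). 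Components with three or more attachments must be single vertices, forming the tangle set. In all cases the stem set is $V(C)$ plus possibly a bounded number of extra vertices, so $k\le 10$ and $\ell$ is a constant such as $11$. We also need the bound that the number of $\ell$-pulped wedges per citrus is at most $\ell$. This should follow because many large wedges between the same ends $x,y$ would create a path through two of them plus part of $C$ of length $\ge 11$.

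\emph{Main obstacle.} The hard part is the exhaustive structural case analysis, specifically showing that for each cycle length, every configuration not yielding $P_{11}$ fits one of the bush shapes. Particularly delicate are components of $G-V(C)$ that have two attachment vertices but are large and not of treewidth $2$. These must be bounded in size via longest paths inside them plus detours along $C$. Also delicate are vertices of $Y$ that are adjacent to other off-cycle structure, which would violate independence of $Y$. I would first handle $|C|\ge 8$, where the constraints are tightest, then treat $|C|\in\{6,7\}$ by rechoosing $C$ to maximise the number of vertices covered by $C$ together with attached wedges. If a configuration resists, I would fall back on enlarging the stem set by a bounded number of vertices, which Theorem~\ref{t-solvetangle} tolerates.
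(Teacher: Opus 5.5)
Your proposal is a plan rather than a proof. The whole content of the theorem is the structural statement that a $2$-connected graph whose longest path has $9$ or $10$ vertices is an entangled $\ell$-lemon bush with a bounded stem set. Every step towards it is asserted rather than shown: ``must hang off at two vertices'', ``should follow'', ``if a configuration resists, I would fall back on\ldots''. The circumference bound ``roughly $r/2+1$'' is quoted without proof. The bound on the number of $\ell$-pulped wedges is also unsupported: since $C$ is a longest cycle, $x$--$y$ paths through wedges are short, so it is not clear that two wedges plus part of $C$ give a $P_{11}$.

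More seriously, one of your concrete structural claims is false. Take the cycle $c_1c_2\cdots c_8c_1$ and add $u,w$ with edges $uc_1,uc_7,uw,wc_4$. Then add any number of vertices $z_1,\ldots,z_m$, each adjacent to $c_1$ and $c_4$. This graph is a subdivision of a $2$-connected multigraph on the branch vertices $c_1,c_4,c_7,u$. A direct check over the threads between branch vertices shows that:
\begin{itemize}
\item its longest path has $10$ vertices, e.g.\ $w,u,c_1,c_2,\ldots,c_8$, so it is $P_{11}$-subgraph-free;
\item its circumference is $8$, with $c_1\cdots c_8$ a longest cycle $C$.
\end{itemize}
Yet $\{u,w\}$ is a component of $G-V(C)$ with $N(\{u,w\})=\{c_1,c_4,c_7\}$. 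This contradicts your claim that ``components with three or more attachments must be single vertices''. The theorem itself is not threatened here, since $V(C)\cup\{u\}$ is a vertex cover. But the example shows your case analysis on $|C|$ would have to handle such components and prove that only boundedly many occur, which is exactly the work that is missing.

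The missing idea is to anchor the analysis at the ends of a longest path instead of at a cycle. The paper first proves the $P_{10}$-free case (Lemma~\ref{l-p10free}) using a $9$-vertex path. For $P_{11}$-free graphs it then takes a $10$-vertex path $P=v_1\cdots v_{10}$, which is necessarily longest, and uses $V(P)$ itself as the stem set. The argument runs as follows.
\begin{itemize}
\item A component $D$ of $G-V(P)$ with $|D|\ge 2$ cannot touch $v_1,v_2,v_9,v_{10}$.
\item If $D$ touches $v_3$ via a vertex $x$, then any $P_3$ in $D$ starting at $x$ extends $v_3\cdots v_{10}$ to a $P_{11}$. Hence $D$ is a star centred at $x$, and its other attachment lies in $\{v_6,v_7,v_8\}$.
\item A finite check of configurations shows that two nontrivial components with different attachment pairs produce a $P_{11}$.
\item If only one nontrivial component touches $v_3$ (or $v_8$), then $V(P)$ plus its centre is a bounded vertex cover.
\item If no nontrivial component touches $v_3$ or $v_8$, every nontrivial component is a star attached exactly at $v_4$ and $v_7$.
\end{itemize}
In all cases the nontrivial components are juicy or seeded wedges of a single lemon, and the singleton components form the tangle set. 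Theorem~\ref{t-solvetangle} or Theorem~\ref{t-vc} then finishes the proof.
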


\begin{proof}[Proof Sketch]
We first check if $G$ contains a $P_9$. If not, we apply Theorem~\ref{t-pol}.
Else we~found a $9$-vertex path $P$.
We check if $G$ has a $P_{10}$.
If not, we analyse the components of 
$G-V(P)$~and show that $G$ is an entangled lemon bush with stem set $V(P)$ of size~$9$ so we apply Theorem~\ref{t-solvetangle}.
If we found a $P_{10}$, we use similar but more involved arguments as in the $P_9$ case.
\end{proof}

\noindent
We now consider the case $H=S_{1,3,6}$.

\begin{restatable}{theorem}{thmSIEbfree}
\label{t-s136}
\stf{} is polynomial-time solvable for $S_{1,3,6}$-subgraph-free graphs.
\end{restatable}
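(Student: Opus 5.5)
We assume, via Lemma~\ref{l-2con}, that $G$ is $2$-connected and $S_{1,3,6}$-subgraph-free. The plan has three steps: reduce to the case where $G$ contains a long path or cycle, handle a long cycle by a citrus-bush argument, and handle a long path by a bounded stem set.

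First we test in polynomial time whether $G$ contains a $P_{11}$. Since $P_{11}$ has constant size, brute force suffices. If $G$ has no $P_{11}$, we apply Theorem~\ref{t-p11free}. Otherwise we fix a path $P=v_1\cdots v_{11}$. Because $G$ is $2$-connected, the two end segments of $P$ can be joined by paths avoiding the middle. So we next look for a long cycle $C$, of length at least roughly $10$, and split into two regimes.

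\emph{Case A: $G$ has a long cycle $C$.} Take $C$ to be a longest cycle. Any vertex outside $C$ adjacent to a vertex $c\in V(C)$, together with $C$, almost yields $S_{1,3,6}$. Take $c$ as the centre, a pendant edge as the $1$-leg, and the two arcs of $C$ leaving $c$ as legs of lengths $3$ and $6$; these fit once $|C|\geq 10$. The same reasoning applies to a chord of $C$, which plays the role of the short leg. Hence every component $D$ of $G-V(C)$ must be tiny and must attach to $C$ only at pairs of consecutive or near-consecutive cycle vertices. Chords must likewise be short-range, otherwise a longer cycle or the claw appears. From this we aim to conclude that $G$ is an $\ell$-lemon bush flowering from a cycle, for some constant $\ell$. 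The stem set is a subset of $V(C)$, and each citrus consists of an arc segment together with the small attached components, all of bounded size or forming juicy wedges. We then apply Theorem~\ref{t-cyclebush}. A short cycle of length below about $10$ goes to Case B.

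\emph{Case B: every cycle is short.} The circumference is bounded but $G$ still contains $P_{11}$. Here we take $X$ to be the vertex set of a longest cycle, or of $P$ together with a bounded neighbourhood, so $|X|$ is bounded. We analyse the components of $G-X$, as in the proof sketch of Theorem~\ref{t-p11free}. Components that are single vertices with neighbours only in $X$ form the tangle set $Y$. Every other component must attach to exactly two vertices of $X$, and must be either of bounded size (at most $\ell$ vertices) or of treewidth~$2$ after adding the edge between its ends. The reason is that a component containing a long path, a vertex of degree at least $3$ far from $X$, or three attachment points would extend $P$ or $C$ into $S_{1,3,6}$. Such components are therefore juicy, seeded, or $\ell$-pulped wedges. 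We also need to show that the number of $\ell$-pulped wedges per pair of ends is bounded, since many parallel large wedges between the same pair produce a long cycle through them. This makes $G$ an entangled $\ell$-lemon bush with bounded stem set, and we apply Theorem~\ref{t-solvetangle}.

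The main obstacle is the exhaustive structural case analysis behind both cases. In Case A we must show that attachments and chords cannot create non-local structure. In Case B we must show that every non-tangle component of $G-X$ is a genuine wedge with exactly two ends and either bounded size or treewidth~$2$. I would organise this by the position on $P$ (or $C$) of the first and last attachment vertex of each component, and for each configuration exhibit an explicit $S_{1,3,6}$, using the spare vertices of the component to supply the legs of length $1$ and $3$.
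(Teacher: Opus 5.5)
There is a genuine gap. The structural analysis is the entire content of this theorem, and your proposal defers it; some of the intermediate claims you do state are also wrong.

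\textbf{Case A.} Your own observation already proves more than you state. An off-cycle neighbour of $c$, together with the two arcs of $C$ leaving $c$, gives $S_{1,3,6}$ once $|C|\ge 10$. Since $G$ is connected, this forces $V(G)=V(C)$, so there are no ``tiny components'' at all.

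Conversely, ``all chords are short-range'' does not imply that $G$ is an $\ell$-lemon bush flowering from a cycle. The square of a long cycle has only chords of span~$2$, yet it has no $2$-vertex cutset. You must exclude short chords as well. Let $C=c_0c_1\cdots c_{m-1}$ and take a chord $c_0c_d$ with $d\in\{2,3\}$. Use $c_0$ as the centre and $c_1$ as the $1$-leg. Start the other two legs at $c_{m-1}$ and, through the chord, at $c_d$, running towards each other along the arc $c_d\cdots c_{m-1}$; this arc has $m-d\ge 9$ vertices when $m\ge 12$. Combined with your chord-as-$1$-leg argument for $d\ge 4$, this gives $G=C$ when $|C|\ge 13$ and $|V(G)|\le 12$ when $10\le|C|\le 12$. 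Theorem~\ref{t-cyclebush} is then not even needed.

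\textbf{Case B.} This is where the real work lies, and you give no argument here.
\begin{itemize}
\item You never fix $X$. If $X$ is a longest cycle or an arbitrary $P_{11}$, Lemma~\ref{l-adjacencytopath} is unavailable.
\item ``Would extend $P$ or $C$ into $S_{1,3,6}$'' is not a proof. Extending a path that is not a longest path just yields a longer path.
\item The case cannot be dismissed. Join two vertices by one path of length~$6$ and arbitrarily many paths of length~$3$. This graph is $2$-connected, $S_{1,3,6}$-subgraph-free, contains $P_{11}$, and has circumference~$9$.
\end{itemize}

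The missing idea, which is how the paper proceeds, is to fix a \emph{longest} path $P=v_1\cdots v_r$.
\begin{itemize}
\item If $r\ge 12$, then for every $v_i$ with $4\le i\le r-3$ the two subpaths of $P$ already supply legs of lengths $3$ and $6$. So $v_i$ has no neighbour off $P$. Together with Lemma~\ref{l-adjacencytopath}, $2$-connectivity and one further explicit $S_{1,3,6}$ (for a component attached near both ends of $P$), this forces $V(G)=V(P)$. Hence $G$ has $12$ vertices when $r=12$; for $r\ge 13$ a short further argument shows $G$ is a chordless cycle.
\item If $r=11$, apply the same argument at $v_4,v_5,v_7,v_8$, use the longest-path property at the ends, and exhibit a few more explicit copies of $S_{1,3,6}$. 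This shows that every component of $G-V(P)$ has at most two vertices. A two-vertex component has exactly two neighbours, both in $\{v_3,v_6,v_9\}$.
\end{itemize}
So $G$ is an entangled lemon bush with stem set $V(P)$ of size~$11$, and Theorem~\ref{t-solvetangle} applies. The $\ell$-pulped or large treewidth-$2$ wedges you anticipate never arise.
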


\begin{proof}[Proof Sketch]
    We determine a longest path $P = (v_1, \dots, v_r)$ in $G$.
    By Theorem~\ref{t-p11free}, we may assume that $r \geq 11$.  
 If $r = 11$, we show that $G$ is an entangled lemon bush with stem set~$V(P)$ of size~$11$ and apply Theorem~\ref{t-solvetangle}.  
 If $r \geq 12$, we show $V(G) = V(P)$.
    Hence, the case $r=12$ is trivial. Else,
    $r \geq 13$. In this case we find that $G=C_r$ and apply Theorem~\ref{t-lemon-tw2}. 
\end{proof}

\noindent
We now consider the cases $H\in \{S_{2,2,7},
S_{2,3,5},S_{2,4,4}\}$, for which
 we use the following lemma.

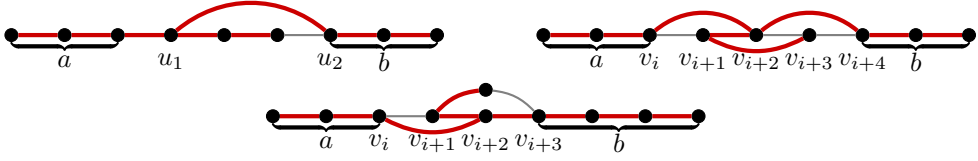
\begin{figure}[t]
    \centering
    \input{figures/s2ab}
    \caption{Forbidden jumps due to occurrences of $S_{2,a,b}$ in the proof of Lemma~\ref{l-s2ab}.}
    \label{f-s2ab-start}
\end{figure}

\begin{restatable}{lemma}{lemSTwoABFree}
\label{l-s2ab}
For $b\geq a \geq 2$, let $G$ be a $2$-connected $S_{2,a,b}$-subgraph-free graph with a longest path $P$.
If $|V(P)|\geq 3(a+b)$ and $V(P)$ is a vertex cover of $G$, then $G$ is a lemon bush flowering from a cycle.
\end{restatable}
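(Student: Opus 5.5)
Let $P=v_1v_2\cdots v_r$ with $r\ge 3(a+b)$, and let $Y=V(G)\setminus V(P)$. Since $V(P)$ is a vertex cover, $Y$ is independent and every $y\in Y$ has all its neighbours on $P$. By $2$-connectivity, every $y\in Y$ has at least two neighbours on $P$. The plan is to show that every edge of $G$ and every vertex of $Y$ ``jumps'' only a short distance along $P$, except for edges or vertices that close a long cycle through the ends $v_1,v_r$. From this we read off a cyclic sequence of stem vertices with lemons between consecutive ones.

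\textbf{Step 1 (endpoints).} A longest path gives $N(v_1),N(v_r)\subseteq V(P)$. If $v_1v_i\in E$ then $v_{i-1}\cdots v_1v_i\cdots v_r$ is again a longest path. The same holds with the roles of $v_1$ and $v_r$ swapped, and for a vertex $y\in Y$ adjacent to $v_i$: if $y$ is adjacent to $v_1$'s side we could extend $P$, so $y\not\sim v_1,v_r$.

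\textbf{Step 2 (forbidden jumps).} Take a chord $v_iv_j$ with $i<j$, $j-i\ge 2$, or a vertex $y\in Y$ with neighbours $v_i,v_j$. Use the vertex $v_{i}$ (or $v_j$) as the centre of a claw. The leg of length $2$ goes to $v_{i-1},v_{i-2}$, or to the jump $v_j$ and then $v_{j\pm1}$. The two long legs of lengths $a$ and $b$ run along $P$ in opposite directions, one possibly rerouted through the jump.

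Since $r\ge 3(a+b)$, some arc of $P$ always has length at least $a+b$. By case analysis on the position of $i$, I would show the following, as in Figure~\ref{f-s2ab-start}:
\begin{enumerate}[(a)]
\item whenever $2\le i$ and $j\le r-1$ are both at distance more than about $a+b$ from the ends, we get $j-i\le 2$;
\item a jump of length at least $3$ forces $i\le c$ and $j\ge r-c$ for a constant $c$.
\end{enumerate}
In case (a) a vertex $y$ then acts like a subdivided chord $v_iv_{i+2}$, and the bypassed vertex $v_{i+1}$ may only have neighbours among $v_i,v_{i+2}$ and $Y$-vertices attached to exactly these two. I expect this case analysis to be the main obstacle. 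Near the ends of $P$ the legs are too short, so I would instead use the long jump itself together with the path rerouting from Step 1 to obtain a long leg, possibly replacing $P$ by a rotated longest path.

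\textbf{Step 3 (a long cycle exists).} $2$-connectivity forces some jump crossing the middle of $P$, since otherwise the middle vertex $v_{\lceil r/2\rceil}$ would be a cut vertex. By Step 2 this jump must connect the two ends of $P$. Using Step 1 rotations, I would argue that some cycle $C$ contains all but a bounded number of vertices of $P$, with only short ``local'' jumps remaining. Then apply Step 2 relative to $C$ instead of $P$; the argument is symmetric by rotation, so the end effects disappear. Every chord and every $Y$-vertex now spans at most distance $2$ along $C$, and no two such local detours cross.

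\textbf{Step 4 (lemon bush).} Take as stem set $X$ those vertices of $C$ that are not strictly inside a detour, in cyclic order $x_1,\ldots,x_m$; this set $X$ induces $G'=C_m$. Between consecutive $x_t,x_{t+1}$ there are two possibilities. The first is a direct edge. The second is a detour: the middle vertex $v$, possibly together with $Y$-vertices adjacent to exactly $x_t,x_{t+1}$ and possibly the edge $x_tx_{t+1}$. Each $Y$-vertex is a wedge of size $3$, hence juicy. Each set $\{x_t,v,x_{t+1}\}$ together with the attached $Y$-vertices forms a wedge of treewidth $2$: it is $K_{2,s}$ plus possibly an edge, so it is juicy. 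Hence $G$ is a lemon bush flowering from a cycle.
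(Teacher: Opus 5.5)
\textbf{Gap: crossing chords are allowed.} Your overall route matches the paper's: only short jumps in the middle of $P$, a long cycle forced by $2$-connectivity, then reading off $2$-cuts around it. However, Step~3 asserts that no two local detours cross, and Step~4 depends on this. The assertion is false. Take a long cycle $v_1v_2\cdots v_N$ and add the chords $v_1v_3$ and $v_2v_4$. This graph is $2$-connected and Hamiltonian, so $v_1\cdots v_N$ is a longest path whose vertex set is trivially a vertex cover. It is also $S_{2,a,b}$-subgraph-free for all $a,b\ge 2$. Only $v_1,\dots,v_4$ have degree at least~$3$, and $\{v_2,v_3\}$ can only be left through $v_1$ and $v_4$, so every claw centred at one of $v_1,\dots,v_4$ has a leg of length~$1$. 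Yet the graph has a $K_4$ minor, hence treewidth~$3$. By contrast, a bush flowering from a cycle whose wedges are all juicy has treewidth~$2$ (cf.\ the proof of Theorem~\ref{t-lemon-tw2}). Your Step~4 only ever produces juicy wedges (a single bypassed vertex, or a $Y$-vertex), so as written your argument would prove a false statement. The missing configuration is exactly why the lemma says \emph{lemon} bush rather than juicy bush. For crossing chords $v_iv_{i+2}$ and $v_{i+1}v_{i+3}$, the set $\{v_i,\dots,v_{i+3}\}$ is a wedge with ends $v_i,v_{i+3}$ that is seeded but not juicy, since adding $v_iv_{i+3}$ yields $K_4$.

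\textbf{What Step~2 must prove instead.} As in the paper, Step~2 needs a finer statement about overlapping short jumps in the middle of $P$.
\begin{itemize}
\item Three consecutive crossing jumps $v_iv_{i+2}$, $v_{i+1}v_{i+3}$, $v_{i+2}v_{i+4}$ yield an $S_{2,a,b}$ centred at $v_{i+2}$, with legs $v_{i+2}v_iv_{i-1}\cdots$, $v_{i+2}v_{i+4}v_{i+5}\cdots$ and $v_{i+2}v_{i+1}v_{i+3}$.
\item A crossing pair $v_iv_{i+2}$, $v_{i+1}v_{i+3}$ yields one as soon as either jump passes through a vertex of $Y$. For example, if $y$ is adjacent to $v_{i+1}$ and $v_{i+3}$, take centre $v_{i+2}$ with legs $v_{i+2}v_iv_{i-1}\cdots$, $v_{i+2}v_{i+3}v_{i+4}\cdots$ and $v_{i+2}v_{i+1}y$.
\item A crossing pair of two chords, however, survives.
\end{itemize}
Step~4 then needs a third kind of consecutive stem pair, $v_i,v_{i+3}$, carrying this single seeded wedge. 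The same oversight affects Step~3. The middle vertex $v_{\lceil r/2\rceil}$ may itself be bypassed by a local jump, so the cut-vertex argument must use a middle vertex that no local jump bypasses. Such a vertex exists among any three consecutive middle vertices, because three consecutive crossing jumps are excluded.
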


\begin{proof}[Proof Sketch]
 We call a path a \emph{jump} for $P$ if only the endpoints of this path are contained in $P$. As $V(P)$ is a vertex cover of $G$, every jump has length at most~$2$.
 We first show that no jumps or overlapping jumps can appear as in the three situations displayed in Figure~\ref{f-s2ab-start}, as in all these cases we would find a forbidden $S_{2,a,b}$.
 This reasoning allows us to find many cut-sets of size~$2$ along $P$.
We then show that all these cut-sets form the ends of a lemon.
So we conclude that the ``middle'' of $P$ is a lemon bush flowering from a path. 

We continue by considering the vertices at the ends of the path. We observe that there cannot be any jump between a vertex close to the end of $P$ and a vertex sufficiently far away from both ends of $P$, as this results in some forbidden $S_{2,a,b}$. As $G$ is $2$-connected, this means that there must be a jump between a vertex close to the start of $P$ and a vertex close to the end of $P$. This jump implies that $G$ contains a long cycle. Now we can re-use our structural arguments used for the middle of $P$ repeatedly to find that $G$ is lemon bush flowering from a cycle.
\end{proof}

\begin{restatable}{theorem}{thmSTwoTwoSevenFree}
\label{t-s227}
\stf{} is polynomial-time solvable for $S_{2,2,7}$-subgraph-free graphs.
\end{restatable}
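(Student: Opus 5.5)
We follow the template of the proof of Theorem~\ref{t-s136}. By Lemma~\ref{l-2con} we may assume $G$ is $2$-connected. Every $S_{2,2,7}$ contains a $P_{11}$? No: $S_{2,2,7}$ has a longest path on $2+1+7=10$ vertices, so it is a subgraph of neither $P_{11}$ nor vice versa. Hence we cannot simply reduce to Theorem~\ref{t-p11free}. Instead, we compute a longest path $P=(v_1,\dots,v_r)$. We can do this in polynomial time when $r$ is bounded by brute force; when $P_{r+1}$ is absent for small $r$ we stop.

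\textbf{Case $r\le 26$.} If $G$ is $P_{11}$-subgraph-free, we apply Theorem~\ref{t-p11free}. Otherwise we find a path of bounded length. We then argue, as in the $P_9$/$P_{10}$ analysis of Theorem~\ref{t-p11free}, that the components of $G-V(P)$ attach in a restricted way. Since $P$ is longest and $G$ is $2$-connected, a component $D$ of $G-V(P)$ with at least two vertices and two attachments on $P$ yields either a longer path or an $S_{2,2,7}$: a long path segment of $P$, plus two short legs, one into $D$ and one along $P$. We would show that every nontrivial component of $G-V(P)$, together with its (exactly two) attachment vertices, forms a juicy or seeded wedge. The remaining components are single vertices, which go into the tangle set. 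Thus $G$ is an entangled lemon bush with stem set $V(P)$ of bounded size, and Theorem~\ref{t-solvetangle} applies.

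\textbf{Case $r\ge 27=3(2+7)$.} Here we aim to show that $V(P)$ is a vertex cover of $G$. Suppose some edge $uw$ lies outside $V(P)$. By $2$-connectivity, there is a path from $\{u,w\}$ to $P$ hitting it at a vertex $v_i$. If $v_i$ is far from both ends of $P$, then $v_i$ is the centre of an $S_{2,2,7}$: the legs are $v_{i-1}v_{i-2}$, the path $v_i\to u\to w$ of length $2$ off $P$, and $v_{i+1}\dots v_{i+7}$ (or the symmetric choice). This is possible because $r\ge 27$ always leaves $7$ vertices on one side. If $v_i$ is near an end, then maximality of $P$ prevents it: extending through $u,w$ would give a longer path, or otherwise a different centre gives the claw. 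Once $V(P)$ is a vertex cover, Lemma~\ref{l-s2ab} with $a=2,b=7$ gives that $G$ is a lemon bush flowering from a cycle. Since cycles have treewidth~$2$, Theorem~\ref{t-lemon-tw2} (or Theorem~\ref{t-cyclebush}) finishes the proof.

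\textbf{Main obstacle.} The main obstacle is the first case, for intermediate $r$ (roughly $11\le r\le 26$). There $V(P)$ need not be a vertex cover, and we must verify carefully that every non-singleton component of $G-V(P)$ has exactly two attachment points and forms a juicy or seeded wedge. I would first attempt a uniform argument: with $|V(P)|\ge 10$, a component with three or more attachments, or containing a $P_3$, creates an $S_{2,2,7}$ centred at an attachment vertex. Failing that, I would fall back on case distinctions by $r$. Note that the stem set is bounded in this regime, so the final running time is polynomial.
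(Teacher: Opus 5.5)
\textbf{Gap: the range $11\le r\le 26$ is not proved.} Your case $r\ge 27$ matches the paper: an $S_{2,2,7}$ centred at an attachment vertex shows that $V(P)$ is a vertex cover, and then Lemma~\ref{l-s2ab} with $a=2$, $b=7$ and Theorem~\ref{t-lemon-tw2} finish. For $11\le r\le 26$, however, you only say ``we would show'' and flag it as the main obstacle. The uniform argument you sketch there fails as stated. For $r=11$, a component attached at $v_5$, $v_6$ or $v_7$ cannot be the centre of such an $S_{2,2,7}$, because neither side of $P$ then has $7$ vertices for the long leg. The wedge and entangled-bush structure you aim for is also more than is needed.

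\textbf{Missing idea: your claw argument already works for $r\ge 14$.} Suppose a component $D$ of $G-V(P)$ with $|D|\ge 2$ attaches at $v_i$. Take $x\in D$ adjacent to $v_i$ and $y\in D$ adjacent to $x$.
\begin{itemize}
\item For $3\le i\le r-7$, the legs $v_{i-1}v_{i-2}$, $xy$ and $v_{i+1}\dots v_{i+7}$ give an $S_{2,2,7}$.
\item For $8\le i\le r-2$, the mirror choice gives one.
\item For $i\in\{1,2,r-1,r\}$, you get a longer path, contradicting maximality of $P$.
\end{itemize}
These cases cover every attachment once $r\ge 14$. For $r\in\{11,12,13\}$, the uncovered attachment positions are $\{v_5,v_6,v_7\}$, $\{v_6,v_7\}$ and $\{v_7\}$ respectively. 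By Lemma~\ref{l-adjacencytopath}, either $|N(D)|=1$, or $N(D)=\{v_5,v_7\}$ and a single vertex of $D$ carries all edges to $P$. In both cases $G$ has a cut vertex, contradicting $2$-connectivity.

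\textbf{Conclusion.} So $V(P)$ is a vertex cover for every $r\ge 11$, and for $r\le 26$ you can invoke Theorem~\ref{t-vc}, since the vertex cover number is at most $26$. This is exactly the paper's proof. Also, Lemma~\ref{l-longestpath} gives a longest path in polynomial time, so no brute-force detour is needed.
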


\begin{proof}[Proof Sketch]
We find a longest path $P$ of $G$. 
If $|V(P)|\leq 10$, we apply Theorem~\ref{t-p11free}. Suppose $|V(P)|\geq 11$.
We prove that $V(P)$ is a vertex cover of $G$.
If $|V(P)| \leq 26$, then we use the known {\sf FPT} algorithm for {\sc Steiner Forest} parameterized by vertex cover number~\cite{GHKKO22,BJMOPPSV25,FL25}.
If $|V(P)| \geq 27 = 3\cdot(2+7)$, then
    we apply Lemma~\ref{l-s2ab} and obtain that $G$ is a lemon bush flowering from a cycle, meaning we can apply Theorem~\ref{t-lemon-tw2}.
\end{proof}

\noindent
We obtain our next result, 
Theorem~\ref{t-s235}, by the same approach as for Theorem~\ref{t-s227}, but for Theorem~\ref{t-s244} we need some other arguments as well.

\begin{restatable}{theorem}{thmSTwoThreeFiveFree}
\label{t-s235}
\stf{} is polynomial-time solvable for $S_{2,3,5}$-subgraph-free graphs.
\end{restatable}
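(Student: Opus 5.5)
We mirror the proof of Theorem~\ref{t-s227}. By Lemma~\ref{l-2con} we may assume that $G$ is $2$-connected. We compute a longest path $P=(v_1,\dots,v_r)$ of $G$. Since $S_{2,3,5}$ has $11$ vertices, a longest path cannot be found in polynomial time in general. So we first test whether $G$ contains a $P_{11}$ (this is polynomial, since $P_{11}$ has constant size). If it does not, Theorem~\ref{t-p11free} applies. Otherwise we extend greedily; more precisely, we argue that in a $2$-connected $S_{2,3,5}$-subgraph-free graph a long path can be found or replaced by a long cycle in polynomial time. The cleanest way is to show that once a path on at least $11$ vertices exists, $G$ has a Hamiltonian-like structure. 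We then work with a path that is maximal under the local extension operations we use, which is all the arguments need.

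\textbf{Step 1: $V(P)$ is a vertex cover.} Suppose that some component $D$ of $G-V(P)$ contains an edge $uw$. By $2$-connectivity, $D$ attaches to $P$ via two disjoint paths, and by maximality of $P$ these attachments cannot be at $v_1$, $v_r$, or at consecutive vertices. If $D$ attaches at an interior vertex $v_i$, then $v_i$ is the centre of a claw. One leg enters $D$ and has at least two vertices (through $u$ and $w$, possibly after rerouting), giving the leg of length~$2$. The other two legs run along $P$ towards $v_1$ and $v_r$ and have lengths $i-1$ and $r-i$. Choosing $i$ appropriately, or using the second attachment to reroute the shorter side into a longer one, yields legs of length at least $3$ and $5$ once $r\ge 11$, which is a contradiction. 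The delicate part is when both attachments are near one end. In that case we reroute through $D$ to obtain a path longer than $P$, or a claw whose short legs use $D$ together with the segment between the two attachments. This case analysis, with legs $2,3,5$ instead of $2,2,7$, is where the main work lies. I expect it to be the main obstacle, since legs $3$ and $5$ are more balanced and so the attachment point $v_i$ needs $\min(i-1,r-i)\ge 3$ and $\max\ge 5$. Vertices $v_i$ with $i\le 3$ or $i\ge r-2$ therefore have to be handled separately, via rerouting arguments showing that a jump from such a vertex into $D$ yields a longer path.

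\textbf{Step 2: conclude.} Once $V(P)$ is a vertex cover, if $r<24=3(2+3+5)-6$, more precisely if $r<3(a+b)=24$, then $G$ has vertex cover number at most $23$, and we apply the {\sf FPT} algorithm parameterized by vertex cover number~\cite{GHKKO22,BJMOPPSV25,FL25}. If $r\ge 24$, Lemma~\ref{l-s2ab} with $a=3$ and $b=5$ shows that $G$ is a lemon bush flowering from a cycle. A cycle has treewidth~$2$, so Theorem~\ref{t-lemon-tw2} solves the instance in polynomial time. Recognising the decomposition is not needed: it suffices that $G$ then has treewidth at most~$2$ after applying Lemma~\ref{l-wedge-transform}, so we can run the algorithm of~\cite{BHM11} directly. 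We also need to handle the possibility that computing $P$ exactly is infeasible. For this we note that Steps~1 and~2 only use local maximality of $P$, that is, that no single rerouting through a component of $G-V(P)$ or through a jump lengthens it. Such a locally maximal path can be obtained by repeatedly applying improvements, each of which increases $|V(P)|$, starting from the $P_{11}$ found initially.
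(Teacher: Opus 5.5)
Your overall plan is the paper's. You reduce to the $2$-connected case, dispatch $P_{11}$-subgraph-free graphs via Theorem~\ref{t-p11free}, and show that $V(P)$ is a vertex cover. Then you use Theorem~\ref{t-vc} if $|V(P)|\le 23$, and Lemma~\ref{l-s2ab} with Theorem~\ref{t-lemon-tw2} if $|V(P)|\ge 24$. Your generic case of Step~1 is also correct: if a component $D$ of $G-V(P)$ with $|D|\ge 2$ has a neighbour $v_i$ with $4\le i\le r-3$, then $v_i$ is the centre of an $S_{2,3,5}$ whose length-$2$ leg lies in $D$.

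The gap is the remaining case. Exclude $v_1,v_2,v_{r-1},v_r$ by a longer-path argument and use $2$-connectivity; what is left is exactly $N(D)=\{v_3,v_{r-2}\}$. Your plan to handle this by rerouting into a longer path fails. Take the path $v_1\cdots v_{11}$ together with new vertices $x,y$ and edges $xy$, $xv_3$, $yv_9$, $v_1v_6$, $v_{11}v_6$. This graph is a subdivision of a $2$-connected multigraph on $\{v_3,v_6,v_9\}$, so it is $2$-connected. Every path in it has at most $11$ vertices. Yet $D=\{x,y\}$ is a component of size~$2$ with $N(D)=\{v_3,v_9\}$.

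What actually rules this case out is an $S_{2,3,5}$ centred at $v_3$ in which $D$ supplies the \emph{long} leg. Its legs are $v_3v_2v_1$, $v_3v_4v_5v_6$, and $v_3\,x\cdots y\,v_{r-2}v_{r-1}v_r$. Here $x\neq y$ are neighbours in $D$ of $v_3$ and $v_{r-2}$ respectively; they exist by $2$-connectivity, and $r\ge 11$ keeps the legs disjoint. Your hedge about a claw whose short legs use $D$ and the segment between the two attachments does not yield this configuration.

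Second, your detour about not being able to compute a longest path is unnecessary and, as written, unsupported. By Lemma~\ref{l-longestpath}, $S_{2,3,5}$-subgraph-free graphs have bounded treewidth, so a longest path (and a longest cycle) can be computed in polynomial time. This is what the paper does. You replace ``longest'' by ``locally maximal under the operations we use'' but never say what those operations are. Your claim that Step~2 needs only local maximality is also unjustified. Lemma~\ref{l-s2ab} is stated for a longest path, and its proof passes to a longest cycle. You therefore cannot invoke it for a merely locally maximal path without reproving it.
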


\begin{restatable}{theorem}{thmSTwoFourFourFree}
\label{t-s244}
\stf{} is polynomial-time solvable for $S_{2,4,4}$-subgraph-free graphs.
\end{restatable}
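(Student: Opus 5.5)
The plan is to copy the proof of Theorem~\ref{t-s227}, except for the step that shows $V(P)$ is a vertex cover, where $S_{2,4,4}$ needs extra work. By Lemma~\ref{l-2con} we may assume $G$ is $2$-connected. Fix a longest path $P=(v_1,\ldots,v_r)$; since the graph is small we may test all paths on at most a fixed number of vertices.

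First, if $r\le 10$ then $G$ is $P_{11}$-subgraph-free and Theorem~\ref{t-p11free} applies. So assume $r\ge 11$. The next step is to show that every component $D$ of $G-V(P)$ is a single vertex, so that $V(P)$ is a vertex cover. Suppose $D$ contains an edge $uw$. By $2$-connectivity, $D$ has two disjoint paths to two distinct vertices $v_i,v_j$ of $P$. Maximality of $P$ gives the usual facts. The vertices $v_1,v_r$ have all their neighbours on $P$. Moreover, $D$ attaches neither to $v_1$ nor to $v_r$, nor to two consecutive vertices of $P$.

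Take an attachment vertex $v_i$ as the claw centre. One leg of length~$2$ goes into $D$, using $u$ and $w$. The other two legs run along $P$ towards $v_1$ and towards $v_r$. These have lengths $i-1$ and $r-i$, so we get $S_{2,4,4}$ unless $i\le 4$ or $i\ge r-3$.

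So all attachments of $D$ lie within the first four or last four vertices of $P$. The awkward case is when one attachment is near $v_1$ and another is near $v_r$. Then $P$ together with the $D$-detour closes a long cycle $C$ of length at least about $r-6$. We then use a vertex of that cycle as the centre: two long arcs of $C$ give the legs of length~$4$, and the edge $uw$ (or an off-cycle vertex) gives the leg of length~$2$. We also rerun the attachment argument with $C$ in place of $P$, using longest-path maximality to rule out the boundary subcases.

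I expect this case analysis on attachments near both ends of $P$ to be the main obstacle. It is where $S_{2,4,4}$ differs from $S_{2,2,7}$ and $S_{2,3,5}$, since the legs of length~$4$ are shorter, so some configurations with $r$ between $11$ and roughly $16$ need individual treatment. If the case analysis fails for a bounded range of $r$, the fallback is to show that $G$ is an entangled lemon bush with stem set $V(P)$ of bounded size. For this we argue that components of $G-V(P)$ are wedges of bounded size or of treewidth~$2$ between two path vertices, and then apply Theorem~\ref{t-solvetangle}.

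Once $V(P)$ is a vertex cover, the finish is as in Theorem~\ref{t-s227}. If $r<3(2+4+4)=30$, apply the FPT algorithm parameterized by vertex cover number~\cite{GHKKO22,BJMOPPSV25,FL25}, which runs in polynomial time because the parameter is bounded. If $r\ge 30$, Lemma~\ref{l-s2ab} with $a=b=4$ shows that $G$ is a lemon bush flowering from a cycle, and Theorem~\ref{t-lemon-tw2} finishes.
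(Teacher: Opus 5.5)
Your plan has a genuine gap at $r=11$. You claim that $V(P)$ is a vertex cover for every $r\ge 11$. That is false for $r=11$, and this is not a boundary nuisance: it is exactly the case that needs a different argument.

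Let $G$ consist of two vertices $s,t$ joined by $m\ge 4$ internally disjoint paths $s\,a_i\,b_i\,c_i\,t$.
\begin{itemize}
\item $G$ is $2$-connected.
\item A path can meet at most three of the $3$-vertex bridge interiors, so a longest path has $3+1+3+1+3=11$ vertices. An example is $P=(c_1,b_1,a_1,s,a_2,b_2,c_2,t,c_3,b_3,a_3)$.
\item $G$ is $S_{2,4,4}$-subgraph-free. The centre would have to be $s$ or $t$, and every leg of length~$4$ leaving $s$ ends at $t$ (and vice versa).
\end{itemize}
Yet $G-V(P)$ consists of $m-3$ copies of $P_3$, attached to $v_4=s$ and $v_8=t$. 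So the vertex cover number is unbounded, and neither Theorem~\ref{t-vc} nor Lemma~\ref{l-s2ab} applies. Your cycle argument cannot produce an $S_{2,4,4}$ here, because none exists.

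So your ``fallback'' is the actual content of the proof for $r=11$, and it needs more than you indicate. You must show the following.
\begin{itemize}
\item Attachments to $v_1,v_2,v_{10},v_{11}$ contradict maximality of $P$. Attachments to any of $v_3,v_5,v_6,v_7,v_9$ yield an $S_{2,4,4}$. Hence every component of $G-V(P)$ with at least two vertices is attached to exactly $v_4$ and $v_8$.
\item Such components are $P_4$-subgraph-free, hence stars, triangles or edges.
\item A star on at least three vertices forms a juicy wedge, because no leaf is adjacent to both $v_4$ and $v_8$ (otherwise that leaf is the centre of an $S_{2,4,4}$).
\end{itemize}

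Triangles are the delicate point. Suppose two vertices of a triangle are adjacent to $v_4$ and one to $v_8$. Then the wedge has five vertices and a $K_4$-minor once $v_4v_8$ is added, so it is neither juicy nor seeded. Replacing each bridge $a_ib_ic_i$ above by such a triangle keeps the graph $S_{2,4,4}$-free with longest path $11$. This shows there can be unboundedly many such wedges. An $\ell$-lemon allows at most $\ell$ pulped wedges, so ``wedges of bounded size'' does not suffice for Theorem~\ref{t-solvetangle}. You need a reduction that makes every such wedge juicy, for example:
\begin{itemize}
\item delete a non-terminal vertex of the triangle; or
\item if both vertices adjacent to $v_4$ are terminals, show by an exchange argument that the edge between them can be deleted.
\end{itemize}
Only then is $G$ an entangled lemon bush with stem set $V(P)$ of size~$11$.

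For $r\ge 12$ your route can be repaired, but as stated it is shaky. With the long legs taken as arcs of $C$, it already fails for $r=12$, where $C$ may have only $8$ vertices. The direct argument is simpler. Take $v_3$ or $v_4$ as centre, with
\begin{itemize}
\item the leg of length~$2$ towards $v_1$,
\item a leg of length~$4$ along $P$, and
\item a leg through $D$ that continues past its attachment near $v_r$.
\end{itemize}
This gives an $S_{2,4,4}$ immediately.

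Finally, a longest path is not found by testing paths of bounded length; use Lemma~\ref{l-longestpath}.
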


\begin{proof}[Proof Sketch]
As before, take a longest path $P$ of $G$. If $|V(P)| \geq 12$, then again we find that $V(P)$ is a vertex cover of $G$.
If $|V(P)| = 11$, we analyze the structure of $G-V(P)$ to~show $G$ is an entangled lemon bush with stem set $V(P)$ of size~$11$, so we apply Theorem~\ref{t-solvetangle}.
\end{proof}

\noindent
Finally, we consider the case $H=S_{3,3,4}$ whose proof is similar but more involved than the proofs of Theorems~\ref{t-s227}--\ref{t-s244}.
 As a first difference, we need to prove a stronger lemma than Lemma~\ref{l-s2ab}.

 \begin{restatable}{lemma}{lemSThreeABFree}
\label{l-s3ab}
For $b\geq a \geq 3$, let $G$ be a $2$-connected $S_{3,a,b}$-subgraph-free graph with a longest path $P$.
If $|V(P)|\geq 3a+2b-4$,  then $G$ is a citrus bush flowering from a cycle. Moreover, every citrus in this bush is either a lemon or has a vesicle set with size at most~$6$. 
\end{restatable}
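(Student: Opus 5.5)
We follow the strategy of Lemma~\ref{l-s2ab}, but since $V(P)$ need not be a vertex cover here, we first control the components of $G-V(P)$. Let $P=v_1\cdots v_r$ with $r\geq 3a+2b-4$. A \emph{jump} is a path whose interior avoids $V(P)$ and whose endpoints lie on $P$; more generally, each component $D$ of $G-V(P)$ together with its attachments $N(D)\subseteq V(P)$ is a \emph{bridge}. First we bound bridges whose attachment points lie in the ``middle'' of $P$, namely in $v_{a+1},\dots,v_{r-b}$ after fixing an orientation.

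Suppose some $v_i$ in the middle has a neighbour $u\notin V(P)$, and $u$ has a further neighbour $w\notin V(P)$. Then $v_iuw$ is a leg of length~$2$. Together with the subpaths of $P$ from $v_i$ towards $v_1$ and towards $v_r$ it gives legs of lengths $\geq a$ and $\geq b$, hence a subgraph $S_{2,a,b}$. This is not yet forbidden, because we need a leg of length~$3$. If instead $u$ has a neighbour $w$ and $w$ has a neighbour $z$, all outside $P$, we get a forbidden $S_{3,a,b}$. So every middle attachment sees only components of $G-V(P)$ in which every vertex adjacent to $P$ has eccentricity-type constraints: the distance from $N(v_i)$ within $D$ is at most~$1$.

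Next we use maximality of $P$. No two consecutive vertices $v_i,v_{i+1}$ can attach to the same bridge, since routing through the bridge would give a longer path. Moreover, the usual rotation arguments rule out crossing jumps whose endpoints are close together. Combining this with the $S_{3,a,b}$ obstruction, now using $P$-segments as the legs, we show the following: any bridge attached to a middle vertex has attachments exactly at two vertices $v_i,v_j$ with $j-i\leq 2$, or the bridge is tiny.

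The tiny case arises when the component has exactly two vertices, or a single vertex, and its attachments force $j-i$ to be small. Then the union of such a bridge with the segment $v_i\cdots v_j$ and its chords is a set of at most $6$ vertices bounded by a $2$-cut $\{x,y\}$. Otherwise the bridge is a wedge of treewidth~$2$, hence juicy, or it is seeded.

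As in Lemma~\ref{l-s2ab}, we then show that there are no ``overlapping'' jumps across a 2-cut: such a jump would combine with $P$ to give an $S_{3,a,b}$ whose centre is a branch vertex of the jump. Hence the middle of $P$ splits at consecutive $2$-cuts $\{x,y\}$ into citruses. Each citrus is either a lemon, when its wedges are paths of length at most~$2$, triangles, or small treewidth-$2$ pieces, or it is a small configuration whose vesicle set has at most $6$ vertices.

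For the ends of $P$, we argue as before. A jump from within distance roughly $a$ of $v_1$ to a vertex far from both ends would create a longer path or an $S_{3,a,b}$. The bound $r\geq 3a+2b-4$ is chosen so that a central vertex with legs $3,a,b$ always exists. Since $G$ is $2$-connected, some bridge links the beginning of $P$ to its end, closing a long cycle $C$. We then re-apply the middle argument with $C$ in place of $P$: every vertex of $C$ is now ``middle'' in a suitable rotation of $C$ into a longest path. This shows that $G$ flowers from a cycle. Finally we check that every vertex lies in some vesicle set and that the citruses partition $V(G)\setminus X$.

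The main obstacle is the second step. We must classify exactly which non-path bridges survive near a middle vertex, with $V(P)$ no longer a vertex cover, and verify that each survivor is either juicy or seeded or fits inside $6$ vertices. We would first enumerate bridges by their size (1, 2, or 3 vertices outside $P$) and attachment spans, and exclude each large configuration by exhibiting either an explicit $S_{3,a,b}$ or a longer path.
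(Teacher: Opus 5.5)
Your outline has the right global shape: control the middle of $P$, use $2$-connectivity to close a long cycle, then re-apply the middle argument. But the actual content of the lemma is that the part of $G$ between two consecutive stem vertices is a lemon or has at most $6$ vertices. That is exactly what you defer as ``the main obstacle'', and the tools you propose for it do not work.

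Classifying the components of $G-V(P)$ is the easy part. Your length-$3$-leg argument shows that a vertex of a component $D$ adjacent to a middle vertex is not the end of a $3$-vertex path in $D$. So $2$-connectivity gives $|D|\le 2$ for components attached only to the middle. Moreover, a jump between middle vertices $v_i,v_j$ with $j\ge i+4$ yields an $S_{3,a,b}$ centred at $v_i$ or $v_j$, with the length-$3$ leg running along $P$. Hence attachments are at distance $2$ or $3$. Span $3$ does occur, and your ``$j-i\le 2$ or tiny'' dichotomy hides the bound you actually need.

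The real difficulty is chords of $P$ and short jumps that overlap. Your claim that rotation arguments rule out crossing jumps with nearby endpoints is false. Chords $v_iv_{i+2}$ and $v_{i+1}v_{i+3}$ neither lengthen $P$ nor create an $S_{3,a,b}$. Overlapping configurations also survive: a vertex $x\notin V(P)$ with $N(x)=\{v_j,v_{j+3}\}$ together with a chord $v_{j+2}v_{j+4}$ forms a $6$-vertex wedge with ends $v_j,v_{j+4}$. This wedge is not juicy, since adding $v_jv_{j+4}$ creates a $K_4$ minor. Such pieces are precisely why the ``size at most $6$'' alternative appears in the statement. Likewise, ``no overlapping jumps across a $2$-cut'' is circular, because a pair is a $2$-cut exactly when no jump overlaps it. What must be shown is that such cuts occur at bounded distance along $P$.

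That is the missing idea. The paper takes as stem set the vertices that separate the middle subpath $P'$ (from $p_{a+1}$ to $p_{\ell-a}$) in the graph $G'$. Here $G'$ is obtained by deleting the ends of $P$ and the components of $G-V(P)$ attached to them. Every non-separating vertex $p_r$ then has a bypass path with ends $p_{r'},p_{r''}$, $r'<r<r''$, internally disjoint from $P'$. The span-$4$ claim above forces $r''-r'\le 3$. The paper then analyses exhaustively how these bypass paths and the components with at most $2$ vertices can chain (its Cases~1--5), exhibiting an explicit $S_{3,a,b}$ for each excluded configuration. This shows that consecutive separating vertices are at most $5$ apart on $P$, and that each piece between them is a lemon or has a vesicle set of at most $6$ vertices. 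Without this, or an equivalent argument bounding chains of overlapping short jumps, your proposal does not prove the statement.

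A smaller point concerns the last step. A Hamiltonian path of the cycle $C$ need not be a longest path of $G$, so you cannot simply rotate $C$ ``into a longest path''. The maximality arguments (Lemma~\ref{l-adjacencytopath}) must instead be rephrased for a longest cycle.
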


\begin{restatable}{theorem}{thmSThreeThreeFourFree}
\label{t-s334}
\stf{} is polynomial-time solvable for $S_{3,3,4}$-subgraph-free graphs.
\end{restatable}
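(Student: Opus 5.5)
We follow the same scheme as for Theorems~\ref{t-s227}--\ref{t-s244}. By Lemma~\ref{l-2con} we may assume that $G$ is $2$-connected. We first compute a longest path $P$ of $G$. This can be done in polynomial time as long as $P$ is short, by brute force over paths on at most a constant number of vertices. Once we know that $G$ has a path on some constant number $c$ of vertices, we can also use the structural lemma to avoid computing a truly longest path: we only need a path that cannot be extended locally, and the lemma's argument should be made robust to this.

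We distinguish cases on $r=|V(P)|$, with threshold $3a+2b-4=3\cdot 3+2\cdot 4-4=13$ for $a=3$, $b=4$.
\begin{itemize}
\item If $r\leq 10$, then $G$ is $P_{11}$-subgraph-free and Theorem~\ref{t-p11free} applies.
\item If $r\geq 13$, then Lemma~\ref{l-s3ab} shows that $G$ is a citrus bush flowering from a cycle, in which every citrus is a lemon or has a vesicle set of size at most~$6$. A citrus with vesicle set of size at most $6$ has only wedges of size at most $6$, and at most $4$ of them, since the wedges have disjoint interiors. Hence such a citrus is a $6$-lemon, and $G$ is a $6$-lemon bush flowering from a cycle. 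Theorem~\ref{t-cyclebush} with $\ell=6$ then solves the instance in polynomial time.
\end{itemize}

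The remaining cases are $r\in\{11,12\}$. Here we aim to show that $G$ is an entangled $\ell$-lemon bush, for a constant $\ell$, with stem set $V(P)$ (so $k\leq 12$), and then apply Theorem~\ref{t-solvetangle}. To do this, we analyse the components $D$ of $G-V(P)$. Since $P$ is longest, no endpoint $v_1$ or $v_r$ has a neighbour outside $P$. A component $D$ attached to $v_i$ and $v_j$ yields a jump, which creates a longer path unless $i$ and $j$ are far from the ends and consecutive attachments are spaced out.

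We then use $S_{3,3,4}$-freeness, with the long path supplying the three legs. Centre the claw at an attachment vertex $v_i$, take the two directions along $P$ as legs, and take a path into $D$ as the third leg. Since $r\geq 11$, for most $i$ the two directions have lengths at least $3$ and $4$. The third leg must then have fewer than $3$ vertices, so every such $D$ has small depth from $P$. A similar argument, with the centre inside $D$, bounds the size of $D$ or forces $D$ to be a single vertex. Combined with $2$-connectivity, each $D$ is then one of the following:
\begin{itemize}
\item a single vertex (which goes into the tangle set $Y$);
\item a wedge between two vertices of $P$ that is juicy or of bounded size (which goes into a citrus between those two stem vertices).
\end{itemize}
Components attached near the ends of $P$ have to be handled separately by rerouting arguments, since the legs are short there.

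The main obstacle is this case analysis for $r\in\{11,12\}$. Vertices near the ends of $P$ do not have long enough legs in both directions, so the $S_{3,3,4}$ argument does not apply directly. I would first try to exploit rerouting: a jump near an end lets us reorder $P$ into another longest path on which the attachment vertex is central, and there we apply the claw argument. If that fails, I would try to show that these exceptional components have at most a constant number of vertices. The bounded-size ($\ell$-pulped) wedges and the bound of at most $\ell$ of them per citrus must also be verified carefully, since Theorem~\ref{t-solvetangle} needs them.
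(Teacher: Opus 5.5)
Your cases $r\le 10$ and $r\ge 13$ follow the paper. Your check that a citrus with vesicle set of size at most $6$ is a $6$-lemon is correct, so Theorem~\ref{t-cyclebush} applies there.

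\textbf{The gap: the case $r\in\{11,12\}$ is not proved.} You call it the main obstacle yourself and only list what you ``would try''. The depth bound, the claw ``with the centre inside $D$'', and the handling of components attached near the ends of $P$ are all unjustified, and the last is left open. You also never show that a two-vertex component has exactly two attachments on $P$. Without these steps nothing shows that $G$ is an entangled ($\ell$-)lemon bush with stem set $V(P)$, so Theorem~\ref{t-solvetangle} cannot be invoked.

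\textbf{The missing idea is that the end case needs no rerouting, because maximality of $P$ disposes of it.}
\begin{itemize}
\item Let $D$ be a component of $G-V(P)$ with $|D|\ge 3$. By $2$-connectivity there are distinct $x,y\in D$ with neighbours $v_i\ne v_j$ on $P$.
\item Since $D$ is connected with at least three vertices, one of $x,y$, say $x$, is the end of a path $(x,y',z)$ in $D$.
\item If $i\le 3$, then $(z,y',x,v_i,\dots,v_r)$ is longer than $P$; if $i\ge r-2$, the mirror image is.
\item Otherwise both directions of $P$ from $v_i$ have length at least $3$. Since $r\ge 11$, one of them has length at least $5$, so with $(x,y',z)$ you get an $S_{3,3,4}$ centred at $v_i$.
\end{itemize}
Hence every component has at most two vertices.

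For a component $\{x,y\}$, an attachment $v_i$ with $4\le i\le r-3$ forces the other vertex's attachments to lie at distance exactly $3$ from $v_i$. Distances $1$ and $2$ are excluded by Lemma~\ref{l-adjacencytopath}, and distance at least $4$ gives an $S_{3,3,4}$. Three attachments $v_{i-3},v_i,v_{i+3}$ again give an $S_{3,3,4}$, and if no attachment is central then $N(D)=\{v_3,v_{r-2}\}$. So $|N(D)|=2$.

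Single vertices then go into the tangle set or form juicy $3$-vertex wedges, and two-vertex components are seeded wedges. Thus $G$ is an entangled lemon bush with stem set $V(P)$, and Theorem~\ref{t-solvetangle} finishes. The paper uses this argument for all $r\le 13$.

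\textbf{Two smaller points.}
\begin{itemize}
\item Your idea of making Lemma~\ref{l-s3ab} robust to non-longest paths is unsupported, but it is also unnecessary: Lemma~\ref{l-longestpath} gives a longest path and a longest cycle in polynomial time.
\item Theorem~\ref{t-cyclebush} needs the stem set as input, so you must compute it. The paper does this from a longest cycle.
\end{itemize}
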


\begin{proof}[Proof Sketch]
We find again a longest path $P$ in $G$, but now we must also deal with the case where $G-V(P)$ contains components of size~$2$. If $|V(P)| \leq 13$, then we show that $G$ is an entangled lemon bush with stem set $V(P)$ and we can apply Theorem~\ref{t-solvetangle}. Else, by performing a much more technical analysis, Lemma~\ref{l-s3ab} implies that $G$ is an $\ell$-lemon bush flowering from a cycle. Note that this is in contrast to Lemma~\ref{l-s2ab}, which implied  that $G$ was a lemon bush, i.e. there were no $\ell$-pulped lemons. We conclude by applying Theorem~\ref{t-cyclebush}.
\end{proof}

\subsection{Hardness Result}\label{s-hard}

In this section we prove our hardness result: Theorem~\ref{t-newhard}. The last part of it follows from the definition of $c$-deletion set number.
Full proof details and the definition of a CSP instance can be found in Section~\ref{s-fp-hard}.

\begin{figure}[b]
    \centering
    \includegraphics[page=16,width=0.6\linewidth]{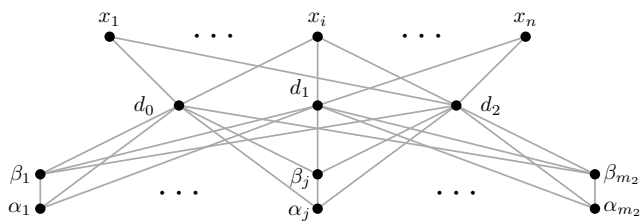}
    \caption{The graph $G$ from the proof of Theorem~\ref{t-newhard}.}
    \label{fig:nph-structure}
\end{figure}

\begin{restatable}{theorem}{thmNewHard}
\label{t-newhard}
\stf{} is \NP-complete for graphs with $2$-deletion set number~$3$ of treewidth~$3$, and thus in particular for $4P_3$-subgraph-free graphs.
\end{restatable}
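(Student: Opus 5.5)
The plan is a polynomial reduction from a \NP-complete CSP with domain $\{1,2,3\}$ whose constraints are all implications $(x=i)\rightarrow(y=i)$. Such a CSP can express, for example, equality between variables and can be combined with unary restrictions to simulate 3-colouring-type problems; Section~\ref{s-fp-hard} would fix the precise \NP-hard source problem. Membership in \NP{} is trivial. The constructed graph $G$ will contain three special vertices $a_1,a_2,a_3$ forming the $2$-deletion set $S$, so every component of $G-S$ has at most two vertices.

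For each CSP variable $x$ I introduce a vertex $t_x$ adjacent to all of $a_1,a_2,a_3$. The intended meaning is that a solution connects $t_x$ by exactly one edge to $a_i$, where $i$ is the value of $x$. To make $t_x$ a terminal, I pair it with a private pendant-free copy, or simply require that every $t_x$ is used; the cleanest choice should come out of the budget analysis below. For each constraint $(x=i)\rightarrow(y=i)$ I add a gadget consisting of a component $\{u,w\}$ of size two. Here $u$ and $w$ are adjacent, and each of them is adjacent to a carefully chosen subset of $S$. I add the terminal pairs $(u,t_x)$ and $(w,t_y)$.

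Write $\mathrm{val}(\cdot)$ for the value of a variable. The neighbourhoods are chosen so that the gadget can be served with exactly $2$ edges whenever the assignment satisfies the implication. When $\mathrm{val}(x)=i$ but $\mathrm{val}(y)\neq i$, at least $3$ edges are needed, or the trees of two different $a_j$ would have to be merged. A natural first attempt is $u$ adjacent to $a_i$ only and $w$ adjacent to the two other special vertices. I would then do a short case analysis over the nine value combinations and adjust the adjacencies until exactly the implication is enforced. The budget is $k=n+2m$, where $n$ is the number of variables and $m$ the number of constraints.

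Treewidth~$3$ follows because $G-S$ is a disjoint union of edges and isolated vertices: put $S$ together with each component into a bag, and attach all these bags to a central bag $S$. The $4P_3$-subgraph-freeness is the final remark. Any $P_3$ in $G$ must contain a vertex of $S$, since components of $G-S$ have at most two vertices. Four vertex-disjoint copies of $P_3$ would therefore need four vertices of $S$, which is impossible as $|S|=3$.

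The main obstacle is soundness. A Steiner forest does not have to keep the trees of $a_1,a_2,a_3$ separate. It may merge them, for instance through some $t_x$ using two edges, after which many gadgets become cheap simultaneously. So I must show that a solution of size at most $k$ never profits from merging. My first approach is a counting argument. Every variable and every gadget forces a disjoint set of edges: at least one edge per $t_x$, and at least two per gadget, since $u$ and $w$ are not both adjacent to one vertex that reaches both targets. If two special vertices lie in one tree, I would locate an edge that is not charged to any of these forced sets, which exceeds the budget. If that fails, the fallback is to replicate each variable vertex $t_x$ many times with pairwise-paired copies. This makes any extra edge at a variable costly relative to the gain in the gadgets, so that a merge costs more than it can save. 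Once no merging occurs, each $t_x$ attaches to exactly one $a_i$, which defines the assignment, and the per-gadget case analysis shows that the assignment satisfies every constraint.
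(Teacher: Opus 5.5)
Your skeleton matches the paper's: three special vertices forming the $2$-deletion set, one vertex per variable, a two-vertex gadget per implication with pairs $(u,t_x),(w,t_y)$, budget $n+2m$, and a counting argument for soundness. Your $4P_3$ remark is also correct. As written, however, the reduction cannot give hardness. A CSP whose constraints are all of the form $(x=i)\rightarrow(y=i)$ is trivially satisfiable, because any constant assignment satisfies every such constraint. You mention unary restrictions, but your graph makes every $t_x$ adjacent to all of $a_1,a_2,a_3$, so nothing encodes them. With a working gadget, every produced instance would then be a yes-instance. You need the unary constraints $x\neq d$ in the source problem, encoded as in the paper by deleting the edge $t_xa_d$. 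You also need an actual proof that implications plus unary disequalities are \NP-hard. Being able to express equality does not give this, since equality plus unary constraints is polynomial. The paper reduces from \textsc{3-Colouring}. It expresses $x\neq y$ with auxiliary variables $\alpha_\ell,\beta_\ell,\gamma_\ell$ for each value $\ell$ and the constraints $(x=\ell)\to(\alpha_\ell=\ell)$, $(y=\ell)\to(\beta_\ell=\ell)$, $\gamma_\ell\neq\ell$, $(\gamma_\ell=\ell+1)\to(\alpha_\ell=\ell+1)$, $(\gamma_\ell=\ell+2)\to(\beta_\ell=\ell+2)$, with values taken modulo $3$.

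Second, the gadget is never fixed, and your first attempt fails. Take $u$ adjacent only to $a_i$ and $w$ adjacent to the other two special vertices. Then the violating assignment $\mathrm{val}(x)=i$, $\mathrm{val}(y)=j\neq i$ is served by the two edges $ua_i,wa_j$; that gadget actually enforces $(x\neq i)\to(y=x)$. The correct gadget is the reverse: $u$ adjacent to every special vertex except $a_i$, and $w$ adjacent to all three. If $\mathrm{val}(x)\neq i$, use $ua_{\mathrm{val}(x)}$ and $wa_{\mathrm{val}(y)}$. If $\mathrm{val}(x)=i$, then $u$ can reach $a_i$ only via $uw,wa_i$, which puts $w$ in the tree of $a_i$ and forces $\mathrm{val}(y)=i$.

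With this gadget your counting argument closes directly, with no replication fallback. No private copies are needed either, since $t_x$ is already a terminal through the pairs $(u,t_x)$ once every variable occurs in some constraint. Each $t_x$ needs an incident edge. Each $\{u,w\}$ needs two incident edges, because both are terminals with partners outside the gadget. These edge sets are pairwise disjoint and already exhaust the budget. Hence every $t_x$ has degree one, and a gadget could join two special vertices only with three edges, or with two edges at one of $u,w$, which isolates the other.

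Finally, your bags $S\cup\{u,w\}$ have five vertices, which gives width $4$, not $3$. Use bags $S\cup\{w\}$ and $\{u,w\}\cup N_S(u)$ instead. These have size four precisely because $u$ has only two neighbours in $S$. If both gadget vertices were complete to $S$, two variable vertices complete to $S$ would already give a $K_5$ minor.
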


\begin{proof}[Proof Sketch]
We reduce from the CSP problem $\textup{CSP}(\overline{\mathcal{R}})$ that has domain $\overline{D}=\{0,1,2\}$ and a set~$\overline{\mathcal{R}}$ with six relations: for every $d\in \overline D$, the unary relation $x\neq d$, and for every $d\in \overline D$, the binary relation $(x=d)\to (y=d)$. In Section~\ref{s-fp-hard}, we give a self-contained proof showing that $\textup{CSP}(\overline{\mathcal{R}})$ is \NP-complete.

From an instance $I$ of $\textup{CSP}(\overline{\mathcal{R}})$ with $n$ variables and $m_1$ unary constraints and $m_2$ binary constraints, we construct a graph $G$ and a set $T$ of terminal pairs as follows (see also Figure~\ref{fig:nph-structure} for an illustration of $G$). We introduce three vertices $d_0$, $d_1$, $d_2$. For every variable $x_i$, we introduce a vertex $x_i$ and, for $\ell=0,1,2$, connect $x_i$ to $d_\ell$ \emph{unless} the constraint $(x_i\neq \ell)$ exists. For every binary 
constraint~$c_j$ of the form $(x_{i_1}=\ell)\to (x_{i_2}=\ell)$, we introduce two adjacent vertices $\alpha_j$ and~$\beta_j$, and we make both of them adjacent to $d_0$, $d_1$, $d_2$ \emph{except} that $\alpha_j$ is not adjacent to~$d_\ell$. Furthermore, we add the terminal pairs $(x_{i_1},\alpha_j)$ and $(x_{i_2},\beta_j)$ to $T$.

It is not difficult to prove that $\{d_0, d_1, d_2\}$ is a $2$-deletion set of $G$, and that $G$ has treewidth~$3$. 
We can also show that  $I$ has a satisfying assignment if and only if $(G,T)$ has a Steiner forest with $k=n+2m_2$ edges. 
\end{proof}

\noindent
{\bf Remark.}
We compare Theorem~\ref{t-hard} and Theorem~\ref{t-newhard} to Lemma~\ref{l-l-lemon-support}.
By inspecting the proof of Theorem~\ref{t-hard} in~\cite{BHM11}, we notice that the \NP-hardness reduction constructs a citrus with an unbounded number of $5$-pulped wedges that have treewidth~$3$ (when the edge between the two vertices of the deletion set is added). Hence, the bound in the definition of $\ell$-lemons on the number of $\ell$-pulped wedges and the bound of~$2$ on the treewidth of juicy wedges is necessary for Lemma~\ref{l-l-lemon-support} (particularly part~\ref{l-l-lemon-support-iii}), unless P$=$NP. 
Defining wedges as having two ends (as opposed to three or more) is also necessary: allowing wedges with three ends would allow exactly the structure shown in Figure~\ref{fig:nph-structure} -- an \NP-complete case by Theorem~\ref{t-newhard}.

\section{Two Dichotomies and Discussions}\label{s-con}

\medskip
\noindent
We first prove Theorems~\ref{t-dicho} and~\ref{t-deletion} followed with some discussion.

\MakeDichoThm*

\begin{proof}
Let $H$ be a connected graph. If $H\in {\cal S}$, then we apply Theorem~\ref{t-s}.
From now on, assume that $H\in {\cal S}$. As $H$ is connected, this means that $H$ is a path or a subdivided claw.
Suppose $H=P_r$ for some $r\geq 1$.
If $r\leq 11$, apply Theorem~\ref{t-p11free}.
If $r\geq 12$, apply Theorem~\ref{t-hard} as $3P_4\subseteq H$.
If $H=S_{h,i,j}$ for some $1\leq h\leq i\leq j$, then we consider all possible cases:
\begin{itemize}
\item $h\geq 4$, $i\geq h$ and $j\geq i$: as $3P_4\subseteq H$, we apply Theorem~\ref{t-hard};
\item $h=3$, $i\geq 4$ and $j\geq i$: as $4P_3 \subseteq H$, we apply Theorem~\ref{t-newhard};
\item $h=3$, $i=3$ and $j\geq 5$: as $4P_3 \subseteq H$, we apply Theorem~\ref{t-newhard};
\item $h=3$, $i=3$ and $j\leq 4$: as $H\subseteq S_{3,3,4}$, we apply Theorem~\ref{t-s334};
\item $h=2$, $2\leq i\leq j$ and $j\geq 8$: as $S_{1,1,8}\subseteq H$, we apply Theorem~\ref{t-hard};
\item $h=2$, $3\leq i\leq j$ and $6\leq j \leq 7$: as $4P_3\subseteq H$, we apply Theorem~\ref{t-newhard};
\item $h=2$, $i=2$ and $6\leq j\leq 7$: as $H\subseteq S_{2,2,7}$, we apply Theorem~\ref{t-s227};
\item $h=2$, $4\leq i \leq 5$ and $j=5$: as $S_{1,4,5}\subseteq H$, we apply Theorem~\ref{t-hard};
\item $h=2$, $2\leq i\leq 3$ and $j=5$: as $H\subseteq S_{2,3,5}$, we apply Theorem~\ref{t-s235};
\item $h=2$, $2\leq i \leq j$ and $j\leq 4$: as $H\subseteq S_{2,4,4}$, we apply Theorem~\ref{t-s244};
\item $h=1$, $1\leq i\leq j$ and $j\geq 8$: as $S_{1,8,8}\subseteq H$, we apply Theorem~\ref{t-hard};
\item $h=1$, $3\leq i\leq 7$ and $j=7$: as $4P_3\subseteq H$, we apply Theorem~\ref{t-newhard};
\item $h=1$, $1\leq i\leq 2$ and $j=7$: as $H\subseteq S_{2,2,7}$, we apply Theorem~\ref{t-s227};
\item $h=1$, $4\leq i\leq j$ and $5\leq j\leq 6$: as $S_{1,4,5}\subseteq H$, we apply Theorem~\ref{t-hard};
\item $h=1$, $1\leq i\leq 3$ and $5\leq j\leq 6$: as $H\subseteq S_{1,3,6}$, we apply Theorem~\ref{t-s136}; 
\item $h=1$, $2\leq i\leq j$ and $j\leq 4$: as $S_{2,2,4}\subseteq H$, we apply Theorem~\ref{t-s244}.
\end{itemize}
\noindent
As the above list is exhaustive, the theorem has been proven.
\end{proof}

\MakeDeletionThm*

\begin{proof}
Gima et al.~\cite{GHKKO22} proved that \stf{} is {\sf FPT} when parameterized by vertex cover number, which shows the case $c=1$, $k\geq 0$.
Bodlaender et al.~\cite{BJMOPPSV25} proved the case $c=2$, $k\leq 3$.
The case $c\geq 3$, $k=1$ follows from Lemma~\ref{l-2con}. In the remaining cases, either $c=2$, $k\geq 3$ and we apply Theorem~\ref{t-newhard}, or $c\geq 3$, $k\geq 2$ and we apply Theorem~\ref{t-hard}.
\end{proof}

\noindent
In light of the above, we discuss two natural next steps.

The first direction is to further exploit our technique in order to establish a complexity classification for disconnected~$H$ as well. The current state-of-the-art for disconnected~$H$ is given in Theorem~\ref{t-pol}. We recall that \stf{} is polynomial-time solvable on $(H+P_2)$-subgraph-free graphs if it is so for $H$-subgraph-free graphs~\cite{BJMOPPSV25}. Hence, in Theorem~\ref{t-pol}, only the cases where $H\in \{2K_{1,3},2P_4\}$ are relevant. Our dichotomy for connected~$H$ captures the case where $H=2P_4$, but not the case $H=2K_{1,3}$. It is not difficult to slightly generalize the case $H=2K_{1,3}$. However, a full classification for disconnected $H$ requires dealing with some highly challenging cases, and we leave this for future work.

The second direction is to improve Theorem~\ref{t-solvetangle}. Recall that {\sc Steiner Forest} is fixed-parameter tractable parameterized by the vertex cover number of the graph~\cite{GHKKO22,BJMOPPSV25,FL25}. Hence, it is natural to investigate whether {\sc Steiner Forest} is fixed-parameter tractable for entangled $\ell$-lemon bushes parameterized by the size of the stem set.

\section{Basic Results}\label{s-fp-prelim}

The remainder of this paper is dedicated to the presentation of our results with full details.
We first present some lemmas and theorems that we need for our proofs, together with some additional terminology.

Let $G = (V(G),E(G))$ be a graph and $v \in V(G)$.
The \emph{degree} of $v$ is the size of the neighbourhood of $v$.
The \emph{length} of a path is its number of edges. The \emph{distance} of two vertices is the length of a shortest path connecting them.
We say that we \emph{contract} an edge if we identify its two end-vertices.

A non-standard term which we will use in several of our proof concerns jumps. A {\it jump} with respect to some path $P$, is a path between some pair $u,v \in V(P)$ that is internally vertex-disjoint from $P$ and edge disjoint from $P$. The {\it length} of the jump is the length of this path.

We will need the following observation.

\begin{observation}\label{o-structure-p4sfree}
    Let $G$ be a $P_4$-subgraph-free graph. Then $G$ is either a triangle or a star.
\end{observation}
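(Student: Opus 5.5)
We prove the observation by a short case analysis on the longest path and the cycles of $G$. We read it, as it is used later, for a \emph{connected} $P_4$-subgraph-free graph $G$ with at least one edge (otherwise the conclusion should be taken componentwise, each component being a triangle, a star, or an isolated vertex). The plan is to first rule out all cycles other than a triangle, and then to show that a triangle cannot have any further vertex attached, while an acyclic $G$ must be a star.

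\emph{Step 1: cycles.} If $G$ contains a cycle $C_r$ with $r\geq 4$, then deleting one edge of it yields a path on $r\geq 4$ vertices, which contains $P_4$, a contradiction. Hence every cycle of $G$ is a triangle.

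\emph{Step 2: the triangle case.} Suppose $G$ contains a triangle on $a,b,c$. If $G$ has a further vertex $d$, then by connectivity some vertex outside $\{a,b,c\}$ is adjacent to a triangle vertex. Without loss of generality we may take $d$ to be such a vertex and $d$ adjacent to $a$. Then $d,a,b,c$ is a path on four vertices, a contradiction. So $V(G)=\{a,b,c\}$ and $G$ is exactly the triangle.

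\emph{Step 3: the acyclic case.} If $G$ has no cycle, then $G$ is a tree. If $G$ had two vertices $u,v$ of degree at least~$2$, we would derive a contradiction as follows.
\begin{itemize}
\item If $u$ and $v$ are adjacent, pick a neighbour $u'\neq v$ of $u$ and a neighbour $v'\neq u$ of $v$. Acyclicity gives $u'\neq v'$, so $u'uvv'$ is a $P_4$.
\item If $u$ and $v$ are not adjacent, the unique $u$--$v$ path already has at least three vertices. Extending it at $u$ by a neighbour of $u$ not on the path, which exists since $\deg(u)\geq 2$ and the tree has no cycles, gives a path on at least four vertices.
\end{itemize}
Hence at most one vertex has degree at least~$2$, and a connected tree with this property is a star (including $K_1$ and $K_2$ as degenerate stars).

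None of these steps is a genuine obstacle. The only care needed is in Step~3, to pick the extending neighbours so that the resulting walk really is a path; this is exactly where acyclicity is used.
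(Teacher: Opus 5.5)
Your argument is correct and complete: excluding cycles of length at least~$4$, showing that a triangle admits no further vertex in a connected graph, and showing that a connected acyclic graph has at most one vertex of degree at least~$2$ is the natural case analysis, and the paper itself states this as an observation without giving a proof. Your added connectivity hypothesis is genuinely needed, not just convenient: for instance $2P_2$ is $P_4$-subgraph-free but is neither a triangle nor a star. It also matches how the paper uses the observation, namely componentwise in the proof of Theorem~\ref{t-s244}, where the components of $G-V(P)$ are concluded to be stars, triangles, edges or single vertices.
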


\noindent
We also frequently make use of the following two lemmas.

\begin{lemma}\label{l-longestpath}
For every graph $H\in {\cal S}$, it is possible to find a longest path or longest cycle in an $H$-subgraph-free graph in polynomial time.
\end{lemma}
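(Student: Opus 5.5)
The natural approach exploits the fact that for $H\in{\cal S}$, an $H$-subgraph-free graph cannot contain long paths or long cycles unless it has very restricted structure. Every $H\in{\cal S}$ is a subgraph of $rS_{j,j,j}$ for some constant $r$ and $j$ (take $r$ the number of components and $j$ the longest leg or path length). So it suffices to handle $H=rS_{j,j,j}$.

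The plan uses a dichotomy on path length. Fix the constant $N=N(r,j)$. First, check in $n^{O(N)}$ time, by brute-force enumeration of vertex sequences, whether $G$ contains a path on $N$ vertices. If it does not, then every path has fewer than $N$ vertices. In that case the longest path is found by enumerating all sequences of at most $N$ vertices, which is polynomial since $N$ is constant. Longest cycles are handled the same way, since any cycle has fewer than $N+1$ vertices.

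The key step is the other case. I would aim to show that if $G$ contains a sufficiently long path, then it has a rigid structure. Consider a long path $P$. Every vertex of degree at least $3$ on $P$ that is far from the ends, together with a third neighbour, yields a subdivided claw $S_{1,j',j'}$ with long legs. Attaching such a branch at a point, and having $r$ such points far apart on $P$, yields $rS_{j,j,j}$ only if the branches themselves are long (length $\ge j$). Hence, far from the ends of a long path, all "jumps" off $P$ have bounded length, and at most $r-1$ disjoint long branches exist.

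From this, I would argue that $G$, restricted to its components containing long paths, has bounded treewidth. The argument is that a large grid minor forces either a long path with many long disjoint branches, or many disjoint subdivided claws with long legs, both of which contain $rS_{j,j,j}$. Granting bounded treewidth $w=w(r,j)$, a longest path or cycle can be computed by standard dynamic programming over a tree decomposition in $2^{O(w\log w)}n^{O(1)}$ time, which is polynomial.

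Alternatively, and more simply, one could invoke the known result that ${\cal H}$-subgraph-free graphs with ${\cal H}$ containing a graph from ${\cal S}$ have bounded treewidth relative to bounded path length. Indeed, the standard C123 framework of Johnson et al.~\cite{JMOPPSV25} shows that excluding a graph from ${\cal S}$ as a subgraph bounds treewidth in graphs of bounded degree, and longest path/cycle are C123 problems. I would cite that framework: \emph{Longest Path} and \emph{Longest Cycle} are efficiently solvable on bounded treewidth (C1), NP-hard on subcubic graphs (C2), and C3 holds under subdivision, so they are polynomial on $H$-subgraph-free graphs for $H\in{\cal S}$.

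The main obstacle is the unbounded-degree case. $H$-subgraph-free graphs with $H\in{\cal S}$ can contain large stars, so treewidth is not bounded in general. Here the rigid structure must be used directly. A high-degree vertex contributes only short jumps, because long jumps create the forbidden subdivided claws. I expect this part to require either the C123 machinery or a careful reduction that replaces high-degree vertices' short pendant structures (which a longest path visits at most twice) by bounded gadgets before running treewidth dynamic programming.
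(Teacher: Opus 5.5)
There is a genuine gap, and it comes from a false premise in your last paragraph. You assert that $H$-subgraph-free graphs with $H\in{\cal S}$ ``can contain large stars, so treewidth is not bounded in general.'' You then leave this ``unbounded-degree case'' unresolved. But stars have treewidth~$1$, and high degree alone never forces large treewidth.

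In fact the whole class of $H$-subgraph-free graphs has bounded treewidth, even bounded pathwidth, with no assumption on degrees or path lengths. Every $H\in{\cal S}$ has maximum degree at most~$3$, so $H$ is a minor of $G$ only if $G$ contains a subdivision of $H$ as a subgraph. Moreover, every subdivision of a disjoint union of paths and subdivided claws contains that union itself as a subgraph. Hence every $H$-subgraph-free graph is $H$-minor-free. Since $H$ is a forest, the excluding-a-forest theorem of Robertson and Seymour~\cite{RS84} bounds the pathwidth by a constant depending only on $H$.

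This is exactly the paper's two-line proof: bounded treewidth, followed by a treewidth-based algorithm for longest path and longest cycle~\cite{CNPPRW22}. Your own sketched grid argument would also give this without any degree condition. A large grid minor yields a subdivided large wall as a subgraph, and that wall contains $rS_{j,j,j}$. So the case split on whether $G$ has a path on $N$ vertices, and the restriction to components containing long paths, are unnecessary.

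The appeal to the C123 framework does not close the gap as written either. You describe it as bounding treewidth only ``in graphs of bounded degree,'' which is not what it says. Its polynomial-time side rests precisely on the degree-free bounded-treewidth fact above together with C1. Conditions C2 and C3, whose verification for Longest Path you only assert, play no role here.

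Replace your final paragraph with the bounded-treewidth observation. The rest of your argument, dynamic programming over a tree decomposition of constant width, then goes through and matches the paper.
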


\begin{proof}
Let $H\in {\cal S}$.
    It follows from~\cite{RS84} that the class of $H$-subgraph-free graphs has bounded treewidth. Hence, we may use the algorithm from~\cite{CNPPRW22}.
\end{proof}

\begin{lemma}\label{l-adjacencytopath}
    Let $G = (V,E)$.
    Let $P = (v_1 , \dots , v_{r})$ be a longest path in $G$, and let $D$ be a connected component of $G-V(P)$.
    Then the following holds:
    \begin{enumerate}
        \item $D$ is not adjacent to two consecutive vertices of $P$, and
        \item $D$ does not contain vertices $a,b$ such that $a\,v_i\in E$ and $b\, v_{i+2}\in E$, for $i \in \{1,\dots, r-2\}$.
    \end{enumerate}
\end{lemma}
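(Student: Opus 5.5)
Both parts follow from the standard path-rerouting argument for longest paths: in each case I will assume the forbidden configuration exists and build a path strictly longer than $P$, contradicting maximality.

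\emph{Part 1.} Suppose $D$ is adjacent to both $v_i$ and $v_{i+1}$, say $a v_i \in E$ and $b v_{i+1} \in E$ with $a,b \in V(D)$. Since $D$ is connected, there is a path $Q$ from $a$ to $b$ inside $D$; if $a=b$, then $Q$ is the single vertex $a$. Replace the edge $v_iv_{i+1}$ of $P$ by the detour $v_i\,a\,Q\,b\,v_{i+1}$. This gives the path
\[
(v_1,\dots,v_i,a,\dots,b,v_{i+1},\dots,v_r).
\]
It is a path because $D$ is disjoint from $V(P)$, and it has at least one more vertex than $P$. This contradicts the choice of $P$ as a longest path.

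\emph{Part 2.} Suppose $a v_i \in E$ and $b v_{i+2} \in E$ with $a,b \in V(D)$. By Part~1, $D$ is not adjacent to $v_{i+1}$, although we will not need this. Take a path $Q$ from $a$ to $b$ in $D$.

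\emph{Case $a\neq b$.} Then $Q$ has at least two vertices. Replace the segment $v_i v_{i+1} v_{i+2}$ by $v_i\,a\,Q\,b\,v_{i+2}$. This drops $v_{i+1}$ but adds at least two vertices, so the new path is strictly longer than $P$.

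\emph{Case $a=b$.} Here the replacement $v_i\,a\,v_{i+2}$ merely swaps $v_{i+1}$ for $a$ and gains nothing, so a different argument is needed. This is the one delicate point of the proof. There are two sub-cases.
\begin{itemize}
\item If $D$ has a second vertex $a'$, we may choose it adjacent to $a$, since $D$ is connected. Then $v_i\,a'\,a\,v_{i+2}$ (or $v_i\,a\,a'\ldots$) is not immediately valid, because $a'$ need not be adjacent to $v_i$. Instead I use Part~1 applied to the modified longest path $P' = (v_1,\dots,v_i,a,v_{i+2},\dots,v_r)$, which has the same length as $P$ and so is also a longest path. The vertex $v_{i+1}$ lies outside $P'$ and is adjacent to both $v_i$ and $v_{i+2}$. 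Rather than invoking Part~1 for the component of $v_{i+1}$ (where these two vertices are not consecutive), observe that $a'$ lies in a component of $G-V(P')$ that is adjacent to $a$. Since $a$ and $v_i$ are consecutive on $P'$, I need to check whether that component also touches $v_i$; this does not hold in general.
\item If $D=\{a\}$, the configuration is simply the single vertex $a$ adjacent to both $v_i$ and $v_{i+2}$. Such a vertex can exist in a graph with longest path $P$: for example, $P=v_1v_2v_3$ and $a$ adjacent to $v_1$ and $v_3$ form a $C_4$, whose longest path has $4$ vertices, so that is not a counterexample. In general, however, $P = (v_1,\dots,v_r)$ with $a$ adjacent to $v_i,v_{i+2}$ for interior $i$ and nothing else seems consistent.
\end{itemize}

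Hence I expect the intended reading of Part~2 to be that $a$ and $b$ are distinct vertices, or that the component has at least two vertices. Under that reading, the first case above finishes the proof. The main obstacle is exactly this $a=b$ case: it cannot be excluded by rerouting alone, so I would state the lemma with $a\neq b$ and give the two-line rerouting proof above.
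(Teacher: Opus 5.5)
Your rerouting proofs of Part~1 and of Part~2 for $a\neq b$ are exactly the paper's argument. Your diagnosis of the case $a=b$ is also correct. The paper's detour $(v_1,\dots,v_i,a,Q,b,v_{i+2},\dots,v_r)$ gains a vertex only when $a\neq b$, so its proof silently assumes distinctness, and every later application of the lemma in the paper uses it in that distinct form.

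Two corrections to how you handled $a=b$:
\begin{enumerate}
\item You can settle the case outright instead of saying the configuration ``seems consistent''. In $K_{2,4}$ with sides $\{x_1,x_2\}$ and $\{y_1,\dots,y_4\}$, every path has at most $5$ vertices. So $P=(y_1,x_1,y_2,x_2,y_3)$ is a longest path, and $D=\{y_4\}$ is adjacent to $v_2$ and $v_4$. The literal statement is therefore false, even for $2$-connected $G$.
\item Your alternative reading ``$D$ has at least two vertices'' is not a valid fix, and your sentence saying the first case then finishes the proof is wrong. Take $P=(v_1,\dots,v_7)$ plus a vertex $a$ adjacent to $v_3$ and $v_5$, and a vertex $a'$ adjacent only to $a$. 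This graph is the $4$-cycle $v_3v_4v_5a$ with pendant paths of $2$, $2$ and $1$ extra vertices at $v_3$, $v_5$ and $a$. Its longest paths still have $7$ vertices, yet $D=\{a,a'\}$ and $a$ is adjacent to both $v_3$ and $v_5$.
\end{enumerate}

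So state Part~2 with the hypothesis $a\neq b$ exactly. Then your two-line argument is the complete proof, and the inconclusive sub-case exploration can be deleted.
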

\begin{proof}
    Let $D$ be a connected component of $G-V(P)$.
   We first prove 1. 
Suppose there is $i \in \{1,\dots, r-1\}$ such that $D$ is adjacent to $v_i$ and $v_{i+1}$. Then, $(v_1 , \dots ,v_i ,D , v_{i+1} , \dots , v_{r})$ contains a $P_{r+1}$, a contradiction.
    
We now prove 2. Suppose there are $a,b \in D$ such that  $a$ is adjacent to $v_i$ and $b$ to $v_{i+2}$, for $i \in \{1,\dots, r-2\}$. Let $Q$ be any path between $a$ and $b$ in $D$. Then, $(v_1 , \dots ,v_i ,a ,Q , b , v_{i+2} , \dots , v_{r})$ contains a $P_{r+1}$, a contradiction. 
\end{proof}

\noindent
We will also need the following known results that we will use as lemmas in our proofs.

\begin{lemma}[\cite{BHM11}] \label{l-tw}
{\sc Steiner Forest} is polynomial-time solvable on graphs of treewidth at most~2.
\end{lemma}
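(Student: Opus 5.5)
The plan is to reproduce, in outline, the dynamic programming argument of Bateni, Hajiaghayi and Marx~\cite{BHM11}. The statement is cited from there and not needed in full detail below, so a sketch suffices. First, by Lemma~\ref{l-2con} (graphs of treewidth at most~$2$ form a hereditary class), we restrict to $2$-connected graphs of treewidth at most~$2$. These are exactly the $2$-connected series-parallel graphs. It is convenient to work with an edge-weighted version of \stf{}, since the reduction below replaces subgraphs by weighted edges. Such a graph has a decomposition tree whose nodes are two-terminal series-parallel subgraphs $G_{xy}$ with poles $x,y$, combined by series and parallel compositions. The algorithm processes this tree bottom-up. Each processed subgraph $G_{xy}$ is replaced by a constant-size weighted gadget on $\{x,y\}$, together with a rewriting of the terminal set. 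The rewritten instance must have the same optimum up to an additive constant that we record.

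The key step is to describe what an optimal solution can look like inside $G_{xy}$ as seen from outside. Either the solution is intertwined on $G_{xy}$, that is, $x$ and $y$ are connected inside. Or it is bi-stemmed, in which case every inner terminal whose partner lies outside (or must reach the outside) is attached to $x$, to $y$, or to both via an outside path. From the inside, the only relevant information is the connectivity pattern on $\{x,y\}$, namely whether $x$ and $y$ are joined outside. I would compute three quantities recursively. The first is $c_1$, the cheapest intertwined solution for the inner demands. The second is $c_2$, the cheapest solution when $x$ and $y$ are considered identified (as in $G\dagger\{x,y\}$). The third is the cheapest bi-stemmed solution together with a description of which inner terminals are routed to $x$ and which to $y$. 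The first two are clearly composable under series and parallel composition: they use min-plus combinations of the children's values with the demands projected onto the poles, exactly as in the projection used in the proof of Theorem~\ref{t-cyclebush}.

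The main obstacle is the third, bi-stemmed, quantity. The assignment of inner terminals with outside partners to $x$ or to $y$ has exponentially many possibilities. I would first try to show that inner terminals connected to the same pole must also be mutually consistent with the outside demands. An inner terminal assigned to $x$ forces its partner side to reach $x$. Hence the choice for each inner terminal reduces to a binary choice, with pairwise implications between terminals whose partners lie in a common sibling subgraph. The aim is to prove that the optimal bi-stemmed inner cost splits as a constant plus the costs of attaching the sets $A_x$ and $A_y$ of terminals routed to $x$ and to $y$. In a treewidth-$2$ (series-parallel) graph these attachment costs would be computable recursively, since each child again has only two poles. If that works, the rewriting replaces $G_{xy}$ by an $x$--$y$ edge of weight $c_1-c_2$, pendant edges to $x$ and $y$, and a relabelling of each inner terminal to $x$ or $y$ according to the optimal choice. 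Otherwise I would fall back to guessing, for each parallel composition, which of the boundedly many children are intertwined, and argue that at most one child per composition can be bi-stemmed in a useful way. Correctness of each replacement would be checked by an exchange argument: any solution of the original instance maps to a solution of the reduced instance of equal cost, and conversely. Polynomial running time follows because each of the $O(n)$ decomposition nodes requires polynomially many Steiner computations on constant-size gadgets.
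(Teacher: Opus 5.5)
The paper gives no proof of this lemma; it imports it from \cite{BHM11}, whose algorithm combines dynamic programming over the series-parallel decomposition with minimum-cut computations. Your outline therefore has to stand on its own, and it has a genuine gap at the step you flag yourself. The bi-stemmed case is left conditional (``I would first try'', ``if that works'', ``otherwise I would fall back''), and that step is the whole content of the result.

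This step cannot be settled by reasoning that only uses the existence of two poles. A $2$-connected graph with a $3$-deletion set $\{x,y\}$ is a single citrus with ends $x,y$. If $x,y$ are identified, or forced to be connected inside, the problem splits into independent per-wedge problems and is easy. Yet by Theorem~\ref{t-hard} (see the Remark after Theorem~\ref{t-newhard}), \stf{} is \NP-complete on such graphs once wedges of treewidth~$3$ are allowed. So the hardness sits precisely in your third quantity, and your ``binary choices with pairwise implications'' reasoning applies verbatim in that \NP-hard setting. A correct proof must exploit the series-parallel structure at exactly this step, and your proposal does not say how.

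Several concrete steps also fail as stated.
\begin{itemize}
\item \emph{The splitting of the bi-stemmed cost is false.} You hope it equals a constant plus separately computed attachment costs of $A_x$ and $A_y$, but the two trees must be disjoint. Take the path $x\,a\,b\,y$ with $a,b$ both needing to leave. Attaching $\{b\}$ to $x$ and attaching $\{a\}$ to $y$ each cost~$2$ in isolation. Both trees would use the edge $ab$, so their union joins $x$ to $y$ inside the subgraph. The assignment is therefore infeasible, while the splitting gives it a finite cost.
\item \emph{$c_1,c_2$ are not ``clearly composable'' under series composition.} Let $H$ be $H_1$ (poles $x,z$) followed by $H_2$ (poles $z,y$). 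Then $H\dagger\{x,y\}$ is a parallel composition between the merged vertex and $z$. An optimum may leave $z$ disconnected from the merged vertex; then both children are bi-stemmed. Each school meeting both $H_1$ and $H_2$ but not the outside must be routed through $z$ or through the merged vertex. This is the same assignment problem, now needed merely to compute $c_2$.
\item \emph{The replacement gadget is not justified.} It relabels inner terminals ``according to the optimal choice'', but that choice depends on the part of the graph outside $G_{xy}$. A bottom-up pass has not yet seen that part, so no fixed local replacement works.
\item \emph{The fallback is backwards.} By acyclicity, at most one child of a parallel composition can be intertwined, so all others are bi-stemmed. Guessing the intertwined child determines none of their assignments.
\end{itemize}
What is missing is a global mechanism for choosing these pole assignments; in \cite{BHM11} this is where minimum cuts enter. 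You would also need a proof that the bi-stemmed cost has the structure such a mechanism requires.
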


\begin{lemma}[\cite{BJMOPPSV25}] \label{l-2d2}
{\sc Steiner Forest} is polynomial-time solvable on graphs with a $2$-deletion set of size at most~$2$.
\end{lemma}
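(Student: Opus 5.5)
The statement is Lemma~\ref{l-2d2}: \stf{} is polynomial-time solvable on graphs with a $2$-deletion set of size at most~$2$. It is a known result of Bodlaender et al.~\cite{BJMOPPSV25}, but I would reprove it from the machinery of Section~\ref{s-techniques}. The idea is to show that such a graph, after the reductions of Lemma~\ref{l-2con} and Lemma~\ref{l-wedge-transform}, is an entangled lemon bush with stem set of size at most~$2$, and then apply Theorem~\ref{t-solvetangle} with $k\le 2$ and a constant $\ell$.

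\textbf{Finding the deletion set.} First compute a $2$-deletion set $S$ with $|S|\le 2$ by brute force over all at most $O(n^2)$ candidate sets, checking each in polynomial time. By Lemma~\ref{l-2con}, applied to the hereditary class of graphs with $2$-deletion set number at most~$2$, we may assume $G$ is $2$-connected. If $|S|\le 1$, every component of $G-S$ has at most two vertices, so $G$ has $O(1)$ vertices in every $2$-connected piece, or is trivial, and Lemma~\ref{l-exponential} finishes. So assume $S=\{x,y\}$.

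\textbf{Classifying components of $G-S$.} Each component $D$ of $G-S$ has one or two vertices, and by $2$-connectivity its neighbourhood is $\{x,y\}$ (or $D$ attaches to only one of them, which would make that vertex a cut vertex unless $G$ is tiny). There are three kinds of component:
\begin{itemize}
\item A single vertex $v$ adjacent to both $x$ and $y$. Then $\{x,v,y\}$ is a wedge of size~$3$, hence juicy.
\item An edge $uv$ whose vertices together see both $x$ and $y$. Then $D\cup\{x,y\}$ is a wedge of size~$4$, i.e.\ seeded, so it is allowed in a lemon. By Lemma~\ref{l-wedge-transform} it can even be made juicy.
\item A component seeing only one vertex of $S$, which is excluded by $2$-connectivity.
\end{itemize}

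\textbf{Concluding.} Every component of $G-S$ is therefore a juicy or seeded wedge with ends $x,y$. So $G$ is a lemon with ends $x,y$: a lemon bush flowering from the single edge $xy$, and hence from a graph of treewidth~$1$. Theorem~\ref{t-lemon-tw2} then applies directly. Alternatively, Theorem~\ref{t-solvetangle} with $k=2$ and an empty tangle set applies.

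\textbf{Main obstacle.} The key step is the $2$-connectivity reduction. Lemma~\ref{l-2con} needs the class to be hereditary, and having $2$-deletion set number at most~$2$ is closed under vertex deletion, so this holds. I would also check the degenerate case where $x$ or $y$ is absent or where $V(G)=S\cup D$. These cases are handled because Theorem~\ref{t-lemon-tw2} places no restriction on the number of wedges, and graphs of bounded size are handled by Lemma~\ref{l-exponential}.
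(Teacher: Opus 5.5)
Your argument is correct. The paper does not reprove this lemma; it cites it from \cite{BJMOPPSV25}. However, it explicitly remarks that Lemma~\ref{l-wedge-transform} generalizes the key step of the original proof, and that Lemma~\ref{l-lemon-support}(\ref{l-lemon-support-iii}) (equivalently Theorem~\ref{t-lemon-tw2} for a bush flowering from a single edge) subsumes the lemma. That is exactly your route: reduce to the $2$-connected case via Lemma~\ref{l-2con}, observe that $G$ is a single lemon with ends $x,y$, make the seeded wedges juicy, and apply the treewidth-$2$ algorithm of Lemma~\ref{l-tw}. There is no circularity, since those results depend only on Lemma~\ref{l-wedge-transform} and Lemma~\ref{l-tw}, not on Lemma~\ref{l-2d2} itself.
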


\noindent
Bodlaender et al.~\cite{BJMOPPSV25} and Feldmann and Lampis~\cite{FL25} independently showed the following result, which was an improvement of the original {\sf FPT} algorithm by Gima et al.~\cite{GHKKO22}.

\begin{theorem}[\cite{BJMOPPSV25,FL25}] \label{t-vc}
{\sc Steiner Forest} has a $2^{O(k \log k)} n^{O(1)}$ time algorithm, where $k$ is the vertex cover number of the input graph, even in the weighted case.
\end{theorem}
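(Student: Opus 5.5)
The statement to prove is Theorem~\ref{t-vc}: \stf{} can be solved in $2^{O(k\log k)}n^{O(1)}$ time, where $k$ is the vertex cover number, even with edge weights. I would argue by guessing the solution's pattern on the cover and then handling the remaining vertices by matroid-style optimization.

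\textbf{Step 1: set-up and normalisation.}
First compute a minimum vertex cover $X$, $|X|=k$, in $2^{O(k)}n^{O(1)}$ time; $Y=V\setminus X$ is then independent. We may assume every terminal pair must be connected by a path. Fix an optimal Steiner forest $F$, taken minimal, i.e. every leaf of $F$ is a terminal.

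\textbf{Step 2: guess the pattern on $X$.}
Guess the partition $\mathcal{X}$ of $X$ induced by the components of $F$; there are at most $k^k=2^{O(k\log k)}$ choices. Every component of $F$ either meets $X$ or is a single edge $xy$. The latter is impossible because $Y$ is independent, so every edge has an end in $X$. Hence each component of $F$ meets exactly one part of $\mathcal{X}$, or is an isolated terminal vertex.

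\textbf{Step 3: classify $Y$-vertices and form the matroid problem.}
Split the vertices of $Y$ used by $F$ into two kinds:
\begin{itemize}
\item \emph{connectors}: vertices of degree at least~$2$ in $F$, all of whose neighbours lie in $X$;
\item \emph{leaves}: degree-$1$ vertices, which by minimality are terminals.
\end{itemize}
Each terminal $y\in Y$ must be attached by one cheapest edge to some part of $\mathcal{X}$. That part is forced whenever $y$'s partner lies in $X$, or is linked through a shared part. For pairs with both ends in $Y$, both ends must attach to a common part.

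Inside each part $P$, the connectors together with edges of $G[X]$ must make $P$ connected, using at most $|P|-1$ "merging" connectors. With parts fixed, attaching terminals and connecting parts decouple, apart from the choice of which part each $Y$-terminal pair uses. Connectivity of the parts amounts to a minimum-weight spanning-tree computation. Treat each $y\in Y$ used as a connector on a vertex subset $S\subseteq P$ as a hyperedge of weight $w(y,S)$, the sum of the cheapest edges. Since each part needs only $k$ hyperedges, I would guess for each part the "shape" of the tree: a tree on its vertices with hyperedges labelled by subsets. This costs $2^{O(k\log k)}$ guesses, and each hyperedge is then realised by a distinct $y$ via min-cost bipartite matching. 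The matching is needed because a $y$ serves only one hyperedge, and a connector that is also a terminal must be handled consistently.

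\textbf{Step 4: assign $Y$-pairs to parts.}
Each $Y$-$Y$ terminal pair with free choice chooses a part containing neighbours of both ends, at cost $\min$ edge to $P$ for each end. This is a per-pair independent minimisation, but it interacts with the matching when a terminal $y$ also serves as a connector. I would fold it into the same matching by letting connector $y$ be free, its incident edges already counted.

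\textbf{Main obstacle.}
The hard step is getting $2^{O(k\log k)}$ rather than $2^{O(k^2)}$: naive guessing of hyperedge subsets costs $2^{k}$ per hyperedge. I would instead guess only the tree structure over parts, via Prüfer codes, and recover the subset hyperedges through a weighted matching between "slots" and $Y$. This keeps the enumeration at $k^{O(k)}$ and the rest polynomial.
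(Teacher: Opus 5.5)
The paper does not prove this theorem; it quotes it from the cited works, so your proposal has to stand on its own. The skeleton is reasonable. You guess the partition $\mathcal{X}$ of the cover, note that a minimal solution uses at most $|P|-1$ connectors in each part $P$, guess per part the hypertree formed by $G[X]$-edges and connector neighbourhoods, and realise the hyperedges by distinct vertices of $Y$ via a min-cost matching. Incidentally, the step you call the main obstacle is not one. There are at most $k-1$ hyperedges of total size at most $2k-2$, so guessing them directly already costs $k^{O(k)}$. The subsets must also be guessed rather than ``recovered'' by the matching, since a slot's cost $w(y,S)$ needs $S$ fixed.

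\textbf{The gap is in Steps~3--4, in how terminals in $Y$ interact with connectors.}

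(i) Minimising per terminal pair is wrong when a vertex of $Y$ lies in several pairs. Such a vertex joins one part and pays one attachment edge, so you must first close $T$ under transitivity and choose one part per school.

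(ii) More seriously, for a school $S\subseteq Y$ the part is free, and this choice is coupled to the matching. If some $y\in S$ fills a slot of part $P$, all of $S$ must end up in $P$, which changes the attachment cost of the other members. Also, any two members of $S$ that fill slots must lie in the same part. A single bipartite matching between slots and $Y$ with per-vertex costs cannot express this. Take a school $\{y_1,y_2\}\subseteq Y$ where $y_1$ is the only vertex able to fill a slot of part $P_1$, and $y_2$ is cheap to attach to a part $P_2$ but expensive to attach to $P_1$.
\begin{itemize}
\item If the matched connector $y_1$ is simply ``free'' while the pair independently picks its cheapest part $P_2$, the computed value puts $y_1$ in $P_1$ and $y_2$ in $P_2$. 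This is not a Steiner forest, and its value can be strictly below the optimum.
\item If instead you charge or forbid connectors whose school was assigned elsewhere, the school's part is fixed before the matching. Then the optimum (school in $P_1$, $y_1$ free) can be missed.
\end{itemize}

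\textbf{A repair that costs only another $k^{O(k)}$ factor.} Work with schools. For each of the at most $k-1$ slots, guess which of three types its occupant is:
\begin{itemize}
\item a non-terminal;
\item a terminal whose school meets $X$ --- its part is then forced by $\mathcal{X}$, so the per-vertex cost $w(y,S_i)$ minus its attachment cost, with $\infty$ on a part mismatch, is correct and additive;
\item a terminal of a school contained in $Y$ --- here also guess which slots share a school, and reject any group spanning two parts.
\end{itemize}
These three types draw on disjoint subsets of $Y$, so you can solve three separate min-cost matchings. In the third, a slot-group $g$ in part $P$ is matched to a school $S\subseteq Y$ at the following cost: the cheapest injective placement of members of $S$ into the slots of $g$, plus the cheapest attachment of the remaining members of $S$ to $P$, minus the unconstrained optimum of $S$. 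Every unmatched school pays that unconstrained optimum. With this addition your outline yields a correct $2^{O(k\log k)}n^{O(1)}$ algorithm; as written, Steps~3--4 do not.
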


\noindent
We need two more lemmas that help us to reduce the input size.

\begin{lemma}\label{l-benjamins}    
    If $(G,T,k)$ is an instance of \stf{} and $x,y \in V(G)$ are two vertices such that $N(x) \subsetneq N(y)$ and $x$ is not a terminal, then $(G,T,k)$ is a yes-instance if and only if $(G - x, T,k)$ is a yes-instance.    
\end{lemma}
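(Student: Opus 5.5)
The plan is a direct exchange argument: any Steiner forest that uses $x$ can be rerouted through $y$ without increasing its number of edges and without losing any connection.

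The backward direction is immediate. $G-x$ is a subgraph of $G$, so a Steiner forest for $(G-x,T)$ with at most $k$ edges is also one for $(G,T)$. Here we use that $x$ is not a terminal, so $T$ is a valid terminal set in $G-x$.

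For the forward direction, let $F$ be a Steiner forest for $(G,T)$ with at most $k$ edges.

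\emph{Trivial case.} If $x\notin V(F)$, or $x$ is isolated in $F$, we simply delete $x$ from $F$.

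\emph{Main case.} Otherwise let $N_F(x)\subseteq N(x)$ be the neighbours of $x$ in $F$, and note that $N_F(x)\subseteq N(y)$ by assumption. Since $x\notin N(x)$ and $N(x)\subsetneq N(y)$, we have $y\notin N(x)$, so $y$ is not adjacent to $x$ and in particular $y\notin N_F(x)$. Define
\[
F' = \bigl(F - x\bigr) + \{ yz : z \in N_F(x) \},
\]
so every edge $xz$ of $F$ is replaced by the edge $yz$ of $G-x$. Then $|E(F')|\leq |E(F)|\leq k$, where the inequality may be strict because some edges $yz$ may already lie in $F$ and are not added twice.

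\emph{Connectivity is preserved.} Take a terminal pair $(s,t)$ and an $s$–$t$ path $Q$ in $F$. If $Q$ avoids $x$, it is also a walk in $F'$. Otherwise $x$ is an internal vertex of $Q$, since it is not a terminal, and $Q$ contains a segment $z_1 x z_2$ with $z_1,z_2\in N_F(x)$. Replacing this segment by $z_1 y z_2$ gives an $s$–$t$ walk in $F'$. Every walk contains a path, so $s$ and $t$ are connected in $F'$.

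\emph{Reduction to a forest.} $F'$ may now contain cycles, for example when $y$ was already joined in $F$ to vertices of different branches at $x$. Any connected subgraph contains a spanning tree on each of its components, so we take a spanning forest $F''$ of $F'$. This keeps all connectivity and uses at most $|E(F')|\leq k$ edges, so $F''$ is a Steiner forest for $(G-x,T)$ within budget.

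The only step needing care is the main case. We must check that the new edges $yz$ really exist in $G-x$, which follows from $N(x)\subseteq N(y)$ and $y\neq x$, and that no edge is counted twice. The strictness of the inclusion is used only to guarantee that $x\neq y$ and that $y$ is a genuine vertex of $G-x$.
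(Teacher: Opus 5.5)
Your proposal is correct and follows the paper's proof: both replace every solution edge $xz$ by $yz$. Your extra care (noting the edge count may strictly drop and passing to a spanning forest) tidies up points the paper glosses over with its claim of ``equal weight''; the only slip is that $y\notin N(x)$ follows from $y\notin N(y)$ together with $N(x)\subseteq N(y)$, not from $x\notin N(x)$.
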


\begin{proof}
    Since $x$ is not a terminal, it is used only as an intermediate vertex to connect terminals. Thus, in any solution to $(G,T,k)$, paths using $x$ can be rerouted through $y$ without changing the solution weight.
    Formally, let $F \subseteq E(G)$ be a solution for $(G,T,k)$. Then $(F\setminus \{ux : u \in V(G)\}) \cup \{uy : ux \in F\}$ is a solution of $(G-x,T,k)$ of equal weight to $F$. Conversely, as $x$ is not a terminal, any solution of $(G-x,T,k)$ is a solution of $(G,T,k)$.
\end{proof}

\begin{lemma}\label{l-adj-terminals}
    Let $(G,T,k)$ be an instance of \stf{} and $xy \in E(G)$ be an edge such that $(x,y)\in T$. Let $G'$ be the graph obtained by identifying $x$ with $y$ and $T' = T \setminus \{(x,y)\}$. Then $(G,T,k)$ is a yes-instance, if and only if $(G', T,k-1)$ is a yes-instance.
\end{lemma}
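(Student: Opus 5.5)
The plan is to prove both directions by translating solutions back and forth between $(G,T,k)$ and $(G',T',k-1)$. Throughout, let $w$ denote the vertex obtained by identifying $x$ and $y$. Every edge of $G'$ arises from an edge of $G$ other than $xy$: an edge $uw$ comes from $ux$ or $uy$, and all other edges are unchanged. The statement as written has $(G',T,k-1)$; we read $T$ there as $T'$, with $x$ and $y$ replaced by $w$ in the remaining pairs. The pair $(x,y)$ itself would become $(w,w)$, which is trivially satisfied, so the two readings agree.

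\emph{Forward direction.} Let $F$ be a Steiner forest for $(G,T)$ with $|F|\le k$. We may assume that $F$ is acyclic and contains the edge $xy$. If $xy\notin F$, then $x$ and $y$ are joined in $F$ by some path $Q$. Adding $xy$ creates a cycle, and deleting one edge of $Q$ from that cycle keeps all connectivity and does not increase $|F|$.

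Now contract $xy$ in $F$ and let $F'$ be the resulting edge set in $G'$. Contracting an edge of a forest yields a forest with exactly one edge fewer, with no parallel edges, since a parallel pair $ux,uy$ would form a triangle in $F$. So $|F'|\le k-1$. Connectivity is preserved under contraction. Hence every pair of $T'$, after renaming $x,y$ to $w$, lies in a single component of $F'$.

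\emph{Backward direction.} Let $F'$ be a Steiner forest for $(G',T')$ with $|F'|\le k-1$. Replace each edge $uw\in F'$ by an edge $ux$ or $uy$ that exists in $G$, and keep all other edges. Then add $xy$ to obtain $F$, so $|F|\le k$.

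Any path in $F'$ lifts to a walk in $F$. Each time the path passes through $w$, the lifted walk either enters and leaves through the same vertex $x$ or $y$, or it crosses between them using $xy$. Therefore each pair $(s,t)\in T$ with $s,t\ne x,y$ is still connected in $F$. A pair with an endpoint in $\{x,y\}$ is also connected, because $x$ and $y$ lie in the same component of $F$. Finally, the pair $(x,y)$ itself is connected by the edge $xy$.

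The only delicate point is the forward direction's normalisation, where we ensure $xy\in F$ and that $F$ is acyclic so that contraction removes exactly one edge. This is a standard exchange argument, so I expect no real obstacle.
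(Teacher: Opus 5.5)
Your proposal is correct and follows the same route as the paper: an exchange argument (add $xy$ and delete an edge of the $x$--$y$ path in $F$) to get a solution containing $xy$, then contract that edge. You also make explicit what the paper leaves implicit: acyclicity, so that contraction saves exactly one edge with no parallel edges; the lifting argument for the converse direction; and reading $T$ as $T'$ in the statement.
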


\begin{proof}
    We show that any solution $F \subsetneq E(G)$ for the instance $(G,T,k)$ can be transformed into an equal-size solution $F' \subsetneq E(G)$ satisfying $xy \in F'$.
    If $xy\in F$ then we are done. Otherwise, since $F$ is a solution and $(x,y)\in T$, there must be some path from $x$ to $y$ using edges of $F$. Let $e$ be some arbitrary edge along this path. Then $F \cup \{xy\} \setminus \{e\}$ is a solution to $(G,T,k)$. This can be seen from the fact that $F \cup \{xy\}$ contains a cycle which includes the edge $e$. 
    It follows, without loss of generality, that $xy$ is included in some optimal solution of $(G,T,k)$. That is, we can safely contract this edge to represent its inclusion in the solution, at cost $1$ meaning $(G,T,k)$ is a yes-instance, if, and only if $(G', T,k-1)$ is a yes-instance.    
\end{proof}

\noindent
We may assume throughout that $T$ is closed under transitivity. Indeed, if $(s,t),(t,x) \in T$, then a single component of any Steiner forest for $(G,T)$ will contain $s$, $t$, and $x$, and thus we can add $(s,x)$ to $T$ if not already present. Since $T$ is closed under transitivity, there exist disjoint sets $T_1,\ldots,T_p \subseteq V(G)$ such that $s,t \in T_i$ if and only if $(s,t) \in T$. We call the sets $T_1,\ldots,T_p$ \emph{schools}.
We use this notion in the proof of Lemma~\ref{l-exponential}.

\lemExponential*
\begin{proof}
Let $(G,T)$ be an instance of \textsc{Steiner Forest}. We assume that $T$ is closed under transitivity; let $T_1,\ldots,T_q \subseteq V(G)$ denote the schools of $T$. We perform the following dynamic programming algorithm. We call a set $S \subseteq V(G)$ a \emph{full set} if $\bigcup_{i \in I} T_i \subseteq S$ for some set $I \subseteq \{1,\ldots,1\}$. We call $I = I(S)$ the \emph{index set} of $S$. For any full set $S$, we compute a minimum Steiner forest $f(S)$ for the instance with graph $G[S]$ and schools $T_i$ for all $i \in I(S)$. Clearly, $f(S) = 0$ for any full set $S$ with $I(S) = \emptyset$. For any other full set $S$, compute $f(S)$ as the minimum $f(S')$ plus the size of a spanning tree of the graph $G[S \setminus S']$ (which has size $|S| - |S'| - 1$, or $\infty$ if $G[S \setminus S']$ is not connected) over all full sets $S'$ with $I(S') \subset I(S)$.

For any full set $S$, the computation always returns the size of a Steiner forest for the instance with graph $G[S]$ and schools $T_i$ for all $i \in I(S)$. To prove optimality, for some full set $S$, let $F$ be a minimum Steiner forest for the instance with graph $G[S]$ and schools $T_i$ for all $i \in I(S)$. Let $C$ be the subgraph induced by the edges of any connected component of $F$. Note that $E(C)$ is a spanning tree of $G[V(C)]$, and thus $|E(C)| = |V(C)|-1$. Clearly, any school is either fully contained in $C$ or fully disjoint from $C$. Let $S' = S \setminus C$. Note that $I(S) = I(S') \cup I(C)$ and $I(S') \subset I(S)$. Since $F[S']$ is a Steiner forest for the graph $G[S']$ and schools $T_i$ for all $i \in I(S')$, $f(S') \leq |E(F[S'])|$. Then $f(S) \leq f(S') + |S| - |S'| - 1 \leq |E(F[S'])| + |V(C)|-1 = |E(F)|$.

Finally, $f(V(G))$ is the size of a minimum Steiner forest for $(G,T)$. The running time of the dynamic programming algorithm is $4^nn^{O(1)}$.
\end{proof}

\section{\fl{Algorithmic Results on Citrus Bushes}}
\label{s-fp-techniques}

\fl{Here we give the results and proofs for Section~\ref{s-techniques} in full. We begin with the fundamental building-block for our decompositions: wedges.}

\subsection{Wedges}
We start by restating the definition of a wedge and several useful observations.
\defWedge*

\noindent
If $L \not= V(G)$ is a wedge with ends $x,y$, then $\{x,y\}$ forms a cutset in the graph, as the vertices of $L \setminus \{x,y\}$ cannot be adjacent to the vertices of $V(G) \setminus L$ by definition. Besides the case $L=V(G)$, another edge case is when $x$ is a cut vertex, in which case $y$ can be any vertex of $L$. 
It is important to observe that, in the definition of a wedge $L$, its ends $x,y$ are in $L$. Hence, a seeded wedge contains exactly two vertices in addition to its ends and an $\ell$-pulped wedge, for some $\ell \geq 5$, contains at least three and at most $\ell-2$ vertices in addition to its ends.

We note that any wedge with $|L| = 3$ is juicy. An $\ell$-pulped or seeded wedge may be a juicy wedge, in which case we will consider it to be a juicy wedge for the purposes of this paper. Moreover, as we show in the following lemma, we always turn a seeded wedge into a juicy wedge without changing the problem. The lemma generalizes an argument given in the proof of Lemma~\ref{l-2d2} in~\cite{BJMOPPSV25}.

\lemWedgeTransform*
\begin{proof}
Let $G$ be a graph. To prove the lemma, it suffices to show that in polynomial time we can find a seeded wedge $L$ in $G$ that is not juicy and, if one exists, we can compute a strict subgraph $G'$ of $G$ such that the size of a minimum Steiner forest for $(G,T)$ is equal to the size of a minimum Steiner forest for $(G',T)$.

We first observe the following. Let $L$ be a seeded wedge in $G$ that is not a juicy wedge, with ends $x,y$. Let $a,b$ denote the two vertices in $L \setminus \{x,y\}$. Since $L$ is a wedge, $ab \in E(G)$. If $ax \not \in E(G)$, then $L$ is a juicy wedge (use bags $\{x,y\}$, $\{x,y,b\}$, and $\{y,b,a\}$ to obtain a tree decomposition of $G_L \cup \{xy\}$ of width~$2$), a contradiction. Using similar arguments for any of the other four possible edges between $\{a,b\}$ and $\{x,y\}$, we conclude that $\{a,b\}$ is complete to $\{x,y\}$ in $G$.

Using the above observation, finding a seeded wedge in $G$ that is not juicy can be done straightforwardly by exhaustive enumeration of all $4$-vertex sets in $G$ and counting edges. If one exists, let $L$ be a seeded wedge in $G$ that is not juicy, with ends $x,y$ and let $a,b$ be the two vertices in $L \setminus \{x,y\}$. By the above observation, $\{a,b\}$ is complete to $\{x,y\}$ in $G$. We now seek to compute a strict subgraph $G'$ of $G$ such that the size of a minimum Steiner forest for $(G,T)$ is equal to the size of a minimum Steiner forest for $(G',T)$.

We distinguish two cases. In the first case, suppose that $(a,b) \in T$. Then we claim that a minimum Steiner forest of $(G,T)$ has the same size as a minimum Steiner forest of $(G-\{ax\},T)$. (An alternative proof could apply Lemma~\ref{l-adj-terminals}.) It suffices to prove the forward direction; the other is straightforward. Let $F$ be a minimum Steiner forest of $(G,T)$. It suffices to prove that we may assume that $ax \not\in E(F)$. If $F$ contains at least three edges of $G_L$, then we replace all of those by $ab, bx, by$ to obtain a Steiner forest of $(G,T)$ with size at most that of $F$ which does not contain $ax$. Hence, we may assume that $F$ contains at most two edges of $G_L$. Following this, if $ax \in E(F)$, then $F$ contains exactly one edge incident to $b$, as $(a,b) \in T$. If $ab \in E(F)$, then we replace $ax$ by $bx$; if $bx \in E(F)$, then we replace $ax$ by $ab$; if $by \in E(F)$, then we replace $ax$ by $ab$. In all three cases, we obtain a Steiner forest of $(G,T)$ with size at most that of $F$ which does not contain $ax$ (in the latter case, we observe that there must be a path between $x$ and $y$ in $F$ that avoids $a,b$). Hence, we may indeed assume that $ax \not\in E(F)$ and the equivalence is proven.

In the second case, suppose that $(a,b) \not\in T$. Then we claim a minimum Steiner forest of $(G,T)$ has the same size as a minimum Steiner forest of $(G-\{ab\},T)$. It suffices to prove the forward direction of this equivalence; the other is straightforward. Let $F$ be any minimum Steiner forest of $(G,T)$. If $ab \in E(F)$, then due to the minimality of $F$, also at least one of the edges between $\{a,b\}$ and $\{x,y\}$ is in $F$. Say it is $ax$ (all other cases are symmetrical). Then removing $ab$ from $F$ and adding $bx$ yields a Steiner forest of $(G-\{ab\},T)$ of the same size as $F$. Hence, the equivalence is proven.

In both cases, we obtain a strict subgraph $G'$ of $G$ in which the size of a minimum Steiner forest for $(G,T)$ is equal to the size of a minimum Steiner forest for $(G',T)$. Also note that $L$ has become one or two juicy wedges, respectively.
\end{proof}

\subsection{Citrus}
We now consider the combination of wedges.
\defCitrus*

\noindent
We extend the definition by the notion of a \emph{juicy citrus}, which is a citrus bush that consists of only juicy wedges.

\noindent
For sake of clarity, we note that a citrus with ends $x,y \in V(G)$ may have a nonempty wedge set and $xy \in E(G)$ at the same time. Moreover, the definition is slightly broader than that of a vertex cutset of size~$2$, as any edge of $G$ can also be a citrus. Finally, a lemon is trivially an $\ell$-lemon for any integer $\ell \geq 5$.

The following proposition is an easy consequence of the definition of a juicy wedge.

\begin{proposition} \label{p-juicy-citrus}
Let $G$ be a graph and suppose $G$ contains a juicy citrus with ends $x,y$ and wedge set $\mathcal{L}$ such that $V(x,y,\mathcal{L}) = V(G)$. Then $G$ has treewidth at most~$2$.
\end{proposition}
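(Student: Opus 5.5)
The plan is to build a tree decomposition of width at most~$2$ of $G$ by gluing together decompositions of the individual wedges along a common bag $\{x,y\}$.

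First, fix a juicy citrus with ends $x,y$ and wedge set $\mathcal{L}=\{L_1,\ldots,L_m\}$ such that $V(x,y,\mathcal{L})=V(G)$. Two structural facts follow from the definition of a wedge. For distinct wedges $L_i,L_j$, the interiors $L_i\setminus\{x,y\}$ and $L_j\setminus\{x,y\}$ are either disjoint or equal. Indeed, $N(L_i\setminus\{x,y\})=\{x,y\}$, so no vertex outside $L_i$ other than $x,y$ is adjacent to the interior of $L_i$. Since $G[L_j\setminus\{x,y\}]$ is connected, if it meets $L_i\setminus\{x,y\}$ then it is contained in it, and symmetrically the reverse inclusion holds. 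After discarding duplicates we may therefore assume the interiors are pairwise disjoint. Moreover, every edge of $G$ either is $xy$ or lies inside some $G[L_i]$, and hence inside $G_{L_i}\cup\{xy\}$. This holds because an edge with an endpoint in the interior of $L_i$ has its other endpoint in $L_i$, and every vertex other than $x,y$ lies in some interior.

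Next, for each $i$, the definition of a juicy wedge gives a tree decomposition $(T_i,\beta_i)$ of $G_{L_i}\cup\{xy\}$ of width at most~$2$. Since $xy$ is an edge of this graph, some bag of $T_i$ contains both $x$ and $y$; let $t_i$ be such a node. Take a new node $r$ with bag $\{x,y\}$ and attach each $t_i$ to $r$. The bags cover all vertices and all edges, using the second fact above together with the bag $\{x,y\}$ for the edge $xy$ if it is present.

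For the connectivity condition, consider first a vertex not in $\{x,y\}$. It lies in the interior of exactly one wedge, so its occurrences are confined to a single $T_i$, where they are already connected. For $x$ (and likewise $y$), its occurrences in each $T_i$ form a connected subtree containing $t_i$, and all of these are joined through $r$, whose bag contains $x$. Every bag has size at most~$3$, so $G$ has treewidth at most~$2$.

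No step is a real obstacle. The only point needing care is the disjointness of wedge interiors, which ensures each non-end vertex appears in only one subdecomposition. If distinct wedges were allowed to share interiors only partially, the argument above shows this is impossible, so the gluing is valid.
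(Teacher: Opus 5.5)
Your proposal is correct and is essentially the paper's argument: glue a width-$2$ tree decomposition of each $G_L \cup \{xy\}$ onto a common bag $\{x,y\}$. You additionally check explicitly that distinct wedges of the citrus have disjoint interiors and that every edge is $xy$ or lies in some $G[L]$, both of which the paper leaves implicit.
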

\begin{proof}
Let $L \in \mathcal{L}$ be any juicy wedge. By definition, the treewidth-bound of a juicy wedge is based on $G_L \cup \{xy\}$. Hence, $L$ must have a tree decomposition of minimum width ($2$) in which $x,y$ are in the same bag. In fact, we can always obtain such a decomposition in which $x,y$ are the only two vertices in a bag. We now glue such a tree decomposition of each juicy wedge on a bag $\{x,y\}$. Note that this also accounts for the edge $xy$ if it exists in $G$. Hence, $G$ has treewidth at most~$2$.
\end{proof}

\noindent
We provide a supporting lemma that deals with a single juicy citrus. First, we give a notion that describes the structure of a solution:

\defBiStemmed*

\noindent
We also need the following notions. Let $G$ be a graph and let $D \subseteq V(G)$. Then $G \dagger D$ denotes the graph obtained from $G$ by identifying the vertices of $D$ to a single vertex~$d$. For a set $L$ of vertices in $G$, we use $L \dagger D$ to denote the set of vertices obtained from $L$ by replacing each occurrence of an element of $D$ by $d$. Similarly, for a set $T$ of pairs of vertices in $G$, we use $T \dagger D$ to denote the set of pairs obtained from $T$ by replacing each occurrence of an element of $D$ in a pair by $d$.

The following lemma is now a direct consequence of Lemma~\ref{l-tw}.

\begin{lemma} \label{l-lemon-support}
Let $G$ be an $n$-vertex graph and suppose $G$ contains a juicy citrus with ends $x,y$ and wedge set $\mathcal{L}$ such that $V(x,y,\mathcal{L}) = V(G)$. Let $T$ be a set of terminal pairs in $G$. Then we can compute, in polynomial time: 
\begin{enumerate}[(i)]
\item\label{l-lemon-support-i} a minimum Steiner forest for $(G \dagger \{x,y\},T \dagger \{x,y\})$;
\item\label{l-lemon-support-ii} a minimum Steiner forest for $(G,T)$ over all Steiner forests for which the $\ell$-lemon is intertwined;
\item\label{l-lemon-support-iii} a minimum Steiner forest for $(G,T)$.
\end{enumerate}
\end{lemma}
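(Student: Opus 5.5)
The approach is to reduce all three parts to Lemma~\ref{l-tw}, using Proposition~\ref{p-juicy-citrus}. That proposition says $G$ has treewidth at most~$2$. More precisely, its proof gives a tree decomposition of width~$2$ that contains a bag equal to $\{x,y\}$. This is obtained by gluing, for each juicy wedge $L\in\mathcal{L}$, a width-$2$ decomposition of $G_L\cup\{xy\}$ at such a bag. For each part we will build an auxiliary instance whose graph still admits a width-$2$ decomposition, and then call the algorithm of Bateni, Hajiaghayi and Marx.

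For part~(\ref{l-lemon-support-iii}) there is nothing more to do: we apply Lemma~\ref{l-tw} directly to $(G,T)$.

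For part~(\ref{l-lemon-support-ii}) we need a minimum Steiner forest among those in which $x$ and $y$ are connected. Since $V(x,y,\mathcal{L})=V(G)$, intertwined simply means that $x$ and $y$ lie in the same component of $F$. So we compute a minimum Steiner forest for $(G,T\cup\{(x,y)\})$, which again is an instance on $G$ and is solved by Lemma~\ref{l-tw}. The Steiner forests for $T\cup\{(x,y)\}$ are exactly the intertwined Steiner forests for $T$, so this gives what we want.

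For part~(\ref{l-lemon-support-i}) we look at $G\dagger\{x,y\}$. Here we take the glued decomposition and replace $x$ and $y$ by the new vertex $d$ in every bag. Every edge of $G\dagger\{x,y\}$ is the image of an edge of $G$, so it is still covered by some bag. For the connectivity condition, note that the bags containing $d$ are the union of the bags containing $x$ and the bags containing $y$. Each of these two sets is connected, and they share the bag $\{x,y\}$, so their union is connected. Bag sizes do not increase, so the width stays at most~$2$. We then apply Lemma~\ref{l-tw} to $(G\dagger\{x,y\},T\dagger\{x,y\})$. Identification can create parallel edges, for example when $x$ and $y$ have a common neighbour, and possibly a loop from the edge $xy$. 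We will delete the loop and keep only one copy of each parallel edge, since a minimum forest never uses a loop or more than one of a set of parallel edges. The only thing to watch is that the recovered solution is mapped back to edges of $G$ consistently, by picking a preimage for each kept edge.

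The main thing to get right is this treewidth-preservation argument in part~(\ref{l-lemon-support-i}), and in particular that a bag containing both ends exists. That bag exists by the construction in Proposition~\ref{p-juicy-citrus}. Everything else is bookkeeping.
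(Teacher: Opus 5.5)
Your proposal is correct and takes essentially the paper's route: parts (ii) and (iii) are handled identically via Proposition~\ref{p-juicy-citrus} and Lemma~\ref{l-tw}. For part (i), the paper views $G\dagger\{x,y\}$ as obtained from $G\cup\{xy\}$ by contracting $xy$ and uses that contraction does not increase treewidth; your substitution of $d$ into the decomposition containing the bag $\{x,y\}$ proves exactly this directly. Under the paper's definition of identification (the new vertex gets neighbourhood $N(\{x,y\})$) the resulting graph is already simple, so your remarks on loops and parallel edges are harmless but not needed.
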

\begin{proof}
For~(\ref{l-lemon-support-i}), consider the graph $G' = G \cup \{xy\}$. Then $G'$ still contains a juicy citrus with ends $x,y$ and wedge set $\mathcal{L}$ such that $V(x,y,\mathcal{L}) = V(G')$. By Proposition~\ref{p-juicy-citrus}, $G'$ has treewidth at most~$2$. Note that $G \dagger \{x,y\}$ can be obtained from $G'$ by contracting the edge $xy$. Hence, $G \dagger \{x,y\}$ has treewidth at most~$2$ and we can apply Lemma~\ref{l-tw}.

For~(\ref{l-lemon-support-ii}), we add the pair $(x,y)$ to $T$. By Proposition~\ref{p-juicy-citrus}, we can apply Lemma~\ref{l-tw}. For~(\ref{l-lemon-support-iii}), we can apply Lemma~\ref{l-tw} again by Proposition~\ref{p-juicy-citrus}.
\end{proof}
Note that Lemma~\ref{l-lemon-support}(\ref{l-lemon-support-iii}) generalizes Lemma~\ref{l-2d2}. Effectively, we have shown that \textsc{Steiner Forest} remains polynomial if we are given a treewidth-$2$-deletion set of size~$2$ instead of just a $2$-deletion set of size~$2$.

We now extend Lemma~\ref{l-lemon-support} to $\ell$-lemons.
\lemLemonSupport*
\begin{proof}
\textsf{\textbf{Part~(\ref{l-l-lemon-support-i}).}}
For simplicity of notation, let $G' = G \dagger \{x,y\}$ and let $T' = T \dagger \{x,y\}$. Let $d$ denote the vertex to which $x$ and $y$ were identified. To start the algorithm, we apply the following separation rule. Intuitively, the separation rule allows us to treat the wedges of the $\ell$-lemon independently. Let $L,L' \in \mathcal{L}$. Suppose that there is a terminal pair $(s,t) \in T$ such that $s \in L \setminus \{x,y\}$ and $t \in L' \setminus\{x,y\}$. Replace the terminal pair $(s,t)$ in $T'$ by the terminal pairs $(s,d)$ and $(t,d)$. After applying the separation rule exhaustively, let $T''$ denote the resulting set of terminal pairs. Observe that, by construction, for each terminal pair $(s,t)\in T''$, there is some wedge $L \in \mathcal{L}$ such that $s,t \in (L \cup \{d\}) \setminus \{x,y\}$. For each $L \in \mathcal{L}$, let $T''_L \subseteq T''$ denote the set of terminal pairs in $T''$ for which both terminals belong to $(L \cup \{d\}) \setminus \{x,y\}$. Thus, the sets $T''_L$ partition $T''$. For each $L \in \mathcal{L}$, compute a minimum Steiner forest $\bar{F}_L$ for $(G[L] \dagger \{x,y\},T''_L)$. Output the union $\bar{F}$ of $\bar{F}_L$ over all wedges $L$.

We first analyze the running time of the algorithm. Note that $\bar{F}_L$ can be computed in polynomial time for each $L \in \mathcal{L}$. If $L$ is a juicy wedge, then as argued in Lemma~\ref{l-lemon-support}(\ref{l-lemon-support-i}), $G[L] \dagger \{x,y\}$ has treewidth~$2$ and we apply Lemma~\ref{l-tw}. If $L$ is a seeded wedge or $\ell$-pulped wedge, then we apply Lemma~\ref{l-exponential}, taking $2^{O(\ell)}$ time. Hence, the algorithm runs in $2^{O(\ell)}n^{O(1)}$ time.

It remains to prove correctness of the algorithm. We start by showing correctness of the separation rule. That is, $F$ is a minimum Steiner forest for $(G',T')$ if and only if $F$ is a minimum Steiner forest for $(G',T'')$. Let $F$ be any minimum Steiner forest for $(G',T')$. Let $L, L' \in \mathcal{L}$. Suppose that there is a terminal pair $(s,t) \in T$ such that $s \in L \setminus \{x,y\}$ and $t \in L' \setminus\{x,y\}$. By the definition of $F$, there is a path in $F$ between $s$ and $t$. By the definition of a wedge, any such path must contain $d$. Thus, $F$ contains a path between $s$ and $d$ and between $t$ and $d$.  Hence, $F$ remains a Steiner forest for the set of terminal pairs after we replace $(s,t)$ by $(s,d)$ and $(t,d)$. The converse is straightforward.

Next, we argue that $\bar{F}$ is indeed a Steiner forest for $(G',T')$. By the preceding, it suffices to argue this for $(G',T'')$. Let $(s,t) \in T''$ be a terminal pair. By construction of $T''$, $s,t \in L$ for some wedge $L \in \mathcal{L}$. Hence, $s,t$ are connected in $\bar{F}_L$ and thus in $\bar{F}$. Hence, $\bar{F}$ is a Steiner forest for $(G',T')$.

Finally, we argue that $\bar{F}$ is a minimum Steiner forest for $(G',T')$. By the preceding, it suffices to argue this for $(G',T'')$. Let $F$ be any minimum Steiner forest for $(G',T'')$. Let $(s,t) \in T''$ be a terminal pair. By construction of $T''$, $s,t \in L$ for some wedge $L \in \mathcal{L}$. Hence, the restriction of $F$ to $G[L] \dagger \{x,y\}$ is a Steiner forest for $(G[L] \dagger \{x,y\},T''_L)$. These restrictions partition the edge set of $F$, and thus $|E(\bar{F})| \leq |E(F)|$ by definition of $\bar{F}_L$. Therefore, $\bar{F}$ is a minimum Steiner forest for $(G',T')$.

\bigskip\noindent\textsf{\textbf{Part~(\ref{l-l-lemon-support-ii}).}}
We assume that $\mathcal{L} \not= \emptyset$; otherwise, the algorithm simply returns the edge $xy$. We first show that we can reduce, in polynomial time, to a polynomial number of instances of the situation in Part~(\ref{l-l-lemon-support-i}). To start the reduction, we apply the following separation rule. Intuitively, the rule allows us to treat the wedges of the $\ell$-lemon independently. Let $L, L' \in \mathcal{L}$. Suppose that there is a terminal pair $(s,t) \in T'$ such that $s \in L \setminus \{x,y\}$ and $t \in L' \setminus\{x,y\}$. Replace the terminal pair $(s,t)$ by the terminal pairs $(s,x)$ and $(t,x)$. After applying the separation rule exhaustively, let $T'$ denote the resulting set of terminal pairs. Observe that, by construction, for each terminal pair $(s,t)\in T'$, there is some wedge $L \in \mathcal{L}$ such that $s,t \in L$. We now branch on all wedges of $\mathcal{L}$. Let $L$ denote the wedge of the current branch. Let $T'_L \subseteq T'$ denote the set of terminal pairs in $T'$ for which both terminals belong to $L$ plus the terminal pair $(x,y)$, if it is not present already, and compute a minimum Steiner forest $\bar{F}_L$ for $(G[L], T'_L)$. Compute a minimum Steiner forest $\hat{F}_L$ for $((G \setminus (L \setminus\{x,y\})) \dagger \{x,y\}, (T' \setminus T'_L) \dagger \{x,y\})$. Let $\tilde{F}_L = \bar{F}_L \cup \hat{F}_L$; note that $\hat{F}_L$ was computed in a graph where $x,y$ are identified and thus we perform this union on the edges to ensure that $\tilde{F}_L$ is indeed a subgraph of $G$. After performing all branches, output the minimum set $\tilde{F}_L$ over all wedges $L$.

We first analyze the running time of the algorithm. There are $n$ branches. Consider a branch, corresponding to some $L \in \mathcal{L}$. Note that $\hat{F}_L$ can be computed in polynomial time by using the algorithm of Part~(\ref{l-l-lemon-support-i}). Moreover, $\bar{F}_L$ can be computed in polynomial time as well: if $L$ is a juicy wedge, then $G[L]$ has treewidth at most~$2$ and we apply Lemma~\ref{l-tw}; otherwise, $L$ is a seeded wedge or $\ell$-pulped wedge and we apply Lemma~\ref{l-exponential}, taking $2^{O(\ell)}$ time. Hence, the algorithm runs in $2^{O(\ell)}n^{O(1)}$ time.

It remains to prove correctness of the reduction. We start by showing correctness of the separation rule. That is, $F$ is a minimum Steiner forest for $(G,T)$ over all Steiner forests for which the $\ell$-lemon is intertwined if and only if $F$ a minimum Steiner forest for $(G,T')$ over all Steiner forests for which the $\ell$-lemon is intertwined. Let $F$ be any minimum Steiner forest for $(G,T)$ over all Steiner forests for which the $\ell$-lemon is intertwined. Let $L, L' \in \mathcal{L}$. Suppose that there is a terminal pair $(s,t) \in T'$ such that $s \in L \setminus \{x,y\}$ and $t \in L' \setminus\{x,y\}$. By the definition of $F$, there is a path $P$ in $F$ between $s$ and $t$. By the definition of a wedge, $P$ must contain $x$ or $y$. In the former case, clearly, $F$ contains a path between $s$ and $x$ and between $t$ and $x$. Else, by the definition of $F$, there is a path in $F$ between $x$ and $y$. The concatenation of $P$ with this path yields that $F$ contains a path between $s$ and $x$ and between $t$ and $x$. Hence, $F$ remains a Steiner forest for the set of terminal pairs after we replace $(s,t)$ by $(s,x)$ and $(t,x)$ and $F$ is still intertwined with respect to the $\ell$-lemon. The converse is straightforward.

Next, we argue that the output is indeed a Steiner forest for $(G,T)$ that is intertwined with respect to the $\ell$-lemon. To this end, by the correctness of the separation rule, it suffices to show that for any wedge $L \in \mathcal{L}$, $\tilde{F}_L$ is a Steiner forest for $(G,T')$. Note that, by construction of $T'_L$, $\bar{F}_L$ contains a path $P$ between $x$ and $y$. Moreover, by the construction of $T'$, both terminals of each pair in $T'$ belong to the same wedge of $G$. Let $(s,t) \in T'$ be a terminal pair. If $s,t \in L$, then $s,t$ are connected in $\bar{F}_L$ and thus in $\tilde{F}_L$. If $s,t \in L'$ for some wedge $L' \not= L$, then there is a path in $\hat{F}_L$ between $s$ and $t$, and using $P$ if necessary, this can be extended to a path in $\tilde{F}_L$ between $s$ and $t$. Hence, $\tilde{F}_L$ is a Steiner forest for $(G,T')$. By the existence of $P$, $\tilde{F}_L$ is intertwined with respect to the $\ell$-lemon. 

Finally, we show that the output is a minimum Steiner forest for $(G,T)$ over all Steiner forests for which the $\ell$-lemon is intertwined. Let $F$ be any such forest. Let $P$ be a shortest path in $F$ between $x$ and $y$. Then $P$ contains only vertices of a single wedge $L \in \mathcal{L}$. Observe now that the restriction of $F$ to $G[L]$ is a Steiner forest $\bar{F}_L$ for $(G[L], T'_L)$ and the restriction of $F$ to $(G \setminus (L \setminus\{x,y\})) \dagger \{x,y\}$ is a Steiner forest for $((G \setminus (L \setminus\{x,y\})) \dagger \{x,y\}, (T \setminus T'_L) \dagger \{x,y\})$. These forests jointly partition the edge set of $F$. Hence, $|E(\tilde{F}_L)| \leq |E(F)|$ by definition of $\bar{F}_L$ and $\hat{F}_L$. Therefore, the output is indeed a minimum Steiner forest for $(G,T)$ over all Steiner forests for which the $\ell$-lemon is intertwined.

\bigskip\noindent\textsf{\textbf{Part~(\ref{l-l-lemon-support-iii}).}}
First, compute a minimum Steiner forest $\tilde{F}$ that is intertwined with respect to the $\ell$-lemon using Part~(\ref{l-l-lemon-support-ii}). Next, we seek to find a minimum Steiner forest that is bi-stemmed with respect to the $\ell$-lemon. We first apply Lemma~\ref{l-wedge-transform} to turn all seeded wedges into juicy wedges. By abuse of notation, we still denote the resulting graph by $G$ and the resulting $\ell$-lemon by $\mathcal{L}$. By inspection of the proof of Lemma~\ref{l-wedge-transform}, we note that still $V(x,y,\mathcal{L}) = V(G)$.

Next, we seek the intersection of some minimum Steiner forest with the graph induced by all $\ell$-pulped wedges in $\mathcal{L}$, by branching on all possibilities. Let $\mathcal{L}'$ denote the set of $\ell$-pulped wedges in $\mathcal{L}$; thus, $\mathcal{L}' \subseteq \mathcal{L}$ and $|\mathcal{L}'| \leq \ell$. Let $C = V(x,y,\mathcal{L}') \setminus \{x,y\}$; thus $|C| \leq \ell (\ell -2)$. Branch on all possible assignments of each vertex in $C$ to $x$, $y$, or neither; this assignment will correspond to whether the vertex will be in a component of the solution that contains $x$, $y$, or neither. There are $O(3^{|C|})=O(3^{\ell (\ell-2)})$ branches. For each branch $b$, let $X_b$ denote the set of vertices in $C$ assigned to $x$, $Y_b$ those assigned to $y$, and $Z_b$ the remaining vertices in $C$.

We first focus on $X_b$ and $Y_b$. We skip the branch if $G[X_b \cup \{x\}]$ or $G[Y_b \cup \{y\}]$ is not connected, or if there is a terminal pair $(s,t) \in T$ such that either $s \in X_b$ and $t \in C \setminus X_b$, or $s \in Y_b$ and $t \in C \setminus Y_b$. This is safe, as we look for a Steiner forest that is bi-stemmed for the $\ell$-lemon. Compute a spanning tree $S_{X_b}$ of $G[X_b \cup \{x\}]$ and a spanning tree $S_{Y_b}$ of $G[Y_b \cup \{y\}]$. Let $T'_b \subseteq T$ be the set of terminal pairs $(s,t)$ in $T$ for which $s, t \not\in Z_b$. Let $G'_b = (G-C) \cup S_{X_b} \cup S_{Y_b}$. Note that this graph has treewidth at most~$2$, because $G-C$ is a juicy citrus and thus has treewidth~$2$ by Proposition~\ref{p-juicy-citrus}. We compute a minimum Steiner forest $\bar{F}^{XY}_b$ of $(G'_b, T'_b)$. This takes polynomial time by application of Lemma~\ref{l-tw}.

We now focus on $Z_b$. We skip the branch if there is a terminal pair $(s,t) \in T$ for which $s \in Z_b$ and $t \not\in Z_b$. If we did not skip the branch, it holds for each $(s,t)\in T \setminus T'_b$ that $s,t \in Z_b$. Consider some $L \in \mathcal{L}'$ and some connected component $D$ of $G[L \cap Z_b]$. We skip the branch if there is a terminal pair $(s,t) \in T \setminus T'_b$ for which $s \in D$ and $t \not\in D$. Let $T_D \subseteq T \setminus T'_b$ be the set of terminal pairs $(s,t) \in T \setminus T'_b$ with $s,t \in D$. Compute a minimum Steiner forest $\bar{F}^D_b$ for $(G[D], T_D)$. Since $L$ is an $\ell$-pulped wedge, we apply Lemma~\ref{l-exponential}, taking $2^{O(l)}$ time.

Let $\bar{F}_b$ denote the union of $\bar{F}^{XY}_b$ and all forests $\bar{F}^D_b$. Verify that $\bar{F}_b$ is indeed a Steiner forest for $(G,T)$; otherwise, set $\bar{F}_b$ to any spanning tree of $G$, making it a trivial Steiner forest for $(G,T)$. Output the minimum of $\tilde{F}$ and $\bar{F}_b$ over all branches $b$.

The algorithm clearly runs in $2^{O(\ell^2)}n^{O(1)}$ time. It always outputs a Steiner forest for $(G,T)$. Hence, it remains to prove optimality of the algorithm. 

Let $F$ be any minimum Steiner forest for $(G,T)$. If $F$ is intertwined with respect to the $\ell$-lemon, then $\hat{F}$ (and the output) will be a minimum Steiner forest for $(G,T)$. Hence, we assume that $F$ is bi-stemmed with respect to the $\ell$-lemon. Let $X_F$ denote the set of vertices in $C$ that are connected in $F$ to $x$ and let $Y_F$ denote the set of vertices in $C$ that are connected in $F$ to $y$. Let $Z_F = C \setminus (X_F \cup Y_F)$ be the remaining vertices. Note that there is a branch $b^*$ that corresponds to $X_F,Y_F,Z_F$. 

Consider the first part of the algorithm that focuses on $X_F,Y_F$. We claim that the branch $b^*$ is not skipped. Indeed, by definition $G[X_F \cup \{x\}]$ and $G[Y_F \cup \{y\}]$ are connected. If there is a terminal pair $(s,t) \in T$ such that $s \in X_F$ and $t \in C$, then since there is no path between $x$ and $y$ in $F$, $t \in X_F$. Similarly, for a terminal pair $(s,t) \in T$ such that $s \in Y_F$ and $t \in C$, it holds that $t \in Y_F$. Thus, the branch is not skipped. For any $(s,t) \in T'_b$, $s,t \in V(G) \setminus Z_F$ by construction. By definition, $F$ connects all vertices in $X_F$ to $x$. As $F$ is bi-stemmed with respect to the $\ell$-lemon, any path in $F$ between a vertex in $X_F$ and $x$ does not contain $y$. Hence, by minimality, the vertices in $X_F \cup \{x\}$ induce in $F$ a spanning tree of $G[X_F \cup \{x\}]$. Similarly, the vertices in $Y_F \cup \{y\}$ induce in $F$ a spanning tree of $G[Y_F \cup \{y\}]$. Hence, without loss of generality, we may assume from now on that $S_{X_F}$ and $S_{Y_F}$ are a subgraph of $F$. Therefore, $F - Z_F$ is a Steiner forest for $(G'_b,T'_b)$. Since $\bar{F}_b$ is a minimum such Steiner forest, it follows that $|E(\bar{F}_b)| \leq |E(F-Z)|$.

Next, consider the second part of the algorithm that focuses on $Z_F$. The branch $b^*$ is not skipped, because if there is a terminal pair $(s,t) \in T$ for which $s \in Z_F$, then since $s$ is not connected to $x$ or $y$, it holds that $t \in Z_F$. Let $L \in \mathcal{L}'$ and $D$ some connected component of $G[L \cap Z_F]$. Again, the branch $b^*$ is not skipped, because if $s \in D$ for some $(s,t) \in T \setminus T'_b$, then there is a path from $s$ to $t$ that is fully contained in $D$, and thus $t \in D$. By definition of $Z_F$, no vertex of $D$ is connected in $F$ to $x$ or $y$. Hence, a subset of the trees in $F$ is contained in $G[D]$ and this subset forms a Steiner forest $F^D$ of $(G[D], T_D)$. Therefore, $|E(\hat{F}^D_b)| \leq |E(F^D)|$. As the sets $T_D$ partition $T \setminus T'_{b^*}$, $|E(\bar{F}_{b^*})| \leq |E(F)|$ and the algorithm indeed outputs a minimum Steiner forest for $(G,T)$.
\end{proof}

\subsection{Bushes of Citruses}
We now combine several citruses.
\defCitrusBush*

\noindent
We extend the definition by the notion of a \emph{juicy citrus bush}, which is a citrus bush that consists of only juicy citruses.
It is important to note that some of $C_1,\ldots,C_{|E(G')|}$ may be empty, particularly if $xy \in E(G)$ for some $x,y \in X$. Moreover, a lemon bush is trivially an $\ell$-lemon bush for any integer $\ell \geq 5$.

\thmLemonTWtwo*
\begin{proof}
Let $T$ be a set of terminal pairs in $G$. We first apply Lemma~\ref{l-wedge-transform}; let $\tilde{G}$ be the resulting subgraph of $G$. By the proof of Lemma~\ref{l-wedge-transform}, $\tilde{G}$ is a juicy citrus bush flowering from a graph $\tilde{G}'$ of treewidth at most~$2$. Consider any tree decomposition of $\tilde{G}'$ of minimum width. For each edge of $\tilde{G}'$, we mark one unique bag that contains both endpoints $x,y$ of the edge. Let $L$ be the juicy wedge corresponding to $xy$; in particular, it has ends $x,y$. As in Proposition~\ref{p-juicy-citrus}, there is a tree decomposition of $G_{L} \cup \{xy\}$ of width~$2$ with a special bag containing exactly $x$ and $y$. We attach this special bag to the marked bag. Then we obtain a tree decomposition of $\tilde{G}$ of width~$2$ and we can apply Lemma~\ref{l-tw}.
\end{proof}
Note that the algorithm of Theorem~\ref{t-lemon-tw2} does not need to know $G'$. It suffices to apply Lemma~\ref{l-wedge-transform}, verify that the resulting graph has treewidth~$2$ in polynomial time~\cite{VTL82}, and then apply Lemma~\ref{l-tw}.

We can prove slightly weaker versions of Theorem~\ref{t-lemon-tw2} for $\ell$-lemon bushes for any fixed integer $\ell$ in case it flowers from a path or cycle.

\thmPathbush*
\begin{proof}
Let $G'$ be a path. Let $G$ be an $\ell$-lemon bush flowering from $G'$ and let $T$ be a set of terminal pairs in $G$. We can apply Lemma~\ref{l-2con} to reduce to the case of a single $\ell$-lemon. For this $\ell$-lemon, we can apply Lemma~\ref{l-l-lemon-support}(\ref{l-l-lemon-support-iii}).
\end{proof}
We immediately generalize to the case that $G'$ is a cycle. For the sake of clarity, in the following theorem statement, by a cycle we mean a cycle of length at least~$3$. The case of a degenerate cycle corresponds to the case that the graph is a single $\ell$-lemon, covered by Lemma~\ref{l-l-lemon-support}(\ref{l-l-lemon-support-iii}).

\thmCyclebush*
\begin{proof}
Let $G'$ be a cycle. Let $G$ be an $\ell$-lemon bush flowering from $G'$ and let $T$ be a set of terminal pairs. Let $F$ be a minimum Steiner forest for $(G,T)$. Observe that $F$ does not contain a cycle, and thus at least one $\ell$-lemon of $G$ must be bi-stemmed. In our search for a minimum Steiner forest for $(G,T)$, we consider three cases, each considering a different number of bi-stemmed $\ell$-lemons.

\medskip
\noindent{\textsf{\textbf{Case 1:}}} \emph{A single bi-stemmed $\ell$-lemon.}
\par\smallskip\noindent
The intuition of our approach is to guess the bi-stemmed $\ell$-lemon and, after projecting terminal pairs appropriately, solve for this $\ell$-lemon and the remaining ones separately. Crucially, if our guess is correct, there is a path in the solution between the ends of the guessed $\ell$-lemon that avoids the guessed $\ell$-lemon (except the ends). We make use of this while projecting the terminal pairs and ensure this path indeed exists in our computed solution.

We branch over all edges in $G'$. Let $xy$ be the edge of $G'$ corresponding to the current branch. Let $\mathcal{L}_{xy}$ denote the $\ell$-lemon corresponding to $xy$ and let $C_{xy} = V(x,y,\mathcal{L})$. We create two new sets $T_{xy}$ and $T'_{xy}$ of terminals as follows. Let $(s,t) \in T$. If $s,t \in C_{xy}$, then add $(s,t)$ to $T_{xy}$. If $s,t \not\in C_{xy}$, then add $(s,t)$ to $T'_{xy}$. If $s \in C_{xy}$ and $t \not\in C_{xy}$, then add $(s,x)$ to $T_{xy}$ and $(t,x)$ to $T'_{xy}$. Finally, add $(x,y)$ to $T'_{xy}$. Compute a minimum Steiner forest $\hat{F}_{xy}$ for $(G[C_{xy}] \dagger \{x,y\}, T_{xy} \dagger \{x,y\})$ and compute a minimum Steiner forest $\hat{F}'_{xy}$ for $(G-(C_{xy} \setminus\{x,y\}) - \{xy\}, T'_{xy})$. This can be done in $2^{O(\ell^2)} n^{O(1)}$ time: $G[C_{xy}]$ is a single $\ell$-lemon and thus we can apply Lemma~\ref{l-l-lemon-support}(\ref{l-l-lemon-support-i}); $G-(C_{xy} \setminus\{x,y\}) - \{xy\}$ is an $\ell$-lemon bush flowering from a path and thus we can apply Theorem~\ref{t-pathbush}. Output the minimum $\hat{F}_{xy} \cup \hat{F}'_{xy}$ over all edges $xy$ of $G'$; we note that this union is performed over the edges, as there is a clear mapping from the edges of $G[C_{xy}] \dagger \{x,y\}$ to those of $G[C_{xy}]$.

The algorithm clearly runs in $2^{O(\ell^2)} n^{O(1)}$ time. Next, we argue that the algorithm outputs a Steiner forest for $(G,T)$. Consider any branch and let $xy$ be the edge of $G'$ of the branch. Let $(s,t) \in T$. If $s,t\not\in C_{xy}$, then $s$ and $t$ are connected by $\hat{F}'_{xy}$. If $s,t \in C_{xy}$, then $s$ and $t$ are connected by $\hat{F}_{xy}$. If $s \in C_{xy}$ and $t \not\in C_{xy}$, then there is a path between $t$ and $x$ in $\hat{F}'_{xy}$, as $(t,x) \in T'_{xy}$. There is also a path between $s$ and $x$ or $y$, as $(s,x) \in T_{xy}$; note that the path may indeed go to $y$, as $\hat{F}_{xy}$ considers $T_{xy} \dagger \{x,y\}$. There is also a path between $x$ and $y$ in $\hat{F}'_{xy}$, as $(x,y) \in T'_{xy}$. Hence, there is a path between $s$ and $t$ in $\hat{F}_{xy} \cup \hat{F}'_{xy}$. By the construction of $G[C_{xy}] \dagger \{x,y\}$, $\hat{F}_{xy}$ is bi-stemmed. Therefore, $\hat{F}_{xy} \cup \hat{F}'_{xy}$ is a Steiner forest for $(G,T)$.

Finally, we prove optimality. Let $F$ be a minimum Steiner forest for $(G,T)$ and suppose that $F$ is bi-stemmed with respect to exactly one $\ell$-lemon. Let this $\ell$-lemon correspond to the edge $xy$ of $G'$. Let $F_{xy}$ be the subgraph of $F$ in $G[C_{xy}]$ and let $F'_{xy}$ be the subgraph of $F$ in $G-(C_{xy} \setminus\{x,y\}) - \{xy\}$. We focus again on the corresponding edge sets. Note that $E(F_{xy})$ and $E(F'_{xy})$ partition $E(F)$. Moreover, as $F$ is only bi-stemmed with respect to the $\ell$-lemon corresponding to $xy$, $F$ contains a path $P$ between $x$ and $y$ that contains no internal vertex from $C_{xy}$ (and not the edge $xy$ if it exists). Thus, $P$ is in $F'_{xy}$.

We now first show that (the edge set of) $F_{xy}$ is a Steiner forest for $(G[C_{xy}] \dagger \{x,y\}, T_{xy} \dagger \{x,y\})$. To this end, we consider the two ways in which a terminal pair ended up in $T_{xy}$. First, let $(s,t) \in T$ such that $s,t\in C_{xy}$. Then $(s,t) \in T_{xy}$. Moreover, $F$ contains a path between $s$ and $t$ that either is fully contained in $G[C_{xy}]$ or goes from $s$ to $x$, via $P$ to $y$, and finally to $t$. Hence, $F_{xy}$ contains a path in $G[C_{xy}] \dagger \{x,y\}$ between $s$ and $t$. Second, let $(s,t) \in T$ such that $s \in C_{xy}$ and $t \not\in C_{xy}$. Then $(s,x) \in T_{xy}$. Moreover, either $F$ contains a path in $G[C_{xy}]$ between $s$ and $x$ or a path in $G[C_{xy}]$ between $s$ and $y$. In both cases, $F_{xy}$ contains a path in $G[C_{xy}] \dagger \{x,y\}$ between $s$ and $x$. Hence, $F_{xy}$ is indeed a Steiner forest for $(G[C_{xy}] \dagger \{x,y\}, T_{xy} \dagger \{x,y\})$. Therefore, $|E(\hat{F}_{xy})| \leq |E(F_{xy})|$.

We next show that $F'_{xy}$ is a Steiner forest for $(G-(C_{xy} \setminus\{x,y\}) - \{xy\}, T'_{xy})$. For simplicity, let $G'_{xy} = G-(C_{xy} \setminus\{x,y\}) - \{xy\}$. We consider the three ways in which a terminal pair ended up in $T'_{xy}$. First, let $(s,t) \in T$ such that $s,t \not\in C_{xy}$. Then $(s,t) \in T'_{xy}$. Moreover, since $F$ is bi-stemmed with respect to the $\ell$-lemon corresponding to $xy$ and $\{x,y\}$ is a cutset, $F'_{xy}$ contains a path between $s$ and $t$. Second, let $(s,t) \in T$ such that $s \in C_{xy}$ and $t \not\in C_{xy}$. Then $(t,x) \in T'_{xy}$. Moreover, either $F$ contains a path in $G'_{xy}$ between $t$ and $x$ or a path in $G'_{xy}$ between $t$ and $y$; in the latter case, the addition of $P$ shows that in fact there is also a path in $G'_{xy}$ between $t$ and $x$. Finally, the pair $(x,y)$ is satisfied by $P$. Hence, $F'_{xy}$ is a Steiner forest for $(G'_{xy}, T'_{xy})$. Therefore, $|E(\hat{F}'_{xy})| \leq |E(F'_{xy})|$.

In conclusion, $|E(\hat{F}_{xy})| + |E(\hat{F}'_{xy})| \leq |E(F_{xy})| + |E(F'_{xy})| = |E(F)|$. Hence, the Steiner forest returned in this case is a minimum Steiner forest for $(G,T)$.

\medskip
\noindent{\textsf{\textbf{Case 2:}}} \emph{Two bi-stemmed $\ell$-lemons.}
\par\smallskip\noindent
The intuition of our approach is to guess the bi-stemmed $\ell$-lemons and, after projecting terminal pairs appropriately, solve for these $\ell$-lemons and the remaining ones separately. Crucially, if our guess is correct, there are two paths in the solution between the ends of the guessed $\ell$-lemons that avoid the guessed $\ell$-lemons (except the ends) and each end of each $\ell$-lemon is the end of exactly one of these paths. We make use of this while projecting the terminal pairs and ensure these paths indeed exist in our computed solution.

We branch over all pairs of distinct edges in $G'$. Let $(e_1,e_2)$ be the pair of edges of $G'$ corresponding to the current branch. Let $e_1=x_1y_1$ and $e_2=x_2y_2$; without loss of generality, we assume that going around the cycle $G'$ (in an arbitrary direction that we define to be clockwise) we encounter $x_1,y_1,x_2,y_2$. For $i=1,2$, let $E'^i_{e_1e_2}$ denote the (possibly empty) set of edges of $G'$ that we encounter between $y_i$ and $x_{3-i}$ by traversing the cycle $G'$ clockwise. For $i=1,2$, let $\mathcal{L}_i$ be the $\ell$-lemon corresponding to $e_i$ and let $C^{i}_{e_1e_2} = V(x_i,y_i,\mathcal{L}_i)$  (note that $x_i,y_i \in C^{i}_{e_1e_2}$). We create three new sets $T_{e_1e_2}$, $T^1_{e_1e_2}$, and $T^2_{e_1e_2}$ of terminals as follows. Let $(s,t) \in T$. If $s$ is in an $\ell$-lemon corresponding to an edge in $E'^1_{e_1e_2}$ and $t$ is in an $\ell$-lemon corresponding to an edge in $E'^2_{e_1e_2}$, then we skip the branch. Now let $i=1,2$. There are five cases (which collapse to four):
\begin{itemize}
\item If $s,t \in C^{i}_{e_1e_2}$, or if $s \in C^i_{e_1e_2}$ and $t \in C^{3-i}_{e_1e_2}$, then add $(s,t)$ to $T_{e_1e_2}$. (These two cases have the same outcome.)
\item If $s \in C^i_{e_1e_2}$ and $t$ is in an $\ell$-lemon corresponding to an edge in $E'^i_{e_1e_2}$, then add $(s,y_i)$ to $T_{e_1e_2}$ and $(y_i,t)$ to $T^i_{e_1e_2}$. 
\item If $s \in C^i_{e_1e_2}$ and $t$ is in an $\ell$-lemon corresponding to an edge in $E'^{3-i}_{e_1e_2}$, then add $(s,x_i)$ to $T_{e_1e_2}$ and $(x_i,t)$ to $T^{3-i}_{e_1e_2}$. 
\item If $s,t$ are both in an $\ell$-lemon corresponding to an edge in $E^i_{e_1e_2}$ (note that these may be two different $\ell$-lemons), then add $(s,t)$ to $T^i_{e_1e_2}$.
\end{itemize}
Finally, add $(y_1,x_2)$ to $T^1_{e_1e_2}$ and $(y_2,x_1)$ to $T^2_{e_1e_2}$.

Next, we compute three Steiner forests:
\begin{itemize}
\item Let $G^\dagger_{e_1e_2}=(G[C^1_{e_1e_2}] \cup G[C^2_{e_1e_2}])\dagger \{y_1,x_2\} \dagger \{y_2,x_1\}$, meaning we both identify $y_1$ and $x_2$ and identify $y_2$ and $x_1$. Let $T^\dagger_{e_1e_2} = T_{e_1e_2} \dagger \{y_1,x_2\} \dagger \{y_2,x_1\}$. $\hat{F}^\dagger_{e_1e_2}$ is a minimum Steiner forest for $(G^\dagger_{e_1e_2}, T^\dagger_{e_1e_2})$;
\item Let $G^1_{e_1e_2}$ denote the subgraph of $G$ corresponding to the $\ell$-lemons corresponding to the edges in $E'^1_{e_1e_2}$. $\hat{F}^1_{e_1e_2}$ is a minimum Steiner forest for $(G^1_{e_1e_2},T^1_{e_1e_2})$;
\item Let $G^2_{e_1e_2}$ denote the subgraph of $G$ corresponding to the $\ell$-lemons corresponding to the edges in $E'^2_{e_1e_2}$. $\hat{F}^2_{e_1e_2}$ is a minimum Steiner forest for $(G^2_{e_1e_2},T^2_{e_1e_2})$.
\end{itemize}
The first can be computed in $2^{O(\ell^2)} n^{O(1)}$ time by Lemma~\ref{l-l-lemon-support}(\ref{l-l-lemon-support-iii}), as $G'_{e_1e_2}$ is a $2\ell$-lemon. The latter two graphs, $G^1_{e_1e_2}$ and $G^2_{e_1e_2}$, form an $\ell$-lemon bush flowering from a path, and thus the requested forests can be computed in $2^{O(\ell^2)} n^{O(1)}$ time by Theorem~\ref{t-pathbush}. Output the minimum $\hat{F}_{e_1e_2} = \hat{F}^\dagger_{e_1e_2} \cup \hat{F}^1_{e_1e_2} \cup \hat{F}^2_{e_1e_2}$ over all pairs $(e_1,e_2)$ of edges of $G'$; we note that this union is performed over the edges, as there is a clear mapping from the edges of $G^\dagger_{e_1e_2}$ to those of $G[C^1_{e_1e_2}]$ and $G[C^2_{e_1e_2}]$.

The algorithm clearly runs in $2^{O(\ell^2)} n^{O(1)}$ time. Next, we argue that, for any branch (corresponding to a pair of edges $(e_1,e_2)$) that is not skipped, the algorithm outputs a Steiner forest for $(G,T)$. Let $x$ respectively $y$ denote the vertex to which $y_1$ and $x_2$ respectively $y_2$ and $x_1$ were identified in $G^\dagger_{e_1e_2}$. Since $(y_1,x_2) \in T^1_{e_1e_2}$ respectively $(y_2,x_1) \in T^2_{e_1e_2}$, there exist paths $\hat{P}^1_{e_1e_2}$ respectively $\hat{P}^2_{e_1e_2}$ in $\hat{F}^1_{e_1e_2}$ respectively $\hat{F}^2_{e_1e_2}$, and thus both paths are in $\hat{F}_{e_1e_2}$.
Let $(s,t) \in T$ and let $i=1,2$. If $s$ is in an $\ell$-lemon corresponding to an edge in $E'^1_{e_1e_2}$ and $t$ is in an $\ell$-lemon corresponding to an edge in $E'^2_{e_1e_2}$, then we have skipped the branch. Let $i=1,2$. We consider the five remaining cases of the location of $s$ and $t$:
\begin{itemize}
\item If $s,t \in C^{i}_{e_1e_2}$, then as $(s,t) \in T_{e_1e_2}$, there is a path $Q$ in $\hat{F}^\dagger_{e_1e_2}$ between $s$ and $t$. If $Q$ contains only edges of $G[C^{i}_{e_1e_2}]$, then $Q$ also exists in $\hat{F}_{e_1e_2}$. If $Q$ contains edges of $G[C^{3-i}_{e_1e_2}]$, then $\hat{F}^\dagger_{e_1e_2}$ is intertwined with respect to the $\ell$-lemon corresponding to $e_{3-i}$. Adding $\hat{P}^1_{e_1e_2}$ and $\hat{P}^2_{e_1e_2}$ to $Q$ then yields a path in $\hat{F}_{e_1e_2}$ between $s$ and $t$.
\item If $s \in C^i_{e_1e_2}$ and $t \in C^{3-i}_{e_1e_2}$, then as $(s,t) \in T_{e_1e_2}$, there is a path $Q$ in $\hat{F}^\dagger_{e_1e_2}$ between $s$ and $t$. We can assume that $Q$ starts in $s$, goes via edges of $G[C^i_{e_1e_2}]$, then via edges of $G[C^{3-i}_{e_1e_2}]$, and reaches $t$. In particular, the edges of $G[C^i_{e_1e_2}]$ and $G[C^{3-i}_{e_1e_2}]$ do not appear mixed in $Q$. Hence, $Q$ reaches $x$ or $y$ during the transition from $G[C^i_{e_1e_2}]$ to $G[C^{3-i}_{e_1e_2}]$. In these cases, adding $\hat{P}^1_{e_1e_2}$ respectively $\hat{P}^2_{e_1e_2}$ to $Q$ yields a path in $\hat{F}_{e_1e_2}$ between $s$ and $t$.
\item If $s \in C^i_{e_1e_2}$ and $t$ is in an $\ell$-lemon corresponding to an edge in $E'^i_{e_1e_2}$, then as $(s,y_i) \in T_{e_1e_2}$, there is a path $Q$ in $\hat{F}^\dagger_{e_1e_2}$ between $s$ and (the vertex corresponding to) $y_i$. If $Q$ contains only edges of $G[C^{i}_{e_1e_2}]$, then $Q$ also exists in $\hat{F}_{e_1e_2}$. If $Q$ contains edges of $G[C^{3-i}_{e_1e_2}]$, then $\hat{F}^\dagger_{e_1e_2}$ is intertwined with respect to the $\ell$-lemon corresponding to $e_{3-i}$; adding $\hat{P}^1_{e_1e_2}$ and $\hat{P}^2_{e_1e_2}$ to $Q$ then yields a path in $\hat{F}_{e_1e_2}$ between $s$ and $y_i$. Since $(y_i,t) \in T^i_{e_1e_2}$, there is a path in $\hat{F}^i_{e_1e_2}$ between $y_i$ and $t$. Hence, there is a path in $\hat{F}_{e_1e_2}$ between $s$ and $t$.
\item If $s \in C^i_{e_1e_2}$ and $t$ is in an $\ell$-lemon corresponding to an edge in $E'^{3-i}_{e_1e_2}$, then the argument is similar as above.
\item If $s,t$ are both in an $\ell$-lemon corresponding to an edge in $E'^i_{e_1e_2}$, then there is a path in $\hat{F}^i_{e_1e_2}$, and thus in $\hat{F}_{e_1e_2}$, between $s$ and $t$.
\end{itemize}
By the construction of $G^\dagger_{e_1e_2}$, $\hat{F}^\dagger_{e_1e_2}$ may be intertwined with respect to the $\ell$-lemon corresponding to $e_1$, but then it will not be with respect to the $\ell$-lemon corresponding to $e_2$ and vice versa. Hence, $\hat{F}_{e_1e_2}$ is a forest. It follows that $\hat{F}_{e_1e_2}$ is a Steiner forest for $(G,T)$.

Finally, we prove optimality. Let $F$ be a minimum Steiner forest for $(G,T)$ and suppose that $F$ is bi-stemmed with respect to exactly two $\ell$-lemons. Let these $\ell$-lemons correspond to the edges $e_1,e_2$ of $G'$. As a first step, we note that the pair $(e_1,e_2)$ is not skipped. Indeed, suppose there is a pair $(s,t) \in T$ such that $s$ is in an $\ell$-lemon corresponding to an edge in $E'^1_{e_1e_2}$ and $t$ is in an $\ell$-lemon corresponding to an edge in $E'^2_{e_1e_2}$. As $F$ is bi-stemmed with respect to both the $\ell$-lemon corresponding to $e_1$ and $e_2$, there is no path in $F$ between $x_1$ and $y_1$ and no path between $x_2$ and $y_2$. Hence, there can be no path in $F$ between $s$ and $t$, a contradiction.

Let $F^\dagger_{e_1e_2}$ be the subgraph of $F$ in $G[C^1_{e_1e_2}] \cup G[C^2_{e_1e_2}]$, let $F^1_{e_1e_2}$ be the subgraph of $F$ in $G^1_{e_1e_2}$, and let $F^2_{e_1e_2}$ be the subgraph of $F$ in $G^2_{e_1e_2}$. We focus again on the corresponding edge sets, particularly for $F^\dagger_{e_1e_2}$. Note that $E(F^\dagger_{e_1e_2})$, $E(F^1_{e_1e_2})$, $E(F^2_{e_1e_2})$ partition $E(F)$. Moreover, as $F$ is bi-stemmed only with respect to the $\ell$-lemons corresponding to $e_1$ and $e_2$, $F$ contains a path $P^1_{e_1e_2}$ between $y_1$ and $x_2$ that contains no edges from $G[C^1_{e_1e_2}] \cup G[C^2_{e_1e_2}]$ and a path $P^2_{e_1e_2}$ between $y_2$ and $x_1$ that contains no edges from $G[C^1_{e_1e_2}] \cup G[C^2_{e_1e_2}]$. Thus, $P^1_{e_1e_2}$ is in $F^1_{e_1e_2}$ and $P^2_{e_1e_2}$ is in $F^2_{e_1e_2}$.

We now show that $F^\dagger_{e_1e_2}$ is a Steiner forest for $(G^\dagger_{e_1e_2},T^\dagger_{e_1e_2})$. To this end, we consider the four ways in which a terminal pair ended up in $T^\dagger_{e_1e_2}$. Let $(s,t) \in T$ and let $i=1,2$. 
\begin{itemize}
\item Suppose that $s,t \in C^i_{e_1e_2}$. Then $(s,t) \in T^\dagger_{e_1e_2}$. Since $F$ is bi-stemmed with respect to $e_{3-i}$, there is no path in $F$ between $x_{3-i}$ and $y_{3-i}$. Hence, there is a path in $F$ between $s$ and $t$ that is restricted to $G[C^i_{e_1e_2}]$ and thus is contained in $F^\dagger_{e_1e_2}$. 
\item Suppose that $s \in C^i_{e_1e_2}$ and $t \in C^{3-i}_{e_1e_2}$. Then $(s,t) \in T^\dagger_{e_1e_2}$. Since $F$ is bi-stemmed with respect to $e_1$ and $e_2$, there is no path in $F$ between $x_1$ and $y_1$ and between $x_2$ and $y_2$. Hence, either there is a path in $F$ between $s$ and $t$ that goes from $s$ to $y_i$, via $P^i_{e_1e_2}$ to $x_{3-i}$, to $t$; or there is a path in $F$ between $s$ and $t$ that goes from $s$ to $x_i$, via $P^{3-i}_{e_1e_2}$ to $y_{3-i}$, to $t$. Since $y_i$ and $x_{3-i}$ as well as $x_i$ and $y_{3-i}$ are identified in $G^\dagger_{e_1e_2}$, there is a path in $F^\dagger_{e_1e_2}$ between $s$ and $t$.
\item Suppose that $s \in C^i_{e_1e_2}$ and $t$ is in an $\ell$-lemon corresponding to an edge in $E'^i_{e_1e_2}$. Then $(s,y_i) \in T^\dagger_{e_1e_2}$. Since $F$ is bi-stemmed with respect to $e_i$ and $e_{3-i}$, there is no path in $F$ between $x_i$ and $y_i$ and between $x_{3-i}$ and $y_{3-i}$. Hence, the path in $F$ between $s$ and $t$ must go through $y_i$, and there is a path in $F^\dagger_{e_1e_2}$ between $s$ and $y_i$.
\item Suppose that $s \in C^i_{e_1e_2}$ and $t$ is in an $\ell$-lemon corresponding to an edge in $E'^{3-i}_{e_1e_2}$. Similar as before, we can show there is a path in $F^\dagger_{e_1e_2}$ between $s$ and $x_i$.
\end{itemize}
Hence, $F^\dagger_{e_1e_2}$ is indeed a Steiner forest for $(G^\dagger_{e_1e_2},T^\dagger_{e_1e_2})$. Therefore, $|E(\hat{F}^\dagger_{e_1e_2})| \leq |E(F^\dagger_{e_1e_2})|$.

We next show, for $i=1,2$, that $F^i_{e_1e_2}$ is a Steiner forest for $(G^i_{e_1e_2},T^i_{e_1e_2})$. To this end, we consider the three ways in which a terminal pair ended up in $T^i_{e_1e_2}$. Let $(s,t) \in T$.
\begin{itemize}
\item If $s \in C^i_{e_1e_2}$ and $t$ is in an $\ell$-lemon corresponding to an edge in $E^i_{e_1e_2}$, then $(y_i,t) \in T^i_{e_1e_2}$. Since $F$ is bi-stemmed with respect to $e_1$ and $e_{2}$, there is no path in $F$ between $x_1$ and $y_1$ and between $x_{2}$ and $y_{2}$. Hence, the path in $F$ between $s$ and $t$ must go through $y_i$, and there is a path in $F^i_{e_1e_2}$ between $y_i$ and $t$.
\item If $s \in C^{3-i}_{e_1e_2}$ and $t$ is in an $\ell$-lemon corresponding to an edge in $E^i_{e_1e_2}$, then a similar argument as before shows there is a path in $F^{i}_{e_1e_2}$ between $x_i$ and $t$.
\item If $s$ and $t$ are both in an $\ell$-lemon corresponding to an edge in $E^i_{e_1e_2}$, then $(s,t) \in T^i_{e_1e_2}$. Since $F$ is bi-stemmed with respect to $e_1$ and $e_{2}$, there is no path in $F$ between $x_1$ and $y_1$ and between $x_{2}$ and $y_{2}$. Hence, the path in $F$ between $s$ and $t$ must be in $G^i_{e_1e_2}$, and thus there is a path in $F^i_{e_1e_2}$ between $s$ and $t$.
\end{itemize}
The pair $(y_i,x_{3-i})$ is satisfied by $P^i_{e_1e_2}$. Hence, $F^i_{e_1e_2}$ is indeed a Steiner forest for $(G^i_{e_1e_2},T^i_{e_1e_2})$. Therefore, $|E(\hat{F}^i_{e_1e_2})| \leq |E(F^i_{e_1e_2})|$.

In conclusion, $|E(\hat{F}_{e_1e_2})| + |E(\hat{F}^1_{e_1e_2})| + |E(\hat{F}^2_{e_1e_2})| \leq |E(F_{e_1e_2})| + |E(F^1_{e_1e_2})| + |E(F^1_{e_1e_2})| = |E(F)|$. Hence, the Steiner forest returned in this case is a minimum Steiner forest for $(G,T)$.

\medskip
\noindent{\textsf{\textbf{Case 3:}}} \emph{More than two bi-stemmed $\ell$-lemons.}
\par\smallskip\noindent
The intuition of our approach is roughly the same as in the case of exactly two bi-stemmed $\ell$-lemons. We guess two bi-stemmed $\ell$-lemons that are closest on the cycle. After projecting terminal pairs appropriately, solve for these $\ell$-lemons and the remaining ones separately. Crucially, if our guess is correct, there is a path in the solution between an end of one guessed lemon and an end of the other that avoids the guessed $\ell$-lemons (except the ends), but in contrast to the previous case, this does not hold for the other pair of ends. We make use of this while projecting the terminal pairs and ensure this path indeed exist in our computed solution. 

We branch over all pairs of distinct edges in $G'$. Let $(e_1,e_2)$ be the pair of edges of $G'$ corresponding to the current branch. Let $e_1=x_1y_1$ and $e_2=x_2y_2$; without loss of generality, we assume that going around the cycle $G'$ (in an arbitrary direction that we define to be clockwise) we encounter $x_1,y_1,x_2,y_2$. For $i=1,2$, let $E'^i_{e_1e_2}$ denote the (possibly empty) set of edges of $G'$ that we encounter between $y_i$ and $x_{3-i}$ by traversing the cycle $G'$ clockwise. For $i=1,2$, let $\mathcal{L}_i$ be the $\ell$-lemon corresponding to $e_i$ and let $C^{i}_{e_1e_2} = V(x_i,y_i,\mathcal{L}_i)$  (note that $x_i,y_i \in C^{i}_{e_1e_2}$). We create three new sets $T_{e_1e_2}$, $T^1_{e_1e_2}$, and $T^2_{e_1e_2}$ of terminals as follows. Let $(s,t) \in T$. If $s$ is in an $\ell$-lemon corresponding to an edge in $E'^1_{e_1e_2}$ and $t$ is in an $\ell$-lemon corresponding to an edge in $E'^2_{e_1e_2}$, then we skip the branch. Now let $i=1,2$. There are five cases (which collapse to four outcomes):
\begin{itemize}
\item If $s,t \in C^{i}_{e_1e_2}$, or if $s \in C^i_{e_1e_2}$ and $t \in C^{3-i}_{e_1e_2}$, then add $(s,t)$ to $T_{e_1e_2}$. (These two cases have the same outcome.)
\item If $s \in C^i_{e_1e_2}$ and $t$ is in an $\ell$-lemon corresponding to an edge in $E'^i_{e_1e_2}$, then add $(s,y_i)$ to $T_{e_1e_2}$ and $(y_i,t)$ to $T^i_{e_1e_2}$. 
\item If $s \in C^i_{e_1e_2}$ and $t$ is in an $\ell$-lemon corresponding to an edge in $E'^{3-i}_{e_1e_2}$, then add $(s,x_i)$ to $T_{e_1e_2}$ and $(x_i,t)$ to $T^{3-i}_{e_1e_2}$. 
\item If $s,t$ are both in an $\ell$-lemon corresponding to an edge in $E^i_{e_1e_2}$ (note that these may be two different $\ell$-lemons), then add $(s,t)$ to $T^i_{e_1e_2}$.
\end{itemize}
Finally, add $(y_1,x_2)$ to $T^1_{e_1e_2}$.

Next, we compute three Steiner forests:
\begin{itemize}
\item Let $G^\dagger_{e_1e_2}=(G[C^1_{e_1e_2}] \cup G[C^2_{e_1e_2}])\dagger \{y_1,x_2\}$. Let $T^\dagger_{e_1e_2} = T_{e_1e_2} \dagger \{y_1,x_2\}$. $\hat{F}^\dagger_{e_1e_2}$ is a minimum Steiner forest for $(G^\dagger_{e_1e_2}, T^\dagger_{e_1e_2})$;
\item Let $G^1_{e_1e_2}$ denote the subgraph of $G$ corresponding to the $\ell$-lemons corresponding to the edges in $E'^1_{e_1e_2}$. $\hat{F}^1_{e_1e_2}$ is a minimum Steiner forest for $(G^1_{e_1e_2},T^1_{e_1e_2})$;
\item Let $G^2_{e_1e_2}$ denote the subgraph of $G$ corresponding to the $\ell$-lemons corresponding to the edges in $E'^2_{e_1e_2}$. $\hat{F}^2_{e_1e_2}$ is a minimum Steiner forest for $(G^2_{e_1e_2},T^2_{e_1e_2})$.
\end{itemize}
In all three cases, we obtain an $\ell$-lemon bush flowering from a path, and thus the requested forests can be computed in $2^{O(\ell^2)} n^{O(1)}$ time by Theorem~\ref{t-pathbush}. Let $\hat{F}_{e_1e_2}$ be the subgraph obtained from $\hat{F}^\dagger_{e_1e_2} \cup \hat{F}^1_{e_1e_2} \cup \hat{F}^2_{e_1e_2}$ (we note that this union is performed over the edges, as there is a clear mapping from the edges of $G^\dagger_{e_1e_2}$ to those of $G[C^1_{e_1e_2}]$ and $G[C^2_{e_1e_2}]$) by taking a spanning tree of each of its components. Output the minimum $\hat{F}_{e_1e_2}$ over all pairs $(e_1,e_2)$ of edges of $G'$.

The algorithm clearly runs in $2^{O(\ell^2)} n^{O(1)}$ time. Next, we argue that, for any branch (corresponding to a pair of edges $(e_1,e_2)$) that is not skipped, the algorithm outputs a Steiner forest for $(G,T)$. Let $z$ denote the vertex to which $y_1$ and $x_2$ were identified in $G^\dagger_{e_1e_2}$. Since $(y_1,x_2) \in T^1_{e_1e_2}$, there exists a path $\hat{P}_{e_1e_2}$ in $\hat{F}^1_{e_1e_2}$, and in $\hat{F}_{e_1e_2}$, between $y_1$ and $x_2$.
Let $(s,t) \in T$ and let $i=1,2$. If $s$ is in an $\ell$-lemon corresponding to an edge in $E'^1_{e_1e_2}$ and $t$ is in an $\ell$-lemon corresponding to an edge in $E'^2_{e_1e_2}$, then we have skipped the branch. Let $i=1,2$. We consider the five remaining cases of the location of $s$ and $t$:
\begin{itemize}
\item If $s,t \in C^{i}_{e_1e_2}$, then as $(s,t) \in T_{e_1e_2}$, there is a path $Q$ in $\hat{F}^\dagger_{e_1e_2}$, and in $\hat{F}_{e_1e_2}$, between $s$ and $t$.
\item If $s \in C^i_{e_1e_2}$ and $t \in C^{3-i}_{e_1e_2}$, then as $(s,t) \in T_{e_1e_2}$, there is a path $Q$ in $\hat{F}^\dagger_{e_1e_2}$ between $s$ and $t$. We can assume that $Q$ starts in $s$, goes via edges of $G[C^i_{e_1e_2}]$, then via edges of $G[C^{3-i}_{e_1e_2}]$, and reaches $t$. In particular, the edges of $G[C^i_{e_1e_2}]$ and $G[C^{3-i}_{e_1e_2}]$ do not appear mixed in $Q$. Hence, $Q$ reaches $z$ during the transition from $G[C^i_{e_1e_2}]$ to $G[C^{3-i}_{e_1e_2}]$. Adding $\hat{P}_{e_1e_2}$ to $Q$ in the place of $z$ yields a path in $\hat{F}_{e_1e_2}$ between $s$ and $t$.
\item If $s \in C^i_{e_1e_2}$ and $t$ is in an $\ell$-lemon corresponding to an edge in $E'^i_{e_1e_2}$, then as $(s,y_i) \in T_{e_1e_2}$, there is a path $Q$ in $\hat{F}^\dagger_{e_1e_2}$, and in $\hat{F}_{e_1e_2}$, between $s$ and (the vertex corresponding to) $y_i$. Since $(y_i,t) \in T^i_{e_1e_2}$, there is a path in $\hat{F}^i_{e_1e_2}$ between $y_i$ and $t$. Hence, there is a path in $\hat{F}_{e_1e_2}$ between $s$ and $t$.
\item If $s \in C^i_{e_1e_2}$ and $t$ is in an $\ell$-lemon corresponding to an edge in $E'^{3-i}_{e_1e_2}$, then the argument is similar as above.
\item If $s,t$ are both in an $\ell$-lemon corresponding to an edge in $E'^i_{e_1e_2}$, then there is a path in $\hat{F}^i_{e_1e_2}$, and thus in $\hat{F}_{e_1e_2}$, between $s$ and $t$.
\end{itemize}
It follows that $\hat{F}_{e_1e_2}$ is a Steiner forest for $(G,T)$.

Finally, we prove optimality. Let $F$ be a minimum Steiner forest for $(G,T)$ and suppose that $F$ is bi-stemmed with respect to more than two $\ell$-lemons. Consider two such $\ell$-lemons whose corresponding edges $e_1,e_2$ are closest on the cycle $G'$. We assume the indexing is such that there is no edge of $G'$ that is clockwise from $e_1$ to $e_2$ and corresponds to an $\ell$-lemon for which $F$ is bi-stemmed.

As a first step, we note that the pair $(e_1,e_2)$ is not skipped. Indeed, suppose there is a pair $(s,t) \in T$ such that $s$ is in an $\ell$-lemon corresponding to an edge in $E'^1_{e_1e_2}$ and $t$ is in an $\ell$-lemon corresponding to an edge in $E'^2_{e_1e_2}$. As $F$ is bi-stemmed with respect to both the $\ell$-lemon corresponding to $e_1$ and $e_2$, there is no path in $F$ between $x_1$ and $y_1$ and no path between $x_2$ and $y_2$. Hence, there can be no path in $F$ between $s$ and $t$, a contradiction.

Let $F^\dagger_{e_1e_2}$ be the subgraph of $F$ in $G[C^1_{e_1e_2}] \cup G[C^2_{e_1e_2}]$, let $F^1_{e_1e_2}$ be the subgraph of $F$ in $G^1_{e_1e_2}$, and let $F^2_{e_1e_2}$ be the subgraph of $F$ in $G^2_{e_1e_2}$. We focus again on the corresponding edge sets, particularly for $F^\dagger_{e_1e_2}$. Note that $E(F^\dagger_{e_1e_2})$, $E(F^1_{e_1e_2})$, $E(F^2_{e_1e_2})$ partition $E(F)$. Moreover, as $F$ is bi-stemmed with respect to the $\ell$-lemons corresponding to $e_1$ and $e_2$, and $(e_1,e_2)$ are a closest such pair, $F$ contains a path $P_{e_1e_2}$ between $y_1$ and $x_2$ that contains no edges from $G[C^1_{e_1e_2}] \cup G[C^2_{e_1e_2}]$. Thus, $P_{e_1e_2}$ is in $F^1_{e_1e_2}$.

We now show that $F^\dagger_{e_1e_2}$ is a Steiner forest for $(G^\dagger_{e_1e_2},T^\dagger_{e_1e_2})$. To this end, we consider the four ways in which a terminal pair ended up in $T^\dagger_{e_1e_2}$. Let $(s,t) \in T$ and let $i=1,2$. 
\begin{itemize}
\item Suppose that $s,t \in C^i_{e_1e_2}$. Then $(s,t) \in T^\dagger_{e_1e_2}$. Since $F$ is bi-stemmed with respect to $e_{3-i}$, there is no path in $F$ between $x_{3-i}$ and $y_{3-i}$. Hence, there is a path in $F$ between $s$ and $t$ that is restricted to $G[C^i_{e_1e_2}]$ and thus is contained in $F^\dagger_{e_1e_2}$. 
\item Suppose that $s \in C^i_{e_1e_2}$ and $t \in C^{3-i}_{e_1e_2}$. Assume $i=1$; we swap the roles of $s$ and $t$ otherwise. Then $(s,t) \in T^\dagger_{e_1e_2}$. Since $F$ is bi-stemmed with respect to $e_1$ and $e_2$, there is no path in $F$ between $x_1$ and $y_1$ and between $x_2$ and $y_2$. Moreover, as $F$ is bi-stemmed with respect the $\ell$-lemon corresponding to an edge of $G'$ that is not $e_1$ or $e_2$, and this edge does not appear clockwise from $e_1$ to $e_2$ by assumption, there is no path between $y_2$ and $x_1$ in $F$. Hence, there is a path in $F$ between $s$ and $t$ that goes from $s$ to $y_1$, via $P_{e_1e_2}$ to $x_{2}$, to $t$. Since $y_1$ and $x_{2}$ are identified in $G^\dagger_{e_1e_2}$, there is a path in $F^\dagger_{e_1e_2}$ between $s$ and $t$.
\item Suppose that $s \in C^i_{e_1e_2}$ and $t$ is in an $\ell$-lemon corresponding to an edge in $E'^i_{e_1e_2}$. Then $(s,y_i) \in T^\dagger_{e_1e_2}$. Since $F$ is bi-stemmed with respect to $e_i$ and $e_{3-i}$, there is no path in $F$ between $x_i$ and $y_i$ and between $x_{3-i}$ and $y_{3-i}$. Hence, the path in $F$ between $s$ and $t$ must go through $y_i$, and there is a path in $F^\dagger_{e_1e_2}$ between $s$ and $y_i$.
\item Suppose that $s \in C^i_{e_1e_2}$ and $t$ is in an $\ell$-lemon corresponding to an edge in $E'^{3-i}_{e_1e_2}$. Similar as before, we can show there is a path in $F^\dagger_{e_1e_2}$ between $s$ and $x_i$.
\end{itemize}
Hence, $F^\dagger_{e_1e_2}$ is indeed a Steiner forest for $(G^\dagger_{e_1e_2},T^\dagger_{e_1e_2})$. Therefore, $|E(\hat{F}^\dagger_{e_1e_2})| \leq |E(F^\dagger_{e_1e_2})|$.

We next show, for $i=1,2$, that $F^i_{e_1e_2}$ is a Steiner forest for $(G^i_{e_1e_2},T^i_{e_1e_2})$. To this end, we consider the three ways in which a terminal pair ended up in $T^i_{e_1e_2}$. Let $(s,t) \in T$.
\begin{itemize}
\item If $s \in C^i_{e_1e_2}$ and $t$ is in an $\ell$-lemon corresponding to an edge in $E^i_{e_1e_2}$, then $(y_i,t) \in T^i_{e_1e_2}$. Since $F$ is bi-stemmed with respect to $e_1$ and $e_{2}$, there is no path in $F$ between $x_1$ and $y_1$ and between $x_{2}$ and $y_{2}$. Hence, the path in $F$ between $s$ and $t$ must go through $y_i$, and there is a path in $F^i_{e_1e_2}$ between $y_i$ and $t$.
\item If $s \in C^{3-i}_{e_1e_2}$ and $t$ is in an $\ell$-lemon corresponding to an edge in $E^i_{e_1e_2}$, then a similar argument as before shows there is a path in $F^{i}_{e_1e_2}$ between $x_i$ and $t$.
\item If $s$ and $t$ are both in an $\ell$-lemon corresponding to an edge in $E^i_{e_1e_2}$, then $(s,t) \in T^i_{e_1e_2}$. Since $F$ is bi-stemmed with respect to $e_1$ and $e_{2}$, there is no path in $F$ between $x_1$ and $y_1$ and between $x_{2}$ and $y_{2}$. Hence, the path in $F$ between $s$ and $t$ must be in $G^i_{e_1e_2}$, and thus there is a path in $F^i_{e_1e_2}$ between $s$ and $t$.
\end{itemize}
Specifically for $i=1$, the pair $(y_1,x_2)$ is satisfied by $P_{e_1e_2}$. 
Hence, $F^i_{e_1e_2}$ is indeed a Steiner forest for $(G^i_{e_1e_2},T^i_{e_1e_2})$. Therefore, $|E(\hat{F}^i_{e_1e_2})| \leq |E(F^i_{e_1e_2})|$.

In conclusion, $|E(\hat{F}_{e_1e_2})| + |E(\hat{F}^1_{e_1e_2})| + |E(\hat{F}^2_{e_1e_2})| \leq |E(F_{e_1e_2})| + |E(F^1_{e_1e_2})| + |E(F^1_{e_1e_2})| = |E(F)|$. Hence, the Steiner forest returned in this case is a minimum Steiner forest for $(G,T)$.

\medskip\noindent
We now conclude the proof. We run each of the three described algorithms, and return the smallest Steiner forest for $(G,T)$ found. The analysis of the three cases shows that this yields a minimum Steiner forest for $(G,T)$. The running time of the algorithm is $2^{O(\ell^2)} n^{O(1)}$: in each case, there are $O(n^2)$ branches, following by applications of Theorem~\ref{t-pathbush} and Lemma~\ref{l-l-lemon-support}.
\end{proof}

\noindent
It is important to note that Theorem~\ref{t-pathbush} and Theorem~\ref{t-cyclebush} assume that the stem set, and thus the graph $G'$, is given. This will be the case for all applications of these two theorems in this paper.

\subsection{Entangled Bushes of Citruses}

Next, we consider citrus bushes with stem set of bounded size. In fact, our algorithm holds for a more general case.

\defEntangledBush*

\noindent
We extend the definition by the notion of an \emph{entangled juicy citrus bush}, which is an entangled citrus bush that consists of only juicy citruses.
Observe that if, for an entangled citrus bush with stem set $X$, every citrus has an empty wedge set, then $X$ forms a vertex cover of $G$. Moreover, if some vertex of $Y$ is adjacent to exactly two vertices of $X$, then it forms a juicy wedge and can alternatively be seen as part of the citrus bush.

\begin{lemma} \label{l-find-l-lemon-tangle}
For any integers $\ell \geq 5, k$, we can decide in $n^{O(k)}$ time whether a graph $G$ is an entangled $\ell$-lemon bush with a stem set of size at most~$k$, and if so, return an associated stem set and tangle set.
\end{lemma}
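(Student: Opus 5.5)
We guess the stem set by brute force: enumerate every set $X\subseteq V(G)$ with $|X|\le k$. There are $n^{O(k)}$ candidates. For each candidate we check in polynomial time whether $X$ can serve as the stem set of an entangled $\ell$-lemon bush. If it can, we output $X$ together with the tangle set built during the check.

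Fix $X$ and consider the connected components $D$ of $G-X$. By definition, every vertex outside $X$ is either in the tangle set $Y$, or lies in $C_{\sigma(xy)}$ for some edge $xy$ of the underlying graph $G'$. In the second case it belongs to a wedge $L$ with ends $x,y$, and $L\setminus\{x,y\}$ is connected with $N(L\setminus\{x,y\})=\{x,y\}$. Hence every component $D$ of $G-X$ is of exactly one of two kinds. Either $D$ is a single vertex whose neighbours all lie in $X$, and it may go into $Y$. Or $D$ is the interior of one wedge, which forces $N(D)=\{x,y\}$ for two vertices $x,y\in X$.

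The check per component is then as follows. If $|D|=1$ and $|N(D)|\ne 2$, the vertex goes into $Y$. If $|D|=1$ and $|N(D)|=2$, the vertex forms a juicy wedge (it could also go into $Y$; both choices are fine). If $|D|\ge 2$, we require $|N(D)|=2$ and put $D$ into the citrus of the pair $N(D)=\{x,y\}$; otherwise we reject this $X$. We then classify the wedge $L=D\cup\{x,y\}$:
\begin{itemize}
\item it is juicy if $G_L\cup\{xy\}$ has treewidth at most~$2$, which is testable in polynomial time~\cite{VTL82};
\item otherwise it is seeded if $|L|=4$;
\item otherwise it is $\ell$-pulped if $5\le|L|\le\ell$;
\item otherwise we reject this $X$.
\end{itemize}
Finally, we let $G'$ consist of all pairs $\{x,y\}\subseteq X$ that either carry at least one wedge or are adjacent in $G$. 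Each edge of $G[X]$ then belongs to the citrus of its pair. For every pair we count the non-juicy $\ell$-pulped wedges and reject $X$ if the count exceeds~$\ell$.

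The main point to get right is that the decision is independent of the choices made. Juiciness, seededness and $\ell$-pulpedness are intrinsic to $L$, so the classification of a component does not depend on anything else. The tangle set is independent because $G-X$ has no edges between different components. Also, a singleton with exactly two neighbours in $X$ is harmless in either role, since both $Y$ and juicy wedges are unrestricted in number. So if $G$ is an entangled $\ell$-lemon bush with some stem set $X^*$ of size at most $k$, the branch $X=X^*$ passes every check. Conversely, any branch that passes yields a valid stem set, tangle set, graph $G'$ and citrus partition directly from the definitions. Each branch takes polynomial time, so the total running time is $n^{O(k)}$.
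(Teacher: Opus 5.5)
Your proof is correct and follows the paper's own argument: it enumerates all candidate stem sets $X$ with $|X|\le k$ and puts singleton components of $G-X$ whose neighbourhood does not have size two into the tangle set. It then requires every remaining component to have exactly two neighbours in $X$ and checks the juicy/seeded/$\ell$-pulped conditions, together with the bound of $\ell$ on non-juicy $\ell$-pulped wedges per pair. Your version is slightly more careful than the paper's, which tests the treewidth of the component alone rather than of $G_L\cup\{xy\}$ and does not explicitly exclude vertices of $X$ from the tangle set.
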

\begin{proof}
Branch on all sets $X \subseteq V(G)$ of size at most $k$. Consider some branch with set $X$. Let $Y$ be the set of vertices $y$ in $G$ for which $N(y) \subseteq X$ and either $|N(y)| = 1$ or $|N(y)| \geq 3$. If there exists a connected component of $G-(X \cup Y)$ that is adjacent to at least three vertices in $X$ or exactly one vertex of $X$, then we reject the branch. For any $u,v \in X$, suppose $G-(X \cup Y)$ has at least one connected component with neighbours $u,v$. Consider the connected components $D_1,\ldots,D_p$ for some $p \geq 1$ in $G-(X \cup Y)$ with neighbours $u,v \in X$. Suppose that $D_1,\ldots,D_q$ for some $q \leq p$ have more than two vertices and more than treewidth~$2$. We can check this in polynomial time~\cite{VTL82}. Verify that $q \leq \ell$ and $D_1,\ldots,D_q$ each have at most $\ell-2$ vertices; otherwise, we reject the branch. If we did not reject the branch, then $G$ is an entangled $\ell$-lemon bush with stem set $X$ and tangle set $Y$. It is also clear that if $G$ is an entangled $\ell$-lemon bush with stem set $\tilde{X}$ and tangle set $\tilde{Y}$, then the branch where $\tilde{X}$ is considered is not rejected, even though the set $Y$ constructed in that branch may not fully correspond to $\tilde{Y}$ (in particular, if $\tilde{Y}$ contains vertices of degree~$2$).
\end{proof}

\thmSolvetangle*
\begin{proof}
We start by defining some notation. Let $X$ be a stem set of $G$ and $Y$ a tangle set; note that we can find $X,Y$ in $n^{O(k)}$ time by Lemma~\ref{l-find-l-lemon-tangle}. Let $G' = G-Y$. Since $G'$ is an $\ell$-lemon bush, let $G''$ be the graph such that $G'$ flowers from $G''$ with stem set $X$. Note that $G''$ can be easily found in polynomial time from $G'$ by inspection of the adjacency of the components of $G'-X$ and the edges of $G[X]$.

The intuition of the proof is to enumerate all possible patterns of a solution and find a minimum solution for each possible pattern. The pattern describes which vertices of $X$ are in the same component of a solution and which $\ell$-lemons must have an intertwined solution. At the same time, the pattern will prescribe exactly which vertices in $Y$ will connect the different components in $X$.

Let $T$ be a set of terminal pairs in $G$. If there is a vertex $x \in X$ for which there exists a pair $(x,t) \in T$ for some $t \in V(G)$, add a vertex $x'$ to $G$ (and to $Y$) and replace $(x,t)$ by $(x',t)$. Note that this operation is safe and increases the size of any solution by exactly~$1$. By performing this operation exhaustively, we ensure that no vertex of $X$ is in a terminal pair.

We define the following auxiliary graph. Let $H$ be the graph on vertex set $X \cup Y$ and the following edges: for any $u,v \in X \cup Y$, $uv \in E(H)$ if and only if $uv \in E(G)$ or $uv \in E(G'')$. Now branch on all subgraphs (\emph{patterns}) $P$ of $H$ such that $P$ is a forest, $X \subseteq V(P)$, and every vertex of $Y$ has degree at least~$2$ in $P$. For any such $P$, we say that a Steiner forest $F$ of $(G,T)$ \emph{adheres} to $P$ if the following hold:
\begin{itemize}
\item if $uv \in E(P) \cap E(G'')$ (note that $u,v \in X$), then $F$ is intertwined with respect to the $\ell$-lemon corresponding to $uv$;
\item if $uv \in E(G'') \setminus E(P)$ (note that $u,v \in X$), then $F$ may be bi-stemmed with respect to the $\ell$-lemon corresponding to $uv$;
\item if $y \in Y \cap V(P)$, then $N_P(y) = N_F(y)$ (note that $N_P(y),N_F(y) \subseteq X$);
\item if $y \in Y \setminus V(P)$, then $y$ has degree at most~$1$ in $F$;
\end{itemize}
We note that the above conditions do not imply that $X \subseteq V(F)$, even though $X \subseteq V(P)$. We only impose the latter condition to simplify some discussions below.

We claim that for any minimum Steiner forest $F$ for $(G,T)$, there exists a pattern $P_F$ such that $F$ adheres to $P_F$. Indeed, we can define $P_F$ as follows. Add all $x \in X$ to $P_F$. For any $y$ for which $|N_F(y)| > 1$, add $y$ to $P_F$ and add the edges from $y$ to $N_F(y)$ to $P_F$. For any $uv \in E(G'')$, add $uv$ to $P_F$ if $F$ is intertwined with respect to the $\ell$-lemon corresponding to $uv$. This subgraph $P_F$ of $H$ satisfies all conditions of a pattern. To argue this, it suffices to show that $P_F$ is a forest; the other conditions are trivial. It is straightforward to observe that if $P_F$ contains a cycle, then there is a sequence of intertwined $\ell$-lemons and edges between $X$ and $Y$ that jointly induce a cycle in $F$, a contradiction to the minimality of $F$.

Now fix a pattern $P$. We aim to find a minimum Steiner forest for $(G,T)$ that adheres to $P$. For any $y \in Y \cap V(P)$, remove any incident edge not in $E(P)$. This is safe, as any such edge will not be part of the solution we seek in this branch. 
Consider the vertex sets of the components of $P$. These yield a partition of $X$. Any element of this partition is called a \emph{strong subset} of $X$ (with respect to $P$). 
Note that the vertices of a strong subset do not necessarily induced a connected subgraph of $G''$; also, a strong subset may be a singleton, as $X \subseteq V(P)$.
We call an $\ell$-lemon \emph{strong} if its corresponding edge in $E(G'')$ has both endpoints in a single strong subset (we may then say that the $\ell$-lemon is strong for that subset). We call an $\ell$-lemon \emph{delicate} if it is not strong. Observe that for any delicate $\ell$-lemon, the corresponding edge $uv \in E(G'')$ is not in $P$, but the converse is not necessarily true.

We now provide some rejection rules for this branch. In the following, it may help the intuition of the reader to imagine that each strong subset has been identified to form a single vertex (although we do not explicitly do so). Suppose there is a school $S$ such that one of the following holds:
\begin{enumerate}
\item\label{kr-tangle-1} $S \subseteq Y$ and there is not a strong subset $A$ such that $S \subseteq N(A)$;
\item\label{kr-tangle-2} there are two strong subsets $A,B$ such that there is a strong $\ell$-lemon for $A$ and a strong $\ell$-lemon for $B$ that both contain a terminal in $S$;
\item\label{kr-tangle-3} there are two delicate $\ell$-lemons with ends in four distinct strong subsets for which both $\ell$-lemons contains a terminal in $S$;
\item\label{kr-tangle-4} there are three delicate $\ell$-lemons $\mathcal{L}_1, \mathcal{L}_2, \mathcal{L}_3$ and three strong subsets $A_1,A_2,A_3$, such that each of the three $\ell$-lemons contains a terminal in $S$ and $\mathcal{L}_i$ has its ends in $A_{i-1}$ and $A_{i}$ for $i=1,2,3$, where $A_0=A_3$;
\item\label{kr-tangle-5} there is a strong subset $A$ with a strong $\ell$-lemon for $A$ and there is a delicate $\ell$-lemon whose ends are not in $A$, such that both $\ell$-lemons contain a terminal in $S$.
\end{enumerate}
In each case, we reject the branch. It can be readily verified in each case, any Steiner forest for $(G,T)$ would not adhere to $P$, because two strong subsets will have to be connected for which this is not prescribed by $P$. We can apply the rejection rules in polynomial time.

If we did not reject until this point, we may conclude that for all schools $S$ either:
\begin{itemize}
\item $S \subseteq Y$ and there is a strong subset $A$ such that $S \subseteq N(A)$;
\item $S \not\subseteq Y$ and there is a strong subset $A$ such that any terminal in $S$ that is in an $\ell$-lemon is in a strong $\ell$-lemon for $A$ or in a delicate $\ell$-lemon for which one of its ends is in $A$.
\end{itemize}
Note that if there is, for example, a single delicate $\ell$-lemon that contains a terminal from a particular school, then the strong subset $A$ in the second point may not be unique. Also, the strong subset in the second point may not be unique.

We now provide three reduction rules. Let $S$ be any school such that $(Y \cap S) \setminus V(P) \not=\emptyset$ and $S \not\subseteq Y$. Let $y \in (Y \cap S) \setminus V(P)$ and let $A$ be a strong subset such that any terminal in $S$ that is in an $\ell$-lemon is in a strong $\ell$-lemon for $A$ or in a delicate $\ell$-lemon for which one of its ends is in $A$. Then the rules are:
\begin{enumerate}
\item\label{rr-tangle-1} If there is a strong $\ell$-lemon for $A$ that contains a terminal in $S$, then remove all edges incident to $y$ except a single edge incident to a vertex in $A$; if no such edge exists, then reject the branch. 
\item\label{rr-tangle-2} If there are two distinct strong subsets $B,C$, both distinct from $A$, such that some terminal in $S$ is in a delicate $\ell$-lemon with ends in $A,B$ and some terminal in $S$ is in a delicate $\ell$-lemon with ends in $A,C$, then remove all edges incident to $y$ except a single edge incident to a vertex in $A$; if no such edge exists, then reject the branch.
\item\label{rr-tangle-3} If there is a strong subset $B \not= A$ such that some terminal in $S$ is in a delicate $\ell$-lemon with ends in $A$ and $B$, then remove all edges incident to $y$ except a single edge (if one exists) incident to a vertex in $A$ and a single edge (if one exists) incident to a vertex in $B$; if both of these types of edges do not exist, then reject the branch.
\end{enumerate}
We execute the reduction rules in order: first Rule~1 until there are no schools and vertices $y$ to which it applies, then Rule~2 and Rule~3 under the same condition. Since any vertex is part of a single school, it is not important to check for a possible second application of one of the rules for the same vertex in $Y$. It is clear that the rules are safe when finding a Steiner forest for $(G,T)$ that adheres to $P$. We can apply the rules in polynomial time.

After the reduction rules, we note that for every $y \in Y \setminus V(P)$, at least one of the following is true: $y$ is not a terminal, the school that contains $y$ is a subset of $Y$, or $y$ has degree at most~$2$. Each vertex of $y \in Y \setminus V(P)$ that has degree~$2$ is removed from $Y$ and treated as a juicy wedge in some $\ell$-lemon with ends in $X$ (part of an existing or new $\ell$-lemon). If some such vertex is removed, we restart the branch (starting with the rejection rules) with the new set $Y$ and possibly new edge set for $G''$. Since $|Y|$ is reduced, this happens at most $n$ times. We note that now, for every $y \in Y \setminus V(P)$, at least one of the following is true: $y$ is not a terminal, the school that contains $y$ is a subset of $Y$, or $y$ has degree~$1$.

We now describe how to find a minimum Steiner forest $\hat{F}_P$ that adheres to $P$. To start, for a strong subset $A$, let $Y(A)$ denote the set of vertices $y \in Y$ for which there is a school $S$ that contains $y$ and contains a terminal in a strong or delicate $\ell$-lemon with an end in $A$. Any such $y$ has degree~$1$ by the preceding, and thus $Y(A) \cap Y(B) = \emptyset$ for any two strong subsets $A,B$. Now:
\begin{itemize}
\item for every $y \in Y \cap V(P)$, add $y$ and all edges incident to $y$ in $P$ to $\hat{F}_P$;
\item for every terminal $y \in Y \setminus V(P)$: 
\begin{itemize}
\item if $y$ belongs to a school $S$ such that $S \subseteq Y$, then there is a strong subset $A$ such that $S \subseteq N(A)$. As there may be multiple such strong subsets, fix a strong subset $A(S)$ for the school. We add $y$ and any edge from $y$ to $A$ to $\hat{F}_P$;
\item if $y$ has degree~$1$, then we add $y$ and its incident edge to $\hat{F}_P$;
\end{itemize}
\item for every $uv \in E(G'') \cap E(P)$, let $\mathcal{L}_{uv}$ denote the corresponding $\ell$-lemon. Let $T_{uv}$ be the set of pairs of vertices formed as follows: for every $(s,t) \in T$, if $s,t \in V(u,v,\mathcal{L}_{uv})$, then add $(s,t)$ to $T_{uv}$; if $s \in V(u,v,\mathcal{L}_{uv})$, and $t \not\in V(u,v,\mathcal{L}_{uv})$, then add $(s,u)$ to $T_{uv}$. Now find a minimum Steiner forest for $(G[V(u,v,\mathcal{L}_{uv})], T_{uv})$ for which the $\ell$-lemon is intertwined and add its vertices and edges to $\hat{F}_P$;
\item for every $uv \in E(G'') \setminus E(P)$ for which the corresponding $\ell$-lemon $\mathcal{L}_{uv}$ is strong, let $T_{uv}$ be the set of pairs of vertices formed as follows: for every $(s,t) \in T$, if $s,t \in V(u,v,\mathcal{L}_{uv})$, then add $(s,t)$ to $T_{uv}$; if $s \in V(u,v,\mathcal{L}_{uv})$, and $t \not\in V(u,v,\mathcal{L}_{uv})$, then add $(s,u)$ to $T_{uv}$. Now find a minimum Steiner forest for $(G[V(u,v,\mathcal{L}_{uv})] \dagger \{u,v\}, T_{uv}\dagger \{u,v\})$ and add its vertices and edges to $\hat{F}_P$. Note that since there is a clear mapping between the edges of $G[V(u,v,\mathcal{L}_{uv})] \dagger \{u,v\}$ and of $G[V(u,v,\mathcal{L}_{uv})]$, this is well defined;
\item for every two strong subsets $A,B$, we simultaneously treat all $uv \in E(G'') \setminus E(P)$ for which the corresponding $\ell$-lemon $\mathcal{L}_{uv}$ is delicate and $u \in A$, $v \in B$. There are at most $k^2$ such lemons. We create a graph $G_{AB}$ by adding each of these $\ell$-lemons separately, then identifying the ends of the $\ell$-lemons in $A$ to a vertex $a$ and identifying the ends in $B$ to a vertex $b$, and removing the edge $ab$ if it exists. Hence, $G_{AB}$ contains a $k^2\ell$-lemon with wedge set $\mathcal{L}_{ab}$ and ends $a,b$ such that $V(a,b,\mathcal{L}_{ab}) = V(G_{AB})$, where $\mathcal{L}_{ab}$ is the union of the wedge sets of the considered $\ell$-lemons. 
Let $T_{AB}$ be the set of pairs of vertices formed as follows: for every $(s,t) \in T$, if $s,t \in V(a,b,\mathcal{L}_{ab})$, then add $(s,t)$ to $T_{AB}$; if $s \in V(a,b,\mathcal{L}_{ab})$, and $t$ is in $Y(A)$ or in a strong or delicate $\ell$-lemon with an end in $A$ and the other end not in $B$, then add $(s,a)$ to $T_{AB}$; if $s \in V(a,b,\mathcal{L}_{ab})$, and $t$ is in $Y(B)$ or in a strong or delicate $\ell$-lemon with an end in $B$ and the other end not in $A$, then add $(s,b)$ to $T_{AB}$. Now find a minimum Steiner forest for $(G_{AB}, T_{AB})$ and add its vertices and edges to $\hat{F}_P$. Note that since there is a clear mapping between the edges of $G_{AB}$ and of the different $\ell$-lemons, this is well defined.
\end{itemize}
By a precise inspection of the running time analysis of Lemma~\ref{l-l-lemon-support}, we note that $\hat{F}_P$ can be computed in $2^{O(k^2\ell^2)}n^{O(1)}$ time by application of Lemma~\ref{l-l-lemon-support}. Finally, we output the minimum over all $P$ of the subgraph of $\hat{F}_P$ obtained by taking an arbitrary spanning tree of each component, that form a Steiner forest for $(G,T)$.

We first analyse the running time of the algorithm. Since $|X| = k$ and any pattern is a forest where the vertices of $Y$ have degree at least~$2$, any pattern contains at most $k-1$ vertices and edges from $Y$. Note that there are most $|Y| \leq n$ vertices and $|X|\cdot |Y| \leq nk$ edges to choose from. Hence, they can be enumerated in $(n(k+1))^{O(k)}$ time. The applications of Lemma~\ref{l-l-lemon-support} take $2^{O(k^2\ell^2)}n^{O(1)}$ time. Hence, the total running time is $2^{O(k^2\ell^2)} \cdot (n(k+1))^{O(k)}$.

We now claim that the output of the algorithm is a minimum Steiner forest for $(G,T)$. Let $F$ be a minimum Steiner forest and let $P_F$ denote the corresponding pattern described earlier. We will show that $\hat{F}_{P_F}$ is a minimum Steiner forest for $(G,T)$, by slowly transforming $F$ into $\hat{F}_{P_F}$, without increasing its size.

As we showed earlier, the five rejection rules are safe and the three reduction rules are safe. Hence, we may assume that the branch with $P_F$ is not rejected. For Reduction Rule~\ref{rr-tangle-1} and~\ref{rr-tangle-2}, we note that $F$ must contain an edge from $y$ to a vertex in $A$ and that these edges are equivalent. Hence, we may replace the current edge in $F$ incident to $y$ and replace it by the edge that remained after application of the rule. The resulting subgraph is still a minimum Steiner forest for $(G,T)$, and by abuse of notation, we still call it $F$. For the sake of brevity, we shorten this argument to ``without loss of generality, we assume that the edge that remained after application of the rule is the one incident to $y$ in $F$'', and apply similar shortenings in the remainder. Similarly, for Reduction Rule~\ref{rr-tangle-3}, we note that $F$ most contain an edge from $y$ to a vertex in $A$ or to a vertex in $B$; without loss of generality, we assume that one of the edges that remained after the application of the rule is the edge incident to $y$ in $F$. Now consider the construction of $\hat{F}_{P_F}$:
\begin{itemize}
\item for any $y \in Y \cap V(P)$, $N_F(y) = N_{P_F}(y)$ by definition and $N_{\hat{F}_{P_F}}(y) = N_{P_F}(y)$ by construction.
\item for every non-terminal $y \in Y \setminus V(P_F)$, no incident edge is in $F$: such a $y$ would have degree~$1$ in $F$, contradicting minimality.
\item for any terminal $y \in Y \setminus V(P)$:
\begin{itemize}
\item if $y$ belongs to a school $S$ such that $S \subseteq Y$, then there is a strong subset $A$ such that $S \subseteq N_F(A)$. There are also $|S|$ edges incident to $S$, exactly one per vertex of $S$. At the same time, since $S \subseteq Y$, for $A(S)$, we can change $F$ to use $|S|$ edges between $A(S)$ and $S$, one per vertex of $S$, to obtain an equivalent solution. Hence, we may assume we add exactly the same edge from $y$ to $A(S)$ in $F$ as in $\hat{F}_{P_F}$.
\item if $y$ has degree~$1$, then trivially we add exactly the same edge from $y$ to $A(S)$ in $F$ as in $\hat{F}_{P_F}$.
\end{itemize}
\item for any $uv \in E(P_F)$, note that $F$ is intertwined with respect to $\mathcal{L}_{uv}$ by definition. Hence, $F$ contains a path $Q$ from $u$ to $v$ using only vertices of $V(u,v,\mathcal{L}_{uv})$. Note that, by the existence of $Q$, for any $(s,t) \in T$ such that $s,t \in V(u,v,\mathcal{L}_{uv})$, $F$ contains a path between $s$ and $t$ using only vertices in $V(u,v,\mathcal{L}_{uv})$. Moreover, for any $(s,t) \in T$ such that $s \in V(u,v,\mathcal{L}_{uv})$ and $t \not\in V(u,v,\mathcal{L}_{uv})$, note that $F$ contains a path between $s$ and $u$ or $v$ using only vertices of $V(u,v,\mathcal{L}_{uv})$, and using $Q$, a path between $s$ and $u$ using only vertices of $V(u,v,\mathcal{L}_{uv})$. Hence, $E(F) \cap E(G[V(u,v,\mathcal{L}_{uv})])$ is a Steiner forest for $G[V(u,v,\mathcal{L}_{uv})]$ and the constructed set $T_{uv}$ that is intertwined with respect to the citrus. Since $E(\hat{F}_{P_F}) \cap E(G[V(u,v,\mathcal{L}_{uv})])$ is a minimum such Steiner forest, which in particular is intertwined, we may assume without loss of generality that $E(F) \cap E(G[V(u,v,\mathcal{L}_{uv})]) = E(\hat{F}_{P_F}) \cap E(G[V(u,v,\mathcal{L}_{uv})])$.
\item for any $uv \in E(G'') \setminus E(P_F)$ for which $\mathcal{L}_{uv}$ is strong, note that $F$ contains a path $Q$ from $u$ to $v$ that does have an internal vertex in $V(u,v,\mathcal{L}_{uv})$. Note that, for any $(s,t) \in T$ such that $s,t \in V(u,v,\mathcal{L}_{uv})$, $F$ contains a path between $s$ and $t$ using only vertices in $V(u,v,\mathcal{L}_{uv}) \cup V(Q)$. Moreover, for any $(s,t) \in T$ such that $s \in V(u,v,\mathcal{L}_{uv})$ and $t \not\in V(u,v,\mathcal{L}_{uv})$, note that $F$ contains a path between $s$ and $u$ or $v$. Hence, $E(F) \cap E(G[V(u,v,\mathcal{L}_{uv})] \dagger\{u,v\})$ is a Steiner forest for $G[V(u,v,\mathcal{L}_{uv})\dagger\{u,v\}]$ and the constructed set $T_{uv} \dagger\{u,v\}$. Since $E(\hat{F}_{P_F}) \cap E(G[V(u,v,\mathcal{L}_{uv})]\dagger\{u,v\})$ is a minimum such Steiner forest, by the existence of $Q$ we may assume without loss of generality that $E(F) \cap E(G[V(u,v,\mathcal{L}_{uv})]) = E(\hat{F}_{P_F}) \cap E(G[V(u,v,\mathcal{L}_{uv})])$.
\item for any $uv \in E(G'') \setminus E(P_F)$ for which $\mathcal{L}_{uv}$ is delicate, note that there is no path in $F$ between $u$ and $v$. In particular, if $uv \in E(G)$, then $uv \not\in E(F)$. Hence, for any $(s,t) \in T$ for which $s,t \in V(u,v,\mathcal{L}_{uv})$, there is a path between $s$ and $t$ in $F$ that only uses vertices of $V(u,v,\mathcal{L}_{uv})$. Now consider a pair $(s,t) \in T$ such that $s \in V(u,v,\mathcal{L}_{uv})$ and $t \not\in V(u,v,\mathcal{L}_{uv})$. Since there is a path between $s$ and $t$ in $F$, $t$ must be either: in a strong $\ell$-lemon with an end in $A$ or $B$, in a delicate $\ell$-lemon with an end in $A$ and an end not in $B$ (or vice versa), in $Y(A)$ or $Y(B)$, or in a delicate $\ell$-lemon with an end $u' \in A$ and an end $v' \in B$ (for which $\{u',v'\}\not= \{u,v\}$). In the first three cases, it is clear that $F$ contains a path between $s$ and $u$ or between $s$ and $v$ respectively using only vertices of $V(u,v,\mathcal{L}_{uv})$. In the fourth and final case, which path exists is not so clear. However, if $F$ contains a path between $s$ and $u$, then the path in $F$ between $s$ and $t$ contains $u$ and $u'$; similarly, if $F$ contains a path between $s$ and $v$, then the path in $F$ between $s$ and $t$ contains $v$ and $v'$. Hence, in $E(F) \cap E(G_{AB})$, there is a path from $s$ to $t$ using only vertices of $V(G_{AB})$. In particular, these arguments show that $E(F) \cap E(G_{AB})$ is a Steiner forest for $(G_{AB},T_{AB})$. Since $E(\hat{F}_{P_F}) \cap E(G_{AB})$ is a minimum such Steiner forest, and there is a path in $F$ between any pair of vertices in $A$ and between any pair of vertices in $B$, we may assume without loss of generality that $E(F) \cap E(G_{AB}) = E(\hat{F}_{P_F}) \cap E(G_{AB})$. In particular, $E(F) \cap E(G[V(u,v,\mathcal{L}_{uv})]) = E(\hat{F}_{P_F}) \cap E(G[V(u,v,\mathcal{L}_{uv})])$.
\end{itemize}
From this analysis, it follows that $\hat{F}_{P_F}$ is a minimum Steiner forest for $(G,T)$.
\end{proof}

\section{\fl{Polynomial-Time Results}}%The Full Proofs of Section~\ref{s-poly}}
\label{s-fp-poly}

\subsection{Forbidding a Path $\mathbf{P_{11}}$}

We first prove the following lemma.

\begin{restatable}{lemma}{lemPIOfree}
\label{l-p10free}
\stf{} is polynomial-time solvable for $P_{10}$-subgraph-free graphs.
\end{restatable}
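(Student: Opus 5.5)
By Lemma~\ref{l-2con} we may assume that $G$ is $2$-connected. First we decide whether $G$ contains a $P_9$; this takes polynomial time by Lemma~\ref{l-longestpath}. If $G$ has no $P_9$, Theorem~\ref{t-pol} solves the instance. So assume a longest path $P=(v_1,\dots,v_9)$ has exactly $9$ vertices; it has no more, since $G$ is $P_{10}$-subgraph-free. The aim is to show that $G$ is an entangled lemon bush with stem set $X=V(P)$, so $|X|=9$. Theorem~\ref{t-solvetangle} then finishes the argument with $k=9$ and any fixed $\ell$, giving running time $n^{O(1)}$.

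Let $D$ be a component of $G-V(P)$. By Lemma~\ref{l-adjacencytopath}, $D$ has no neighbours on two consecutive vertices of $P$. Also, no two vertices of $D$ attach to $v_i$ and $v_{i+2}$ respectively. The same idea gives more. If $D$ is adjacent to an endpoint $v_1$, then prefixing a path inside $D$ to $P$ yields a path on $9+|V(D)|$ vertices, a contradiction. Hence $D$ is not adjacent to $v_1$ or $v_9$. A similar rerouting shows that $D$ is not adjacent to $v_2$ or $v_8$ unless $|V(D)|=1$. Indeed, if $D$ attaches to $v_2$ and also to some $v_j$, we can traverse $v_1$ backwards and build a longer path, as in the standard argument.

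Next we use $2$-connectivity: every $D$ has at least two attachment vertices on $P$. For attachments $v_i,v_j$ with $i<j$, consider the path that goes $v_1\dots v_i$, then through $D$ (using at least $|V(D)|\ge 1$ internal vertices), then $v_j\dots v_9$. Its length gives $j-i-1\ge |V(D)|+\,$(the number of vertices of $D$ used). Combining this with the rerouting trick (walk $v_{i+1}\dots v_{j-1}$ in reverse together with $D$) bounds $|V(D)|$ by a small constant and limits how vertices of $D$ attach. The goal is the following dichotomy.
\begin{itemize}
\item If $|V(D)|=1$, then $D=\{y\}$ with $N(y)\subseteq X$, so $y$ is placed in the tangle set $Y$. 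Such vertices are pairwise non-adjacent because they are distinct components.
\item If $|V(D)|\ge 2$, then $D$ has exactly two attachment vertices $v_i,v_j$, and $D\cup\{v_i,v_j\}$ is a wedge. It is juicy or seeded, for instance because $D$ is a star or triangle by Observation~\ref{o-structure-p4sfree}, which follows from the length bounds forcing $D$ to be $P_4$-subgraph-free.
\end{itemize}
Grouping the wedges with ends $v_i,v_j$, together with the edges of $G[X]$, into citruses produces the lemon bush flowering from a graph $G'$ on $X$.

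The main obstacle is the second bullet. We must rule out a component with $\ge 2$ vertices having three or more attachments, or having a structure of treewidth $>2$ relative to its ends. This needs a careful case analysis of the positions of the attachments along the $9$-vertex path, repeatedly producing a $P_{10}$ by rerouting. I would first show $|V(D)|\le 3$: since $D$ avoids $v_1,v_2,v_8,v_9$ and consecutive pairs, its attachments lie among $v_3,\dots,v_7$, and the detour lengths are tight there. Then I would enumerate the few attachment patterns. In a pattern with three attachments such as $v_3,v_5,v_7$, the rerouting $v_1v_2v_3\,D\,v_7v_6v_5v_4$ plus the tail should give the contradiction. If a residual case still has treewidth issues, then its size is bounded, so it counts as a seeded wedge or a bounded $\ell$-pulped wedge. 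This is still covered by Theorem~\ref{t-solvetangle}, which allows $\ell$-lemons.
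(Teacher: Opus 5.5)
\paragraph{Verdict.} Your overall architecture is the paper's: stem set $X=V(P)$, size-one components of $G-V(P)$ as the tangle set, larger components as juicy or seeded wedges, then Theorem~\ref{t-solvetangle}. Putting singletons in the tangle set is fine. You also do not need the paper's extra step showing that all large components have the same ends. However, the second bullet is the entire content of the lemma, and it is not proved. Both supports you offer for it fail.

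\paragraph{Why the two supports fail.} First, ``$D$ is a star or triangle'' does not make $D\cup\{v_i,v_j\}$ juicy or seeded.
\begin{itemize}
\item If the centre $x$ and a leaf $y$ of a star are both adjacent to both ends, then $\{x,y,v_i,v_j\}$ is a $K_4$ in $G_L\cup\{v_iv_j\}$.
\item A triangle with two vertices adjacent to one end and the third adjacent to the other end has a $K_4$ minor.
\end{itemize}
Once $|D|\ge 3$ such wedges have at least $5$ vertices, so they are not seeded either. What has to be shown is how the vertices of $D$ attach to $P$.

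Second, your planned bound $|V(D)|\le 3$ is false. Your inequality is garbled: a detour through $D$ need not use all of $D$. One only gets that a $v_i$--$v_j$ detour uses at most $j-i-1$ vertices of $D$. Here is a counterexample.
\begin{itemize}
\item Take $P=(v_1,\dots,v_9)$ with chords $v_1v_3$, $v_2v_6$, $v_3v_8$, $v_6v_9$.
\item Add a star with centre $x$ adjacent to $v_3$ and leaves $y_1,\dots,y_m$, each adjacent to $x$ and $v_6$.
\end{itemize}
This graph is $2$-connected. The components of $G-\{v_3,v_6\}$ are $\{v_1,v_2\}$, $\{v_4,v_5\}$, $\{v_7,v_8,v_9\}$ and $\{x,y_1,\dots,y_m\}$. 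A path meets each of $v_3,v_6$ at most once, so it splits into at most three segments. An end segment hanging from $v_3$ or a middle segment has at most $2$ vertices; an end segment hanging from $v_6$ has at most $3$. So every path has at most $9$ vertices, yet $G-V(P)$ is a star on $m+1$ vertices. Hence your fallback (``its size is bounded, so it is an $\ell$-pulped wedge'') has no footing. It would also need a bound on the number of $\ell$-pulped wedges per citrus, which you do not address.

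\paragraph{The missing argument.} It is short.
\begin{enumerate}
\item \emph{Exactly two attachments.} Attachments lie in $\{v_3,\dots,v_7\}$ and no two are consecutive, so three attachments would have to be $\{v_3,v_5,v_7\}$. Lemma~\ref{l-adjacencytopath}(2), applied to the pairs $(v_3,v_5)$ and $(v_5,v_7)$, forces all these edges to come from one vertex of $D$. That vertex is then a cut vertex, a contradiction. Your rerouting $v_1v_2v_3\,D\,v_7v_6v_5v_4$ ``plus the tail'' does not do this: a path ending at $v_4$ cannot pick up $v_8v_9$, and it need not reach $10$ vertices.
\item \emph{One attachment is $v_3$ or $v_7$.} The two attachments are at distance at least $3$, so one of them is $v_3$ or, symmetrically, $v_7$. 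Let $x\in D$ be adjacent to $v_3$, and let $v$ be the other attachment.
\item \emph{$D$ is a star centred at $x$.} A path $x,y,z$ in $D$ gives the $P_{10}$ $(z,y,x,v_3,\dots,v_9)$.
\item \emph{Leaves avoid $v_3$.} If $|D|\ge 3$, a leaf $y$ adjacent to $v_3$ gives the $P_{10}$ $(y',x,y,v_3,\dots,v_9)$. By $2$-connectivity every leaf therefore has neighbourhood exactly $\{x,v\}$.
\item \emph{Juicy wedge.} The bags $\{v_3,v,x\},\{v,x,y_1\},\{v,x,y_2\},\dots$ form a width-$2$ path decomposition of $G_L\cup\{v_3v\}$. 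If $|D|=2$ the wedge is seeded.
\end{enumerate}
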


\begin{proof}
    Let $(G,T,k)$ be an instance of {\sc Steiner Forest}, where $G$ is a $P_{10}$-subgraph-free graph, $T$ a terminal set, and $k$ an integer. We apply Lemma~\ref{l-2con} and assume that $G$ is $2$-connected. If $G$ is $P_9$-subgraph-free we apply Lemma~\ref{t-pol}, therefore we may assume in the following that $G$ has $P_9$ as a subgraph. Note that such a path can be found in polynomial time applying Lemma~\ref{l-longestpath}.
    Let $P = (v_1, \dots , v_9)$ be a $P_9$ which is a subgraph of $G$.

    If $G-V(P)$ consists only of isolated vertices, then $V(P)$ is a vertex cover of $G$ of size~$9$. Hence, we conclude by applying Theorem~\ref{t-vc}. 
    Thus, in the following we may assume there is at least one component $D$ of $G-V(P)$ of size at least~$2$.

    \begin{claim}\label{c-p10-comp-adjacencies}
        Let $D$ be any component of $G-V(P)$ of size at least~2. Then:
        \begin{enumerate}[i]
            \item $N(D) \subseteq \{v_3, \ldots, v_7\}$. \label{i-p10-d-nadj-1289}
            \item $D$ is adjacent to exactly 2 vertices of $P$.
            \label{i-p10-d-adj-2}
            \item The vertices in $N(D)$ are at distance at least~$3$ on $P$. 
            \label{i-p10-d-adj-dist-3}
            \item $D$ is a star, and the centre of $D$ is adjacent to $v_3$ or $v_7$.
            \label{i-p10-d-star}
            \item $D \cup N(D)$ is a juicy or seeded wedge.\label{i-p10-wedge}
            
        \end{enumerate}
    \end{claim}

    \begin{claimproof}
        We first prove item~\ref{i-p10-d-nadj-1289}: Note first that $N(v_1) \subseteq P$, since otherwise we can extend $P$ by a neighbour of $v_1$ and obtain a $P_{10}$. 
        Suppose now that $D$ is adjacent to $v_2$. Let $x,y \in D$, such that $v_2x, xy \in E(G)$. Then, $(y,x,v_2, \dots , v_9)$ is a $P_{10}$, a contradiction. Therefore, no such component exists. Symmetric arguments apply to $v_9$ and $v_8$. Item~\ref{i-p10-d-nadj-1289} follows.
        
        For item~\ref{i-p10-d-adj-2}, suppose for a contradiction that $D$ is adjacent to 3 or more vertices on~$P$. From item~\ref{i-p10-d-nadj-1289}, these must include 3 of $\{v_3,\ldots,v_7\}$. 
        Applying Lemma~\ref{l-adjacencytopath}, we obtain that $N(D) \cap P = \{v_3, v_5, v_7\}$. Let $x \in D$ with  $v_5 \in N(x)$. By Lemma~\ref{l-adjacencytopath} there does not exist any vertex $y \ne x$ in $D$ adjacent to $v_3$ or $v_7$. Consequently, $N(v_3)\cap D = N(v_7)\cap D = \{x\}$. Applying Lemma~\ref{l-adjacencytopath} again we obtain that no vertex $y \ne x$ in $D$ is adjacent to $v_5$. Consequently, either $|D| = 1$, contradicting our premise, or $x$ is a cut-vertex of $G$ and so $G$ cannot be 2-connected, a contradiction by Lemma~\ref{l-2con}. Item~\ref{i-p10-d-adj-2} follows.

        For item~\ref{i-p10-d-adj-dist-3}, let $N(D)=\{u,v\}$. Since $D$ does not contain a cut-vertex, $u$ and $v$ collectively have at least two neighbours in $D$. Applying Lemma~\ref{l-adjacencytopath} $u$ and $v$ have distance at least~3 on~$P$, and item~\ref{i-p10-d-adj-dist-3} follows.

        For item~\ref{i-p10-d-star}. 
        Note that by items~\ref{i-p10-d-nadj-1289}-\ref{i-p10-d-adj-dist-3}, $D$ is adjacent to at least one of~$v_3$ and $v_7$. Without loss of generality, we may assume that $D$ is adjacent to $v_3$.
        Let $x$ be a neighbour of $v_3$ in~$D$, and $y$ a neighbour of $x$ in $D$. Suppose $y$ has a neighbour $z \ne x$ in $D$ then we find a $P_{10}$ $(z,y,x,v_3,\ldots ,v_9)$, a contradiction. Thus, $D$ is a star with centre $x$. Item~\ref{i-p10-d-star} follows.

        Lastly, we prove item~\ref{i-p10-wedge}. If $|D|=2$ then $D\cup N(D)$ is immediately a seeded wedge with ends $v_3,v_6$, or $v_3, v_7$, or $v_4, v_7$. Now suppose $|D|\ge 3$. Again call $x$ the centre of the star $D$ and let $y$ and $y'$ be two neighbours of $x$ in $D$.    
        Suppose for contradiction that $y$ is adjacent to $v_3$ or $v_7$. 
        Now either $(y',x,y,v_3 ,\ldots ,v_9)$ or $(y',x,y,v_7 ,\ldots ,v_1)$ is a $P_{10}$, a contradiction. 
        From item~\ref{i-p10-d-star} $x$ neighbours either $v_3$ or $v_7$, say $v_3$ by symmetry. 
        Thus, applying item~\ref{i-p10-d-adj-2} and Lemma~\ref{l-2con} we obtain that $N(y)=\{x,v_6\}$ and this holds for all neighbours of $x$ in $D$. 
        Now $D \cup N(D)= D \cup \{v_3, v\}$ is a juicy wedge witnessed by the path-decomposition of width 2 with bags $\{v_3,v_6,x\},$ $\{v_6, x, y_1\}, \{v_6, x, y_2\}, \ldots$ for successive neighbours $y_i$ of $x$ in $D$. Item~\ref{i-p10-wedge} follows.
    \end{claimproof}

    \noindent
    Let $D$ be a component of $G-V(P)$. By Claim~\ref{c-p10-comp-adjacencies}, $D$ is a star with some vertex $x$ as its centre, and without loss of generality (by symmetry with $v_7$) $x \in N(v_3)$. Also, $x$ has a neighbour $y$ which is adjacent to either $v_6$ or $v_7$.

    \medskip
    \noindent
    \textbf{Case 1:} $y$ is adjacent to $v_6$.\\
        Applying Claim~\ref{c-p10-comp-adjacencies}, $D\cup \{v_3, v_6\}$ is a juicy or seeded wedge.
        Suppose there is another component $D'$ of $G-V(P)$ of size at least~$2$. 
        Applying Claim~\ref{c-p10-comp-adjacencies} again, we obtain that $D'$ is a star adjacent to two vertices of $P$, including either $v_3$ or $v_7$.
        \begin{itemize}
            \item If $D'$ is adjacent to $v_3$ and $v_6$, then by Claim~\ref{c-p10-comp-adjacencies} $D'\cup\{v_3,v_6\}$ is a juicy or seeded wedge. 
            \item If $D'$ is adjacent to $v_3$ and $v_7$, we find a path of length at least~$11$ (and hence a $P_{10}$) as follows: $(v_9, v_8, v_7, D' , v_3 , \dots , v_6 , y, x)$, a contradiction.
            \item If $D'$ is adjacent to $v_4$ and $v_7$, we find a path of length at least~$13$ (and hence a $P_{10}$) as follows: $(v_1, v_2, v_3, x, y,v_6,v_5,v_4, D' , v_7 , v_8 , v_9)$, a contradiction.
        \end{itemize}
        It follows that for any component $D'$ of $G-V(P)$, $D' \cup \{v_3, v_6\}$ is a juicy or seeded wedge with ends $v_3$ and $v_6$. Thus all components of $G-V(P)$ of size at least~$2$ are wedges of a lemon with ends $v_3,v_6$; all other components of $G-V(P)$ are an independent set (tangle), and $G$ is an entangled lemon bush with a stem set of size~$9$ (the vertices of $P$), so we conclude by applying Theorem~\ref{t-solvetangle}. 
    \medskip
    
    \noindent
    \textbf{Case 2:} $y$ is adjacent to $v_7$.\\
    Again by Claim~\ref{c-p10-comp-adjacencies}, $D \cup \{v_3, v_7\}$ is a juicy or seeded wedge. 
    Suppose there is another component $D'$ of $G-V(P)$ of size at least~$2$. Applying Claim~\ref{c-p10-comp-adjacencies} again, we are left with the following three cases to consider:
    \begin{itemize}
        \item If $D'$ is adjacent to $v_3$ and $v_6$, this is covered in Case 1 above. 
        \item If $D'$ is adjacent to $v_3$ and $v_7$, $D' \cup \{v_3, v_7\}$ is a juicy or seeded wedge by Claim~\ref{c-p10-comp-adjacencies}.
        \item If $D'$ is adjacent to $v_4$ and $v_7$, we find a path on 11 vertices (and hence a $P_{10}$) as follows: $(v_1, v_2, v_3, x, y,v_7, v_6, v_5,v_4, D')$. 
    \end{itemize}
        It follows that for any component $D'$ of $G-V(P)$, $D' \cup \{v_3, v_7\}$ is a juicy or seeded wedge with ends $v_3$ and $v_7$. Thus all components of $G-V(P)$ of size at least~$2$ are wedges of a lemon with ends $v_3,v_7$, and
        all other components of $G-V(P)$ are an independent set (tangle), so $G$ is (again) an entangled lemon bush with a stem set of size~$9$ (the vertices of~$P$), and we again conclude by applying Theorem~\ref{t-solvetangle}. 
        
        \medskip
        \noindent        
        This concludes the proof.
\end{proof}

\noindent
We can now prove Theorem~\ref{t-p11free}, which we restate below.

\thmPIIfree*
\begin{proof}
    Let $(G,T,k)$ be an instance of {\sc Steiner Forest}, where $k$ is some integer, $T$ a terminal set, and $G$ is a $P_{11}$-subgraph-free graph. From Lemma~\ref{l-2con}, we assume $G$ is $2$-connected. Further, if $G$ is $P_{10}$-subgraph-free, then we can solve {\sc Steiner Forest} on $(G,T,k)$ in polynomial time from Lemma~\ref{l-p10free}. That is, we may assume $G$ contains some $P_{10}$ subgraph, $P = (v_1, \dots , v_{10})$. Such a path can be found in polynomial time applying Lemma~\ref{l-longestpath}.

    If $G-V(P)$ contains only isolated vertices, then $V(P)$ is a vertex cover of $G$ of size~$10$ and we conclude by applying Theorem~\ref{t-vc}.

    \begin{figure}
        \centering
        \includegraphics[width=0.7\linewidth, page=4]{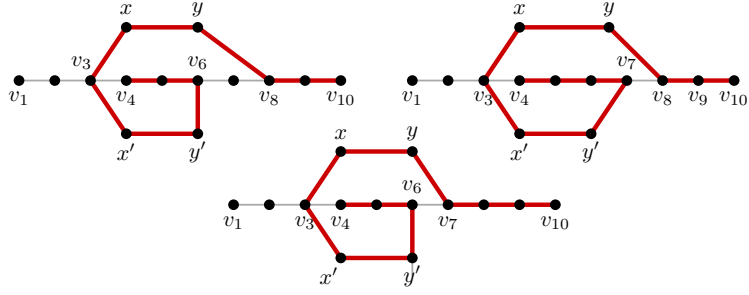}
        \caption{Occurrences of $P_{11}$ in the proof of Theorem~\ref{t-p11free}.}
        \label{fig:p11-contradiction-3-678}
    \end{figure}

    \begin{figure}
        \centering
        \includegraphics[width=\linewidth, page=5]{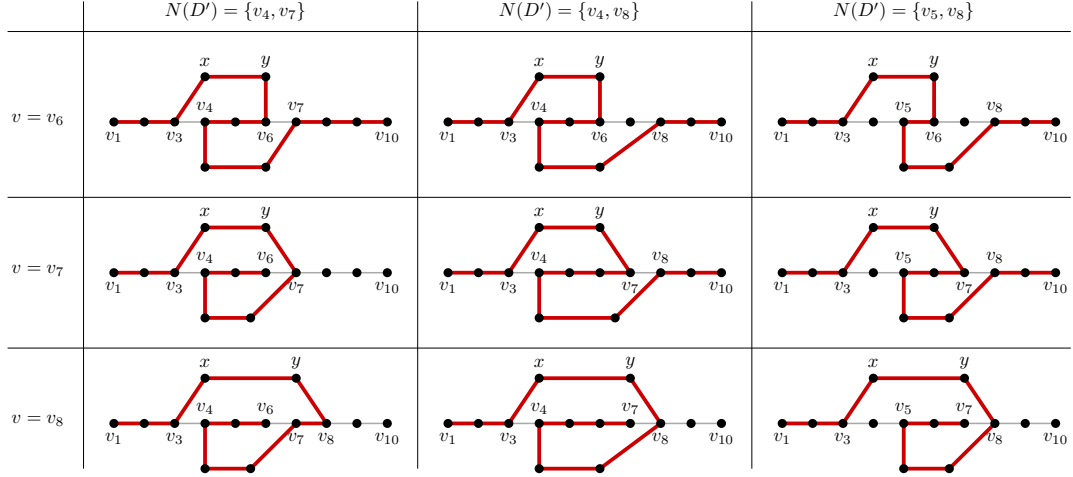}
        \caption{Occurrences of $P_{11}$ in the proof of Theorem~\ref{t-p11free}.}
        \label{fig:p11-contradiction-grid}
    \end{figure}
    
    \begin{claim}\label{c-p11-38-comps}
        Let $D$ be any component of $G-V(P)$ of size at least~2 such that $D$ is adjacent to $v_3$ (respectively $v_8$). Then all the following hold:
        \begin{enumerate}[i]
            \item $G[D]$ is a star. \label{i-p11-star}
            \item There exists an edge $xy$ with $x,y \in D$ such that $xv_3$ (resp.~$xv_8$) is an edge and $yv$ is an edge for some $v\in \{v_6,v_7,v_8\}$  (resp.~$v\in \{v_3,v_4,v_5\}$). \label{i-p11-xy} 
            \item If $|N(D)|=2$, then $D \cup \{N(D)\}$ is a juicy or seeded wedge.\label{i-p11-wedge}
        \end{enumerate}
    \end{claim}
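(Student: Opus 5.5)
The plan is to argue directly from the maximality of $P$ and the $2$-connectivity of $G$. Since $G$ is $P_{11}$-subgraph-free, $P=(v_1,\dots,v_{10})$ is a longest path, so Lemma~\ref{l-adjacencytopath} applies to $P$. By the symmetry $v_i\leftrightarrow v_{11-i}$ I only treat the case that $D$ is adjacent to $v_3$.

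First I record two facts. Neither $v_1$ nor $v_{10}$ has a neighbour off $P$. Also, $D$ is not adjacent to $v_2$: otherwise, with $v_2x,xy\in E(G)$ and $x,y\in D$, the path $(y,x,v_2,\dots,v_{10})$ is a $P_{11}$. Symmetrically, $D$ is not adjacent to $v_9$. Since $D$ is adjacent to $v_3$, Lemma~\ref{l-adjacencytopath} also excludes $v_4$ (consecutive) and $v_5$ (distance~$2$). Hence $N(D)\cap V(P)\subseteq\{v_3,v_6,v_7,v_8\}$.

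For item~(\ref{i-p11-star}), fix $x\in D$ with $xv_3\in E(G)$. If some neighbour $y$ of $x$ in $D$ had a neighbour $z\neq x$ in $D$, then $(z,y,x,v_3,\dots,v_{10})$ would be a $P_{11}$. So every $D$-neighbour of $x$ has $D$-degree~$1$, which rules out triangles and vertices at distance $\geq 2$ from $x$. As $D$ is connected, $D$ is a star with centre $x$; when $|D|=2$ it is just an edge, and then either endpoint may play the role of $x$.

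For item~(\ref{i-p11-xy}), I split on $|D|$.
\begin{itemize}
\item If $|D|\geq 3$, no leaf $y$ of the star is adjacent to $v_3$ or $v_8$. Indeed, taking another leaf $y'$, one of the paths $(y',x,y,v_3,\dots,v_{10})$ or $(y',x,y,v_8,\dots,v_1)$ would be a $P_{11}$. Since $x$ is not a cut vertex of the $2$-connected graph $G$, some leaf $y$ has a neighbour on $P$, and by the above this neighbour lies in $\{v_6,v_7\}$. Moreover, the neighbour of $D$ at $v_3$ must be $x$ itself.
\item If $D=\{x,y\}$, then $2$-connectivity gives $|N(D)|\geq 2$, and neither of $x,y$ can carry all attachments of $D$ alone. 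I pick the endpoint adjacent to $v_3$ as $x$; if both are adjacent to $v_3$, I pick the one that is not adjacent to the second attachment vertex. The other endpoint then has a neighbour in $\{v_6,v_7,v_8\}$.
\end{itemize}
The delicate point here is to choose the labelling consistently so that $x$ sees $v_3$ and $y$ sees the far vertex. I expect this bookkeeping in the two-vertex case to be the main (mild) obstacle; the other steps are direct.

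For item~(\ref{i-p11-wedge}), suppose $N(D)=\{v_3,v\}$. Then $D\cup N(D)$ is a wedge with ends $v_3,v$ by definition, since $D$ is connected and $N(D)=\{v_3,v\}$. If $|D|=2$, it is seeded. If $|D|\geq 3$, the leaves are adjacent only to $x$ and $v$, and $x$ is adjacent to $v_3$ and possibly $v$. The bags $\{v_3,v,x\}$ and $\{x,v,y_i\}$, one for each leaf $y_i$, then form a path decomposition of width~$2$ of $G_L\cup\{v_3v\}$, so the wedge is juicy.
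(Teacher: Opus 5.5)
Your proof is correct and follows essentially the same route as the paper: the $P_{11}$ argument $(z,y,x,v_3,\dots,v_{10})$ gives the star, $2$-connectivity together with Lemma~\ref{l-adjacencytopath} gives the edge $xy$, and the same width-$2$ path decomposition gives juiciness, while your explicit split into $|D|=2$ and $|D|\geq 3$ is a bit more careful than the paper's ``after potentially relabelling $x$ and $y$''. One small caveat: part~2 of Lemma~\ref{l-adjacencytopath} only holds for distinct vertices $a\neq b$, so your blanket claim $v_5\notin N(D)$ is not justified for the $v_3$-neighbour $x$ itself, but you only ever use it for vertices other than $x$ (the leaves, or the second endpoint when $|D|=2$), where it does apply.
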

    \begin{claimproof}
        By symmetry, we only prove the claim where $D$ is adjacent to $v_3$.
        
        We begin by proving item~\ref{i-p11-star}. Let $x \in D$ be a neighbour of $v_3$ and let $y \in D$ be a neighbour of $x$.
        If $y$ has some other neighbour $x' \in D$, then $(x',y,x,v_3,\ldots,v_{10})$ is a $P_{11}$ in $G$. Hence, $N(y) \subseteq P \cup \{x\}$. This implies that $G[D]$ is a star with $x$ at the centre. Item~\ref{i-p11-star} follows.        
        
        Now for item~\ref{i-p11-xy}. Applying item~\ref{i-p11-star} let $x$ be the centre of the star $D$ and $y$ some neighbour of $x$ in $D$.
        Given that $G$ is $2$-connected, there is a path from every vertex in $D \setminus \{x\}$ to $P$ which does not contain $x$. Since $G[D]$ is a star, it follows that every vertex in $D$ has some neighbour in $P$. Further, again due to the $2$-connectedness, we know that some vertex in $D$ is adjacent to some vertex $v \in P \setminus \{v_3\}$.  
        After potentially relabelling $x$ and $y$, we may assume this is $y$. 
        From Lemma~\ref{l-adjacencytopath}, we obtain that $v_4,v_5 \notin N(y)$, that is $v \in \{v_6,v_7,v_8\}$. Item~\ref{i-p11-xy} follows.
        
        Now we show item~\ref{i-p11-wedge}. Applying item~\ref{i-p11-xy}, let $xy$ be an edge with $x,y \in D$ such that $xv_3$ is an edge and $yv$ is an edge for some $v\in \{v_6,v_7,v_8\}$.
        By the premise of item~\ref{i-p11-wedge}, $|N(D)|=2$, so $N(D)=\{v_3, v\}$. If $|D|=2$ then $D \cup N(D)$ is a seeded wedge and we are done. 
        If $|D| \geq 3$, then $x$ has some other neighbour $y' \in D$. Since $x$ is the centre of the star $G[D]$ and $N(D)=\{v_3,v\}$, we know that $N(y') \subseteq \{v_3,v, x\}$. Moreover, $y'$ is not adjacent to $v_3$ since otherwise $G$ contains the $P_{11}$ $(y, x, y', v_3, \ldots, v_{10})$, a contradiction; that is, $N(y')\subseteq\{v,x\}$. 
        As $G$ is $2$-connected, it follows that $N(y') = \{v, x\}$. Note that this holds for every vertex in $D \setminus \{x\}$.
        Now $D \cup N(D)= D \cup \{v_3, v\}$ is a juicy wedge witnessed by the path-decomposition of width~$2$ with bags $\{v_3,v,x\},$ $\{v, x, y_1\}, \{v, x, y_2\}, \ldots$ for successive neighbours $y_i$ of $x$ in $D$.
        Item~\ref{i-p11-wedge} follows.
    \end{claimproof}

    \noindent
    We can leverage this structure to derive a polynomial-time algorithm when it arises:
    
    \begin{claim}\label{c-p11-38-poly}
        If there exists a component $D$ of size at least~$2$ in $G-V(P)$ such that $v_3 \in N(D)$ or $v_8 \in N(D)$, then for any $T,k$, the instance $(G,T,k)$ of \stf{} can be solved in polynomial time. 
    \end{claim}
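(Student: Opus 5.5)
By symmetry I assume $v_3 \in N(D)$. By Claim~\ref{c-p11-38-comps}, $D$ is a star with centre $x$ adjacent to $v_3$, and some leaf $y$ of $D$ is adjacent to some $v \in \{v_6,v_7,v_8\}$. The plan is to show that, in each of the three subcases, $G$ is an entangled lemon bush with stem set $V(P)$ of size $10$, and then apply Theorem~\ref{t-solvetangle} with constant $k=10$. If $|N(D)|\geq 3$ happens, I will instead add the offending extra vertices of $D$ (or the center $x$) to the stem set; this keeps it of bounded size.

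\textbf{Step 1: long cycles through $P$.} Each choice of $v$ gives a long path that avoids part of $P$. For instance, $v=v_6$ gives $(v_1,v_2,v_3,x,y,v_6,\dots,v_{10})$, which misses $v_4,v_5$. Rerouting then yields alternative $10$-vertex paths $P'$ that contain $x,y$ and whose ends lie elsewhere. For each such $P'$, any further component $D'$ of $G-V(P)$ of size at least~$2$ must satisfy Lemma~\ref{l-adjacencytopath} and the analogue of Claim~\ref{c-p11-38-comps} with respect to both $P$ and $P'$, since otherwise we get a $P_{11}$. This should force $N(D')$ to equal a single fixed pair, e.g.\ $\{v_3,v\}$ or a symmetric pair near $v_8$. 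I will check this by an exhaustive case list like those in Figures~\ref{fig:p11-contradiction-3-678}--\ref{fig:p11-contradiction-grid}: every $D'$ with neighbours at $v_2$, $v_9$, or at distance less than $3$ on $P$, or whose attachment pair crosses $\{v_3,v\}$, yields a $P_{11}$.

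\textbf{Step 2: components with at least three neighbours on $P$.} Next I show that no component of size at least $2$ has three or more neighbours on $P$. Such a component would have $N(D)\cap P$ spaced by at least~$2$ along $P$ (Lemma~\ref{l-adjacencytopath}). Combined with the rerouted path from Step~1, I expect this to give a $P_{11}$, or a cut vertex contradicting $2$-connectivity, as in Claim~\ref{c-p10-comp-adjacencies}(ii). Then item~(iii) of Claim~\ref{c-p11-38-comps}, and its analogue for components not touching $v_3,v_8$ (these are stars, by the same extension argument), shows that every nontrivial component together with its two neighbours is a juicy or seeded wedge. The trivial components have neighbourhoods inside $V(P)$ and form the tangle set. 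Hence $G$ is an entangled lemon bush with stem set $V(P)$, and Theorem~\ref{t-solvetangle} finishes.

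\textbf{Main obstacle.} The main obstacle is Step~2 for components not adjacent to $v_3$ or $v_8$, e.g.\ a $D'$ attached to $\{v_4,v_7\}$ or $\{v_5,\dots\}$ with three contacts. These are not controlled by Claim~\ref{c-p11-38-comps} and need $D$'s rerouting to produce the $P_{11}$. My first approach is to concatenate $v_1v_2v_3xy v$ with a traversal of $D'$ to cover eleven vertices. If some configuration survives, I would enlarge the stem set by the at most constant-size $D'$ and still apply Theorem~\ref{t-solvetangle}.
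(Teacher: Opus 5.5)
Your overall strategy matches the paper's: show that $G$ is an entangled lemon bush with stem set $V(P)$ and apply Theorem~\ref{t-solvetangle}. However, the combinatorial step that actually proves the claim is only announced (``should force'', ``I will check'', ``I expect''), and you locate the difficulty in the wrong place.

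The paper's argument has two prongs.
\begin{itemize}
\item If $D$ is the only component of $G-V(P)$ of size at least~$2$, then $V(P)$ together with the centre of the star $D$ is a vertex cover of size~$11$, and Theorem~\ref{t-vc} applies. Your fallback of adding $x$ to the stem set amounts to the same thing.
\item Otherwise, for every other such component $D'$, the paper shows $N(D')=\{v_3,v\}$ by exhibiting explicit $P_{11}$'s. If $v_3\in N(D')$, then $D'$ has its own edge $x'y'$ with $x'\sim v_3$ and $y'\sim v'\in\{v_6,v_7,v_8\}$. For $v'\neq v$ the two detours are concatenated with pieces of $P$; for instance, for $v=v_8$, $v'=v_7$, the path $v_4v_5v_6v_7y'x'v_3xyv_8v_9v_{10}$ has $12$ vertices. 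If $v_3\notin N(D')$, Lemma~\ref{l-adjacencytopath} leaves $N(D')\in\{\{v_4,v_7\},\{v_4,v_8\},\{v_5,v_8\}\}$, and each of these combines with the detour $v_3xyv$ into a $P_{11}$.
\end{itemize}
Then $|N(D)|=2$ follows by swapping the roles of $D$ and $D'$, and Claim~\ref{c-p11-38-comps}(iii) gives the wedges.

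Your Step~2, as you justify it, does not go through.
\begin{itemize}
\item The spacing from Lemma~\ref{l-adjacencytopath} plus the cut-vertex argument of Claim~\ref{c-p10-comp-adjacencies}(ii) only excludes $N(D)=\{v_3,v_5,v_7\}$. The configuration $D=\{a,b\}$ with $a\sim v_3,v_5$ and $b\sim v_8$ passes both tests, as does its mirror image with neighbourhood $\{v_3,v_6,v_8\}$. Moreover, no $P_{11}$ can be built from $P$ and $D$ alone. The paper never proves such a statement in isolation; it sidesteps it with the vertex-cover shortcut or the pairing argument above.
\item The ``main obstacle'' you name is not one. A component of size at least $2$ avoiding $v_3$ and $v_8$ has all its neighbours in $\{v_4,\dots,v_7\}$, no two consecutive, hence at most two of them. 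Its star structure also does not follow from the extension argument of Claim~\ref{c-p11-38-comps}(i), since extending from $v_4$ yields only $10$ vertices. In the paper such components are simply ruled out by a $P_{11}$.
\item Enlarging the stem set by ``the at most constant-size $D'$'' is unjustified as stated. You have shown neither that such $D'$ are small nor that only boundedly many components can have three neighbours on $P$. That bound is exactly what the pairing argument supplies.
\end{itemize}
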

    \begin{claimproof}
        Consider such a component $D$, and assume by symmetry that $D$ is adjacent to $v_3$. Applying Claim~\ref{c-p11-38-comps}.\ref{i-p11-xy}, let vertices $x,y \in D$ with $xv_3, yv \in E(G)$ for some $v \in \{v_6, v_7,v_8\}$.
        
       Note that $x$ is a vertex cover of $G[D]$ from Claim~\ref{c-p11-38-comps} item~\ref{i-p11-star}. If $V(P) \cup \{x\}$ is a vertex cover of $G$, then we conclude by applying Theorem~\ref{t-vc}. Otherwise, there must exist some other component in $G-V(P)$ of size at least~2. We now deal with cases where such a component exists. 
       First suppose that there exists a second component $D'$ of $G-V(P)$ of size at least~2 such that $N(D') \ne \{v_3, v\}$. By Lemma~\ref{l-2con}, $|N(D')|\ge 2$. 
       \begin{enumerate}
           \item If $v_3 \in N(D')$ then, applying the reasoning above, there are vertices $x', y' \in D'$ such that $x'$ is adjacent to $v_3$ and $y'$ is adjacent to some $v' \in \{v_6, v_7, v_8\}$. We have that $v\ne v'$.
           If $v=v_8$ and $v'=v_7$ then we find a $P_{11}$ as shown in Figure~\ref{fig:p11-contradiction-3-678} (top left). If $v=v_8$ and $v'=v_7$ then we find a $P_{11}$ as shown in Figure~\ref{fig:p11-contradiction-3-678} (top right). If $v=v_7$ and $v'=v_6$ then we find a $P_{11}$ as shown in Figure~\ref{fig:p11-contradiction-3-678} (bottom). The cases where $v=v_i$ and $v'=v_j$ for $j>i$ are symmetric. Consequently, we find that $v_3 \notin N(D')$.
           \item If $v_3 \notin N(D')$ then applying Lemma~\ref{l-adjacencytopath} we have 3 cases: $N(D')=\{v_4,v_7\}$, $N(D')=\{v_4,v_8\}$, or $N(D')=\{v_5,v_8\}$. We also again have the 3 possibilities for $v$. Figure~\ref{fig:p11-contradiction-grid} shows that in any case, we can extend the path $P$ and find a $P_{11}$ subgraph, a contradiction.
       \end{enumerate}

        \noindent
        Thus, we obtain that $N(D') = \{v_3, v\}$ for every component $D' \ne D$ in $G-V(P)$ of size at least~$2$, and that there exists at least one such component $D'$. Then by symmetry we also have that $N(D) = \{v_3, v\}$; that is, we can at this point exclude the possibility that $|N(D)| \ge 3$. Applying Claim~\ref{c-p11-38-comps}.\ref{i-p11-wedge}, we have that for every component $D$ of $G-V(P)$ of size at least~$2$, $D \cup N(D)$ is a juicy or seeded wedge. 

        We let $G'' = G[V(P)] \cup \{v_3v\}$. Then, $G' = G[V(P) \cup D]$ is a lemon bush flowering from~$G''$ with stem set $V(P)$.
        Further, all vertices in $V(G) - (V(P)  \cup D)$ are adjacent only to vertices in $P$ and are thus a tangle set. Hence, we obtain that $G$ is an entangled lemon bush with $V(P)$ its stem set of size~$10$, and we conclude by applying Theorem~\ref{t-solvetangle}.
    \end{claimproof}
    
    \noindent
    It remains to deal with the case where no component of size at least~2 is adjacent to either $v_3$ or $v_8$. 
    \begin{claim}\label{c-p11-47-wedge}
        If no component $D$ of $G-V(P)$ of size at least~2 is adjacent to either $v_3$ or $v_8$, then for any $T,k$, the instance $(G,T,k)$ of \stf{} can be solved in polynomial time.
    \end{claim}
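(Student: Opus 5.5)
Under the hypothesis of the claim, every component $D$ of $G-V(P)$ with $|D|\ge 2$ has $N(D)\subseteq\{v_4,\ldots,v_7\}$. Indeed, $v_1,v_{10}$ have no neighbours outside $P$, since $P$ could otherwise be extended. If $D$ were adjacent to $v_2$ (resp.\ $v_9$) via $x\in D$ with a neighbour $y\in D$, then $(y,x,v_2,\ldots,v_{10})$ would be a $P_{11}$. The vertices $v_3,v_8$ are excluded by assumption. By $2$-connectivity (Lemma~\ref{l-2con}), $|N(D)|\ge 2$. Lemma~\ref{l-adjacencytopath} forbids two neighbours of $D$ that are consecutive or at distance~$2$ on $P$. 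Inside $\{v_4,\ldots,v_7\}$ the only admissible pair is therefore $N(D)=\{v_4,v_7\}$. So every component of size at least~$2$ attaches exactly to $v_4$ and $v_7$, and the remaining components are single vertices with neighbours in $V(P)$.

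Next I show that each such $D\cup\{v_4,v_7\}$ is a juicy or seeded wedge, imitating Claim~\ref{c-p11-38-comps}. Let $x\in D$ be adjacent to $v_4$ and $y$ a neighbour of $x$ in $D$. If $y$ had a further neighbour $z\in D$, then $(z,y,x,v_4,v_5,\ldots,v_{10})$ has only $10$ vertices, which is not enough. Instead I route around: $(z,y,x,v_4,v_3,v_2,v_1)$ plus the part towards $v_7$ is not a path, so use $2$-connectivity. Some vertex $w\in D$ is adjacent to $v_7$, and then $(v_1,v_2,v_3,v_4,D\text{-path},v_7,v_8,v_9,v_{10})$ is a path. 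This path has $8$ vertices of $P$ plus the $D$-path. A $D$-path from a neighbour of $v_4$ to a neighbour of $v_7$ with at least $3$ vertices already gives $P_{11}$. Hence any two vertices $a,b\in D$ with $a\in N(v_4)$ and $b\in N(v_7)$ are equal or adjacent, and that path has $9$ or $10$ vertices. I would use this to show that $D$ is a star whose centre is adjacent to one end. A path using $P$ through $v_5,v_6$ can be extended into $D$ at both ends, e.g.\ $(\ldots)$ starting in $D$ at a leaf, via $x$, $v_4,v_3,v_2,v_1$ reversed, combined with $v_7,\ldots,v_{10}$. This rules out longer paths in $D$ and leaves that each leaf is adjacent only to the centre and the far end. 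I then exhibit the width-$2$ path decomposition as in Claim~\ref{c-p11-38-comps}\ref{i-p11-wedge}. The case $|D|=2$ gives a seeded wedge.

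Then $G$ is an entangled lemon bush. Take the stem set $V(P)$ of size $10$ and let $G''=G[V(P)]\cup\{v_4v_7\}$. All components of size at least~$2$ are wedges of a single lemon with ends $v_4,v_7$. All singleton components have neighbourhoods inside $V(P)$ and form the tangle set. If there is no component of size at least~$2$, then $V(P)$ is a vertex cover and Theorem~\ref{t-vc} applies. Otherwise, Theorem~\ref{t-solvetangle} with $k=10$ gives polynomial time.

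The main obstacle is the star/wedge analysis in the second step. With ends $v_4,v_7$, the arms of $P$ beyond the ends have only $3$ vertices, so the direct path extensions of the earlier claims give only $P_{10}$. I expect to need paths that enter $D$, traverse $v_5v_6$ or the detour through $v_1,\ldots,v_4$, and exit again, to forbid $P_3$'s in $D$ away from the centre and leaves adjacent to both ends. If this fails, I would fall back to a bounded-size argument: then $|D|$ is small, which yields $\ell$-pulped wedges, and the entangled $\ell$-lemon version of Theorem~\ref{t-solvetangle} would still apply.
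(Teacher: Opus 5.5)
Your first step (every component $D$ of $G-V(P)$ with $|D|\ge 2$ has $N(D)=\{v_4,v_7\}$) matches the paper. So does your final assembly: stem set $V(P)$, $G''=G[V(P)]\cup\{v_4v_7\}$, singletons as the tangle set, then Theorem~\ref{t-solvetangle}. The step in between, that $D\cup\{v_4,v_7\}$ is a juicy or seeded wedge, is not proved, and this is a genuine gap.

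\textbf{What you actually have.} You only show that a vertex of $D$ adjacent to $v_4$ and a vertex of $D$ adjacent to $v_7$ are equal or joined in $G[D]$ by nothing but an edge. The rest is a hope, not an argument. Your example ``starting in $D$ at a leaf, via $x$, $v_4,v_3,v_2,v_1$ reversed, combined with $v_7,\ldots,v_{10}$'' is not a path.

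\textbf{Your first attempt targets a false statement.} A neighbour $y$ of an arbitrary $x\in D\cap N(v_4)$ can have further neighbours in $D$. This happens when the star is centred at the vertex adjacent to $v_7$.

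\textbf{The fallback does not help.} Nothing bounds $|D|$. The path $P$ together with a star centred at a neighbour of $v_4$, whose leaves are adjacent only to the centre and $v_7$, contains no path on $11$ vertices however many leaves it has. Nothing you have bounds the number of components attached to $\{v_4,v_7\}$ either. An $\ell$-lemon admits at most $\ell$ wedges that are neither juicy nor seeded. So juiciness must genuinely be proved.

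\textbf{The missing idea: use $2$-connectivity to get a second exit from $D$.}
\begin{itemize}
\item Fix distinct $x,y\in D$ with $xv_4,yv_7\in E(G)$. By your observation, $xy$ is the only $x$--$y$ path in $G[D]$.
\item Suppose $x$ has another neighbour $y'\in D$. Since $x$ is not a cut vertex, some path from $y'$ to $\{v_4,v_7\}$ in $G[D\cup\{v_4,v_7\}]$ avoids $x$. It also avoids $y$, since otherwise there would be a second $x$--$y$ path.
\item If this path ends at $v_4$, then $(v_1,\ldots,v_4,\ldots,y',x,y,v_7,\ldots,v_{10})$ has at least $11$ vertices. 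If it ends at $v_7$ with an internal vertex, then $(y,x,y',\ldots,v_7,v_6,\ldots,v_1)$ has at least $11$ vertices.
\item Hence every further neighbour of $x$ is adjacent to $v_7$ and not to $v_4$. Symmetrically, every further neighbour of $y$ is adjacent to $v_4$ and not to $v_7$.
\item If both $x$ and $y$ had further neighbours $y'$ and $x'$, then $(v_1,\ldots,v_4,x',y,x,y',v_7,\ldots,v_{10})$ would have $12$ vertices.
\end{itemize}
Applying this to every such pair shows that $G[D]$ is a star. Say its centre $x$ is adjacent to $v_4$; then its leaves $y_1,y_2,\ldots$ are adjacent only to $x$ and $v_7$. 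The bags $\{v_4,v_7,x\},\{v_7,x,y_1\},\{v_7,x,y_2\},\ldots$ then form a path decomposition of width $2$, so the wedge is juicy.
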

    \begin{claimproof}
        Recall that no component of size at least~$2$ is adjacent to any of $v_1, v_2, v_9, v_{10}$. We will show that any component $D$ of $G-V(P)$ of size at least~2 with $N(D) \subseteq \{v_4, \ldots, v_7\}$ satisfies that $D \cup N(D)$ is a juicy or seeded wedge with ends $v_4$ and $v_7$. 
        
        First note that $N(D)=\{v_4,v_7\}$ by applying 2-connectivity together with Lemma~\ref{l-adjacencytopath}. If $|D|=2$ then $D \cup N(D)$ is immediately a seeded wedge. We now consider the case where $|D|\ge 3$.
        Let $x,y \in D$ be two vertices such that $xv_4 \in E(G)$ and $yv_7\in E(G)$.
    
        If there is some path from $x$ to $y$ in $G[D]$ with length at least~$2$, then $(v_1, \ldots , v_4, x, \ldots, y,$ $v_7, \dots, v_{10})$ is a $P_{11}$ in $G$. It follows that $x$ and $y$ are adjacent and there is no other path from $x$ to $y$ in $G[D]$.        
        Note that the same holds for any pair of vertices in $D$ such that one is adjacent to $v_4$ and the other to~$v_7$.
    
        Suppose that $x$ has another neighbour $y'$. As $G$ is $2$-connected and $N(D) = \{v_4,v_7\}$, there is some path in $G[D \cup \{v_4,v_7\}]$ from $y'$ to either $v_4$ or $v_7$ which does not contain $x$. Further, this path does not contain $y$, since this would entail the existence of a path from $x$ to $y$ of length at least~2, a contradiction.  
    
        Suppose first there is a path from $y'$ to $v_4$ using only vertices in $D \cup \{v_4\} \setminus \{x,y\}$.
        Regardless of the length of this path, we find a $P_{11}$ contained in $(v_1, \dots , v_4, \dots, y', x, y, v_7, \dots, v_{10})$, a contradiction.
        If there is a path of length at least~$2$ from $y'$ to $v_7$ using only vertices in
        $D \cup \{v_7\} \setminus \{x,y\}$,
        we find a $P_{11}$ contained in $(y, x, y', \dots, v_7, \dots, v_1)$, a contradiction.
        It follows that $y'$ is adjacent to $v_7$. Thus, all neighbours of $x$ in $D$ are adjacent to $v_7$ and, by symmetry, all neighbours of~$y$ in $D$ are adjacent to $v_4$.
        
        Suppose for contradiction that $x$ has some neighbour $y' \in D$ and $y$ has some neighbour $x' \in D$.
        Given that $x'$ is adjacent to $v_4$ and $y'$ is adjacent to $v_7$, we find the path $(v_1, \dots, v_4, x', y, x, y',v_7, \dots, \,v_{10})$ of length 12 and hence a $P_{11}$, a contradiction. 
        It follows that only one of $x$ and $y$ can have additional neighbours and thus, $G[D]$ is a star with either $x$ or $y$ at the centre. If the centre is $x$, then all of the leaves of this star are adjacent to $v_7$, otherwise all the leaves are adjacent to $v_4$. 
        If $x$ is the centre then $G[D \cup N(D)]$ admits the path-decomposition of width~$2$ with bags $\{v_4,v_7,x\},$ $\{v_7, x, y_1\}, \{v_7, x, y_2\}, \ldots$ for successive neighbours $y_i$ of $x$ in $D$. A symmetric decomposition exists if $y$ is the centre. Thus, $D \cup N(D)$ is a juicy wedge.
        
        Now we let $G'' = G[V(P)] \cup \{v_4v_7\}$. Then, applying Claim~\ref{c-p11-47-wedge}, $G' = G[V(P) \cup D]$ is a lemon bush flowering from~$G''$ with stem set $V(P)$.
        Further, all vertices in $V(G) - (V(P)  \cup D)$ are adjacent only to vertices in $P$ and are thus a tangle set. Hence, we obtain that $G$ is an entangled lemon bush with $V(P)$ its stem set of size~$10$, and we can apply Theorem~\ref{t-solvetangle}. This concludes the proof of the claim.
    \end{claimproof}

    \noindent
    Thus all cases can be solved in polynomial time: we know there is at least one component of $G-V(P)$ of size at least~$2$. If it is adjacent to one of $v_3$ and $v_8$ Claim~\ref{c-p11-38-poly} applies, otherwise Claim~\ref{c-p11-47-wedge} applies. This concludes the proof.
    \end{proof}

\subsection{Forbidding a Subdivided Claw $\mathbf{S_{1,3,6}}$}

In this section we prove Theorem~\ref{t-s136}, which we restate below.

\thmSIEbfree*
\begin{proof}
    Let $(G,T,k)$ be an instance of \stf, where $G$ is a $S_{1,3,6}$-subgraph-free graph.
    From Lemma~\ref{l-2con}, we may assume that $G$ is $2$-connected.
    Let $P = (v_1, \dots, v_r)$ be a longest path in $G$, which can be found in polynomial time by Lemma~\ref{l-longestpath}.
    If $G$ is $P_{11}$-subgraph-free, then, applying Theorem~\ref{t-p11free}, \stf{} can be solved in polynomial time on $(G,T,k)$.
    Therefore, we now assume $r\geq 11$.

\begin{figure}
    \centering
    \includegraphics[width=0.8\linewidth,page=7]{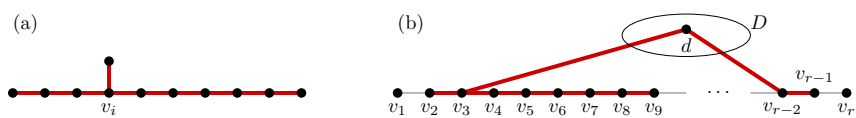}
    \caption{Occurrences of $S_{1,3,6}$ in the proof of Theorem~\ref{t-s136}.}
    \label{fig:s136-i}
\end{figure}

    \begin{claim}\label{clm-s334-P}
        If $r \geq 12$, then $V(G) = V(P)$.
    \end{claim}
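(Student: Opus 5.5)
To prove the claim we assume $r \geq 12$ and that some vertex $w \notin V(P)$ exists, and derive a contradiction by exhibiting an $S_{1,3,6}$ as a subgraph. Since $G$ is $2$-connected and $P$ is a longest path, we first use the standard longest-path facts. Let $D$ be the component of $G-V(P)$ containing $w$. Then $N(D)\subseteq V(P)$. The endpoints $v_1,v_r$ have no neighbours outside $P$. By Lemma~\ref{l-adjacencytopath}, $D$ is not adjacent to two consecutive vertices of $P$, nor (via two distinct vertices $a,b$) to $v_i$ and $v_{i+2}$. By $2$-connectivity, $D$ has at least two neighbours on $P$, say $v_i$ and $v_j$ with $i<j$, attained through vertices $a,b\in D$ (possibly $a=b$). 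We get $j\geq i+2$, and $j\geq i+3$ if $a\neq b$. Also $2\leq i$ and $j\leq r-1$.

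The main step is to build the claw. We centre it at $v_i$ (or symmetrically at $v_j$) and use three legs.
\begin{itemize}
\item A leg of length $1$: the edge $v_i a$ into $D$.
\item A backward leg: $v_{i-1},v_{i-2},\dots$ along $P$.
\item A forward leg: $v_{i+1},v_{i+2},\dots$ along $P$.
\end{itemize}
This needs backward and forward leg lengths $\{3,6\}$ in some order, i.e.\ $i-1\geq 3$ and $r-i\geq 6$, or $i-1\geq 6$ and $r-i\geq 3$. With $r\geq 12$, this works unless $i\leq 3$ and also $i\geq r-2$, which is impossible. The only failures are when $i\leq 3$ and $r-i<6$; with $r\geq 12$ these cannot happen together. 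So whenever $D$ attaches at some $v_i$ with $4\leq i\leq r-3$, we immediately get an $S_{1,3,6}$. Here we use $i-1\geq 3$ and $r-i\geq 3$, plus whichever side has length at least $6$, which exists since $(i-1)+(r-i)=r-1\geq 11$.

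It remains to handle the case where all neighbours of $D$ lie in $\{v_2,v_3,v_{r-2},v_{r-1}\}$. Two neighbours on the same side would be at distance at most $1$, which is forbidden. So $D$ has one neighbour $v_i$ with $i\in\{2,3\}$ and one neighbour $v_j$ with $j\in\{r-2,r-1\}$. Then the path $v_1\dots v_i\,(D)\,v_j\dots v_r$ together with the omitted middle segment $v_{i+1}\dots v_{j-1}$ gives a long cycle $C = v_i \dots v_j\, b \dots a\, v_i$ of length at least $r-3+2\geq 11$. The vertex $v_{i-1}$ hangs off $v_i$, so the centre $v_i$ has the pendant leg $v_{i-1}$ of length $1$. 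Its other two legs go the two ways around $C$: one through $D$ towards $v_j$, the other along $P$. We choose lengths $3$ and $6$ in opposite directions, which is possible since $|C|\geq 10$. This yields an $S_{1,3,6}$. Alternatively, this configuration already contradicts $P$ being longest, by rerouting. In either case we obtain a contradiction.

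The main obstacle is making sure the two ``end'' cases and the case $a=b$ really close up without an overlapping vertex in the claw. The length-$1$ leg must avoid both other legs: if the leg into $D$ is used, the legs along $P$ do not enter $D$; in the cycle case we use $v_{i-1}$ instead, which lies off $C$. I expect to check these few small configurations by hand, as in Figure~\ref{fig:s136-i}. Concluding, every vertex of $G$ lies on $P$, so $V(G)=V(P)$.
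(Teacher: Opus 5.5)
Your plan is the paper's: an outside neighbour of some $v_i$ with $4\le i\le r-3$ gives an $S_{1,3,6}$ centred at $v_i$, and otherwise $D$ attaches to exactly one of $v_2,v_3$ and exactly one of $v_{r-2},v_{r-1}$, which must again produce an $S_{1,3,6}$. Your justification of the first step via $(i-1)+(r-i)=r-1\ge 11$ is correct, although the sentences before it are muddled: the failing cases are $i\le 3$ \emph{or} $i\ge r-2$.

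The gap is in your construction for the last case. Let $Q$ be an $a$--$b$ path in $D$. The cycle $C=v_i\cdots v_j\,b\cdots a\,v_i$ then has $(j-i+1)+|V(Q)|\ge r-3$ vertices, not $r-1$. For $r=12$, $i=3$, $j=10$ and $D=\{a\}$ it has only $9$ vertices. Legs of lengths $3$ and $6$ leaving $v_i$ in opposite directions around $C$ need $10$ vertices of $C$. Concretely, with the $6$-leg $v_4,\dots,v_9$, the $3$-leg $a,v_{10},v_9$ collides at $v_9$, and swapping the lengths does not help.

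The repair is to let the leg through $D$ leave $C$ at $v_j$. Take centre $v_i$ with three legs:
\begin{itemize}
\item $(v_{i-1})$;
\item $(v_{i+1},\dots,v_{i+6})$;
\item the first three vertices of $a,\dots,b,v_j,v_{j+1}$, running along $Q$ and then out past $v_j$.
\end{itemize}
This works because $i+6\le 9<10\le r-2\le j$, and $v_{j+1}$ exists since $j\le r-1$.

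You should also drop the fallback that this configuration ``already contradicts $P$ being longest, by rerouting''; it does not. Take $P=v_1\cdots v_{12}$ plus the chords $v_1v_5$ and $v_8v_{12}$, and a vertex $a$ adjacent to $v_3$ and $v_{10}$.
\begin{itemize}
\item The graph is $2$-connected: it is the cycle $v_3\cdots v_{10}a$ plus two ears.
\item It has no Hamiltonian path. It subdivides a cubic multigraph on $\{v_3,v_5,v_8,v_{10}\}$ whose six threads all have internal vertices. A Hamiltonian path would therefore traverse at least four threads completely, giving a linear forest with four edges on four vertices, which is impossible.
\end{itemize}
Hence $P$ is a longest path and $a\notin V(P)$. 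The contradiction comes only from the claw with centre $v_3$ and legs $v_2$; $a,v_{10},v_{11}$; $v_4,\dots,v_9$. This is exactly the tight case above.
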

    \begin{claimproof}
        Suppose that $|V(P)| \geq 12$. If some vertex $v_i \in \{v_4,\dots, v_{r-3}\}$ has some neighbour in $G-V(P)$, then we find $S_{1,3,6}$, see Figure~\ref{fig:s136-i}\,a), a contradiction. Likewise, as $P$ is a longest path, the vertices $v_1$ and $v_{r}$ do not have any neighbour in $G-V(P)$. It follows that no vertex in $P - \{v_2,v_3, v_{r-2},v_{r-1}\}$ has a neighbour in $G-V(P)$.

        Suppose there is a component $D$ of $G-V(P)$. 
        Since we assume $G$ to be $2$-connected, and $v_1,v_4,\dots, v_{r-3}, v_{r}$ have no neighbour in $G-V(P)$, $D$ has at least two neighbours $v_i, v_j \in V(P)$ with $i, j \in \{2,3,r-2,r-1\}$. 
        By Lemma~\ref{l-adjacencytopath}, without loss of generality, $i \in \{2,3\}$ and $j \in \{r-2, r-1\}$.
        This results in a $S_{1,3,6}$ subgraph, see Figure~\ref{fig:s136-i}\,b), a contradiction. Hence, $V(G) = V(P)$.
    \end{claimproof}

\begin{figure}
    \centering
    \includegraphics[width=0.9\linewidth,page=8]{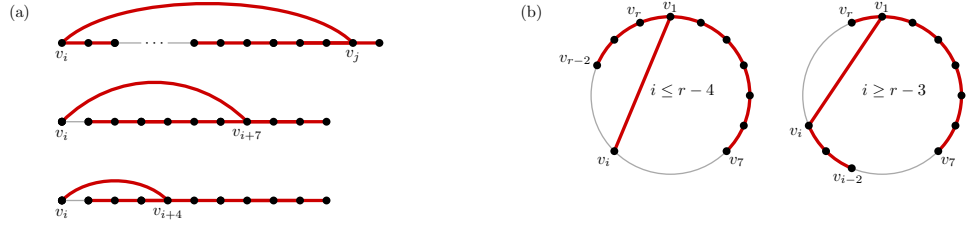}
    \caption{Occurrences of $S_{1,3,6}$ in the proof of Theorem~\ref{t-s136}.}    \label{fig:s136-ii}
\end{figure}

    \noindent
    We now consider the case where $r\geq 13$.

    \begin{claim}\label{clm-s334-cyc}
        If $r\geq 13$, then $G$ is a cycle of length $r$.
    \end{claim}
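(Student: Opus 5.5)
By Claim~\ref{clm-s334-P} we already know $V(G)=V(P)$. So the whole task is to show that the only edges of $G$ are the path edges $v_iv_{i+1}$ and the single edge $v_1v_r$. The plan is to classify all chords $v_iv_j$ with $j\geq i+2$ (jumps of length~$1$) and show that each one either closes a longer path or creates an $S_{1,3,6}$.

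\textbf{Step 1: chords at the ends.} Since $P$ is a longest path and $V(G)=V(P)$, every neighbour of $v_1$ lies on $P$. Suppose $v_1v_j\in E(G)$ with $3\leq j\leq r-1$. Then $v_1\cdots v_jv_1$ is a cycle, and the rest of the path hangs off $v_j$ via $v_{j+1}$. I would take the claw centre at $v_j$ with three legs: one leg of length~$1$ into $v_{j+1}$ or along the cycle, and the other two legs going around the cycle in both directions or continuing along $v_{j+1}\cdots v_r$. With $r\geq 13$, for every $j$ one of these choices gives legs of lengths at least $1,3,6$. The case analysis is symmetric in $j\leftrightarrow r+1-j$, and I would present it as a figure in the style of Figure~\ref{fig:s136-ii}. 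Hence $N(v_1)\subseteq\{v_2,v_r\}$, and symmetrically $N(v_r)\subseteq\{v_1,v_{r-1}\}$. Because $G$ is $2$-connected, $v_1$ has degree at least~$2$, so $v_1v_r\in E(G)$ and $C=v_1\cdots v_rv_1$ is a Hamiltonian cycle.

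\textbf{Step 2: chords of the Hamiltonian cycle.} With $C$ in hand, every rotation of $C$ is also a longest path. Applying Step~1 to the rotated path that starts at $v_{i+1}$ and ends at $v_i$ shows that $v_i$ has no chord. Concretely: suppose a chord $v_iv_j$ exists. Delete the cycle edge $v_iv_{i+1}$ to obtain the Hamiltonian path $v_{i+1},\dots,v_r,v_1,\dots,v_i$, which has $r$ vertices and is therefore a longest path. The neighbourhood of its endpoint $v_i$ contains $v_j$, which is not adjacent to $v_i$ along this path, and $v_j$ is not the other endpoint $v_{i+1}$ because the chord is not a cycle edge. This contradicts Step~1 applied to the new path. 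Hence $G=C_r$.

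\textbf{Main obstacle.} The main difficulty is Step~1, the explicit construction of $S_{1,3,6}$ for every position $j$. When $j$ is small (say $j=3$ or $4$), the cycle $v_1\cdots v_j$ is short. In that case I would put the centre at $v_j$, use a short leg on the cycle (length~$1$, e.g.\ $v_{j-1}$), a leg of length~$3$ going $v_1,v_2,\dots$ round the cycle, and the long leg $v_{j+1}\cdots v_{j+6}$, which exists since $r\geq 13$.

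When $j$ is large (close to $r-1$), the cycle is long but the tail beyond $v_j$ is short. Here I would make the tail the length-$1$ leg $v_{j+1}$ and use the two directions around the cycle from $v_j$ as the legs of length~$3$ and~$6$. This needs $j-1\geq 9$, which holds once $j$ is large.

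For intermediate $j$, I would mix these two choices: put the long leg on whichever side is longer, and check that $r\geq 13$ always leaves enough vertices for $1+3+6+1=11$ vertices in total.

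Finally, I would double-check the boundary case $j=r-1$ together with a possible edge $v_1v_r$.
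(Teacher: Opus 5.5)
Your constructions in Step~1 work for $5\le j\le r-1$. Step~1 breaks down for $j\in\{3,4\}$, and that is the hard part of the claim. For these $j$ the cycle $v_1\cdots v_jv_1$ has only $j-1\le 3$ vertices other than $v_j$. So it cannot carry both a leg of length~$1$ and a leg of length~$3$, which together need $4$ vertices. From $v_j$ the tail is entered only through $v_{j+1}$, so it carries just one leg. The construction in your last paragraph for small $j$ therefore does not exist.

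In fact $P$ together with the single chord $v_1v_4$ (or $v_1v_3$) contains no $S_{1,3,6}$ at all. Its only vertex of degree~$3$ is $v_j$, and the two legs from $v_j$ into $\{v_1,\dots,v_{j-1}\}$ have total length at most $j-1\le 3$. So these chords cannot be excluded by finding a forbidden subgraph in the path plus the chord. Your deduction ``$N(v_1)\subseteq\{v_2,v_r\}$, so $\deg(v_1)\ge 2$ forces $v_1v_r$'' is therefore unsupported: $v_1$ may get its second neighbour from $\{v_3,v_4\}$.

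The missing ingredient is a real use of $2$-connectivity combined with re-routing the path. The paper assumes there is no cycle of length~$r$. Then for every Hamiltonian path (which is a longest path), the first vertex has all its neighbours among the next three vertices. The second vertex has all its neighbours among the first five. This holds because a chord of span at least~$4$ yields $S_{1,3,6}$, except the edge joining the two ends, which would close an $r$-cycle. The paper then splits into two cases.
\begin{itemize}
\item If $v_1v_4\notin E(G)$, then $N(v_1)=\{v_2,v_3\}$. The Hamiltonian path $(v_2,v_1,v_3,v_4,\dots,v_r)$ and $2$-connectivity force $N(v_2)=\{v_1,v_3,v_4\}$. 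Then $(v_3,v_1,v_2,v_4,\dots,v_r)$ gives $N(v_3)\subseteq\{v_1,v_2,v_4\}$, making $v_4$ a cut vertex.
\item If $v_1v_4\in E(G)$, the path $(v_3,v_2,v_1,v_4,\dots,v_r)$ bounds $N(v_3)$, and $2$-connectivity forces $v_2v_5$. Then $(v_1,v_4,v_3,v_2,v_5,\dots,v_r)$ bounds $N(v_4)$, making $v_5$ a cut vertex.
\end{itemize}
Only after this is the $r$-cycle obtained.

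Your Step~2 inherits the same hole for chords of span $2$ and $3$, because you delete $v_iv_{i+1}$ before invoking Step~1. There the repair is easy, since you may keep the whole cycle. For a chord $v_iv_{i-2}$, take centre $v_i$ with legs $v_{i-1}$, $v_{i-2}v_{i-3}v_{i-4}$ and $v_{i+1}\cdots v_{i+6}$ (indices modulo $r$). Span~$3$ is handled the same way.
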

    \begin{claimproof}
        We first show that $G$ contains some cycle of length $r$. Suppose, for a contradiction, $G$ contains no cycle of length $r$.

        We first show that $N(v_1) \subseteq \{v_2,v_3,v_4\}$ and $N(v_2) \subseteq \{v_1,v_3,v_4,v_5\}$, that is, $v_1$ is not adjacent to any vertex in $\{v_5, \ldots, v_r\}$ and $v_2$ is not adjacent to any vertex in $\{v_6, \ldots, v_r\}$. As $G$ does not contain a cycle of length $r$, $v_1$ is not adjacent to $v_r$. 
        
        We now claim that for $v_i \in \{v_1,v_2\}$, if $v_i$ is adjacent to some vertex $v_j$, for $j \in \{{i+4,} \ldots, v_{r-1}\}$, then we find some $S_{1,3,6}$ subgraph. As illustrated in Figure~\ref{fig:s136-ii}\,a), we distinguish between the cases where $j \in \{i+9, \ldots, r-1\}$ (top), $j \in \{i+7,i+8\}$ (middle) and $j \in \{i+4, \ldots, i+6\}$ (bottom), but in each case we find a $S_{1,3,6}$ subgraph. It follows that $N(v_1) \subseteq \{v_2,v_3,v_4\}$ and, as the edge $v_1v_{r-1}$ is symmetric to the edge $v_2v_{r}$, $N(v_2) \subseteq \{v_1,v_3,v_4, v_5\}$. 
        We will also use this reasoning more generally to reason about the neighbourhood of the first and second vertex of a path of length $r-1$.

        We consider again $v_1$. As $v_2$ is not a cut vertex and $N(v_1) \subseteq \{v_2,v_3,v_4\}$, at least one of the edges $v_1v_3$ or $v_1v_4$ must exist.
        
        First suppose that $G$ does not contain the edge $v_1v_4$. As $G$ is $2$-connected, $N(v_1) = \{v_2,v_3\}$. 
        As $v_2$ is the end of the path $(v_2, v_1, v_3, v_4, \ldots, v_r)$ of length $r-1$, it follows that $N(v_2) \subseteq \{v_1, v_3, v_4\}$. As $v_3$ is not a cut vertex, it follows that $N(v_2) = \{v_1,v_3,v_4\}$ 
        and so $v_3$ is the end vertex of the path $(v_3, v_2, v_1, v_4, \ldots, v_r)$ of length $r-1$. Applying the same argument, we find that $N(v_3) = \{v_1,v_2,v_4\}$. As $N(v_1) = \{v_2,v_3\}$, $N(v_2) = \{v_1,v_3,v_4\}$ and $N(v_3) = \{v_1,v_2,v_4\}$, it follows that $v_4$ is a cut vertex, a contradiction. Thus, $G$ contains the edge $v_1v_4$.

        Given that $v_1$ is adjacent to $v_4$, it follows that $v_3$ is the end of the path $(v_3,v_2,v_1,v_4, \ldots,v_r)$ of length $r-1$ and so $N(v_3) \subseteq \{v_1,v_2,v_4\}$. Recall that $N(v_2) \subseteq \{v_1,v_3,v_4,v_5\}$. As $v_4$ is not a cut vertex, $v_2$ is adjacent to $v_5$. Given that $v_4$ is the second vertex of the path $(v_1,v_4,v_3,v_2,v_5, \ldots, v_r)$ of length $r-1$, using the same reasoning as for $v_2$, $N(v_4) \subseteq \{v_1,v_2,v_3,v_5\}$. Note that this implies that $v_5$ is a cut vertex, a contradiction. It follows that $G$ contains some cycle of length $r$.

        Without loss of generality, this cycle consists of the edges of $P$ alongside the edge $v_1v_r$. We now claim that $G$ is a cycle of length $r$. Suppose, for a contradiction, there is some edge between a pair of non-consecutive vertices along this cycle. Without loss of generality this is the edge $v_1v_i$ for $i \geq 8$. We further distinguish between the cases where $i \leq r-4$ and $i \geq r-3$. In both cases we find some $S_{1,3,6}$ as shown in Figure~\ref{fig:s136-ii}\,b).
    \end{claimproof}

    \noindent
    By Claim~\ref{clm-s334-cyc}, if $r \geq 13$, then $G$ is a cycle of length $r$. It follows that $G$ has treewidth~$2$ and so applying Lemma~\ref{l-tw}, we can solve \stf{} in polynomial time on $(G,T,k)$.

    Suppose now $r =12$, then by Claim~\ref{clm-s334-P}, $V(G) = V(P)$. As $G$ has size~$12$, we can solve \stf{} in polynomial time on $(G,T,k)$.

    Hence, we may assume that $r = 11$.
    We now claim that $G$ is an entangled lemon bush with a stemset of size~$11$. 

\begin{figure}
    \centering
    \includegraphics[width=0.8\linewidth,page=9]{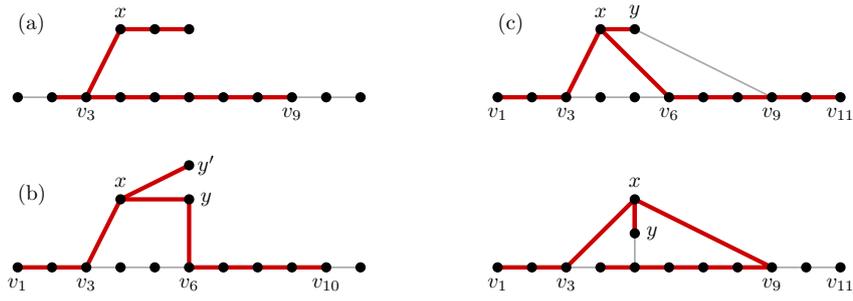}
    \caption{Occurrences of $S_{1,3,6}$ in the proof of Theorem~\ref{t-s136}.}    \label{fig:s136-iii}
\end{figure}

    \begin{claim}\label{clm-s334-lemon-bush}
        If $D$ is some connected component of $G - V(P)$, then $D$ has size at most~$2$. If $D$ has size~$2$ then it is adjacent to exactly two vertices of $P$.
    \end{claim}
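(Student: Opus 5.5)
Throughout, $P=(v_1,\dots,v_{11})$ is a longest path, $G$ is $2$-connected and $S_{1,3,6}$-subgraph-free, and $D$ is a component of $G-V(P)$. The plan is to use maximality of $P$ (Lemma~\ref{l-adjacencytopath}) to pin down where $D$ attaches, and then in each remaining case either extend $P$ or exhibit an $S_{1,3,6}$.

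\emph{Step 1: attachment points of a component with at least two vertices.} Let $|D|\ge 2$. By maximality of $P$, $v_1$ and $v_{11}$ have no neighbours outside $P$. If $D$ were adjacent to $v_2$, a neighbour $x\in D$ of $v_2$ and a neighbour $y\in D$ of $x$ give the path $(y,x,v_2,\dots,v_{11})$ on $12$ vertices, contradicting maximality. By symmetry $D$ is not adjacent to $v_{10}$ either, so $N(D)\subseteq\{v_3,\dots,v_9\}$.

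\emph{Step 2: no attachment in the middle.} Suppose $x\in D$ is adjacent to $v_i$ with $4\le i\le 8$, and let $y\in D$ be a neighbour of $x$. Centre a claw at $v_i$:
\begin{itemize}
\item one long arm runs along $P$ towards the farther end, which has at least six vertices beyond $v_i$ when $i\le 5$ (or $i\ge 7$, symmetrically);
\item the leg of length $2$ goes through $x,y$ and can be extended by a third vertex, either from $D$ or by following an edge from $D$ back to $P$;
\item the remaining leg goes along $P$ towards the nearer end.
\end{itemize}
This gives arm lengths $(1,3,6)$ in some assignment. For example, with $i=4$ the arms $v_5\dots v_{10}$, $x\,y\,\cdot$ and $v_3$ work, where $y$ needs one further neighbour. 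That neighbour exists by $2$-connectivity: either $y$ has a neighbour in $D\setminus\{x\}$, or $y$ is adjacent to $P$, in which case Lemma~\ref{l-adjacencytopath} restricts where. The central case $i=6$ is the tightest (five vertices on each side), so the two $P$-arms have lengths $5$ and $5$. There I would instead route the $6$-arm as $x,y$ followed by a return into $P$, and shorten one of the $P$-arms to length $3$. This step requires careful case checking and is the likely main obstacle. I expect it to force $N(D)\subseteq\{v_3,v_9\}$, possibly with a few surviving configurations to rule out separately.

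\emph{Step 3: size and adjacencies of $D$.} By $2$-connectivity $N(D)=\{v_3,v_9\}$. Then the path $v_1v_2v_3$, followed by a path through $D$, followed by $v_9v_{10}v_{11}$, must have length at most $10$, so a shortest route through $D$ from $v_3$ to $v_9$ uses at most $4$ vertices. On the other hand, $v_3\dots v_9$ together with a $D$-route forms a long cycle. If $|D|\ge 3$, I use this cycle together with the pendant paths $v_1v_2$ and $v_{10}v_{11}$, plus an extra vertex of $D$ hanging off the cycle, to locate an $S_{1,3,6}$ centred on the cycle. This bounds $|D|\le 2$, and in the case $|D|=2$ it gives $|N(D)|=2$. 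A singleton $D$ is unconstrained and belongs to the tangle set. Throughout I would reuse the figure-style case analysis of the $S_{1,3,6}$ embeddings from Claim~\ref{clm-s334-P}.
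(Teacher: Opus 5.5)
There is a genuine gap, and it sits in Step~2 at $i=6$. The conclusion you expect there, $N(D)\subseteq\{v_3,v_9\}$, is false, so no amount of case checking will reach it.

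Here is a counterexample. Take three vertices $A,B,C$ and join each pair by two internally disjoint paths of length~$3$:
$A a_1 b_1 B$, $A a_2 b_2 B$, $B c_1 d_1 C$, $B c_2 d_2 C$, $C e_1 f_1 A$, $C e_2 f_2 A$.
\begin{itemize}
\item The graph is $2$-connected.
\item It is $S_{1,3,6}$-subgraph-free. Only $A,B,C$ have degree at least~$3$. A $6$-arm from a hub must pass through both other hubs, while every $3$-arm ends at one of them.
\item Every path meets at most three hubs, so it has at most $2+3+4+2=11$ vertices.
\end{itemize}
Now $P=(a_2,b_2,B,c_1,d_1,C,e_1,f_1,A,a_1,b_1)$ is a longest path with $v_3=B$, $v_6=C$, $v_9=A$. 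The graph $G-V(P)$ has two-vertex components $\{c_2,d_2\}$ and $\{e_2,f_2\}$, with neighbourhoods $\{v_3,v_6\}$ and $\{v_6,v_9\}$.

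Already in $P\cup D$ your proposed routing cannot work. From $v_6$, a route through a two-vertex $D$ and back along $P$ has at most five vertices. Consequently Step~3, which only treats $N(D)=\{v_3,v_9\}$, misses exactly the configurations the claim has to handle. Its cycle-based argument is also left unspecified.

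What is needed is weaker, and $v_6$ must be kept.
\begin{itemize}
\item \emph{Excluding $v_4,v_5,v_7,v_8$.} You do not need $y$ here. Let $x$ be the $1$-arm; the two $P$-arms from $v_i$ have lengths at least $3$ and $6$. Your example for $i=4$, with $3$-arm $x\,y\,\cdot$, can fail when $y$'s only other neighbour lies among $v_6,\dots,v_9$. So $N(D)\subseteq\{v_3,v_6,v_9\}$.
\item \emph{The case $|D|\ge 3$.} By $2$-connectivity there are disjoint edges $xv_i$, $yv_j$ with $x,y\in D$ and $i\neq j$; reversing $P$, assume $i=3$.
  \begin{itemize}
  \item Then $x$ cannot start a $P_3$ inside $D$, by the arms $v_2$; $x,\cdot,\cdot$; $v_4,\dots,v_9$. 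Hence $D$ is a star centred at $x$.
  \item The leaf $y$ starts the $P_3$ $(y,x,y')$. By the same argument at $v_3$ or $v_9$, $y$ must be adjacent to $v_6$.
  \item Then $x$ centres an $S_{1,3,6}$ with arms $y'$; $v_3v_2v_1$; $y\,v_6v_7v_8v_9v_{10}$.
  \end{itemize}
\item \emph{The case $|D|=2$ with $N(D)=\{v_3,v_6,v_9\}$.} Some $z\in D$ has two of these neighbours. Then $z$ centres an $S_{1,3,6}$: its $1$-arm is the other vertex of $D$, and its remaining arms run along $P$ from those two neighbours (e.g.\ $v_3v_2v_1$ and $v_6v_7\cdots v_{11}$).
\end{itemize}
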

    \begin{claimproof}
        Let $D$ be some component of $G - V(P)$ of size at least~$2$. As $G$ is $2$-connected, there exist distinct vertices $x, y$ in $D$ and distinct vertices $v_i, v_j$ in $P$ such that $x$ is adjacent to $v_i$ and $y$ is adjacent to $v_j$. As $P$ is a longest path, $i, j \notin \{1,2, r-1, r\}$. That is, $i,j \in \{3, \ldots, 9\}$.
        
        If $i \in \{4,5\}$, then we find $S_{1,3,6}$ as shown in Figure~\ref{fig:s136-i}\,a), a contradiction. 
        It follows that $i \notin \{4,5\}$ and, symmetrically, $i \notin \{7,8\}$. 
        That is, $i, j \in \{3,6,9\}$. Given that $i \neq j$ and $v_3$ and $v_9$ are symmetric, without loss of generality $i=3$. That is, $x$ is adjacent to $v_3$.

        We now claim that $D$ has size at most~$2$. If $x$ is the end of some path of length~$2$ in~$D$, then as $x$ is adjacent to $v_3$, $G$ contains a $S_{1,3,6}$ subgraph as shown in Figure~\ref{fig:s136-iii}\,a). It follows, if $D$ has size at least~$3$, then $D$ is a star with centre $x$, i.e., there is some path $(y,x,y')$ in~$D$. Recall that $y$ is adjacent to some vertex $v_j$ in $P$.
        As $y$ is the end vertex of some path of length $2$, using that same argument as for $x$, it follows that $y$ is not adjacent to $v_3$ or symmetrically $v_9$. That is $y$ is adjacent to $v_6$. However, now we find that $S_{1,3,6}$ as shown in Figure~\ref{fig:s136-iii}\,b), a contradiction. It follows that $D$ has size at most~$2$.

        It remains to show that if $D$ has size~$2$ then it is adjacent to exactly two vertices of~$P$. As $D$ has size~$2$, $V(D) =  \{x,y\}$.  Recall that there exist distinct vertices $v_i, v_j$ in $P$ such that $x$ is adjacent to $v_i$ and $y$ is adjacent to $v_j$. It follows that $D$ is adjacent to at least two vertices of~$P$. We also recall that $N(D) \subseteq \{v_3,v_6,v_9\}$ and, without loss of generality, $x$ is adjacent to $v_3$. We now claim that $|N(D)| =2$. Suppose, for contradiction, that $N(D) = \{v_3,v_6,v_9\}$. That, is either $v_3, v_6 \in N(x)$ and $v_9 \in N(y)$, or, $v_3, v_9 \in N(x)$ and $v_6 \in N(y)$. In both cases we find some $S_{1,3,6}$, as shown in Figure~\ref{fig:s136-iii}. This is a contradiction, that is, $N(D) \subsetneq \{v_3,v_6,v_9\}$ and so $|N(D)| =2$, thus concluding our proof.
    \end{claimproof}

    \noindent
    It now follows by Claim~\ref{clm-s334-lemon-bush}, that $G$ is an entangled lemon bush with stem set $P$. As $P$ has size~$11$, applying Theorem~\ref{t-solvetangle}, we can solve \stf{} in polynomial time on $(G,T,k)$. Thus concluding the proof of this theorem.
\end{proof}

\subsection{Forbidding Subdivided Claws $\mathbf{S_{2,2,7}}$, $\mathbf{S_{2,3,5}}$, $\mathbf{S_{2,4,4}}$}

Here and in the following, when we consider a path $(v_1, \dots, v_r)$, we denote by $v_{-i}$, for $i \in \{1,\dots, r\}$ the vertex $v_{r-i+1}$, that is $v_r = v_{-1}$. 

We start with the proof of Lemma~\ref{l-s2ab}, which we restate below.

\lemSTwoABFree*
\begin{proof}
    Let $G$ be a $2$-connected $S_{2,a,b}$-subgraph-free graph. Let $P = (v_1, \dots , v_r)$, for some integer $r\geq 3(a+b)$, be a longest path on at least~$3(a+b)$ vertices in $G$, such that $V(P)$ is a vertex cover. 
    Note that in the following we consider all jumps with respect to $P$. Further, note that no jump has length more than~$2$ since $V(P)$ is a vertex cover.
    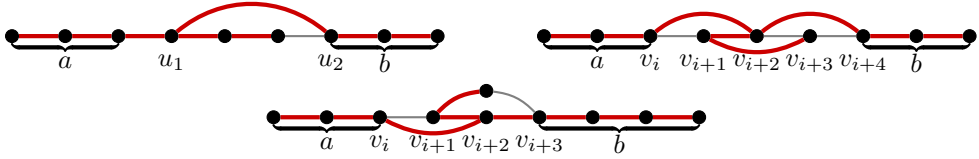
\begin{figure}[!ht]
    \centering
    \input{figures/s2ab}
    \caption{Forbidden jumps due to occurrences of $S_{2,a,b}$ in the proof of Lemma~\ref{l-s2ab} (this figure is a copy of Figure~\ref{f-s2ab-start}).}
    \label{f-s2ab}
\end{figure}

    Suppose there is a jump between two vertices $u_1, u_2 \in \{v_{a+1}, \dots, v_{-(a+1)}\}$ of distance at least~$3$ on~$P$. Then we find $S_{2,a,b}$ as a subgraph, a contradiction, see Figure~\ref{f-s2ab} (top left). 
    Since $P$ is a longest path, we can apply Lemma~\ref{l-adjacencytopath} and obtain that the neighbours of a vertex in $G-V(P)$ on $P$ have distance at least~$2$.
    Together, this implies that any vertex $v$ in $G-V(P)$ has at most~$2$ neighbours in \{$v_{a+1}, \dots, v_{-(a+1)}\}$, and that if $v$ has two such neighbours then they are at distance exactly~$2$ on $P$.
    
    Suppose now there are three jumps, one from $v_i$ to $v_{i+2}$, one from $v_{i+1}$ to $v_{i+3}$, and one from $v_{i+2}$ to $v_{i+4}$, where $v_i \in \{v_{a+1}, \dots, v_{-(a+5)}\}$. Then, we find $S_{2,a,b}$ with $v_{i+2}$ as centre, see Figure~\ref{f-s2ab} (top right), a contradiction.
    Consider now two jumps, one from $v_i$ to $v_{i+2}$ and the other from $v_{i+1}$ to $v_{i+3}$, where $v_i \in \{v_{a+1}, \dots, v_{-(a+4)}\}$. If one of the jumps has length at least~$2$ we find again $S_{2,a,b}$, see Figure~\ref{f-s2ab} (bottom), a contradiction. 
    
    Let $G' = G-\{v_1, \dots, v_{a}, v_{-a}, \dots, v_{-1}\}$, that is, the graph without the ends of $P$. Note that $G'$ might not be $2$-connected. We remove all isolated vertices and iteratively remove all vertices of degree~$1$ from $G'$, unless they are on $P$.
    Let $u_1, u_2 \in V(G')$ be two vertices on $P$ which are connected via a jump of length~$2$. By the observations above, they are at distance~$2$ on $P$. Further, the vertex on $P$ which is at distance~$1$ from both $u_1, u_2$ is not contained in any jump. Hence, $\{u_1, u_2\}$ is a cutset of $G'$ and $u_1, u_2$ are the ends of a lemon in $G'$.

    Suppose now there are vertices $u_1, u_2 \in V(G')$ which are connected via a jump of length~$1$, that is, via an edge, and which are not also connected by a jump of length~$2$. 
    By the observations above, $u_1$ and $u_2$ have distance~$2$ on $P$, that is, $u_1 = v_i$, $u_2 = v_{i+2}$ for some $v_i \in \{v_{a+1}, \dots, v_{-(a+3)}\}$. 
    If there is no jump from $v_{i+1}$ then $v_i$ and $v_{i+2}$ are immediately the ends of a juicy lemon in $G'$ consisting of a single wedge of size 3. If there is a jump from $v_{i+1}$ then applying the properties above this is a jump of length $1$ (i.e., the jump is a single edge) and the jump is to either $v_{i-1}$ or $v_{i+3}$.
    By the observations above, we get that $\{v_{i-1}, v_{i+2}\}$ is a cutset of $G'$ (in the former case) or that $\{v_i, v_{i+3}\}$ is a cutset of $G'$ (in the latter case). In either case, the vertices of the cutset are the ends of a seeded lemon in $G'$.
    We obtain that $G'$ is a lemon bush flowering from a path.

    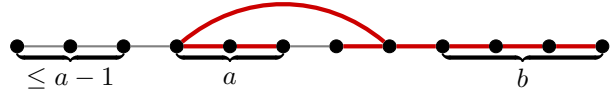
\begin{figure}
        \centering
        \input{figures/s2ab-second}
        \caption{An occurrence of $S_{2,a,b}$ at the end of a path in the proof of Lemma~\ref{l-s2ab}.}
        \label{f-s2ab-secondcase}
    \end{figure}

    We consider now $G$, that is, we add back the ends of the path and all other vertices we deleted.
    Consider a jump starting at $u \in \{v_1, \dots, v_{a}\}$. It can only reach $v_{-a}, \dots, v_{-1}$ or a vertex at distance at most~$a+1$ from~$u$, otherwise, we find $S_{2,a,b}$ as a subgraph, see Figure~\ref{f-s2ab-secondcase}.

    Suppose there is no jump from a vertex in $\{v_1, \dots, v_a\}$ to a vertex in $\{v_{-a}, \dots, v_{-1}\}$. Then, since $|V(P)| \geq 3(a+b)$ and since $G'$ is a lemon bush flowering from a path, we find a cut vertex in $G$, a contradiction to $G$ being $2$-connected.
    
    Hence, there is an edge or a path from $\{v_1, \dots, v_a\}$ to $\{v_{-a}, \dots, v_{-1}\}$. In this case we obtain that $G$ has a cycle of length at least~$a + 3b+2$. By application of Lemma~\ref{l-longestpath}, we find a longest cycle $C$ of $G$ which has length at least~$a+3b+2$. Now all observations for $v_{a+1}, \dots, v_{-(a+1)}$ on $P$ hold everywhere on the cycle. Hence, we may apply the same arguments as for $G'$ and we obtain that $C$ is a lemon bush flowering from a cycle.
\end{proof}

\noindent
We are now ready to prove Theorems~\ref{t-s227}--\ref{t-s244}.

\thmSTwoTwoSevenFree*
\begin{proof}
    Let $(G,T,k)$ be an instance of \stf, where $G$ is a $S_{2,2,7}$-subgraph-free graph, $T$ a terminal set, and $k$ an integer.
    If $G$ is $P_{11}$-subgraph-free, we apply Theorem~\ref{t-p11free}.
    Hence, we may assume that $G$ has a $P_{11}$ subgraph. Let $P$ be a longest path in $G$. Note that $|V(P)| \geq 11$ and $P$ can be found in polynomial time by applying Lemma~\ref{l-longestpath}.
    \begin{claim} $V(P)$ is a vertex cover of $G$.
    \end{claim}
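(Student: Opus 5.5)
We argue by contradiction: suppose some component $D$ of $G-V(P)$ has at least two vertices. Then $D$ contains an edge $xy$. Write $P=(v_1,\dots,v_r)$ with $r\geq 11$. Since $G$ is $2$-connected, $D$ has at least two neighbours on $P$. By Menger's theorem, there are two disjoint paths from $D$ to $P$. Combining them with a path inside $D$ gives a jump $J$ of length at least~$3$ between two vertices $v_i,v_j$ of $P$ with $i<j$. Moreover, since $|D|\geq 2$, we may choose $J$ so that one of its endpoints on $P$, say $v_i$, is followed along $J$ by a path of two vertices of $D$.

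By Lemma~\ref{l-adjacencytopath}, $j-i\geq 2$. If $j-i=2$, the two attachments must use distinct vertices of $D$, which contradicts part~2 of that lemma. Hence $j-i\geq 3$. Since $P$ is a longest path, neither $v_1$ nor $v_r$ has a neighbour in $D$, so $2\leq i<j\leq r-1$.

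The key step is to locate a copy of $S_{2,2,7}$ centred at some vertex $v_i$ with a neighbour $x\in D$. Take the leg $(v_i,x,y)$ of length~$2$ into $D$, the leg $(v_i,v_{i-1},v_{i-2})$ or $(v_i,v_{i+1},v_{i+2})$ of length~$2$ along $P$, and a leg of length~$7$ along $P$ in the other direction. The leg of length~$7$ may also be routed through the jump $J$ and then continue along $P$ from $v_j$. Consider first a neighbour $v_i$ with $i\geq 3$ and $r-i\geq 7$. Then $(v_i,v_{i-1},v_{i-2})$, $(v_i,\dots,v_{i+7})$ and $(v_i,x,y)$ form $S_{2,2,7}$ directly. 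The symmetric argument handles $i\leq r-7$ or $i\geq 8$ by reversing $P$. The condition $r\geq 11$ guarantees that every index $3\leq i\leq r-2$ satisfies $i\geq 8$ or $i\leq r-7$, except possibly when $r\leq 14$. In those cases we use $J$ to lengthen the leg: the walk from $v_i$ through $J$ to $v_j$ and then along $P$ away from $v_i$ has many vertices. The only remaining cases are $i=2$ or $j=r-1$, with short legs on one side.

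For the end cases, suppose $v_2$ has a neighbour $x\in D$. Then $(x,v_2,\dots,v_r)$ is also a longest path, and $v_1$ is forced to play the role of an end. We use this to place the centre at $v_j$ instead, with the other endpoint of the jump. The $2$-leg goes into $D$ (using $|D|\geq 2$ and the jump structure), another $2$-leg goes along $P$, and the $7$-leg goes the long way through $J$ and around $P$. We expect this short-end case analysis, possibly combined with long jumps from $v_1$ to rearrange $P$, to be the main obstacle. First we would reduce to the situation in which all attachments of $D$ lie in $\{v_2,v_3,v_{r-2},v_{r-1}\}$, since otherwise the generic construction applies. Then we would explicitly exhibit a copy of $S_{2,2,7}$ using $r\geq 11$ in each remaining configuration, in the style of the figures used for $S_{1,3,6}$ in Claim~\ref{clm-s334-P}.

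It follows that every component of $G-V(P)$ is a single vertex, so $V(P)$ is a vertex cover of $G$.
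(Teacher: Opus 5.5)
There is a genuine gap, and it is a missed one-line observation rather than a hard configuration. When $|D|\ge 2$, the vertex $v_2$ cannot have a neighbour in $D$ at all, and symmetrically neither can $v_{r-1}$. Suppose $x\in D$ is adjacent to $v_2$. Since $D$ is connected with at least two vertices, $x$ has a neighbour $y\in D$. Then $(y,x,v_2,\dots,v_r)$ is a path on $r+1$ vertices, contradicting the choice of $P$ as a longest path.

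You noted that $(x,v_2,\dots,v_r)$ is again a longest path, but you did not extend it by $y$. Instead you leave the ``end cases'' as the main obstacle, backed only by a promised case analysis, so as written the proof is incomplete. Your intended reduction to attachments in $\{v_2,v_3,v_{r-2},v_{r-1}\}$ is also misstated: an attachment at $v_3$ is already covered by your generic construction, since $r-3\ge 7$.

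Once all attachments lie in $\{v_3,\dots,v_{r-2}\}$, your $S_{2,2,7}$ centred at an attachment vertex works in every case but one. It fails only when $r\le 13$ and the centre lies in the window $\{v_{r-6},\dots,v_7\}$, which consists of at most three consecutive vertices. Your plan to ``use $J$ to lengthen the leg'' there is never made precise. If the centre is $v_i$ with $2$-leg $(v_i,x,y)$, a $7$-leg routed through $J$ would reuse $x$.

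That plan is also unnecessary. You already showed that $J$ joins $v_i$ and $v_j$ with $j-i\ge 3$ through distinct vertices of $D$, so $v_i$ and $v_j$ cannot both lie in that window, and one of them is a valid centre. The paper closes these cases equivalently. For $r\in\{11,12,13\}$ it confines $N(D)$ to the window, and Lemma~\ref{l-adjacencytopath} then leaves a single attachment vertex, either on $P$ or in $D$. That vertex is a cut vertex, contradicting $2$-connectivity.

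With the $v_2$/$v_{r-1}$ observation added and the short cases argued this way, your proof is complete and essentially the same as the paper's.
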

    \begin{claimproof}
    Suppose for a contradiction that there is a component $D$ of $G-V(P)$ of size at least~$2$. 
    $D$ is not adjacent to any of $v_1,v_2, v_{-1}, v_{-2}$, since otherwise we find a longer path in $G$, a contradiction.
    We distinguish four cases based on the length of $P$.

    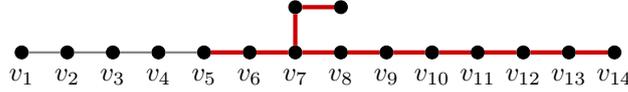
\begin{figure}
        \centering
        \input{figures/s227}
        \caption{An occurrence of $S_{2,4,4}$ in the proof of Theorem~\ref{t-s227}.}
        \label{f-s227}
    \end{figure}
    
    If $|P| \geq 14$, then we find a $S_{2,2,7}$-subgraph, regardless of the neighbours of $D$ in $\{v_3, \dots, v_{-3}\}$, see Figure~\ref{f-s227}. Hence, there is no such component $D$ and $V(P)$ is a vertex cover of $G$.

    If $|P| = 13$, we find $S_{2,2,7}$ as a subgraph as in the case before unless $N(D)=\{v_7\}$. However, since $D$ can only be adjacent to $v_7$, we get that $v_7$ is a cut vertex, a contradiction to the $2$-connectivity of $G$, which we have from Lemma~\ref{l-2con}. Hence, $V(P)$ is a vertex cover.
    
    If $|P| = 12$, similar to the case where $|P| = 13$, we get that $D$ can only be adjacent to $v_6, v_7$. 
    Applying Lemma~\ref{l-adjacencytopath}, we find that $|N(D)|=1$. That is, $N(D)$ consists exactly of a cut-vertex of $G$, which contradicts 2-connectivity of $G$.

    Hence, $V(P)$ is a vertex cover.
    
    If $|P| = 11$, using again the same arguments, $D$ can only be adjacent to $v_5, v_6, v_7$. 
    Applying Lemma~\ref{l-adjacencytopath}, we find that there is a unique vertex $x\in D$ with neighbours in $V(P)$, and so $x$ is a cut-vertex of $G$, which contradicts 2-connectivity of $G$. 
    Hence, $V(P)$ is a vertex cover.
    
    So in every case we get that $V(P)$ is a vertex cover. This proves the claim.
    \end{claimproof}

    \noindent
    If $|P| \leq 26$, the vertex cover number is bounded and we conclude by applying Theorem~\ref{t-vc}.
    Else, we may assume that $|P| \geq 27 = 3*(2+7)$.
    This allows us to apply Lemma~\ref{l-s2ab} and we obtain that $G$ is a lemon bush flowering from a cycle. Since a cycle has treewidth~$2$, we may apply Theorem~\ref{t-lemon-tw2} to solve \stf{} for the instance $(G,T,k)$ in polynomial time.
    This concludes the proof.
\end{proof}

\thmSTwoThreeFiveFree*
\begin{proof}
    Let $(G,T,k)$ be an instance of \stf, where $G$ is a $S_{2,3,5}$-subgraph-free graph, $T$ a terminal set, and $k$ an integer.
    By Lemma~\ref{l-2con}, we may assume that $G$ is $2$-connected.
    If $G$ is $P_{11}$-subgraph-free, we apply Theorem~\ref{t-p11free}. Hence, we may assume that $G$ has a $P_{11}$ subgraph. Let $P$ be a longest path in $G$. Note that $|V(P)| \geq 11$ and $P$ can be found in polynomial time by Lemma~\ref{l-longestpath}.

    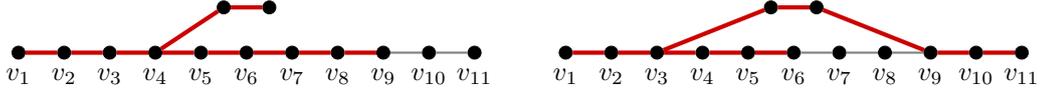
\begin{figure}
        \centering
        \input{figures/s235}
        \caption{Occurrences of $S_{2,3,5}$ in the proof of Theorem~\ref{t-s235}.}
        \label{f-s235}
    \end{figure}
    
    \begin{claim} 
        $V(P)$ is a vertex cover.
    \end{claim}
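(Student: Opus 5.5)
Assume, as in the proof of Theorem~\ref{t-s235}, that $G$ is $2$-connected and $S_{2,3,5}$-subgraph-free, and that $P=(v_1,\dots,v_r)$ is a longest path with $r\geq 11$. I will argue by contradiction, following the proof of the analogous claim in Theorem~\ref{t-s227}. Suppose $G-V(P)$ has a component $D$ with $|D|\geq 2$. Then $D$ has a vertex $x$ adjacent to some $v_i\in V(P)$, and $x$ has a neighbour $y\in D$. This gives a pendant path $v_i x y$ of length~$2$ hanging off $P$ at $v_i$, and it will serve as the leg of length~$2$ of a copy of $S_{2,3,5}$ with centre $v_i$.

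The first step is to bound where $v_i$ can lie. Since $P$ is a longest path, $D$ has no neighbour among $v_1,v_2,v_{-1},v_{-2}$; otherwise the path extends through $x$ and $y$. If $v_i$ has at least $3$ vertices of $P$ on one side and at least $5$ on the other, then the two subpaths of $P$ from $v_i$, together with $v_i x y$, form $S_{2,3,5}$. So every $v_i\in N(D)$ must satisfy $\min(i-1,r-i)\leq 2$, or else $\max(i-1,r-i)\leq 4$. The first case is excluded by the previous observation. The second case forces $r\leq 9$, which is also excluded. Hence no such $D$ exists.

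For $r\geq 11$ this is a single uniform argument: any $i$ with $3\leq i\leq r-2$ has $i-1\geq 2$. I need to be careful with the boundary cases $i=3$ and $i=r-2$. For $i=3$ the short side has only the two vertices $v_2,v_1$, so the leg of length~$3$ cannot be taken there, and I must handle it separately. In that case I will use $v_3xy$ as the leg of length~$3$ together with $v_2,v_1$ as the leg of length~$2$, which requires $x$ to be reachable via $y$ and hence a third vertex in $D$. Alternatively, I will let the legs of lengths $3$ and $5$ both run along $v_4,\dots,v_r$, exploiting the extra vertices of $D$ to turn around. The main obstacle is exactly these near-end positions $v_3,v_4,v_{-3},v_{-4}$. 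There I plan to combine Lemma~\ref{l-adjacencytopath} with $2$-connectivity. $2$-connectivity forces $D$ to have a second neighbour $v_j$ on $P$, with $j$ not adjacent to $i$. Then the jump $v_i\,D\,v_j$ together with $P$ yields a long cycle-like structure in which I can embed $S_{2,3,5}$ centred at some $v_k$ with $k\geq 5$. This is possible because $r\geq 11$, and I will check the finitely many pairs $(i,j)$ case by case, mirroring Figure~\ref{f-s235}.
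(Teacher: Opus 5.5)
\textbf{There is a genuine gap: the only nontrivial case, a component $D$ attached at $v_3$ or $v_{-3}$, is never settled.}

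Your generic step is correct and matches the paper. The longest-path argument excludes $v_1,v_2,v_{-2},v_{-1}$. For $4\le i\le r-3$, the pendant path $v_ixy$ and the two subpaths of $P$ form $S_{2,3,5}$, because $r\ge 11$.

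Your sentence ``the first case is excluded by the previous observation'' is false. For $i=3$ and $i=r-2$ we have $\min(i-1,r-i)=2$, and the longest-path observation does not rule these out. So ``hence no such $D$ exists'' does not follow. The remedies you then propose also do not work:
\begin{itemize}
\item A leg of length~$3$ leaving $v_3$ and staying inside $D$ needs three vertices $x,y,z$ of $D$ forming a path with $xv_3\in E(G)$. That can never happen: $z,y,x,v_3,\dots,v_r$ would then be a path on $r+1$ vertices. Besides, $D$ may have only two vertices.
\item You cannot centre the claw at a vertex $v_k$ with $5\le k\le r-4$. In the subgraph formed by $P$ and a jump through $D$, the only vertices of degree~$3$ are the two attachment points.
\end{itemize}
Also, $v_4$ and $v_{-4}$ are not obstacles; your own generic argument already covers them.

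What is missing is short. The generic step gives $N(D)\subseteq\{v_3,v_{-3}\}$. By $2$-connectivity, $N(D)=\{v_3,v_{-3}\}$, and there are distinct $x,y\in D$ with $xv_3,yv_{-3}\in E(G)$. Otherwise $v_3$, $v_{-3}$, or the unique vertex of $D$ with a neighbour on $P$ would be a cut vertex. Now take centre $v_3$ with these three legs:
\begin{itemize}
\item $v_3v_2v_1$, of length~$2$;
\item $v_3v_4v_5v_6$, of length~$3$;
\item $v_3x\cdots yv_{-3}v_{-2}v_{-1}$, of length at least~$5$ because $x\neq y$.
\end{itemize}
The legs are pairwise disjoint, since $r\ge 11$ places $v_6$ strictly before $v_{-3}$. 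This is the paper's second configuration (Figure~\ref{f-s235}, right). Your idea of routing a leg through $D$ via a second attachment point is the right direction, but you point to the wrong centre and leave the boundary case undone.
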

    
    \begin{claimproof}
    Suppose for a contradiction that there is a component $D$ of size at least~$2$ in $G-V(P)$. Then, $D$ is not connected to $v_1,v_2,v_{-2},v_{-1}$, since otherwise there is a longer path in $G$, a contradiction.
    If $D$ is adjacent to any vertex in $\{v_4, \dots, v_{-4}\}$, we find $S_{2,3,5}$ with the neighbour of $D$ as centre, see Figure~\ref{f-s235} (left), a contradiction.
    Hence, $D$ can be adjacent to only $v_3, v_{-3}$ on $P$.
    Recall that due to the $2$-connectivity of $G$, $D$ has at least two neighbours on $P$.
    Thus, $D$ is adjacent to both $v_3, v_{-3}$.
    We find $S_{2,3,5}$ with $v_3$ as centre, see Figure~\ref{f-s235} (right), a contradiction.
    It follows that no component of $G-V(P)$ can have size at least~$2$, so $V(P)$ is a vertex cover.
    \end{claimproof}

    \noindent
    If $|P| \leq 23$, the vertex cover number is bounded and we conclude by applying Theorem~\ref{t-vc}.
    Hence, we may assume that $|P| \geq 24 = 3*(3+5)$. This allows us to apply Lemma~\ref{l-s2ab} and we obtain that $G$ is a lemon bush flowering from a cycle. Since a cycle has treewidth~$2$, we may apply Theorem~\ref{t-lemon-tw2} to solve \stf{} for the instance $(G,T,k)$ in polynomial time.
\end{proof}

\thmSTwoFourFourFree*
\begin{proof}
    Let $(G,T,k)$ be an instance of \stf, where $G$ is a $S_{2,4,4}$-subgraph-free graph, $T$ a terminal set, and $k$ an integer. If $G$ is $P_{11}$-subgraph-free, we apply Theorem~\ref{t-p11free}.
    Let $P$ be a longest path of $G$ which can be found in polynomial time due to Lemma~\ref{l-longestpath}. Note that $|P| \geq 11$.

    \begin{figure}
        \centering
        \input{figures/s244-longp}
        \caption{Occurrences of $S_{2,4,4}$ in the proof of Theorem~\ref{t-s244}.}
        \label{f-s244-longp}
    \end{figure}
    
    We first consider the case where $|P| \geq 12$.
    \begin{claim}
        $V(P)$ is a vertex cover of $G$.
    \end{claim}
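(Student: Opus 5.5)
Let $P=(v_1,\dots,v_r)$ be a longest path with $r\geq 12$, and recall that $G$ is $2$-connected by Lemma~\ref{l-2con}. I argue by contradiction, following the claims in the proofs of Theorems~\ref{t-s227} and~\ref{t-s235}. Suppose $G-V(P)$ has a component $D$ with $|D|\geq 2$, and pick an edge $xy$ in $D$ with $x$ adjacent to some $v_i\in V(P)$.

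The first step is to restrict where $D$ can attach. Since $P$ is longest, $D$ has no neighbour among $v_1,v_2,v_{-2},v_{-1}$: otherwise we extend $P$ by two vertices. By Lemma~\ref{l-adjacencytopath}, $N(D)\cap V(P)$ contains no two vertices at distance $1$ or $2$ on $P$. By $2$-connectivity, $|N(D)\cap V(P)|\geq 2$, and these attachments may be realised by distinct vertices of $D$ unless $D$ is attached through a single vertex. In the latter case that vertex would be a cut vertex, which is impossible.

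Next I rule out each possible attachment $v_i$ with $3\leq i\leq r-2$ by exhibiting an $S_{2,4,4}$ centred at $v_i$. Take the leg $v_i,x,y$ of length $2$ into $D$. The two remaining legs run along $P$ in both directions, of lengths $i-1$ and $r-i$. Both are at least $4$ whenever $5\leq i\leq r-4$. The boundary cases are $i\in\{3,4,r-3,r-2\}$ (here $i=r-2$ means $v_{-3}$), where one side of $P$ is too short. For these I use the second attachment point $v_j$ of $D$. The short side is replaced by a leg through $D$: depending on whether $x$ or $y$ is adjacent to $v_j$, go $v_i\to D\to v_j$ and continue along $P$ away from $v_i$. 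The $P$-leg from $v_i$ then goes the other way with length at least $4$, and the remaining leg of length $2$ is taken from $v_{i-1},v_{i-2}$ or from the path segment next to $v_i$. Because $r\geq 12$, together with the distance-$3$ constraint from Lemma~\ref{l-adjacencytopath}, there is enough room for all three legs to be disjoint. Alternatively, when $D$ attaches at both $v_3$ and $v_{-3}$, I centre the claw at $v_3$ as in Figure~\ref{f-s235}. The three legs are: $v_3,v_2,v_1$ (length $2$), the path $v_3,v_4,\dots$ (length at least $4$), and the route $v_3\to D\to v_{-3}\to v_{-2}\to v_{-1}$, which has length at least $4$ because $|D|\geq 2$ adds an internal vertex.

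I expect the main obstacle to be the boundary case analysis for $i\in\{3,4\}$ and its mirror, in particular when $r=12$ and the second attachment lies near the middle, say at $v_7$ or $v_8$. The two long legs must share the path segment, so I have to split it carefully. My first try is to centre the claw at the second attachment point $v_j$ rather than at $v_3$ or $v_4$, using the route through $D$ back to $v_3$ and then $v_2,v_1$ as one long leg. I would check the finitely many configurations with $r=12$ or $13$ explicitly and keep a figure for each. Once every attachment is excluded, no component of size at least $2$ exists, so $V(P)$ is a vertex cover, as claimed.
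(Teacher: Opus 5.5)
Your proposal is correct and follows essentially the same route as the paper: exclude $v_1,v_2,v_{-2},v_{-1}$ by maximality of $P$, exclude attachments in $\{v_5,\dots,v_{-5}\}$ with an $S_{2,4,4}$ centred there using a leg $v_i,x,y$ into $D$, and rule out the remaining attachments $v_3,v_4$ (and their mirrors) with a claw centred at $v_3$ or $v_4$ whose third leg runs through $D$ to the far attachment. The obstacle you anticipate does not arise, and the distance-$3$ constraint is not what gives disjointness. Once the middle attachments are excluded, Lemma~\ref{l-adjacencytopath} (consecutive case) and $2$-connectivity force $N(D)=\{v_i,v_j\}$ with $i\in\{3,4\}$ and $j\in\{r-3,r-2\}$, so the legs $v_iv_{i-1}v_{i-2}$, $v_i\cdots v_{i+4}$ and $v_i x\cdots y\,v_jv_{j+1}$ are automatically disjoint for every $r\geq 12$ and no separate check for $r\in\{12,13\}$ is needed.
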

    \begin{claimproof}
    Suppose for a contradiction that there is a component $D$ of size at least~$2$ in $G-V(P)$. Note first that $D$ cannot be adjacent to any of $v_1, v_2, v_{-1}, v_{-2}$, since otherwise we find a longer path.
    If $D$ is adjacent to some vertex in $\{v_5, \dots, v_{-5}\}$, we find $S_{2,4,4}$ as a subgraph in~$G$, see Figure~\ref{f-s244-longp} (top left), a contradiction.
    Hence, $D$ is adjacent to a vertex in $\{v_3,v_4, v_{-3}, v_{-4}\}$.
    Recall that due to the $2$-connectivity of $G$, $D$ has two neighbours on $P$.
    Suppose first that $D$ is adjacent to $v_3$, then by Lemma~\ref{l-adjacencytopath}, $D$ is not adjacent to $v_4$, so it is adjacent to $v_{-3}$ or $v_{-4}$. In either case, we find $S_{2,4,4}$ with $v_3$ as centre, see Figure~\ref{f-s244-longp} (top right), a contradiction.
    Similarly, if $D$ is adjacent to $v_4$, it has to be adjacent to $v_{-3}$ or $v_{-4}$ and we find again $S_{2,4,4}$ with centre~$v_4$, see Figure~\ref{f-s244-longp} (bottom), a contradiction.
    It follows that $V(P)$ is a vertex cover of~$G$.
    \end{claimproof}

    \noindent
    If $|P| \leq 23$, the vertex cover number is bounded and we conclude by applying Theorem~\ref{t-vc}.
    Hence, we may assume that $|P| \geq 24 = 3*(4+4)$.
    This allows us to apply Lemma~\ref{l-s2ab} and we obtain that $G$ is a lemon bush flowering from a cycle. Since a cycle has treewidth~$2$, we may apply Theorem~\ref{t-lemon-tw2} to solve \stf{} for the instance $(G,T,k)$ in polynomial time in the case where we find a path of length at least~$12$ in $G$.

    \begin{figure}
        \centering
        \input{figures/s244-concomp}
        \caption{Occurrences of $S_{2,4,4}$ in the proof of Theorem~\ref{t-s244}.}
        \label{f-s244-concomp}
    \end{figure}
    
    \medskip
    \noindent
    We now consider the case where $P = P_{11}$ is a longest path in $G$.
    If all components of $G-V(P)$ have size~$1$, we get that $V(P)$ is a vertex cover of bounded size and we conclude by applying Theorem~\ref{t-vc}.
    Hence, we may assume that there is a component~$D$ of size at least~$2$ in $G-V(P)$. 

    \begin{claim}\label{cl-s244-adj48}
        Every component of $G-V(P)$ of size at least~$2$ is adjacent to exactly $v_4$ and~$v_8$.
    \end{claim}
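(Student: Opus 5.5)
We work in the setting of the claim: $G$ is $2$-connected and $S_{2,4,4}$-subgraph-free, and $P=(v_1,\dots,v_{11})$ is a longest path. Let $D$ be a component of $G-V(P)$ with $|D|\geq 2$. The plan is to first restrict which vertices of $P$ can see $D$, then pin down $N(D)\cap V(P)$ exactly using $2$-connectivity, Lemma~\ref{l-adjacencytopath}, and explicit copies of $S_{2,4,4}$.

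\textbf{Step 1: discard the ends of $P$.} As in the $|P|\geq 12$ case, $D$ has no neighbour in $\{v_1,v_2,v_{10},v_{11}\}$. For $v_1$ or $v_{11}$ this would extend $P$ directly. For $v_2$ (resp.\ $v_{10}$), a neighbour $x\in D$ of $v_2$ together with a neighbour $y$ of $x$ in $D$ gives the path $(y,x,v_2,\dots,v_{11})$ on $12$ vertices, contradicting maximality.

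\textbf{Step 2: forbid $v_5$, $v_6$, $v_7$.} Suppose $D$ is adjacent to $v_i$ with $i\in\{5,6,7\}$, via $x\in D$ and a neighbour $y\in D$ of $x$. Take $v_i$ as the centre of a claw:
\begin{itemize}
\item the leg of length~$2$ is $v_i,x,y$;
\item the two legs of length~$4$ run along $P$ in both directions.
\end{itemize}
This needs $i-1\geq 4$ and $11-i\geq 4$, i.e.\ $5\leq i\leq 7$, which holds. So $N(D)\cap V(P)\subseteq\{v_3,v_4,v_8,v_9\}$. By $2$-connectivity, $|N(D)|\geq 2$, and by Lemma~\ref{l-adjacencytopath}, $D$ is not adjacent to both $v_3,v_4$, nor to both $v_8,v_9$. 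Hence $D$ has one neighbour in $\{v_3,v_4\}$ and one in $\{v_8,v_9\}$. This leaves, up to symmetry, three cases: $\{v_3,v_9\}$, $\{v_3,v_8\}$ (mirror of $\{v_4,v_9\}$), and $\{v_4,v_8\}$. The goal is to exclude the first two.

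\textbf{Step 3: exclude $v_3$ (and symmetrically $v_9$).} Suppose $D$ sees $v_3$ and $v_j$ with $j\in\{8,9\}$. Because $D$ sees vertices at distance $\geq 5$ on $P$, the union of $P$ and a path through $D$ contains a long cycle. Concretely, let $Q$ be a $v_3$–$v_j$ path through $D$, with at least $2$ internal vertices since $D$ has size $\geq 2$ and the endpoints can be chosen to differ. Then $C=(v_3,\dots,v_j,Q)$ is a cycle on at least $j-3+1+2\geq 8$ vertices, with the pendant paths $v_3v_2v_1$ and $v_jv_{j+1}\dots$ attached.

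For $j=9$, take centre $v_3$ with legs $v_2v_1$ (length~$2$), $v_4v_5v_6v_7$ (length~$4$), and a length-$4$ leg going $v_3\to x\to\cdots$ through $D$ and then $v_9,v_{10}$. This works if the $D$-part has at least one vertex besides the attachment, and $|D|\geq 2$ lets us route $x\to y\to v_9\to v_{10}$, or $x\to y\to v_9$ followed by $v_{10}$ if $y$ sees $v_9$. If $x$ itself is the only neighbour of $v_9$ in $D$, use $x\to v_9\to v_{10}\to v_{11}$ instead, or place the length-$2$ leg inside $D$ as $v_3,x,y$ and take $v_2v_1$ as part of a leg. For $j=8$ the argument is the same, now using $v_8v_9v_{10}v_{11}$.

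The only delicate point is whether $x$ alone carries both attachments, which would make $x$ a cut vertex separating $D\setminus\{x\}$. This is exactly what $2$-connectivity forbids, as in earlier claims. Moreover, Lemma~\ref{l-adjacencytopath} forbids the $P$-neighbours of $D$ from being too close on $P$.

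\textbf{Main obstacle.} I expect the subcase bookkeeping in Step 3 to be the hardest part: depending on whether the same or different vertices of $D$ see $v_3$ and $v_j$, the required legs of lengths $2,4,4$ must be routed differently. I would handle it by always taking $v_3$ as the centre and choosing the short leg as either $v_2v_1$ or $x$ followed by a $D$-neighbour. A leg of length~$4$ then always exists, either along $v_4\dots v_7$ or around through $D$ and $v_j$ and onwards along $P$. With $v_3$ and $v_9$ excluded, the only remaining option is $N(D)=\{v_4,v_8\}$, which proves the claim.
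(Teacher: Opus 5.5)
Your proof is correct and follows the paper's argument. As in the paper, you exclude $v_1,v_2,v_{10},v_{11}$ by maximality of $P$ and exclude $v_5,v_6,v_7$ via an $S_{2,4,4}$ centred there. You then rule out any attachment at $v_3$ (and, by symmetry, $v_9$) with an $S_{2,4,4}$ centred at $v_3$ whose third leg runs through $D$ and continues along $P$; the paper instead treats $\{v_4,v_9\}$ directly with centre $v_4$, which is equivalent. One small remark: your comment that a single $x\in D$ seeing both $v_3$ and $v_j$ would be a cut vertex is not accurate, but it is also unnecessary, since the leg $v_3,x,\ldots,v_j,v_{j+1},\ldots,v_{11}$ has length at least $4$ whenever $j\le 9$.
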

    \begin{claimproof}
    Let $D$ be a component of $G-V(P)$ of size at least~$2$.
    Note first that $D$ cannot be adjacent to any of $v_1, v_2, v_{10}, v_{11}$, since otherwise there is a longer path in $G$.
    If $D$ is adjacent to some vertex in $\{v_5, v_6,v_7\}$, we find $S_{2,4,4}$ as a subgraph in $G$, see Figure~\ref{f-s244-concomp} (top left), a contradiction.
    Hence, $D$ is adjacent only to vertices in $\{v_3,v_4, v_{8}, v_{9}\}$.
    
    Recall that due to the $2$-connectivity of $G$, $D$ has at least two neighbours on $P$.
    Further, due to Lemma~\ref{l-adjacencytopath}, $D$ cannot be adjacent to $v_3$ and $v_4$ or to $v_8$ and $v_9$ at the same time.
    Suppose first that $D$ is adjacent to $v_3$, then it is adjacent to $v_{8}$ or $v_{9}$. In either case, we find $S_{2,4,4}$ with $v_3$ as centre, see Figure~\ref{f-s244-concomp} (top right), a contradiction.
    Similarly, if $D$ is adjacent to $v_4$, it has to be adjacent to $v_{8}$ or $v_{9}$. If $D$ is adjacent to $v_4$ and $v_9$ we find again $S_{2,4,4}$ with centre $v_4$, see Figure~\ref{f-s244-concomp} (bottom), a contradiction.
    Thus, we may assume that every component $D$ of $G-V(P)$ of size at least~$2$ is adjacent to exactly $v_4$ and $v_8$.
    \end{claimproof}

    \begin{claim}\label{cl-s244-comps-p4free}
        The components of $G-V(P)$ are $P_4$-subgraph-free. 
    \end{claim}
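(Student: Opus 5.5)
We argue by contradiction, using only that $P=(v_1,\dots,v_{11})$ is a longest path, that $G$ is $2$-connected and $S_{2,4,4}$-subgraph-free, and Claim~\ref{cl-s244-adj48}. Let $D$ be a component of $G-V(P)$ and suppose $D$ contains a $P_4$, say $Q=(a,b,c,d)$. Then $|D|\geq 2$, so by Claim~\ref{cl-s244-adj48} we have $N(D)=\{v_4,v_8\}$. Since $G$ is $2$-connected, $D$ has at least two vertices with neighbours on $P$. Moreover, by a path argument in the style of Lemma~\ref{l-adjacencytopath}, any vertex of $D$ adjacent to $v_4$ is joined by a path inside $D$ to a vertex adjacent to $v_8$. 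This gives a jump of length at least~$2$ from $v_4$ to $v_8$ through $D$.

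First we bound the length of such jumps. If $D$ contains a path of length at least~$4$ (that is, at least $5$ vertices) from a neighbour $x$ of $v_4$ to a neighbour $y$ of $v_8$, then $(v_1,\dots,v_4,x,\dots,y,v_8,\dots,v_{11})$ has at least $12$ vertices. This contradicts the maximality of $P$. So every $v_4$--$v_8$ jump through $D$ has at most $3$ internal vertices.

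The main step builds a forbidden $S_{2,4,4}$ from $Q$. Choose a shortest path $R$ in $G[D]$ from $N(v_4)\cap D$ to $Q$, or, if $Q$ already meets $N(v_4)$, take $R$ trivial. Let $x$ be the endpoint of $R$ in $N(v_4)$. Use $x$ (or a vertex near it) as the centre, or use $v_4$ itself as the centre with legs $v_4v_3v_2v_1$ (length $3$), $v_4v_5v_6v_7v_8$ (length $4$), and a leg into $D$ following $R$ and then $Q$ (length at least $4$ once $Q$ is long enough). Leg lengths $3,4,4$ do not suffice, since we need $2,4,4$ with the short leg of length $2$ and the other two of length $4$. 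So the better choice is legs $v_4v_3v_2$ (length $2$) and $v_4v_5v_6v_7$ (length $3$), extended as follows. If the $D$-leg from $v_4$ can go $v_4\to x\to\cdots$ for $4$ vertices inside $D$, we need a second leg of length $4$. For this, use $v_4v_5v_6v_7v_8$ when the $D$-leg avoids $v_8$, or else use the detour $v_8v_9v_{10}v_{11}$ by moving the centre to $v_8$: the legs are then $v_8v_9v_{10}$ (length $2$), $v_8v_7v_6v_5v_4$ (length $4$), and into $D$ via a neighbour $y$ of $v_8$ followed by $3$ further vertices of $D$. Hence it suffices to show that some vertex of $N(v_4)\cup N(v_8)$ in $D$ starts a path on $4$ vertices inside $D$.

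The main obstacle is this case analysis. We must show that a $P_4$ in $D$, together with $2$-connectivity, always yields a path in $D$ on $4$ vertices starting at an attachment vertex $y\in N(v_8)$ or $x\in N(v_4)$. Such a path gives an $S_{2,4,4}$ centred at $v_8$ (respectively at $v_4$, with legs $v_4v_3v_2$, $v_4v_5v_6v_7v_8$, and into $D$). The plan is to look at where the attachment vertices lie relative to $Q$. If an attachment vertex lies on $Q$ but not in the middle, or is joined to an end of $Q$ by a path avoiding $Q$, then walking along $Q$ gives the $4$-vertex path. If the only attachments are $b$ or $c$, or sit in the middle, we use the second attachment, which $2$-connectivity forces to avoid the relevant cut vertex, to reroute, or we get a longer $v_4$--$v_8$ jump, contradicting the bound above. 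Finding no $P_4$ in any component then proves the claim.
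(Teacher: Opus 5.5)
\textbf{Gap.} Your reduction is sound and is essentially the paper's. It suffices to find a vertex $x\in D$ adjacent to $v_4$ or $v_8$ that is the end of a path $(x,x_2,x_3,x_4)$ inside $D$. You do not need $S_{2,4,4}$ for this: if $x\sim v_4$, then $(x_4,x_3,x_2,x,v_4,\dots,v_{11})$ has $12$ vertices, contradicting the maximality of $P$, and symmetrically for $v_8$. This is how the paper starts. Your remark that legs of lengths $3,4,4$ ``do not suffice'' is mistaken, since $S_{3,4,4}\supseteq S_{2,4,4}$, but it is harmless.

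The genuine gap is the step you yourself call the main obstacle: that a $P_4$ $Q=(a,b,c,d)$ in $D$, together with $2$-connectivity, forces such an $x$. You only describe this as a plan, and it is the whole content of the claim. The hard configurations are those where the attachment vertices are the middle vertices $b,c$, or lie off $Q$ and reach $Q$ only at $b$ or $c$. There your jump bound gives nothing: for example, $b\sim v_4$, $c\sim v_8$ gives the jump $v_4bcv_8$ with only two internal vertices. ``Reroute using the second attachment'' is not an argument, and the cases you list do not cover attachments outside $Q$ that meet $Q$ only in its middle.

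\textbf{A way to close it.} Since $N(D)=\{v_4,v_8\}$ and $G$ is $2$-connected, $H=G[D\cup\{v_4,v_8\}]+v_4v_8$ is $2$-connected. By Menger there are two disjoint paths $R_1,R_2$ from $\{v_4,v_8\}$ to $V(Q)$, each meeting $\{v_4,v_8\}$ only in its first vertex and $V(Q)$ only in its last vertex, ending at distinct $q_1\neq q_2$.
\begin{enumerate}
\item If some $q_i\in\{a,d\}$, or some $R_i$ has an internal vertex, follow $R_i$ without its first vertex and then $Q$ towards its far end. This gives a path in $D$ on at least four vertices starting at an attachment vertex.
\item Otherwise $b$ and $c$ are themselves attachment vertices. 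Take a shortest path $S$ in $H-b$ from $a$ to $\{c,d,v_4,v_8\}$; it exists because $H-b$ is connected. Its internal vertices lie in $D\setminus V(Q)$. Depending on where $S$ ends, one of
\[
(y,\dots,a,b,c,d),\qquad (b,a,\dots,c,d),\qquad (b,a,\dots,d,c)
\]
is the required path, where $y$ is the last vertex of $S$ in $D$.
\end{enumerate}
Either way you obtain a $12$-vertex path, a contradiction. This is the concrete use of $2$-connectivity your proposal is missing; the paper does a similar case analysis directly on $Q$.
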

    \begin{claimproof}
    Suppose for a contradiction that there is a component $D$ in $G-V(P)$ which contains a $P_4$ $(u_1,u_2,u_3,u_4)$ as a subgraph. Since $D$ has size more than~$2$, by Claim~\ref{cl-s244-adj48}, the neighbours of $D$ on~$P$ are exactly $v_4$ and $v_8$.
    If $u_1$ is adjacent to $v_4$, we find a longer path $(u_4,u_3,u_2,u_1,v_4, \dots,  v_{11})$, a contradiction.
    Hence, $u_1$ is not adjacent to $v_4$. The cases where $u_1$ is adjacent to $v_8$ and $u_4$ is adjacent to $P$ follow by symmetry.  
    Thus, some other vertex in~$D$ is adjacent to $P$. We may assume without loss of generality that $u_2$ is adjacent to $v_4$.

    Since $G$ is $2$-connected, there is a path from $u_1$ to $u_3$ not using $u_2$. If the path contains a vertex from $P$, then let $w$ be the vertex in $D$ adjacent to $P$. We get that $(w , \dots , u_1 , \dots , u_3)$ is a $P_4$ in $D$ whose end is adjacent to $P$, a contradiction. Hence, the path contains no vertex from~$P$. If the path contains $u_4$, we find a cycle of length at least~$4$ in $D$ and thus, $u_2$ is the endvertex of a $P_4$ in $D$. Thus, $u_2$ may not be adjacent to a vertex on~$P$, a contradiction. Hence, the path reaches $u_3$ not via $u_4$. Then, $(u_4, u_3 , \dots , u_1, u_2)$ is another path of length at least~$4$ ending at $u_2$, again a contradiction. Thus we obtain a contradiction in all cases, and the claim follows.
    \end{claimproof}

    \noindent
    Note that Claim~\ref{cl-s244-comps-p4free} together with Observation~\ref{o-structure-p4sfree} implies that all components of $G-V(P)$ are stars, triangles, edges or single vertices.

    We now analyse the structure of stars and triangles.
    Observe that if a leaf of a star or a triangle is adjacent to both $v_4$ and $v_8$, we find $S_{2,4,4}$, see Figure~\ref{f-s244-neighboursstar}, a contradiction.
    Hence, no leaf of a star and no vertex of a triangle is adjacent to both $v_4$ and $v_8$.

    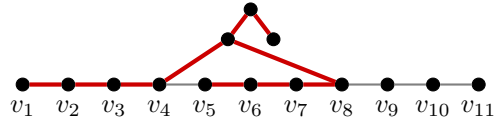
\begin{figure}
        \centering
        \input{figures/s244-neighboursstar}
        \caption{An occurrence of $S_{2,4,4}$ in the proof of Theorem~\ref{t-s244} if the leaf of a star is adjacent to both $v_4$ and $v_8$.}
        \label{f-s244-neighboursstar}
    \end{figure}

    Consider now a triangle $D$ in $G-V(P)$, consisting of $u_1, u_2 , u_3$. We know that each vertex of the triangle is adjacent to at most one of~$v_4$ and $v_8$. 

    \begin{claim}\label{claim-S244-notriangles}
        We can reduce to a case where $D \cup \{v_4, v_8\}$ is a juicy or seeded wedge. 
    \end{claim}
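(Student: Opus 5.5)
Let the triangle be $D=\{u_1,u_2,u_3\}$. By Claim~\ref{cl-s244-adj48}, $N(D)\cap V(P)=\{v_4,v_8\}$, and by the preceding observation each $u_i$ is adjacent to at most one of $v_4,v_8$. Moreover, $D$ is a whole component of $G-V(P)$, so $N(D)=\{v_4,v_8\}$ and $D\cup\{v_4,v_8\}$ is a wedge with ends $v_4,v_8$ and $|D\cup\{v_4,v_8\}|=5$. The plan is to look at the adjacency pattern between $D$ and $\{v_4,v_8\}$. In every case we either exhibit a tree decomposition of width~$2$ of $G_L\cup\{v_4v_8\}$ with $L=D\cup\{v_4,v_8\}$, or we delete a triangle edge safely and so turn the component into a star (a $P_3$). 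A star with two leaves was already handled by the star analysis as a juicy wedge.

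\textbf{Case analysis.} Since $G$ is $2$-connected, both $v_4$ and $v_8$ have a neighbour in $D$. Up to symmetry between $v_4$ and $v_8$ and relabelling, there are two patterns.

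(a) $N(u_1)\cap P=\{v_4\}$, $N(u_2)\cap P=\{v_8\}$ and $u_3$ has no neighbour on $P$. The bags $\{v_4,v_8,u_1\},\{v_8,u_1,u_2\},\{u_1,u_2,u_3\}$ give width~$2$, so the wedge is juicy.

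(b) Two vertices of the triangle are adjacent to the same end. Say $u_1,u_2\sim v_4$ and $u_3\sim v_8$, with $u_1,u_2\not\sim v_8$ and $u_3\not\sim v_4$. The graph $G_L\cup\{v_4v_8\}$ then contains the triangle plus $v_4$ adjacent to $u_1,u_2$ and $v_8$ adjacent to $u_3,v_4$. It has treewidth~$2$: use bags $\{v_4,u_1,u_2\},\{v_4,u_2,u_3\}$ and $\{v_4,u_3,v_8\}$. Note that $\{v_4,u_1,u_2\}$ and $\{v_4,u_2,u_3\}$ both contain $u_2$, and $u_1u_3$ must also be covered; if we use $\{v_4,u_1,u_3\}$ in the middle instead, $u_2u_3$ is uncovered. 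The honest check is whether $\{v_4,u_1,u_2,u_3\}$ contains a $K_4$ minor. It does only if $v_4$ is adjacent to all three $u_i$, which is excluded here since $u_3\not\sim v_4$. Deleting $v_8$ and $v_4$ as separators leaves a series-parallel structure, so treewidth~$2$ holds.

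The only pattern that could be non-juicy has all of $u_1,u_2,u_3$ adjacent to $P$ with one end seeing two of them. Any $K_4$ minor would need both ends plus the triangle. In case (b), contracting $u_3v_8$ to $v_4$ would create $K_4$ on $\{v_4,u_1,u_2,u_3\}$ because $u_3v_8v_4$ is a path. So case (b) is actually \emph{not} juicy, and this is the main obstacle.

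\textbf{Handling case (b).} I expect to rule this case out by an $S_{2,4,4}$ or longer-path argument, or else to reduce it. First I try to find a $P_{12}$: the path $v_1,v_2,v_3,v_4,u_1,u_2,u_3,v_8,\dots,v_{11}$ has $4+3+4=11$ vertices, which is not longer. Next I look for an $S_{2,4,4}$ centred at $v_4$, with the branches $u_1u_2$ (length 2) and $v_3v_2v_1$ (only length 3). That also fails, so exclusion probably does not work and a reduction is needed.

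The reduction is an exchange argument in the style of Lemma~\ref{l-wedge-transform}. If $u_1,u_2$ are both non-terminals, or if $(u_1,u_2)\in T$, one of the edges $u_1u_2$ or $v_4u_1$ can be deleted without increasing the optimum. Any minimum Steiner forest using $u_1u_2$ can replace it by $u_1v_4$ or $u_2v_4$, or by the other triangle edge, because $u_1$ and $u_2$ have identical neighbourhoods in $L$ apart from each other. More precisely, $N(u_1)\setminus\{u_2\}=N(u_2)\setminus\{u_1\}=\{v_4,u_3\}$, so exactly as in the second case of Lemma~\ref{l-wedge-transform} we delete $u_1u_2$ when $(u_1,u_2)\notin T$, and delete $u_1v_4$ when $(u_1,u_2)\in T$. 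After either deletion, the resulting wedge has bags $\{v_4,v_8,u_3\}$ and $\{v_4,u_3,u_1\}/\{v_4,u_3,u_2\}$ (in the first case), or a similar path decomposition in the second. This shows juiciness. I would then verify that the deletion preserves $2$-connectivity, or at least the entangled-lemon-bush structure. Deletions keep the class $S_{2,4,4}$-subgraph-free, so the subsequent arguments still apply.
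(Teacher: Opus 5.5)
Your argument is correct, but your reduction differs from the paper's. In the non-juicy configuration, $u_1,u_2$ are true twins adjacent to $v_4$ and $u_3$ is adjacent to $v_8$.

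\emph{The paper's route.} It splits on whether the twins are terminals. A non-terminal twin is deleted via Lemma~\ref{l-benjamins}, which leaves a seeded wedge. If both twins are terminals, the twin--twin edge is deleted by an exchange argument, after Lemma~\ref{l-adj-terminals} is used to dispose of the case that the twins form a terminal pair.

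\emph{Your route.} You split on whether $u_1,u_2$ lie in the same school. If they do not, you delete $u_1u_2$ and obtain a $P_3$ centred at $u_3$. If they do, you delete $u_1v_4$ and obtain your pattern~(a). Both branches end in a juicy wedge. In fact $\{u_1,u_2,v_4,u_3\}$ is itself a non-juicy seeded wedge of $G$ with ends $v_4,u_3$, so your step is literally Lemma~\ref{l-wedge-transform}, and you can cite it directly rather than argue ``in its style''.

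\emph{What each buys.} Your route only deletes edges, so the graph stays a subgraph of the original. Hence $S_{2,4,4}$-freeness and the component structure of $G-V(P)$ persist for free. Moreover, ``same school or not'' is exactly the dichotomy on which deleting the twin--twin edge is safe. The paper's first step applies Lemma~\ref{l-benjamins} to true twins, which do not satisfy its hypothesis $N(x)\subsetneq N(y)$, so strictly speaking it needs a closed-neighbourhood version of that lemma. In exchange, the paper's route yields a smaller (seeded) wedge in some branches.

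Some points in your write-up need cleaning up.
\begin{itemize}
\item The passage in case~(b) asserting treewidth~$2$ should be deleted. The bags you list do not cover $u_1u_3$. The remark that a $K_4$ minor needs $v_4$ adjacent to all three $u_i$ is false, and so is the ``series-parallel'' conclusion. State directly what you eventually note: contracting $u_3v_8$ and using the added edge $v_4v_8$ gives a $K_4$ on $\{v_4,u_1,u_2,u_3\}$.
\item The opening sentence of the reduction (``both non-terminals, or $(u_1,u_2)\in T$'') does not match the dichotomy you actually use, so drop it.
\item Make explicit where the school hypothesis enters, using that $T$ is closed under transitivity. If $u_1,u_2$ are in different schools, minimality forces the tree containing $u_1u_2$ to contain $v_4$ or $u_3$, which supplies the swap edge. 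If they are in the same school, $u_2$ lies in the tree containing $u_1v_4$, and the swap edge is $u_1u_2$ or $u_2v_4$ according to the side $u_2$ falls on.
\item Your worry about $2$-connectivity is unnecessary. Theorem~\ref{t-solvetangle} does not require it. After either deletion, $D$ is still connected with $N(D)=\{v_4,v_8\}$, so the entangled lemon bush structure with stem set $V(P)$ is unaffected.
\end{itemize}
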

    \begin{claimproof}
        By the $2$-connectivity of $G$ we know that at least one of $u_1, u_2, u_3$, say $u_1$, is adjacent to~$v_4$ and at least one, say $u_2$, to $v_8$.
        We distinguish between the case where $u_3$ is adjacent to none of $v_4, v_8$ and the case where $u_3$ is adjacent to one of them. Recall that any vertex in the triangle is adjacent to at most one of them, hence these are all cases to consider.

        Suppose first that $u_3$ is not adjacent to any of $v_4, v_8$. 
        Then, $D\cup\{v_4,v_8\}$ is a juicy wedge with ends $v_4, v_8$ 
        witnessed by the path-decomposition $\{v_4,v_8,u_1\},\{v_8,u_1,u_2\},\{u_1,u_2,u_3\}$.

        Consider now the case where $u_3$ has a neighbour in $\{v_4, v_8\}$. We assume that it is $v_4$, the other case follows by symmetry.
        Note first that if either of $u_1, u_3$, say $u_1$, is not a terminal we may reduce to solving \stf{} on $G-u_1$ by applying Lemma~\ref{l-benjamins}.

        We may therefore assume that both $u_1$ and $u_3$ are terminals.
        We will show that $u_1u_3$ can safely be deleted by proving that any optimal solution $F$ which uses this edge entails the existence of a solution $F'$ of equal weight which does not.
        First suppose $u_1v_4 \in F$. If $u_1u_3 \in F$ then deleting $u_1u_3$ and introducing $u_3v_4$ is a solution of equal cost. A symmetric argument holds if $u_3v_4 \in F$.
        Now suppose $u_1v_4,u_3v_4\notin F$. Applying Lemma~\ref{l-adj-terminals} we may assume that $u_1$ and $u_3$ must connect to some vertex outside $D$ in $F$, and therefore they are both in the same component of $F$ as $u_2$. If $u_1u_3 \in F$, then deleting $u_1u_3$ from $F$ and introducing $u_1u_2$ or $u_3u_2$ (whichever was not already in $F$) yields a solution $F'$ of equal weight. The claim follows.
    \end{claimproof}

    \noindent   
    We now consider the star components. 
    \begin{claim}\label{cl-s244-starwedge}
        Let $D$ be a component of $G-V(P)$ which is a star. Then, $D \cup \{v_4, v_8\}$ is a juicy wedge with ends $v_4, v_8$. 
    \end{claim}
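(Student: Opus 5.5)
We work in the setting where $P=(v_1,\dots,v_{11})$ is a longest path and $G$ is $2$-connected. Let $D$ be a star component with centre $x$ and leaves $y_1,\dots,y_p$, with $p\geq 1$. Claim~\ref{cl-s244-adj48} gives $N(D)=\{v_4,v_8\}$, and the preceding observation (Figure~\ref{f-s244-neighboursstar}) says no leaf is adjacent to both $v_4$ and $v_8$. The plan is to pin down exactly which of $v_4,v_8$ each vertex of $D$ sees, and then to write down an explicit tree decomposition of width~$2$ for $G[D\cup\{v_4,v_8\}]$ with the edge $v_4v_8$ added.

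\textbf{Step 1: restrictions on the centre.} If $p\geq 2$, the centre $x$ is the middle of the path $(y_1,x,y_2)$. Suppose $x$ is adjacent to $v_4$. Then each leaf $y_i$, by $2$-connectivity ($x$ is not a cut vertex), has a neighbour on $P$. If $y_i$ is adjacent to $v_4$, then $(y_j,x,y_i,v_4,\dots,v_{11})$ is a path on $12$ vertices, contradicting maximality; the case $v_8$ is symmetric. So each leaf is adjacent only to the endpoint of $\{v_4,v_8\}$ not adjacent to $x$. If $x$ were adjacent to both $v_4$ and $v_8$, no leaf could have a $P$-neighbour, and $x$ would be a cut vertex, a contradiction. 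If $x$ is adjacent to neither, each leaf sees exactly one of $v_4,v_8$, and $2$-connectivity forces both to occur. If leaves $y_1\sim v_4$ and $y_2\sim v_8$ exist, the path $(v_1,\dots,v_4,y_1,x,y_2,v_8,v_7,v_6,v_5)$ has $12$ vertices, a contradiction. So $x$ must see exactly one of $v_4,v_8$, say $v_4$, and every leaf sees exactly $\{x,v_8\}$.

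\textbf{Step 2: the case $p=1$.} Here $D$ is a single edge $xy$, so $D\cup\{v_4,v_8\}$ has four vertices. After using the no-leaf-sees-both fact for both endpoints (each endpoint of an edge can be viewed as a leaf), each of $x,y$ sees exactly one of $v_4,v_8$, and $2$-connectivity forces distinct ones. This gives a path $v_4,x,y,v_8$, which with the edge $v_4v_8$ added is a $4$-cycle and hence juicy. If the star-leaf argument does not cover $p=1$ directly, I would instead fall back on Lemma~\ref{l-wedge-transform}, since the set is a seeded wedge.

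\textbf{Step 3: the decomposition.} When $p\geq 2$ and $N(x)\cap P=\{v_4\}$, I use the bags $\{v_4,v_8,x\}$ and $\{v_8,x,y_i\}$ for $i=1,\dots,p$, mirroring the decompositions in Claim~\ref{c-p11-38-comps}. These cover the edges $v_4v_8$, $xv_4$, $xy_i$ and $y_iv_8$, so $G_L\cup\{v_4v_8\}$ has treewidth~$2$. Since $N(D)=\{v_4,v_8\}$ and $D$ is connected, $D\cup\{v_4,v_8\}$ is a wedge with ends $v_4,v_8$, and it is juicy.

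\textbf{Main obstacle.} I expect the hardest part to be ruling out that $x$ sees neither endpoint. This rests on the length-$12$ path exhibited in Step 1, and I would check carefully that the path is simple and has the right length.
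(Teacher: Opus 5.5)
There is a genuine gap in Step~1, and your Step~3 decomposition depends on it. Both paths you exhibit have $11$ vertices, not $12$. Since $v_4$ and $v_8$ are at distance $4$ on $P$, a detour $y,x,y'$ through $D$ replaces the segment $v_5v_6v_7$ by exactly three vertices. So $(y_j,x,y_i,v_4,\dots,v_{11})$ and $(v_1,\dots,v_4,y_1,x,y_2,v_8,\dots,v_5)$ are no longer than $P$.

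Worse, the conclusions of Step~1 are false in this setting. Join two vertices $a,b$ by four internally disjoint paths with three internal vertices each. This graph has the following properties:
\begin{itemize}
\item it is $2$-connected;
\item any path meets $G-\{a,b\}$ in at most three segments of at most three vertices each, so longest paths have exactly $11$ vertices;
\item it is $S_{2,4,4}$-subgraph-free, since only $a,b$ have degree at least~$3$ and every arm of length~$4$ from one of them runs through the other.
\end{itemize}
Route $P$ through three of the paths with $v_4=a$ and $v_8=b$. The fourth path is then a star component $D$ whose centre sees neither $v_4$ nor $v_8$. Now add the edge from the centre to $a$, or to both $a$ and $b$. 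All the properties above are kept: the components of $G-\{a,b\}$ are unchanged, and an arm leaving the centre through a leaf must continue into the end that leaf is attached to. In the resulting graph a leaf and the centre both see $v_4$, so your bags $\{v_4,v_8,x\}$ and $\{v_8,x,y_i\}$ miss the edge $y_iv_4$.

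The repair is the paper's argument, and it needs no information about the centre at all. Take the hub bag $\{v_4,v_8,x\}$; it covers $v_4v_8$ together with whichever of $xv_4,xv_8$ exist. Attach to it, for each leaf $y_i$, the bag $\{x,y_i,v_4\}$ or $\{x,y_i,v_8\}$ according to its neighbour in $\{v_4,v_8\}$. There is at most one such neighbour by the observation of Figure~\ref{f-s244-neighboursstar}, and $N(D)=\{v_4,v_8\}$ gives the wedge.

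As for Step~2, that observation needs a path of length~$2$ in $D$ starting at the leaf, so it says nothing when $|D|=2$. In that case both vertices may see both ends, making $G_L\cup\{v_4v_8\}$ a $K_4$, which is not juicy. So the claim is only meant for stars with at least two leaves. The paper handles $|D|=2$ separately as a seeded wedge, as in your fallback.
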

    \begin{claimproof}
        Recall that by definition $D \cup \{v_4, v_8\}$ is a juicy wedge if $G[D \cup \{v_4, v_8\}] \cup \{v_4v_8\}$ has treewidth~$2$. We show this by giving a tree decomposition.
        We make a bag $B$ containing the vertices $v_4$ and $v_8$. Let $c$ be the centre of the star. We add a bag $B_c$ containing $v_4, v_8, c$ and make it adjacent to $B$.
        Now, recall that every leaf is adjacent to only one of $v_4$ and~$v_8$. For a leaf $u$ adjacent to $v_4$, we add a bag containing $u, v_4, c$ and make it adjacent to $B_c$. Similarly, for a leaf $u$ adjacent to $v_8$, we add a bag containing $u, v_8, c$ and make it adjacent to~$B_c$.
        Hence, $G[D \cup \{v_4, v_8\}] \cup \{v_4v_8\}$ has treewidth~$2$ and thus, $D \cup \{v_4, v_8\}$ is a juicy wedge. 
    \end{claimproof}

    \noindent
    Now for any component $D$ of $G-V(P)$ which is adjacent to $v_4$ and $v_8$, it holds that $D \cup \{v_4, v_8\}$ is a juicy or seeded wedge:
    \begin{itemize}
        \item If $D$ is a star this holds by Claim~\ref{cl-s244-starwedge}.
        \item If $D$ is a triangle then applying Claim~\ref{claim-S244-notriangles}, it can be transformed to satisfy that $D \cup \{v_4, v_8\}$ is a juicy or seeded wedge.
        \item If $|D|=2$ then definitionally $D \cup \{v_4, v_8\}$ is a seeded wedge.
        \item If $|D|=1$ then $D \cup \{v_4, v_8\}$ is a (trivially) juicy wedge.
    \end{itemize}
    Now the set of all components of $G-V(P)$ adjacent to $v_4$ and $v_8$ is a lemon with ends $v_4$ and~$v_8$. We call its vesicle set $D$.

    We let $G'' = G[V(P)] \cup \{v_4v_8\}$. Then, $G' = G[V(P) \cup D]$ is a lemon bush flowering from~$G''$ with stem set $V(P)$.
    Further, all vertices in $V(G) - (V(P)  \cup D)$ are adjacent only to vertices in $P$ and are thus a tangle set. Hence, we obtain that $G$ is an entangled lemon bush with $V(P)$ its stem set of size~$11$, and we conclude by applying Theorem~\ref{t-solvetangle}.
    \end{proof}

\subsection{Forbidding a Subdivided Claw $\mathbf{S_{3,3,4}}$}

We first prove Lemma~\ref{l-s3ab}, which we restate below.

\lemSThreeABFree*
\begin{proof}
    Let $P = (p_1, \ldots, p_\ell)$ be some longest path in $G$ where $\ell \geq 3b+2a-4$.
    We first reason about the \textit{middle} of $P$.    
    Let $P'= P - \{p_1,\ldots,p_a,p_{\ell-a+1},\ldots,p_\ell\}$ and let $G'$ be the graph obtained from $G$ by removing the vertices $\{p_1,\ldots,p_a,p_{\ell-a+1},\ldots,p_\ell\}$ together with every connected component $C$ of $G - V(P)$ such that $N(C) \not\subseteq V(P')$. It follows that, if $C$ is some connected component of $G' - V(P')$, then $C$ had no neighbours in $P-V(P')$. Note that the following arguments for $G'$ only use that $\ell \geq 3b$.

    We now claim that $G'$ is a citrus bush flowering from a path such that every citrus is either a lemon or has size at most~$6$. To prove this, we explicitly construct a stemset $S \subseteq V(P')$ and verify that it satisfies the necessary properties.

    Towards defining our stemset, we say a vertex $v$ separates $P'$ if either $v \in \{p_{a+1}, p_{\ell-a}\}$ or $p_{a+1}$ and $p_{\ell-a}$ are contained in distinct components in $G'-v$. As $P'$ is a path, any vertex that separates $P'$ must also be contained in $P'$. We now define our stemset $S = \{s_1, \ldots, s_{|S|}\} \subseteq V(P')$ to be the set of vertices that separate $P'$. Further, we order these such that $s_1 := p_{a+1}$, $s_{|S|} := p_{\ell-a}$ and for every $i \in \{2, \ldots, {|S|-1}\}$, the vertex $s_i$ lies on that subpath of $P'$ between $s_{i-1}$ and $s_{i+1}$. Note that by definition every component in $G' - S$ is adjacent to at most  two vertices in $S$. 

    We now show that $S$ is indeed a stemset. To this end, we show that, for every connected component $C$ of $G' - S$, there exists an index $i$ such that  $V(C) \cup \{s_i, s_{i+1}\}$ is a wedge with ends $\{s_i, s_{i+1}\}$. Moreover, either $V(C) \leq 2$, in which case this wedge is juicy or seeded, or this is the unique wedge with ends $s_i$, $s_{i+1}$ and $V(C) \cup \{s_i,s_{i+1}\}$ has size at most~$6$.

    The following claims will useful for our reasoning.

    \begin{claim}\label{clm-far-disjoint-path}
        If there exist indices $i,j \in \{a+1,\ldots,\ell-a\}$ such that $j \geq i +4$ and there exists some path $Q$ between $p_i$ and $p_j$ such that $V(Q) \cap P' = \{p_i, p_j\}$, then $G$ contains $S_{3,a,b}$.        
    \end{claim}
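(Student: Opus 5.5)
The plan is to exhibit $S_{3,a,b}$ with centre $p_i$, choosing its three legs from four pieces: the \emph{backward} segment $p_{i-1},\ldots,p_1$ of $P$; the \emph{middle} segment $p_{i+1},\ldots,p_{j-1}$ of $P$; and the path $Q$ followed by the \emph{forward} segment $p_j,\ldots,p_\ell$. Since $i\geq a+1$, the backward segment has $i-1\geq a$ vertices. Since $j\leq \ell-a$, the leg formed by the interior of $Q$, then $p_j$, then $p_{j+1},\ldots,p_\ell$ has at least $1+(\ell-j)\geq a+1$ vertices. Since $j\geq i+4$, the middle segment has at least $3$ vertices. The hypothesis $V(Q)\cap V(P')=\{p_i,p_j\}$ makes these three pieces vertex-disjoint apart from the centre, at least when $Q$ avoids the end segments of $P$; that case is handled below. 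Throughout I only use $\ell\geq 3b$ and $b\geq a\geq 3$.

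\emph{Case 1: $j-i-1\geq b$.} Take the middle segment, truncated to $b$ vertices, as the $b$-leg. Take the backward segment, truncated to $a$ vertices, as the $a$-leg. Take the first three vertices of the $Q$-then-forward leg as the $3$-leg, which is possible since that leg has at least $a+1\geq 4$ vertices.

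\emph{Case 2: $j-i-1<b$.} Use only $p_{i+1},p_{i+2},p_{i+3}$ from the middle segment as the $3$-leg; this is valid because $j\geq i+4$. The remaining two legs are the backward segment, with $i-1$ vertices, and the $Q$-then-forward leg, with at least $\ell-j+1$ vertices. Their total is at least $\ell-(j-i)\geq 3b-b=2b$, so the longer of the two has at least $b$ vertices. The shorter still has at least $a$ vertices by the bounds above. Truncating gives the required $a$- and $b$-legs.

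The step I expect to be the real obstacle is that the hypothesis only forbids $Q$ from meeting $P'$, so $Q$ might pass through an end vertex $p_k$ with $k\leq a$ or $k\geq \ell-a+1$, which would collide with the backward or forward leg. I would handle this by taking $p_k$ to be the first vertex of $P-V(P')$ met when walking along $Q$ from $p_j$ (or from $p_i$, symmetrically) and replacing $Q$ by its subpath $Q'$ between $p_j$ and $p_k$. Then $Q'$ together with the segment of $P$ from $p_k$ to $p_i$ is again internally disjoint from the middle segment. The $Q$-leg is rerouted as $p_i,p_{i-1},\ldots,p_k$, then along $Q'$ to $p_j$, then onward along the forward part of $P$ or backward along the middle part, whichever side of $p_j$ is unused. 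The backward leg is then replaced by the unused part $p_{k-1},\ldots,p_1$, or the new leg is made long through $Q'$. If this becomes too messy, I would instead observe that in every later application of the claim, $Q$ lives in $G'$, where the end vertices and all components touching them have been deleted, and state the claim for such $Q$. The counting in the two cases above then goes through unchanged.
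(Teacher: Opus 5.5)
Your two-case count is correct and is essentially the paper's argument. The paper also centres the claw at an end of $Q$. It splits on whether $i\ge b+1$ (or symmetrically $j\le \ell-b$), in which case an outer segment of $P$ is the $b$-leg and the middle supplies the $3$-leg. Otherwise $i\le b$ and $j\ge \ell-b+1$, which forces $j-i\ge b+1$ and makes the middle the $b$-leg. Your split on $j-i-1$ versus $b$, with the bound $(i-1)+(\ell-j+1)\ge \ell-b\ge 2b$, is the same bookkeeping. The paper handles a longer $Q$ only by saying the edge $p_ip_j$ may be replaced by $Q$. That tacitly assumes $V(Q)\cap V(P)=\{p_i,p_j\}$, so you are right to flag the end segments.

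Your rerouting paragraph, however, does not work and should be dropped. Once $Q$ is cut down to $Q'$ from $p_j$ to $p_k$, the vertex $p_i$ has degree~$2$ in $P\cup Q'$. So $p_i$ cannot be the centre, and $p_{k-1},\dots,p_1$ is not attached to it. The sketch also does not cover the case where $Q$, read from $p_i$, first meets $\{p_1,\dots,p_a\}$ while, read from $p_j$, it first meets $\{p_{\ell-a+1},\dots,p_\ell\}$.

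More fundamentally, no argument confined to $P\cup Q$ can succeed. Take $a=b=7$, $\ell=31$, $i=8$, $j=12$ and $Q=(p_8,p_7,p_6,p_5,p_{12})$. Then $V(Q)\cap V(P')=\{p_8,p_{12}\}$, yet $P+p_5p_{12}$ is $S_{3,7,7}$-free. Its only vertices of degree~$3$ are $p_5$ and $p_{12}$, and at each of them at most one leg can have $7$ vertices. So the literal statement would need global properties of $G$, such as $2$-connectivity or maximality of $P$.

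Your fallback is the right fix: add the hypothesis $V(Q)\cap V(P)=\{p_i,p_j\}$. This holds in every use of the claim in the proof of the lemma, because there $Q$ is a path in $G'$, whose vertices outside $P'$ do not lie on $P$. With that hypothesis your proof is complete.
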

    \begin{claimproof}
        Suppose there exist indices $i$, $j$ and a path $Q$ as described in the claim statement. In what follows, we assume without loss of generality that $Q$ is the single edge $p_ip_j$. Indeed, if $Q$ has length at least~$2$, then, since $V(Q) \cap P' = \{p_i, p_j\}$ our arguments still hold after replacing the edge $p_ip_j$ by the path $Q$. We now show that $G$ contains $S_{3,a,b}$ as a subgraph.

        \begin{figure}
            \centering
            \includegraphics[width=0.4\linewidth, page=11]{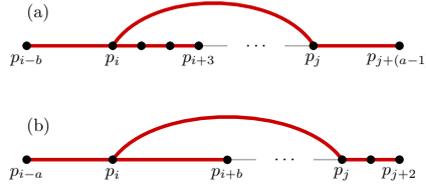}
            \caption{Occurrences of $S_{3,a,b}$ in the proof of Claim~\ref{clm-far-disjoint-path} of Lemma~\ref{l-s3ab}.}
            \label{fig:far-disjoint-path}
        \end{figure}
    
    	We consider two cases according to position of $p_i$ and $p_j$ along the path $P$. If $i \ge b+1$, then we find that $S_{3,a,b}$ shown in Figure~\ref{fig:far-disjoint-path} a). By symmetry, the same holds if $j \le \ell-b$. It remains to consider the case where $i \leq b$ and $j \geq \ell-b+1$. Given that $\ell \geq 3b$, in this case $j \geq i+b+1$. In this case we find that $S_{3,a,b}$ shown in Figure~\ref{fig:far-disjoint-path} b), thus concluding the proof of this claim.
    \end{claimproof}

    \noindent
    Before reasoning about the components of $G' - S$ we first show the following properties of $G' - V(P')$.

    \begin{claim} \label{claim-component-adjacencies}
        Either $G$ contains $S_{3,a,b}$ or for every connected component $C \subseteq G' - V(P')$, $|N(V(C))| = 2$ and $|V(C)| \leq 2$. If $C$ has size~$2$, then $C$ contains two distinct vertices that are adjacent to two distinct vertices of $p'$.
    \end{claim}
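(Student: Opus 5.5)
The plan is to assume $G$ is $S_{3,a,b}$-subgraph-free and rule out every other configuration of a component $C$ of $G'-V(P')$ by exhibiting a longer path or an $S_{3,a,b}$. Recall the setup: $C$ has $N(C)\subseteq V(P')$, and $P=(p_1,\ldots,p_\ell)$ is a longest path with $\ell\ge 3b+2a-4$.

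\textbf{Step 1: $|N(C)|\ge 2$.} By $2$-connectivity of $G$, $C$ is adjacent to at least two vertices of $P'$. If it had only one, that vertex would be a cut vertex, because $C$ has no neighbours outside $P'$ by the construction of $G'$.

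\textbf{Step 2: neighbours of $C$ are close on $P'$.} Take any two neighbours $p_i,p_j\in N(C)$ with $i<j$. Routing through $C$ gives a path $Q$ from $p_i$ to $p_j$ that is internally disjoint from $P'$.
\begin{itemize}
\item If $j\ge i+4$, Claim~\ref{clm-far-disjoint-path} gives $S_{3,a,b}$.
\item If $j=i+1$ or $j=i+2$ with distinct attachment vertices, Lemma~\ref{l-adjacencytopath} gives a longer path.
\end{itemize}
So any two neighbours of $C$ are at distance $2$ (attached through the same vertex of $C$) or at distance $3$. Three neighbours pairwise at distance $2$ or $3$ would force a span of at least $4$ between the outer two, which is excluded. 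Hence $|N(C)|=2$.

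\textbf{Step 3: $|V(C)|\le 2$ and the size-$2$ case.} Suppose $C$ contains a path $u_1u_2u_3$. Using the two neighbours $p_i,p_j$ and the structure from Step 2, I would build an $S_{3,a,b}$ with centre $p_i$ (or a nearby path vertex). The three legs are:
\begin{itemize}
\item one leg of length $3$ into $C$ (via $u_1u_2u_3$, or via $C$ and then back onto $P$);
\item the other two legs along $P$ in the two directions.
\end{itemize}
Both directions have length at least $a$ and $b$ respectively, because $p_i$ lies in $P'$ and $\ell\ge 3b+2a-4$. The main obstacle is that the $C$-leg must avoid the $P$-legs and reach depth $3$ regardless of where $C$ attaches. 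I would handle this by case analysis on which vertex of $C$ is adjacent to $p_i$:
\begin{itemize}
\item if an end of a $P_3$ in $C$ attaches, the three $C$-vertices themselves form the leg;
\item if only a middle vertex attaches, use $2$-connectivity to find another vertex of $C$ attached to $p_j$, and extend the leg through $p_j$ onto $P$ on the side away from the long legs.
\end{itemize}
Finally, if $|V(C)|=2$, two distinct vertices of $C$ must carry the two attachments. Otherwise a single vertex of $C$ would be adjacent to both $N(C)$, leaving the other vertex of $C$ a pendant and making the shared vertex a cut vertex, which contradicts $2$-connectivity.
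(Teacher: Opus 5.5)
Your Steps~1 and~2 and the closing size-$2$ argument match the paper's proof. Three neighbours force a span of at least $4$ by Lemma~\ref{l-adjacencytopath}, and Claim~\ref{clm-far-disjoint-path} then gives $S_{3,a,b}$. Your first subcase of Step~3 is also exactly the paper's construction: when the vertex attached to $p_i$ is an end of a $P_3$ in $C$, the leg $p_i\,u_1\,u_2\,u_3$ works.

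\textbf{The gap is your second subcase.} Suppose the vertex $x$ attached to $p_i$ is not an end of any $P_3$ in $C$.
\begin{itemize}
\item Then $C$ is a star with centre $x$, and $x$ is the only vertex of $C$ adjacent to $p_i$.
\item The other attachment is a leaf $y\sim p_j$. Since $x\neq y$, your Step~2 forces $|j-i|=3$; say $j=i+3$.
\item Hence $N(C)=\{p_i,p_{i+3}\}$, and every leaf reaches $P$ only at $p_{i+3}$.
\end{itemize}
Now centre at $p_i$. A $C$-leg of length $3$ must run $p_i\,x\,y\,p_{i+3}\dots$, passing through $p_{i+3}$. Then the $P$-leg from $p_i$ towards $p_\ell$ is $p_i\,p_{i+1}\,p_{i+2}$ and is blocked at $p_{i+3}$, so it has length only $2$. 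Your plan to ``extend the leg through $p_j$ onto $P$ on the side away from the long legs'' cannot work, because one of the two long legs from $p_i$ lies on exactly that side. So in this configuration there is in general no $S_{3,a,b}$ centred at $p_i$.

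\textbf{The repair is to move the centre, as the paper does.} The leaf $y$ is an end of the $P_3$ $y\,x\,y'$ for any other leaf $y'$. So your first subcase applies with $p_j$ in place of $p_i$. The legs are $p_j\,y\,x\,y'$ and the two directions of $P$ from $p_j$. These avoid $C$, and since $p_j\in V(P')$ and $\ell\ge 3b$, one has length at least $a$ and the other at least $b$.

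The paper phrases this uniformly. Take distinct $x\sim p_i$ and $y\sim p_j$ in $C$ with $|V(C)|\ge 3$. At least one of $x,y$ is an end of a $P_3$ in $C$:
\begin{itemize}
\item if $\mathrm{dist}_C(x,y)\ge 2$, both are;
\item otherwise $xy\in E$, and some third vertex is adjacent to one of them, which makes the other an end.
\end{itemize}
One then centres at the corresponding $p_i$ or $p_j$.
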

    \begin{claimproof}
        Let $C$ be some connected component of $G' - V(P')$. 
        
        We first claim that either $G$ contains $S_{3,a,b}$ as a subgraph, or, $C$ is adjacent to exactly $2$ vertices in $P'$. Suppose that $C$ is adjacent to at least~$3$ vertices in $P'$, that is, there exist indices $i$, $j$, $k$ such that $p_i, p_j, p_k \in N(V(C))$. By Lemma~\ref{l-adjacencytopath}, $j \geq i+2$ and $k \geq j+2$. It follows that $k-i \geq 4$. As the path from $p_i$ to $p_k$ via $C$ is disjoint from $P'$, by Claim~\ref{clm-far-disjoint-path}, $G$ contains $S_{3,a,b}$. That is, either $G$ contains $S_{3,a,b}$ or $C$ is adjacent to at most~$2$ vertices in $P'$. 

        We now claim that $C$ is adjacent to at least~$2$ vertices in $P'$. Recall that $G$ was $2$-connected. It follows that $C$ is adjacent to two distinct vertices in $P$ and if $C$ has size at least two then $P$ is adjacent to two distinct vertices in $C$. By definition of $G'$, $C$ is not adjacent to any vertices in $P - V(P')$. That is $C$ is adjacent to at least~$2$ vertices in $P'$. It follows that $C$ is adjacent to exactly two vertices in $P'$, call these $p_i$ and $p_j$. Further, if $C$ has size at least~$2$, then there exists distinct vertices $x$, $y$ such that $x$ is adjacent to $p_i$ and $y$ is adjacent to $p_j$.

        \begin{figure}
            \centering
            \includegraphics[width=0.4\linewidth, page=12]{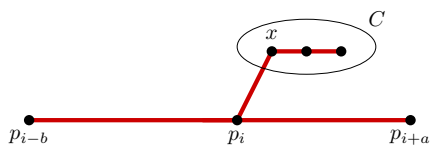}
            \caption{An occurrence of $S_{3,a,b}$ in the proof of Claim~\ref{claim-component-adjacencies} of Lemma~\ref{l-s3ab}.}
            \label{fig:s3ab-component-adjacencies}
        \end{figure}

        We now claim that if $C$ has size at least~$3$, then $G$ contains $S_{3,a,b}$. Suppose $C$ has size at least~$3$. Consider those vertices $x$, $y$ such that $x$ is adjacent to $p_i$ and $y$ is adjacent to $p_j$. If $x$ is the end of some path of length $2$ in $C$, then we find some $S_{3,a,b}$ as shown in Figure~\ref{fig:s3ab-component-adjacencies}. Note, that in the case depicted $i \geq b+1$. If $i \leq b$, then as $\ell \geq 3b$ we find that $i \leq \ell-b$ and so there is a symmetric $S_{3,a,b}$ found. 
        
        We now claim that either $x$ or $y$ is the end of some path of length $2$ in $C$. As $C$ is a connected component, there is a path from $x$ to $y$ in $C$. If this path has length at least~$2$, then we find that both $x$ and $y$ are the end vertex of a path of length $2$ in $C$. That is, we now assume that this path consists of the single edge $xy$. By assumption, $C$ has size at least~$3$ meaning there is some vertex $y' \in V(C)$ such that $y' \neq x \neq y$. Without loss, $y'$ is adjacent to $y$. It follows that $x$ is the end vertex of the path $(x,y,y')$ of length $2$ in $C$ and so $G$ contains $S_{3,a,b}$, thus concluding the proof of this claim.
    \end{claimproof}

    We are now ready to show that $S$ is indeed a stemset. Recall that by construction of $S$ every component in $G' - S$ is adjacent to exactly two vertices in $S$. From this we define the set $\mathcal{L}_i$ for every $i \in \{1, \ldots, |S|-1\}$. For every $L \in \mathcal{L}_i$, $L$ is a set of vertices such that $L = V(C) \cup \{s_i, s_{i+1}\}$ where $C$ is some connected component in $G'-S$ such that $N(V(C)) = \{s_i, s_{i+1}\}$. We let $V_i$ denote the vertices of these components, i.e., $V_i = \bigcup_{L \in \mathcal{L}_i} L$. 

    Note that if $|L| = 1$, then the graph $G'[L] \cup \{s_i,s_{i+1}\}$ has treedepth at most~$2$ and so $L$ is a juicy wedge. If $|L| = 2$, then by definition $L$ is a seeded wedge. It follows that if every $L \in \mathcal{L}_i$ has size at most~$4$, then $\mathcal{L}_i$ is a lemon.

    \begin{figure}
        \centering
        \includegraphics[width=\linewidth, page=16]{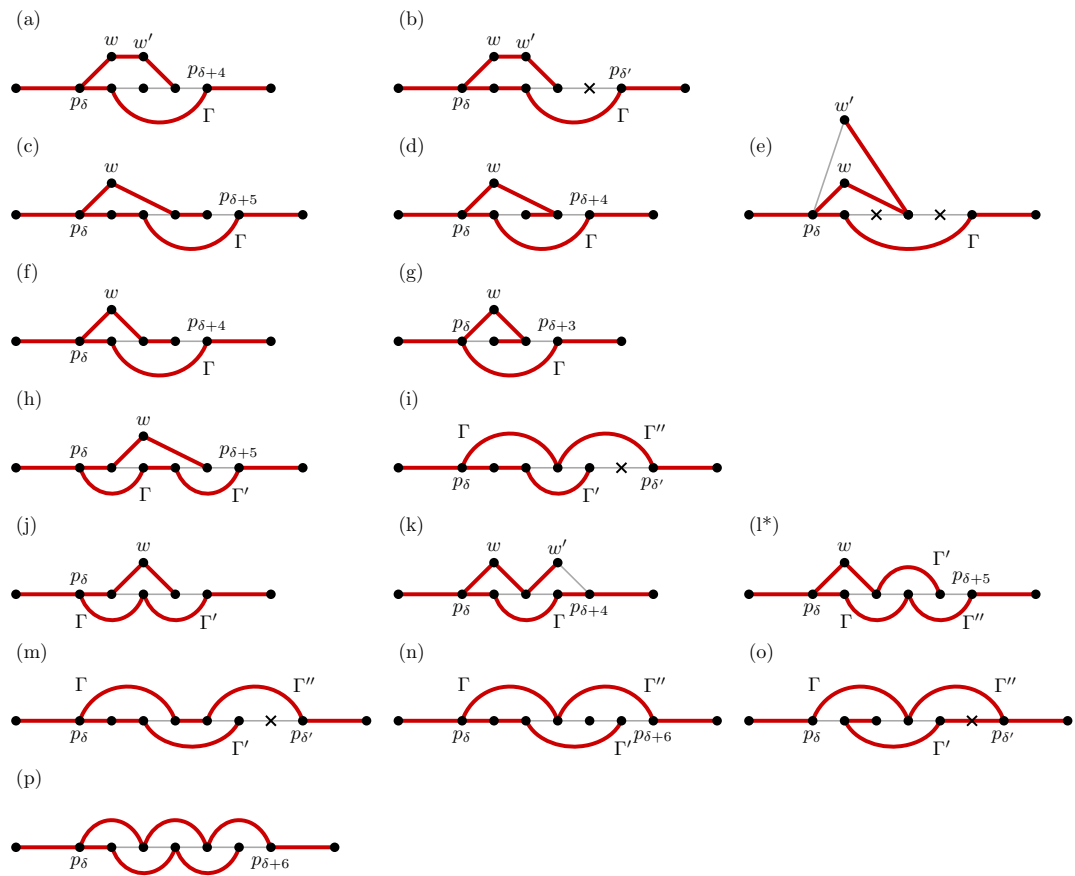}
        \caption{Occurrences of $S_{3,a,b}$ in the proof of  Claim~\ref{clm-G'-citrus} of Lemma~\ref{l-s3ab}.}
        \label{fig-S334-analysis}
    \end{figure}

    \begin{claim}\label{clm-G'-citrus}
        Either $G$ contains $S_{3,a,b}$ or, for every $i \in \{1, \ldots, |S|-1\}$, $\mathcal{L}_i$ is a citrus with ends $\{s_i, s_{i+1}\}$ such that either $\mathcal{L}_i$ is a lemon or $|V_i| \leq 6$.
    \end{claim}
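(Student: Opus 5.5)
Fix $i \in \{1,\ldots,|S|-1\}$ and assume $G$ is $S_{3,a,b}$-subgraph-free. The plan has two parts: first show that $\mathcal{L}_i$ is a citrus with ends $s_i,s_{i+1}$, then show that its wedges are small unless the whole vesicle set is small.

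\textbf{Citrus structure.} Every component $C$ of $G'-S$ with $N(C)=\{s_i,s_{i+1}\}$ yields $L=V(C)\cup\{s_i,s_{i+1}\}$. By construction $G[L\setminus\{s_i,s_{i+1}\}]$ is connected and $N(L\setminus\{s_i,s_{i+1}\})=\{s_i,s_{i+1}\}$, so $L$ is a wedge. The citrus condition ($\mathcal{L}_i\neq\emptyset$ or $s_is_{i+1}\in E$) follows from 2-connectivity: the subpath of $P'$ strictly between $s_i$ and $s_{i+1}$ lies in one such component, unless $s_i s_{i+1}$ is itself an edge of $P'$.

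\textbf{Constraining the interior of $P'$.} Let $s_i=p_j$ and $s_{i+1}=p_k$. No vertex of $P'$ strictly between them separates $P'$, so there are jumps (via edges or components of $G'-V(P')$) that bypass every interior vertex. By Claim~\ref{clm-far-disjoint-path}, every jump between vertices of $P'$ spans distance at most $3$ along $P'$. By Claim~\ref{claim-component-adjacencies}, every off-path component has at most $2$ vertices and exactly $2$ neighbours, which are consequently at distance $2$ or $3$. I then bound $k-j$. If $k-j\ge 5$ (or $4$), the covering jumps must overlap in a chain such as $p_j\!\to\!p_{j+2}$, $p_{j+1}\!\to\!p_{j+3}$, and so on. In each such configuration I will exhibit an $S_{3,a,b}$ centred at a middle vertex: one leg of length $3$ routes through the overlapping jumps, and the legs of lengths $a$ and $b$ run along $P$ towards its two ends. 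This mirrors the forbidden-jump analysis of Lemma~\ref{l-s2ab}, and uses that the centre is at least $a$ from one end and $b$ from the other. The same argument bounds the number of jumps of length $2$ or $3$ inside a single segment.

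\textbf{Lemon or size at most $6$.} Call a wedge of $\mathcal{L}_i$ \emph{large} if $|L|\ge 5$. I aim to show that if some $L\in\mathcal{L}_i$ is large, then $V_i$ consists of $L$ together with nothing else, and $|V_i|\le 6$. A large $L$ must contain part of $P'$ between $p_j$ and $p_k$ together with attached off-path vertices or chords. Using $k-j\le 4$ and components of size at most $2$, $L$ contains at most $3$ interior path vertices plus a few extra vertices. Any extra vertex beyond $4$ interior vertices, or any second wedge with the same ends, either extends $P$ (contradicting that $P$ is longest, via Lemma~\ref{l-adjacencytopath}) or creates a third leg of length $3$ at $p_j$ or $p_k$, giving $S_{3,a,b}$. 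If no wedge is large, then every wedge has size $3$ or $4$, and $\mathcal{L}_i$ is a lemon, since size-$3$ wedges are juicy and size-$4$ wedges are seeded.

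\textbf{Main obstacle.} The hard step is the exhaustive case analysis that bounds $k-j$ and the shape of a large wedge. It involves several overlapping configurations of chords and $2$-vertex components, each needing an explicit embedding of $S_{3,a,b}$ or a longer path. I would first reduce to few canonical configurations using the jump-length bound and the longest-path property, then handle them one by one with figures, as in Figure~\ref{fig-S334-analysis}.
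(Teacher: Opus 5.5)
Your reduction is fine: wedges with three or four vertices are juicy or seeded, so it suffices to show that $|V_i|\le 6$ whenever some wedge of $\mathcal{L}_i$ has at least five vertices. But everything after that is a promise of a case analysis rather than an argument, and the one quantitative step you commit to is false. You claim that $k-j\ge 5$ produces an $S_{3,a,b}$, and you then count with $k-j\le 4$.

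Here is a counterexample. Take a long cycle $c_0c_1\cdots c_{N-1}$ plus the crossing chords $c_0c_3$ and $c_2c_5$, and let $P$ be a Hamiltonian path with $c_0,\dots,c_5$ in its middle. This graph is $2$-connected and even $S_{3,3,3}$-free:
at $c_0$, a leg of length $3$ starting at $c_1$ must be $c_1c_2c_5$, which leaves the leg starting at $c_3$ stuck at $c_3c_4$;
at $c_3$, the legs starting at $c_2$ and at $c_4$ both need $c_5$, since the leg from $c_2$ cannot continue via $c_1$ to $c_0$;
$c_5$ and $c_2$ are symmetric to these.
Yet $c_1,\dots,c_4$ do not separate $P'$ while $c_0$ and $c_5$ do. So $s_i=c_0$, $s_{i+1}=c_5$, $k-j=5$, and $V_i=\{c_0,\dots,c_5\}$ is a single wedge that is not juicy (after adding $c_0c_5$ it has a $K_4$-minor).

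So the constant $6$ in the claim is tight, and the correct threshold is $k\ge j+6$. Proving even that requires following the bypass edges of $p_{j+1},\dots,p_{j+5}$ one at a time, which is the paper's final case. The analogy with Lemma~\ref{l-s2ab} misleads you exactly here. There, the leg of length $2$ forbids every jump of distance $3$. A leg of length $3$ does not, and the surviving distance-$3$ jumps are what create the non-lemon citruses on five or six vertices.

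The off-path part is likewise only asserted. You say an extra vertex or a second wedge ``either extends $P$ or creates a third leg of length $3$ at $p_j$ or $p_k$''. This presupposes that you already know which vertices are $s_i$ and $s_{i+1}$. Determining that is the actual work: whether $p_j$, $p_{j+3}$ or $p_{j+4}$ separates $P'$ depends on which bypass paths exist.

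The paper handles this by splitting on the type of off-path vertex in $V_i$:
\begin{itemize}
\item a $2$-vertex component or a pair of twins, which pins the ends to its neighbourhood and forces every wedge to have at most four vertices (this is how unboundedly many twins on $\{p_j,p_{j+3}\}$ are reconciled with the claim, via the lemon alternative);
\item a vertex with neighbourhood $\{p_j,p_{j+3}\}$;
\item a vertex with neighbourhood $\{p_j,p_{j+2}\}$;
\item no off-path vertex at all.
\end{itemize}
In each case it takes, for every non-separating vertex, the bypass path with the furthest right end. It bounds the span of that path by $3$ using Claim~\ref{clm-far-disjoint-path}, and then excludes the remaining configurations one by one with explicit $S_{3,a,b}$ embeddings.

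Until you supply an argument of this kind, in whatever organisation, the proposal does not establish the claim.
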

    \begin{claimproof}
        Where $G$ contains $S_{3,a,b}$, we will often highlight to the reader which of those cases from Figure~\ref{fig-S334-analysis} we are in. 
        
        We consider it useful to explain how Fig. 17 presents different cases. We shall use 17b as an illustrative example. Here, the cross denotes a position where a vertex may or may not appear without affecting the existence of our forbidden subgraph. That is, Fig. 17b is actually representing two cases, one in which there is a vertex between $p_{\delta+3}$ and $p_{\delta'}$ and another in which there is no such vertex and $p_{\delta+3}$ and $p_{\delta'}$ are adjacent.  
        
        To ensure that we can find that $S_{3,a,b}$ illustrated, when applying these cases we ensure that $\delta, \delta' \in \{a+1, \ldots, \ell-a\}$. This implies that in $G$ both the paths $(p_1,\ldots,p_{\delta})$ and $(p_{\delta'},\ldots,p_{\ell})$ have length at least $a$. Further, combining the fact that $\ell \geq 3b$ with the fact that in each case $\delta' \leq \delta+6$, at least one of the paths $(p_1,\ldots,p_{\delta})$ and $(p_{\delta'},\ldots,p_{\ell})$ have length at least $b$. We further ensure that $\Gamma$ contains only the vertices $p_{\delta}$, $p_{\delta'}$ from $P'$ and $w, w' \notin V(P')$. The other diagrams in Figure~\ref{fig-S334-analysis} can be interpreted similarly.

        We will frequently use that, by definition, $s_i$ and $s_{i+1}$ separate $P'$. Recall that a vertex $v$ separates $P'$ if either $v \in \{p_{a+1}, p_{\ell-a}\}$ or $P'$ is disconnected in $G'- \{v\}$. Further, by definition of $S$, no vertex on the subpath of $P'$ between $s_i$ and $s_{i+1}$ separates $P'$. 

        Due to this, we use the following observation repeatedly. If some vertex $p_r$ does not separate $P'$, then there exist indices $r'$, $r''$ such that $r'< r < r''$ and a path $Q$ from $p_{r'}$ to $p_{r''}$ such that $V(Q) \cap V(P') = \{p_{r'},p_{r''}\}$. We call such a path a bypass path for $p_r$ and we say that it has ends $p_{r'}$ and $p_{r''}$. In all cases we consider that bypass path that maximises $r''$. If $r''-r' \geq 4$, then by Claim~\ref{clm-far-disjoint-path}, $G$ contains $S_{3,a,b}$, that is, we assume $r''-r' \leq 3$ and so $r'' \leq r+3$.

        We are now ready to prove our claim. Consider some $i \in \{1, \ldots, |S|-1\}$, we distinguish between the following cases depending on the components of $G'[V_i \setminus V(P')]$. Note in each case we assume that we are not contained in any previous case.

        \medskip
        
        \noindent \textbf{Case $1$:} \emph{$G'[V_i \setminus V(P')]$ contains some component $C$ of size at least~$2$.}
        \medskip
    \noindent
    In this case we will show that either $G$ contains some $S_{3,a,b}$ or $\mathcal{L}_i$ is a lemon with ends $p_j$ and $p_{j+3}$.
                
    By applying Claim~\ref{claim-component-adjacencies} together with Lemma~\ref{l-adjacencytopath}, we conclude that either $G$ contains $S_{3,a,b}$, or the following holds: the component $C$ consists of exactly two vertices, call these ${x,y}$; the neighbourhood of $C$ has size $2$; and there is some $j \in \{a+1,\ldots,\ell-(a+3)\}$ such that $x$ is adjacent to $p_j$ and $y$ is adjacent to $p_{j+3}$.
    
    We are now ready to show that either $G$ contains some $S_{3,a,b}$ or $\mathcal{L}_i$ is a lemon with ends $p_j$ and $p_{j+3}$. Suppose $G$ is $S_{3,a,b}$-subgraph-free. We first show that $\mathcal{L}_i$ is a citrus with ends $p_j$, $p_{j+3}$. Towards this, we claim that both $p_j$ and $p_{j+3}$ separate $P'$. 
        
    Suppose for contradiction that $p_{j+3}$ does not separate $P'$. That is, there is some bypass path for $p_{j+3}$. By definition, this either has end vertices $(p_{j+1}, p_{j+4})$, $(p_{j+2}, p_{j+5})$ or $(p_{j+2}, p_{j+4})$. As $N(C) = \{p_j,p_{j+3}\}$, this bypass path does not contain either $x$ or $y$. That is, if it has ends $(p_{j+1}, p_{j+4})$ then we find that $S_{3,a,b}$ depicted in Figure~\ref{fig-S334-analysis}\,(a) and if it has either the ends $(p_{j+2}, p_{j+5})$ or $(p_{j+2}, p_{j+4})$, then we find that $S_{3,a,b}$ depicted in Figure~\ref{fig-S334-analysis}\,(b). This contradicts the fact that $G$ is $S_{3,a,b}$-subgraph-free and so $p_{j+3}$ separates $P'$. Symmetrically the same holds for $p_{j}$ and so $p_{j}$ separates $P'$. Note that, as there is a path from $p_{j}$ to $p_{j+3}$ via $C$, neither $p_{j+1}$ or $p_{j+2}$ separate $P'$. It follows that $\mathcal{L}_i$ is a citrus with ends $p_j$, $p_{j+3}$, i.e., $p_j = s_i$ and $p_{j+3} = s_{i+1}$.

    We now claim that $\mathcal{L}_i$ is a lemon. We do this by showing that every wedge in $\mathcal{L}_i$ has size at most~$4$. Let us consider some $L \in \mathcal{L}_i$. By definition of $\mathcal{L}_i$, $L = V(D) \cup \{p_j, p_{j+3}\}$ where $D$ is some connected component in $G' -S$ such that $N(V(D)) = \{p_j, p_{j+3}\}$. If $D$ does not contain any vertex from $P'$, then $D$ is also a connected component in $G' - V(P')$. By Claim~\ref{claim-component-adjacencies}, it follows that $D$ has size at most~$2$ and so $L$ has size at most~$4$ as we sought to show.

    We now consider the case where $D$ contains some vertex from $P'$. As $D$ is some connected component in $G' -\{p_j, p_{j+3}\}$ and $p_j$ and $p_{j+3}$ separate $P'$, it follows that $V(D) \cap V(P') = \{p_{j+1}, p_{j+2}\}$. Note that, this also implies that either $L$ has size at most~$4$, as desired, or $D$ contains some vertex $v \notin V(P')$. We claim, if $D$ contains such a vertex $v$, then $G$ contains some $S_{3,a,b}$. As $G$ is $S_{3,a,b}$-subgraph free, this will allow us to conclude that $L$ has size at most $4$.
            
    Suppose that $D$ contains some vertex $v \notin V(P')$. As $D$ is a connected component and $V(D) \cap V(P') = \{p_{j+1}, p_{j+2}\}$, $v$ is adjacent to at least one of $p_{j+1}$ and $p_{j+2}$. It then follows from Claim~\ref{claim-component-adjacencies} and Lemma~\ref{l-adjacencytopath} that either $N(v) = \{p_j, p_{j+2}\}$ or $N(v) = \{p_{j+1}, p_{j+3}\}$. In both cases we find that $S_{3,a,b}$ depicted in Figure~\ref{fig-S334-analysis}\,(g), where $w=v$ and $\Gamma$ is that path via $C$. As $G$ is $S_{3,a,b}$-subgraph free, it follows that $D$ does not contain any such vertex $v$. Hence, if $D$ contains some vertex from $P'$, then $V(D) = \{p_{j+1}, p_{j+2}\}$ and so $L$ has size at most $4$, as we sought to show. Recall from the previous paragraph, if $D$ does not contain some vertex from $P'$, then $L$ has size at most $4$.    
    That is, either $G$ contains $S_{3,a,b}$ or every wedge $L \in \mathcal{L}_i$ has size at most~$4$, and so $\mathcal{L}_i$ is a lemon with ends $s_i$ and $s_{i+1}$ as we sought to prove.
        
    \medskip
    
    \noindent \textbf{Case $2$:} There exist distinct vertices $x,y \in V_i \setminus V(P')$ such that $N(x) = N(y)$.
    
    \medskip
    \noindent
    As we are not in Case~$1$, both $x$ and $y$ are isolated in $G'-V(P')$. Combining this with Claim~\ref{claim-component-adjacencies} and Lemma~\ref{l-adjacencytopath}, either $N(x) = N(y) = \{p_{j},p_{j+2}\}$, for some $j \in \{a+1,\ldots,\ell-(a+2)\}$, or $N(x) = N(y) = \{p_{j},p_{j+3}\}$, for some $j \in \{a+1,\ldots,\ell-(a+3)\}$. In this case we will show that either $G$ contains some $S_{3,a,b}$ or $\mathcal{L}_i$ is a lemon whose ends correspond to the vertices of $N(x)$.
    
    Suppose that $G$ is $S_{3,a,b}$-subgraph-free and let $j'$ be that index such that $N(x) = N(y) = \{p_{j},p_{j'}\}$. We will now show that $\mathcal{L}_i$ is a lemon with ends $p_{j}$ and $p_{j'}$. Towards this, we claim that both $p_j$ and $p_{j'}$ separate $P'$.

    Suppose, for the sake of contradiction, $p_{j'}$ does not separate $P'$. It follows that there is some bypass path for $p_{j'}$ that either has end vertices $(p_{j'-1},p_{j'+2})$, $(p_{j'-2},p_{j'+1})$ or $(p_{j'-1},p_{j'+1})$. We will now show that in each of these cases we find some $S_{3,a,b}$, thus contradicting that $S_{3,a,b}$-subgraph-free. 

    First suppose that $j' = j+3$. If the bypass path for $p_{j+3}$ has end vertices $(p_{j+2},p_{j+5})$ then we find that $S_{3,a,b}$ depicted in Figure~\ref{fig-S334-analysis}\,(c), if it has end vertices $(p_{j+1},p_{j+4})$ then we find that $S_{3,a,b}$ depicted in Figure~\ref{fig-S334-analysis}\,(d), otherwise, it must have end vertices $(p_{j+2},p_{j+4})$ in which case we find that $S_{3,a,b}$ depicted in Figure~\ref{fig-S334-analysis}\,(e). That is, if $j' = j+3$ and there is some bypass path for $p_{j+3}$, then $G$ contains $S_{3,a,b}$, a contradiction. It follows that if $j' = j+3$, then $p_{j'}$ and symmetrically $p_j$ separate $P'$. 

    This leaves the case where $j' = j+2$. If the bypass path for $p_{j+2}$ has end vertices $(p_{j+1},p_{j+4})$, then we find that $S_{3,a,b}$ depicted in Figure~\ref{fig-S334-analysis}\,(f), if it has end vertices $(p_{j},p_{j+3})$, then we find that $S_{3,a,b}$ depicted in Figure~\ref{fig-S334-analysis}\,(g),  otherwise, it must have end vertices $(p_{j+1},p_{j+3})$ in which case we find that $S_{3,a,b}$ depicted in Figure~\ref{fig-S334-analysis}\,(e). That is, if $j' = j+2$ and there is some bypass path for $p_{j+2}$, then $G$ contains $S_{3,a,b}$, a contradiction. It follows that if $j' = j+2$, then $p_{j'}$ and symmetrically $p_j$ separate $P'$.

    Combining the above two paragraphs, both $p_j$ and $p_{j'}$ separate $P'$. It follows that $\mathcal{L}_i$ is a citrus with ends $p_j$ and $p_{j'}$. We are now ready to show that $\mathcal{L}_i$ is indeed a $\mathcal{L}_i$ is a lemon. We do this by showing that every wedge in $\mathcal{L}_i$ has size at most~$4$. Recall that every $L \in \mathcal{L}_i$ is in the form $V(D) \cup \{p_j, p_{j'}\}$ where $D$ is some connected component of $G'-S$ which is adjacent to $p_j$ and $p_{j'}$. 

    First suppose that $j' = j+2$. If $D$ contains some vertex $v \notin V(P')$, then as we are not in Case~$1$, $v$ is isolated in $G'-V(P')$. Combining Lemma~\ref{l-adjacencytopath} with the fact that $p_j$ and $p_{j+2}$ separate $P'$, we find that $N(v) = \{p_j, p_{j+2}\}$. As $v$ is isolated in $G' - \{p_j, p_{j+2}\}$, it follows that $V(D) = \{v\}$ and so $L = \{v, p_j, p_{j+2}\}$. It follows, if $D$ contains some vertex $v \notin V(P')$, then $L$ has size $3$. Else, if $D$ contains only vertices from $P'$, then as $p_j$ and $p_{j+2}$ both separate $P'$, we find that $V(D) = \{p_{j+1}\}$ and so $L = \{p_j, p_{j+1}, p_{j+2}\}$. In each case $L$ has size $3$ meaning if $j' = j+2$ then $\mathcal{L}_i$ is a lemon. 

    This leaves the case where $j' = j+3$. If $D$ contains no vertex from $P'$, then $D$ is a connected component of $G'- V(P')$. As we are not in Case~$1$, this implies that $D$ has size~$1$ and so $L$ has size~$3$, as desired. That is, we now assume that $V(D) \cap V(P') \neq \emptyset$. As $p_j$ and $p_{j+3}$ both separate $P'$, $V(D) \cap V(P') = \{p_{j+1}, p_{j+2}\}$. It follows that either $L$ has size~$4$, as desired, or $D$ contains some vertex $v \notin V(P')$. We now show that if $D$ contains such a vertex $v$, then $G$ contains $S_{3,a,b}$. This contradicts that $G$ is $S_{3,a,b}$-subgraph free and so allows us to conclude that $L$ has size at most~$4$.
    
    As $v$ is contained in the same connected component as the vertices $p_{j+1}$ and $p_{j+2}$ in $G' - S$, $v$ must be adjacent to at least one of $p_{j+1}$ and $p_{j+2}$. Combining this with Claim~\ref{claim-component-adjacencies} and Lemma~\ref{l-adjacencytopath}, we find that either $N(v) = \{p_j,p_{j+2}\}$ or  $N(v) = \{p_{j+1},p_{j+3}\}$. In both cases we find that $S_{3,a,b}$ depicted in Figure~\ref{fig-S334-analysis}\,(g), where $w=v$ and $\Gamma$ is that path via $x$ (or $y$). As $G$ is $S_{3,a,b}$-subgraph free, $D$ contains no such vertex $v$ and so $L$ has size at most $4$, as claimed. This concludes the case where $j' = j+3$ and so also Case~$2$.

    That is, either $G$ contains $S_{3,a,b}$ or every wedge $L \in \mathcal{L}_i$ has size at most~$4$. It follows that $\mathcal{L}_i$ is a lemon with ends $p_j$ and $p_{j'}$ as we sought to prove.
    
\medskip

\noindent \textbf{Case $3$:} There exists some vertex $x \in V_i \setminus V(P')$ such that for some $j \in \{a+1, \ldots, \ell-(a+3)\}$, $N(x) = \{p_j, p_{j+3}\}$.

\medskip
\noindent
In this case, we will show that either $G$ contains some $S_{3,a,b}$ or $V_i$ has size at most $6$. Suppose that $G$ is $S_{3,a,b}$-subgraph-free.

We first claim that if both $p_j$ and $p_{j+3}$ separate $P'$, then $V_i= \{x, p_j, p_{j+1}, p_{j+2}, p_{j+3}\}$. Suppose, for contradiction, there is some vertex $y \in  V_i \setminus \{x, p_j, p_{j+1}, p_{j+2}, p_{j+3}\}$. As both $p_j$ and $p_{j+3}$ separate $P'$, $y \notin V(P')$. As we are not in Case~$1$, $y$ is isolated in $G'- V(P')$. Combining this with Claim~\ref{claim-component-adjacencies} and Lemma~\ref{l-adjacencytopath}, either $N(y)= \{p_j, p_{j+3}\}$, $N(y)= \{p_j, p_{j+2}\}$ or $N(y)= \{p_{j+1}, p_{j+3}\}$. As we are not in Case~$2$, $N(y)\neq \{p_j, p_{j+3}\}$. If either $N(y)= \{p_j, p_{j+2}\}$ or $N(y)= \{p_{j+1}, p_{j+3}\}$ then we find that $S_{3,a,b}$ depicted in Figure~\ref{fig-S334-analysis}\,(g), where $w=y$ and $\Gamma$ is that path via $x$. This contradicts that $G$ is $S_{3,a,b}$-subgraph-free, hence, $V_i$ contains no such vertex $y$ and so if both $p_j$ and $p_{j+3}$ separate $P'$, then $V_i= \{x, p_j, p_{j+1}, p_{j+2}, p_{j+3}\}$. As $V_i$ has size $5$ our claim holds in this case.

We now assume that at least one of $p_j$ and $p_{j+3}$ do not separate $P'$. Suppose that $p_{j+3}$ does not separate $P'$, that is, there is some bypass path for $p_{j+3}$. We claim that this bypass path consists of the single edge $p_{j+2}p_{j+4}$. The bypass path for $p_{j+3}$ must either have end vertices $(p_{j+2}, p_{j+5})$, $(p_{j+1}, p_{j+4})$ or $(p_{j+2}, p_{j+4})$. In the first case we find that $S_{3,a,b}$ depicted in Figure~\ref{fig-S334-analysis}\,(c), in the second case we find that $S_{3,a,b}$ depicted in Figure~\ref{fig-S334-analysis}\,(d). In both cases this contradicts that $G$ is $S_{3,a,b}$-subgraph free, hence, the bypass path for $p_{j+3}$ must have end vertices $(p_{j+2}, p_{j+4})$. If this path has length at least $2$, then by Claim~\ref{claim-component-adjacencies}, this path has length exactly $2$ and so consists of a single intermediate vertex $y \notin V(P')$ which is adjacent to $p_{j+2}$ and $p_{j+4}$. We then find that $S_{3,a,b}$ depicted in Figure~\ref{fig-S334-analysis}\,(f), where $w =y$ and $\Gamma$ is that path via $x$. As $G$ is $S_{3,a,b}$-subgraph free, the bypass path for $p_{j+3}$ consists of the single edge $p_{j+2}p_{j+4}$.

We now claim that both $p_j$ and $p_{j+4}$ separate $P'$. Suppose that $p_j$ does not separate $P'$. By symmetry with $p_{j+3}$, the bypass path for $p_j$ must consist of the single edge $p_{j-1}p_{j+1}$. We now find that $S_{3,a,b}$ depicted in Figure~\ref{fig-S334-analysis}\,(h), a contradiction. Hence, $p_{j}$ separates $P'$. Suppose now that $p_{j+4}$ does not separate $P'$, that is, there is some bypass path for $p_{j+4}$. Note that, as we showed there was no bypass path for $p_{j+3}$ with end vertices $(p_{j+2}, p_{j+4})$ there also cannot be any bypass path for $p_{j+4}$ with end vertices $(p_{j+2}, p_{j+4})$. It follows that the bypass path for $p_{j+4}$ must either have end vertices $(p_{j+3}, p_{j+6})$ or $(p_{j+3}, p_{j+5})$. In both cases we find that $S_{3,a,b}$ depicted in Figure~\ref{fig-S334-analysis}\,(i), where $\delta = j$ and the path $\Gamma$ corresponds to that path via the vertex $x$. Again this contradicts that $G$ is $S_{3,a,b}$-subgraph free. It follows that $p_{j+4}$ separates $P'$ and so both $p_j$ and $p_{j+4}$ separate $P'$, as claimed.

We now claim that $V_i= \{x, p_j, p_{j+1}, p_{j+2}, p_{j+3}, p_{j+4}\}$. Suppose, for contradiction, there is some vertex $y \in  V_i \setminus \{x, p_j, p_{j+1}, p_{j+2}, p_{j+3}, p_{j+4}\}$. Recall that there is no bypass path for $p_{j+3}$, of length at least~$2$. If $y$ were adjacent to $p_{j+4}$ alongside some vertex in $p_j, p_{j+1}, p_{j+2}, p_{j+3}$, then this would result in such a bypass path. It follows that $N(y) \subseteq \{p_j, p_{j+1}, p_{j+2}, p_{j+3}\}$. Combining this with Claim~\ref{claim-component-adjacencies} and Lemma~\ref{l-adjacencytopath}, either $N(y)= \{p_j, p_{j+3}\}$, $N(y)= \{p_j, p_{j+2}\}$ or $N(y)= \{p_{j+1}, p_{j+3}\}$. As we are not in Case~$2$, $N(y)\neq \{p_j, p_{j+3}\}$. If either $N(y)= \{p_j, p_{j+2}\}$ or $N(y)= \{p_{j+1}, p_{j+3}\}$ then we find that $S_{3,a,b}$ depicted in Figure~\ref{fig-S334-analysis}\,(g), where $w=y$, a contradiction. It follows that there is no such vertex $y$ and so $V_i= \{x, p_j, p_{j+1}, p_{j+2}, p_{j+3}, p_{j+4}\}$. As again $V_i$ has size at most $6$, this concludes Case~$3$. 
\medskip
        
\noindent \textbf{Case $4$:} There exists some vertex $x \in V_i \setminus V(P')$ such that for some $j \in \{a+1, \ldots, \ell-(a+2)\}$, $N(x) = \{p_j, p_{j+2}\}$.

\medskip\noindent
    In this case, we will again show that either $G$ contains some $S_{3,a,b}$ or $|V_i| \leq 6$.

    Combining Claim~\ref{claim-component-adjacencies} and Lemma~\ref{l-adjacencytopath} with the fact that we are not in either Case~$1$ or Case~$3$, for every vertex $v \in V_i \setminus V(P')$, there is some  $j' \in \{a+1, \ldots, \ell-(a+2)\}$ such that $N(v) =\{p_{j'},p_{j'+2}\}$. As we are not in Case~$2$, for any vertex $v \neq x$ in $V_i \setminus V(P')$, we find that $N(v) \neq N(x)$. We now claim that either $G$ contains some $S_{3,a,b}$ or $|V_i| \leq 6$. Suppose that $G$ is $S_{3,a,b}$-subgraph free.

    If both $p_j$ and $p_{j+2}$ separate $P'$, then from Claim~\ref{claim-component-adjacencies} and Lemma~\ref{l-adjacencytopath}, every vertex in $V_i \setminus V(P')$ must be adjacent to $p_j$ and $p_{j+2}$. As we are not in Case~$2$, it follows that $V_i = \{x,p_j, p_{j+1}, p_{j+2}\}$. That is, if both $p_j$ and $p_{j+2}$ separate $P'$, then our claim holds. Let us now assume that at least one of $p_j$ does not $p_{j+2}$ separate $P'$, without loss of generality this is $p_{j+2}$.

    We now claim that the bypass path for $p_{j+2}$ has ends $(p_{j+1}, p_{j+3})$. If the bypass path for $p_{j+2}$ has end vertices $(p_{j+1}, p_{j+4})$, then we find that $S_{3,a,b}$ depicted in Figure~\ref{fig-S334-analysis}\,(f), and if it has end vertices $(p_j, p_{j+3})$ then we find that $S_{3,a,b}$ depicted in Figure~\ref{fig-S334-analysis}\,(g). In both cases this contradicts that $G$ is $S_{3,a,b}$-subgraph free. It follows that the bypass path for $p_{j+2}$ must have ends $(p_{j+1}, p_{j+3})$. 

    We now claim that if $p_{j+3}$ separates $P'$, then $V_i$ has size at most $6$. We first show that $p_j$ separates $P'$. Note that by symmetry with $p_{j+2}$, any bypass path for $p_{j+2}$ must have ends $(p_{j-1}, p_{j+1})$. However, if such a bypass path exists, then we find that $S_{3,a,b}$ depicted in Figure~\ref{fig-S334-analysis}\,(j), a contradiction. Hence, $p_j$ separates $P'$. If $p_{j+3}$ separates $P'$, then combining Claim~\ref{claim-component-adjacencies} and Lemma~\ref{l-adjacencytopath} with the fact that we are not in Cases~$1$ or $3$, for every vertex in $y \in V_i \setminus V(P')$, either $N(y)=\{p_{j+1}, p_{j+3}\}$ or $N(y)=\{p_{j+1}, p_{j+3}\}$. As we are not in Case~$2$, it follows there are at most $2$ vertices in $V_i \setminus V(P')$ (as each has a distinct neighbourhood). It follows that $V_i$ has size at most $6$ and our claim holds. That is, we now assume that $p_{j+3}$ does not separate $P'$.

    Let us consider the bypass path for $p_{j+3}$. Recall that, the bypass path for $p_{j+2}$ (and so also $p_{j+3}$) cannot have ends $(p_{j+1}, p_{j+4})$. That is, the bypass path for $p_{j+3}$ must either have ends $(p_{j+2}, p_{j+5})$ or $(p_{j+2}, p_{j+4})$. In the first case we find a $S_{3,a,b}$, symmetrical to that depicted in Figure~\ref{fig-S334-analysis}\,(i). Here $\delta = j+5$, $\delta' = j$ and the path $\Gamma''$ this bypass path corresponds to that path from $p_{j}$ to $p_{j+2}$ via $x$. The $\Gamma$ and $\Gamma'$ then correspond to those bypass paths for $p_{j+3}$ and $p_{j+2}$ respectively. As $G$ is $S_{3,a,b}$-subgraph free, the bypass path for $p_{j+3}$ has ends $(p_{j+2}, p_{j+4})$. 

    We now claim that $p_{j+4}$ separates $P'$. Consider some bypass path for $p_{j+4}$. Given the bypass path for $p_{j+3}$ (and so also $p_{j+4}$) does not have ends $(p_{j+2}, p_{j+5})$, the bypass path for $p_{j+4}$ must either have ends $(p_{j+3}, p_{j+6})$ or $(p_{j+3}, p_{j+5})$. If this is $(p_{j+3}, p_{j+6})$, then we again find a $S_{3,a,b}$ symmetric to that depicted in Figure~\ref{fig-S334-analysis}\,(i), this time $\delta = j+6$ $\delta' = j+1$. If the bypass path for $p_{j+4}$ has ends $(p_{j+3}, p_{j+5})$, then we find that $S_{3,a,b}$ depicted in Figure~\ref{fig-S334-analysis}\,(l). In both cases this contradicts that $G$ is $S_{3,a,b}$-subgraph free, that is, $p_{j+4}$ separates $P'$.

    We finally show that given both $p_{j}$ and $p_{j+4}$ separate $P'$, $V_i$ has size at most $6$. 
    
    As we are not in Cases~$1$, $2$ or $3$ for every vertex in $y \in V_i \setminus V(P')$, either $N(y)=\{p_{j}, p_{j+2}\}$, $N(y)=\{p_{j+1}, p_{j+3}\}$ or $N(y)=\{p_{j+2}, p_{j+4}\}$. If there is some $y \in V_i \setminus V(P')$ such that $N(y)=\{p_{j+1}, p_{j+3}\}$, then we find that $S_{3,a,b}$ depicted in Figure~\ref{fig-S334-analysis}\,(j), where $w= y$ and $\Gamma$ is that path via $x$. If there there is some $y \in V_i \setminus V(P')$ such that $N(y)=\{p_{j+2}, p_{j+4}\}$, then we find that $S_{3,a,b}$ depicted in Figure~\ref{fig-S334-analysis}\,(k), where $w = x$ and $w' = y$. Finally, as we are not in Case~$2$, there is no vertex $y \in V_i \setminus V(P')$ such that $y \neq x$ and $N(y)=\{p_{j}, p_{j+2}\}$. It follows that $V_i = \{x,p_j, p_{j+1}, p_{j+2}, p_{j+3}, p_{j+4}\}$. As, again, $V_i$ has size $6$, this concludes Case~$4$.

    \medskip
        
    \noindent\textbf{Case $5$:} Otherwise, $V_i \setminus V(P') = \emptyset$.

    \medskip\noindent
    We now claim that if we are in none of the above cases then either we find some $S_{3,a,b}$ or $|V_i| \leq 6$. 
        
    Let $p_j := s_i$ and $p_k := s_{i+1}$. That is, by definition both $p_j$ and $p_k$ separate $P'$. Combining Claim~\ref{claim-component-adjacencies} and Lemma~\ref{l-adjacencytopath} with the fact that we are not in Cases $3$ or $4$, we find that $V_i$ contains only vertices from $P'$. That is, $V_i = \{p_j, \ldots, p_k\}$. We now show that if $|V_i| \geq 7$, i.e. $k \geq j+6$, then $G$ contains some $S_{3,a,b}$.

    Suppose $k \geq j+6$. It follows that no vertex from  $p_{j+1}, p_{j+2}, p_{j+3}, p_{j+4}, p_{j+5}$ separates~$P'$, that is, each of these vertices have some bypass path. We now consider those bypass paths which exist as a result showing that in each case we find some $S_{3,a,b}$. As $V_i$ contains only vertices from $P'$, each of these paths correspond to an edge.

    We first consider the bypass path for $p_{j+1}$. As $p_j$ separates $P'$, the bypass path for $p_{j+1}$ corresponds either to the edge $p_{j}p_{j+3}$ or the edge $p_{j}p_{j+2}$. 

    We first suppose that this corresponds to the edge $p_{j}p_{j+3}$. Let us now consider the bypass path for $p_{j+3}$. This corresponds to either the edge $p_{j+2}p_{j+5}$, the edge $p_{j+1}p_{j+4}$, or the edge $p_{j+2}p_{j+4}$. Suppose this is $p_{j+2}p_{j+5}$, that is, we are under the assumption that $G'$ contains both the edge $p_{j}p_{j+3}$ and the edge $p_{j+2}p_{j+5}$. 
    
    We now claim that the bypass path for $p_{j+5}$ results in some $S_{3,a,b}$. The bypass path for $p_{j+5}$ must correspond to either the edge $p_{j+4}p_{j+7}$, $p_{j+3}p_{j+6}$ or $p_{j+4}p_{j+6}$. If this is either $p_{j+4}p_{j+7}$ or $p_{j+4}p_{j+6}$, then we find that $S_{3,a,b}$ depicted in Figure~\ref{fig-S334-analysis}\,(m). If this is $p_{j+3}p_{j+6}$, then we find that $S_{3,a,b}$ depicted in Figure~\ref{fig-S334-analysis}\,(n). It follows that if $G$ contains both the edges $p_{j}p_{j+3}$ and $p_{j+2}p_{j+5}$, then $G$ contains $S_{3,a,b}$, that is our claim holds in this case.

    We now assume that $G$ contains the edge $p_{j}p_{j+3}$ but not $p_{j+2}p_{j+5}$. It now follows that the bypass path for $p_{j+3}$ must either correspond to the edge $p_{j+1}p_{j+4}$ or $p_{j+2}p_{j+4}$. 
    
    Suppose that this is the edge $p_{j+1}p_{j+4}$. We now claim that the bypass path for $p_{j+4}$ results in some $S_{3,a,b}$. As per the previous paragraph, $G$ does not contain the $p_{j+2}p_{j+5}$, that is, bypass path for $p_{j+4}$ either corresponds to the edge $p_{j+3}p_{j+6}$ or $p_{j+3}p_{j+5}$. In both cases we find that $S_{3,a,b}$ depicted in Figure~\ref{fig-S334-analysis}\,(o). It follows, if $G$ contains both the edge $p_{j}p_{j+3}$ and $p_{j+1}p_{j+4}$, then $G$ contains $S_{3,a,b}$. Hence, our claim holds in this case.

    We now consider the case where $G$ contains the edge $p_{j}p_{j+3}$ but not $p_{j+2}p_{j+5}$ or $p_{j+1}p_{j+4}$. It follows that the bypass path for $p_{j+3}$ corresponds to the edge $p_{j+2}p_{j+4}$. We now claim that the path for $p_{j+4}$ results in some $S_{3,a,b}$. As $G$ does not contain the edge $p_{j+2}p_{j+5}$, the bypass path for $p_{j+4}$ either corresponds to the edge $p_{j+3}p_{j+6}$ or $p_{j+3}p_{j+5}$. In both cases we find that $S_{3,a,b}$ depicted in Figure~\ref{fig-S334-analysis}\,(i). That is, if $G$ contains the edge $p_{j}p_{j+3}$ and $p_{j+2}p_{j+4}$, then $G$ contains $S_{3,a,b}$. Further, as this was the final case when considering the bypass path for $p_{j+3}$, if $G$ contains the edge $p_{j}p_{j+3}$, then the bypass path for $p_{j+3}$ results in some $S_{3,a,b}$. That is, if $G$ contains the edge $p_{j}p_{j+3}$, then our claim holds.
    
    We now consider the case where $G$ does not contain the edge $p_{j}p_{j+3}$. It follows that the bypass path for $p_{j+1}$ corresponds to the edge $p_{j}p_{j+2}$.

    Let us now consider the bypass path for $p_{j+2}$. As $G'$ does not contain the edge $p_{j}p_{j+3}$, the bypass path for $p_{j+2}$ must either correspond to the edge $p_{j+1}p_{j+4}$ or $p_{j+1}p_{j+3}$. Suppose this is the edge $p_{j+1}p_{j+4}$. That is, $G'$ contains the edge $p_{j}p_{j+2}$ and $p_{j+1}p_{j+4}$. 
    
    We now claim that either the bypass path for $p_{j+4}$ corresponds to the edge $p_{j+3}p_{j+5}$, or we find $S_{3,a,b}$, in which case our claim holds. If the bypass path for $p_{j+4}$ corresponds to the edge $p_{j+3}p_{j+6}$, then we find a $S_{3,a,b}$ symmetric to that depicted in Figure~\ref{fig-S334-analysis}\,(m), where $\delta' = j$ and $\delta = j+6$. If this bypass corresponds to the edge $p_{j+2}p_{j+5}$, then we find a $S_{3,a,b}$ symmetric to that depicted in Figure~\ref{fig-S334-analysis}\,(o), where $\delta' = j$ and $\delta = j+5$. That is, either we find $S_{3,a,b}$ or the bypass path for $p_{j+4}$ corresponds to the edge $p_{j+3}p_{j+5}$. In the first case our claim holds, that is, we assume that the bypass path for $p_{j+4}$ corresponds to the edge $p_{j+3}p_{j+5}$.

    We now claim that the bypass path for $p_{j+5}$ results in some $S_{3,a,b}$. As the bypass path for $p_{j+4}$ corresponds to the edge $p_{j+3}p_{j+5}$, the bypass path for $p_{j+5}$ is not $p_{j+4}p_{j+6}$, it must either be $p_{j+4}p_{j+7}$ or $p_{j+4}p_{j+6}$. In both cases we find that $S_{3,a,b}$ depicted in Figure~\ref{fig-S334-analysis}\,(i), where $\delta = j+1$. We note that this completes the case where the bypass path for $p_{j+2}$ corresponds to the edge $p_{j+1}p_{j+4}$.

    This leaves only the case where the bypass path for $p_{j+2}$ corresponds to the edge $p_{j+1}p_{j+3}$ and $G$ does not contain the edge $p_{j+1}p_{j+4}$. 
    
    Now the bypass path for $p_{j+3}$ either corresponds to the edge  $p_{j+2}p_{j+5}$ or $p_{j+2}p_{j+4}$. If it corresponds to the edge $p_{j+2}p_{j+5}$, then we find a $S_{3,a,b}$ symmetric to that depicted in Figure~\ref{fig-S334-analysis}\,(i), where $\delta = j+5$ and $\delta' = j$. That is, we now assume that $G$ does not contain the edge $p_{j+2}p_{j+5}$, and so the bypass path for $p_{j+3}$ is the edge $p_{j+2}p_{j+4}$.

    We now consider the bypass path for $p_{j+4}$. If it corresponds to the edge $p_{j+3}p_{j+6}$, then we again find a $S_{3,a,b}$ symmetric to that depicted in Figure~\ref{fig-S334-analysis}\,(i), where $\delta = j+6$ and $\delta' = j+1$. That is we now suppose the bypass path for $p_{j+4}$ corresponds to the edge $p_{j+3}p_{j+5}$.

    We now claim that the bypass path for $p_{j+5}$ results in an $S_{3,a,b}$. If the bypass path for $p_{j+5}$ corresponds to the $p_{j+4}p_{j+7}$, then we again find a $S_{3,a,b}$ symmetric to that depicted in Figure~\ref{fig-S334-analysis}\,(i), where $\delta = j+7$ and $\delta' = j+2$, else, it corresponds to the edge $p_{j+4}p_{j+6}$, in which case we find that $S_{3,a,b}$ depicted in Figure~\ref{fig-S334-analysis}\,(p). As this was the final case where the bypass path for $p_{j+1}$ corresponds to the edge $p_{j}p_{j+2}$, we have shown that in each case $G$ contains $S_{3,a,b}$. That is, if $|V_i| \geq 7$, then $G$ contains some $S_{3,a,b}$.
    \end{claimproof}

    \noindent
    We now consider the entire graph $G$ and claim that $G$ contains a cycle of length at least least~$3b$. Since the arguments applied to $P'$ hold whenever $P$ has at least $3b$ vertices, the existence of such a cycle in $G$ allows us, by symmetry of the cycle,  to apply the reasoning used for the vertices of $G'$ to all vertices of $G$. It follows then from Claim~\ref{clm-G'-citrus}, the graph $G$ is a citrus bush flowering from a cycle, where each citrus is either a lemon or has size at most $6$.

    We first note that as $\ell \geq 3b+2a-4$, if there some path $Q$ between $p_i$ and $p_j$ such that  $V(Q) \cap P = \{p_i, p_j\}$ and $i \in \{1, \ldots, a\}$ and $j \in \{\ell-a+1, \ldots, \ell\}$ then $G$ contains some cycle of length at least~$3b$. We now assume that no such path exists and claim that this implies that $G$ contains $S_{3,a,b}$. Note that this implies that every path from $p_1$ to $p_{\ell}$ contains some vertex from $p_{a+1}, \ldots, p_{\ell-a}$. 

    \begin{figure}
        \centering
        \includegraphics[width=0.5\linewidth, page=13]{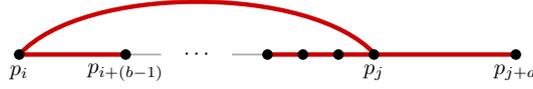}
        \caption{An occurrence of $S_{3,a,b}$ in the proof of Lemma~\ref{l-s3ab}.}
        \label{fig:s3ab_start-to-mid}
    \end{figure}

    It then follows by Claim~\ref{clm-G'-citrus} that either there is some cut vertex for $G$ in the set $\{p_{a+b+1}, \ldots, p_{\ell-(a+b+2)}\}$ or there exist vertices $p_i$, $p_j$ and a path $Q$ such that $V(Q) \cap P = \{p_i, p_j\}$ and either $i \in \{1, \ldots, a\}$ and $j \in \{a+b+1, \ldots, \ell-(a+b+2)\}$ or symmetrically $i \in \{\ell, \ldots, \ell-a+1\}$ and $j \in \{a+b+1, \ldots, \ell-(a+b+2)\}$. As $G$ is $2$-connected, without loss of generality we assume there is such a path between $p_i$ and $p_j$, where $i \in \{1, \ldots, a\}$ and $j \in \{a+b+1, \ldots, \ell-(a+b+2)\}$. It follows that $G$ contains some $S_{3,a,b}$ as shown in Figure~\ref{fig:s3ab_start-to-mid}. It follows that either we find some  $S_{3,a,b}$ or $G$ contains some cycle of length at least~$3b$. In the first case, our claim holds directly and in the second we find that $G$ is a citrus bush flowering from a cycle such that every citrus is either a lemon or has size at most~$6$.
\end{proof}

\noindent
We are now ready to prove Theorem~\ref{t-s334}, which we restate below.

\thmSThreeThreeFourFree*
\begin{proof}
    Let $(G,T,k)$ be an instance of \stf, where $G$ is a $S_{3,3,4}$-subgraph-free graph, $T$ a terminal set and $k$ an integer.
    From Lemma~\ref{l-2con} we may assume that $G$ is $2$-connected.
    If $G$ is $P_{11}$-subgraph-free, we apply Theorem~\ref{t-p11free}, that is, we assume that $G$ contains some $P_{11}$ subgraph.
    Let $P = (v_1, \dots , v_r)$ be a longest path in $G$, by Lemma~\ref{l-longestpath} such a path can be found in polynomial time.

    We now consider the components of $G-V(P)$.

    \begin{claim}\label{c-s334-size-2}
        Every component of $G-V(P)$ has size at most~$2$.
    \end{claim}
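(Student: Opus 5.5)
The plan is to argue by contradiction: suppose some component $D$ of $G-V(P)$ contains a path $(x,y,z)$ on three vertices, and exhibit an $S_{3,3,4}$ subgraph. We may use that $G$ is $2$-connected, that $P=(v_1,\dots,v_r)$ is a longest path with $r\geq 11$, and Lemma~\ref{l-adjacencytopath}.

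First we locate where $D$ attaches to $P$. Since $P$ is longest, $D$ has no neighbour among $v_1,v_2,v_{-2},v_{-1}$; otherwise we could prepend a vertex of $D$ (or two, using an edge of $D$), giving a longer path. Next take any neighbour $v_i$ of $D$ and a vertex $u\in D$ adjacent to $v_i$. If $u$ is the end of a path on three vertices inside $D$, then together with $v_i$ this gives a leg of length $3$ leaving $P$. In that case $v_i$ is the centre of an $S_{3,3,4}$ whose other legs run along $P$ in both directions, provided $v_i$ has at least $3$ path-vertices on one side and $4$ on the other, that is, $i\in\{4,\dots,r-3\}$ (using $r\geq 11$). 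The remaining positions are $v_3$ and $v_{-3}$. There the leg towards $v_1$ has only two vertices, but extending $(v_3, u,\dots)$ by the $D$-path gives a path longer than $(v_1,v_2,v_3)$, and rerouting yields a longer path than $P$, which is a contradiction. So no attachment vertex of $D$ is the end of a $P_3$ inside $D$.

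Then we handle the structure of $D$. By $2$-connectivity, $D$ has at least two vertices with neighbours on $P$, or a single vertex $u$ with several neighbours on $P$. In the second case $u$ is a cut vertex separating $D-u$ from the rest (since $|D|\geq 3$), which contradicts $2$-connectivity. In the first case, take attachment vertices $x\neq y$ in $D$ with neighbours $v_i$ and $v_j$. Neither may be the end of a $P_3$ in $D$. If $|D|\geq 3$, a short case analysis (as in Claim~\ref{claim-component-adjacencies} of Lemma~\ref{l-s3ab}) shows that some attachment vertex is the end of such a $P_3$. If $D$ is a star with centre $c$, every leaf is an end of a $P_3$, so only $c$ can attach, and $c$ is then a cut vertex. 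If $D$ is a triangle, every vertex is an end of a $P_3$. In general, any connected graph on at least three vertices in which only the non-$P_3$-ends attach forces a cut vertex.

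The main obstacle is the boundary cases $v_3$ and $v_{-3}$ when $r$ is small, so that the $S_{3,3,4}$ legs along $P$ are too short. For these cases I would first try to use the path through $D$ together with a second attachment to build a longer path, which contradicts maximality, rather than forcing the claw.
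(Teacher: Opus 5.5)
Your proof is correct and is essentially the paper's argument: $2$-connectivity gives two distinct attachment vertices of $D$, one of which is the end of a $P_3$ inside $D$. Maximality of $P$ rules out that vertex's neighbour being among $v_1,v_2,v_3,v_{-3},v_{-2},v_{-1}$, and any other neighbour $v_i$ is the centre of an $S_{3,3,4}$ because $r\geq 11$.

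The worry in your last paragraph is unnecessary. Your own rerouting argument in the second paragraph already disposes of $v_3$ and $v_{-3}$ for every $r$, without needing a second attachment.
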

    \begin{claimproof}
            Suppose for a contradiction that there is a connected component $D$ of size at least~$3$ in $G-V(P)$. By the $2$-connectivity of $G$, there exist two distinct vertices $x$, $y$ in $D$ and two distinct indices $i$, $j$ such that $x$ is adjacent to $p_i$ and $y$ is adjacent to $p_j$. 

            \begin{figure}
                \centering
                \includegraphics[width=0.3\linewidth, page=14]{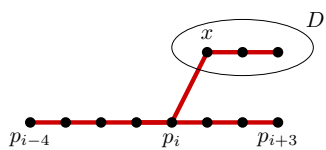}
                \caption{An occurrence of $S_{3,3,4}$ in the proof of Claim~\ref{c-s334-size-2}.}
                \label{fig:s334-size-2}
            \end{figure}

            As $D$ has size at least~$3$, either $x$ or $y$ is the end vertex of some path of length $2$ in $D$, without loss of generality this is $x$. It follows that $i \notin \{1, 2, 3,r-2,r-1,r\}$, else we find a longer path via $D$. We note that as $r \geq 11$, either $i \geq 6$ or $i \geq 6$. If $i \geq 5$, or symmetrically $i \leq r-4$, then we find that $S_{3,3,4}$ shown in Figure~\ref{fig-S334-analysis}. That is, if there is some connected component of size at least~$3$ in $G-V(P)$, then $G$ contains some $S_{3,3,4}$, a contradiction.
    \end{claimproof}

    \begin{claim}\label{c-s334-comp2neighbours}
        Every connected component $D$ of $G-V(P)$ with size~$2$ is adjacent to exactly two vertices of $P$.
    \end{claim}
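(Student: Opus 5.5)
Let $D=\{x,y\}$ be a component of $G-V(P)$ of size~$2$, so $xy\in E(G)$. I would first get a lower bound on $|N(D)\cap V(P)|$ and then rule out three or more neighbours.

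\textbf{Lower bound.} By the $2$-connectivity of $G$ (Lemma~\ref{l-2con}), $D$ has at least two neighbours on $P$. Moreover, neither $x$ nor $y$ is a cut vertex, so both $x$ and $y$ have a neighbour on $P$. If all of $N(D)$ were a single vertex, that vertex would be a cut vertex. Hence we may fix $x v_i, y v_j\in E(G)$ with $i\neq j$. By Lemma~\ref{l-adjacencytopath}, $|i-j|\ge 3$, and no neighbour of $D$ lies in $\{v_1,v_2,v_{r-1},v_r\}$, since otherwise $P$ could be extended by a path through $D$.

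\textbf{Three neighbours give a long jump.} Suppose for contradiction that $D$ has at least three neighbours $v_i,v_j,v_k$ on $P$ with $i<j<k$. By Lemma~\ref{l-adjacencytopath}, any two neighbours of $D$ that are attached to distinct vertices of $D$ are at distance at least~$3$ on $P$. Neighbours attached to the same vertex are at distance at least~$2$, otherwise we could insert that vertex into $P$. So $k-i\ge 4$, and $D$ provides a jump between $v_i$ and $v_k$ that is internally disjoint from $P$. I would then mimic Claim~\ref{clm-far-disjoint-path} with $a=3$, $b=4$. Take the middle neighbour $v_j$ (or a neighbour vertex of $D$) as the centre of a claw. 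Its three legs are:
\begin{itemize}
\item the two directions along $P$, which need lengths $3$ and $4$;
\item a branch through $D$ and the jump, of length at least~$3$.
\end{itemize}
Since $r\ge 11$ and every neighbour index lies in $\{3,\dots,r-2\}$, at least one direction along $P$ from the centre has length at least~$4$. The other direction can be rerouted through the jump to $v_i$ or $v_k$ and then continue along $P$. This yields an $S_{3,3,4}$, a contradiction.

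\textbf{Case split.} I expect the main obstacle to be the case analysis of how the three neighbours are distributed between $x$ and $y$:
\begin{itemize}
\item one of $x,y$ has two neighbours on $P$ and the other has one;
\item one of $x,y$ has all three neighbours.
\end{itemize}
The second case is impossible, since $y$ must itself have a neighbour on $P$. Within the first case I would further split according to whether the middle neighbour $v_j$ is attached to the same vertex of $D$ as $v_i$ or $v_k$. When indices are near the ends of $P$ (for example $i=3$), I would use the alternative claw centred at $v_k$, exploiting the path segment $v_k,\dots,v_r$ together with the detour $v_k\to D\to v_i\to v_{i-1}\to\dots$. The goal in every case is to exhibit explicit legs of lengths $3$, $3$ and $4$ from the chosen centre, as in the figures for Lemma~\ref{l-s3ab}.
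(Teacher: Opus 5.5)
You have the right preliminaries: the lower bound from $2$-connectivity, neighbours of $D$ confined to $\{v_3,\dots,v_{r-2}\}$, spacing at least~$2$ between neighbours of the same vertex of $D$ and at least~$3$ between neighbours of different vertices, hence $k-i\ge 4$. \textbf{The gap is the step that is supposed to produce $S_{3,3,4}$.}

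You cannot just mimic Claim~\ref{clm-far-disjoint-path}. It needs both ends of the jump in $\{4,\dots,r-3\}$ and $r\ge 3b=12$, whereas here the ends may be $v_3$ or $v_{r-2}$ and $r$ may be~$11$.

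More seriously, the centres you actually propose can fail. Take $r=11$, $N_P(x)=\{v_3,v_9\}$ and $N_P(y)=\{v_6\}$; this satisfies all your spacing constraints. In $P\cup D$ the only vertices of degree~$3$ are $v_3,v_6,v_9,x$.
\begin{itemize}
\item A claw centred at $v_3$ or at $v_9$ (your fallback $v_k$) has a leg towards $v_1$ or $v_{11}$ of length at most~$2$.
\item At the middle neighbour $v_6$, the leg through $D$ is $v_6,y,x$ and must continue to $v_3$ or $v_9$. This cuts the $P$-leg on that side down to $v_5,v_4$ or $v_7,v_8$.
\item Your ``reroute the other direction through the jump'' cannot coexist with a separate leg through $D$, since both would need $x$.
\end{itemize}
Since your argument uses only edges of $P\cup D$, it yields no contradiction in this configuration.

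\textbf{The missing idea is to centre the claw inside $D$.} If your parenthetical ``a neighbour vertex of $D$'' meant a vertex of $D$, that is the right centre, but you never develop it. When $|N(D)|\ge 3$ you can choose, after possibly swapping $x$ and $y$, two $P$-neighbours $v_p,v_q$ of $x$ with $p<q$ and a $P$-neighbour $v_m\notin\{v_p,v_q\}$ of $y$. Your dismissal of ``one of $x,y$ has all three'' is not literally correct, but this choice makes it harmless. Take the legs:
\begin{itemize}
\item $x,v_p,v_{p-1},v_{p-2}$;
\item $x,v_q,v_{q+1},v_{q+2}$;
\item $x,y,v_m,v_{m+1},v_{m+2}$ if $m>p$, or $x,y,v_m,v_{m-1},v_{m-2}$ if $m<p$.
\end{itemize}
These exist because $p,q,m\in\{3,\dots,r-2\}$, and they are disjoint because $|m-p|,|m-q|\ge 3$. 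So they form an $S_{3,3,4}$ in every case, with no special treatment of the ends of $P$.

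The paper arrives at these same $D$-centred claws (Figure~\ref{fig:s334-comp2neighbours}\,b) and~c)) by a different route. It first uses a $P$-centred claw at an interior neighbour $p_i\in\{4,\dots,r-3\}$ to confine $N(D)$ to within distance~$3$ of $p_i$. It then observes that if no neighbour is interior, $N(D)\subseteq\{p_3,p_{r-2}\}$.
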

    \begin{claimproof}
        Let $V(D) = \{x,y\}$. As $G$ is $2$-connected, there is some pair of indices $i$, $j$ such that $x$ is adjacent to $p_i$ and $y$ is adjacent to $p_j$. As $P$ is a longest path, $i,j \notin \{1,2,r-1,r\}$ and $j-i \geq 2$.

        \begin{figure}
            \centering
            \includegraphics[width=0.7\linewidth, page=15]{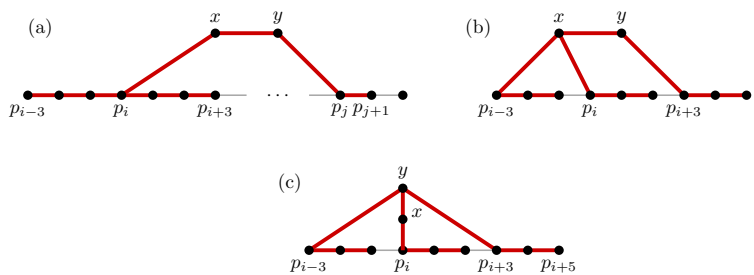}
            \caption{Occurrences of $S_{3,3,4}$ in the proof of Claim~\ref{c-s334-comp2neighbours}.}
            \label{fig:s334-comp2neighbours}
        \end{figure}

        Suppose first that $i \in \{4,\ldots, r-3\}$. If $|j-i| \geq 4$, then we find $S_{3,3,4}$ as show in Figure~\ref{fig:s334-comp2neighbours} a). Note this shows the case where $i<j$, the case where $j<i$ is symmetric. It follows that $N(D) \subseteq \{i, i-3, i+3\}$. If $D$ is adjacent to at least three vertices in $P$, then $N(D) = \{i, i-3, i+3\}$. That is, without loss of generality, either $p_i, p_{i-3} \in N(x)$ and $p_{i+3} \in N(y)$, or, $p_i \in N(x)$ and $p_{i-3}, p_{i+3} \in N(y)$. In both cases we find $S_{3,3,4}$ as show in Figure~\ref{fig:s334-comp2neighbours} b) and c). It follows that, if $i \in \{4,\ldots, r-3\}$, then $D$ is adjacent to exactly two vertices of $P$.

        Else, we may assume that $i \notin \{4,\ldots, r-3\}$, and symmetrically $j \notin \{4,\ldots, r-3\}$. As $i,j \notin \{1,2,r-1,r\}$, this implies that $i = 3$ and $j = r-2$, that is, $N(V(D)) = \{p_3,p_{r-2}\}$. As $D$ is adjacent to exactly two vertices of $P$, our claim holds.
    \end{claimproof}

    \noindent
    If $|V(P)| \geq 14$, then applying Lemma~\ref{l-s3ab}, we find that $G$ is a citrus bush flowering from a cycle and every citrus in this bush is either a lemon or has size at most~$6$. That is, $G$ is a $6$-lemon bush flowering path flowering from a cycle. In addition, we can find the stem set of this citrus bush in polynomial time. Let $C$ be a longest cycle, such a cycle can be found in polynomial time using Lemma~\ref{l-longestpath}. For every pair of vertices $u,v$ in $C$, we check if $C$ is connected in $C-\{u,v\}$. If $C$ is disconnected in $C-\{u,v\}$, then we add $u$ and $v$ to the stemset. Naively this can be done in time $O(n^3)$. It follows now by Theorem~\ref{t-cyclebush} we can solve {\sc Steiner Forest} on $G$ in polynomial time.

    Suppose now $|V(P)| \leq 13$. Recall that every component $D$ of $G-V(P)$ has size at most~$2$ and $D$ is adjacent to exactly two vertices in $P$. It follows that $V(D) \cup N(D)$ is a wedge with ends $N(D)$, further, if $D$ has size~$1$ then this wedge is juicy and if $D$ has size~$2$ then this wedge is seeded. It follows that $G$ is a lemon bush flowering from a graph with stemset $V(P)$. As $|V(P)| \leq 13$, by Theorem~\ref{t-solvetangle}, we can solve {\sc Steiner Forest} for the instance $(G,T,k)$ in polynomial time.
\end{proof}

\section{\fl{Hardness Results}}%The Full Proofs of Section~\ref{s-hard}}

We finish our paper by proving Theorem~\ref{t-newhard}, which we restate below.

\label{s-fp-hard}
\thmNewHard*

\medskip
\noindent
It is convenient to present the proof of Theorem~\ref{t-newhard} as a reduction from a specific Constraint Satisfaction Problem (CSP). In general, a CSP instance is a triple $(V ,D, C)$,
where:

\begin{itemize}
\item $V$ is a set of variables,
\item $D$ is a domain of values,
\item $C$ is a set of constraints, $\{c_1,c_2,\dots ,c_q\}$.
Each constraint $c_i\in C$ is a pair $\langle
s_i,R_i\rangle$, where:
\begin{itemize}
\item $s_i$ is a tuple of variables of length $m_i$, called the {\em constraint scope,} and
\item $R_i$ is an $m_i$-ary relation over $D$, called the {\em constraint
  relation.}
\end{itemize}
\end{itemize}

\noindent
For each constraint $\langle s_i,R_i\rangle$, the tuples of $R_i$
indicate the allowed combinations of simultaneous values for the
variables in $s_i$. The length $m_i$ of the tuple $s_i$ is called the
{\em arity} of the constraint.  A {\em solution} to a constraint
satisfaction problem instance is a function $f$ from the set of
variables $V$ to the domain of values $D$ such that for each
constraint $\langle s_i,R_i\rangle$ with $s_i = \langle
v_{i_1},v_{i_2},\dots,v_{i_m}\rangle$, the tuple $\langle f(v_{i_1}),
f(v_{i_2}),\dots,f(v_{i_m})\rangle$ is a member of $R_i$.

If $D$ is a fixed domain and $\mathcal{R}$ is a set of relations over $D$, then $\textup{CSP}(\mathcal{R})$ is the special case where every relation $R_i$ is from $\mathcal{R}$. We will specifically consider the domain $\overline{D}=\{0,1,2\}$ and the set $\overline{\mathcal{R}}$ of relations that contains six relations:

\begin{itemize}
    \item for every $d\in \overline D$, the unary relation $x\neq d$, and
    \item for every $d\in \overline D$, the binary relation $(x=d)\to (y=d)$.
\end{itemize}

\noindent
There are powerful general classification results describing the polynomial-time and \NP-complete cases of CSP \cite{Bu17,Bu06,Bu11,Zh20}, which should show that $\textup{CSP}(\overline{\mathcal{R}})$ is \NP-complete. However, we give a very simple, self-contained proof here.

\begin{lemma}\label{lem:csphard}
$\textup{CSP}(\overline{\mathcal{R}})$ is \NP-complete.
\end{lemma}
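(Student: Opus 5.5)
Membership in \NP{} is immediate: an assignment $f\colon V\to\{0,1,2\}$ is a polynomial-size certificate that can be checked constraint by constraint. For hardness I would reduce from \textsc{3-Colouring}, or more conveniently from \textsc{1-in-3 SAT}/\textsc{3-SAT}. The real question is what the implication relations $(x=d)\to(y=d)$ together with the unary relations $x\neq d$ can express. Each such implication is a Horn-like constraint on the indicator literal ``$x=d$''. With a ternary domain, however, ``$x\neq d$'' does not pin down which of the other two values $x$ takes, and this is where disjunctions come from.

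My plan is to encode a Boolean variable $b$ of a \textsc{3-SAT} instance by a CSP variable $x_b$ restricted to $\{0,1\}$ via the unary constraint $x_b\neq 2$, reading $x_b=1$ as true. For a clause $\ell_1\vee\ell_2\vee\ell_3$ I would introduce a clause variable $c$ with domain $\{0,1,2\}$, whose value $j$ says that literal $\ell_{j+1}$ is responsible for satisfying the clause. To link $c=j$ to the truth of $\ell_{j+1}$, I need a constraint of the form ``$c=j$ implies $x_b$ takes a prescribed value''. The available implications only transfer equality of the \emph{same} value $d$, so I would add intermediate copy variables.

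For example, to force $c=0\to x_b=1$, take an auxiliary $z$ with the constraints $(c=0)\to(z=0)$, $z\neq 1$ and $(z=0)\to\ldots$. A cleaner route is to use ``$(z=d)\to(y=d)$'' chains together with the unary restrictions, so that value names can be permuted. Concretely, I would build a gadget variable $z$ restricted to $\{0,2\}$ by $z\neq1$, with constraints $(c=0)\to(z=0)$ and $(x_b=0)\to(z=2)$. The latter is not directly available, so instead I would use its contrapositive $(z=0)\to(x_b\neq 0)$, expressed as $(z=0)\to(x_b=\,$something$)$. Since $x_b\in\{0,1\}$, the constraint $(z=1)\to(x_b=1)$ is of the allowed form, but value $0\neq1$ does not match. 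I would therefore choose value encodings per literal: for a positive literal in position $j$, encode truth of $x_b$ as ``$x_b=j$'' using a fresh copy variable $x_b^{(j)}$ with domain $\{j,d'\}$. Consistency between copies $x_b^{(j)}$ and $x_b^{(j')}$ is enforced by a pair of implications $(x_b^{(j)}=j)\to(w=j)$ and so on. This bookkeeping, i.e.\ finding a small set of implication-and-unary gadgets that realise ``$u=j \leftrightarrow v=j'$'' between two 2-valued variables on different value pairs, is the main obstacle.

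If value-permuting equivalences prove awkward, my fallback is to note that the implications already give the relation $\{(d,d)\}\cup\{(a,b): a\neq d\}$. Intersecting three of them on the same pair $(x,y)$, one for each $d$, yields $x=y$. Together with the unary constraints this gives a ``list'' equality. Combining $(x=0)\to(y=0)$ with $y\neq0$ gives $x\neq0$ conditionally. I would then aim to express the relation $\{(a,b):a\neq b\}$ on $\{0,1,2\}$ restricted to suitable lists, which would yield a reduction from \textsc{List 3-Colouring}. Once disequality on two-element lists with differing value pairs is available, graph $3$-colouring follows. I would verify the gadget by checking all $9$ value pairs, and then conclude \NP-completeness.
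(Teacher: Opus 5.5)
Your \NP-membership argument is fine, but the hardness part never produces a reduction. Plan~A stops exactly at what you yourself call the main obstacle, namely realising ``$c=j$ implies the literal is true'' when the values involved differ. The fallback stops at ``I would then aim to express $\{(a,b):a\neq b\}$''. No gadget for any \NP-hard relation is ever written down or checked, and that gadget is the whole content of the lemma.

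The last step of the fallback is also unjustified as stated. If every variable ranges over a two-element list and all constraints are binary, the CSP is just \textsc{2-SAT} and hence polynomial. So ``disequality on two-element lists'' does not by itself give \textsc{3-Colouring}; you still need $x\neq y$ for variables ranging over all of $\{0,1,2\}$.

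The missing idea is that same-value implications do yield cross-value information once two of them share a target variable. From $(u=j)\to(w=j)$ and $(v=d)\to(w=d)$ with $j\neq d$ one gets $u=j \Rightarrow v\neq d$. Adding a ``choice'' variable restricted to two values by a unary constraint then gives binary clauses.

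This is exactly the paper's gadget for an edge $xy$. For each $\ell\in\{0,1,2\}$, take fresh $\alpha_\ell,\beta_\ell,\gamma_\ell$ with the constraints $(x=\ell)\to(\alpha_\ell=\ell)$, $(y=\ell)\to(\beta_\ell=\ell)$, $\gamma_\ell\neq\ell$, $(\gamma_\ell=\ell+1)\to(\alpha_\ell=\ell+1)$ and $(\gamma_\ell=\ell+2)\to(\beta_\ell=\ell+2)$, with values taken mod~$3$. Since $\gamma_\ell\in\{\ell+1,\ell+2\}$, this forces $x\neq\ell$ or $y\neq\ell$. Conversely, whenever $x\neq\ell$ or $y\neq\ell$, suitable values for $\alpha_\ell,\beta_\ell,\gamma_\ell$ exist. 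Taken over all three $\ell$, the gadget expresses $x\neq y$, which gives the reduction from \textsc{3-Colouring}.

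The same trick would also complete your 3-SAT plan. Your clause variable $c$ plays the role of $\gamma$. The case $j=d$ is avoided by using copies of $x_b$ on other two-element lists, linked by implications in both directions on their shared value. As written, however, the proposal contains neither construction.
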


\begin{proof}
The proof is by reduction from \textsc{3-Colouring}. We construct a gadget that expresses $x\neq y$. For every $\ell\in D$, we introduce the variables $\alpha_\ell$, $\beta_\ell$, $\gamma_\ell$ and the following five constraints (thus, the gadget contains a total of nine variables besides $x$ and $y$, and 15 constraints):

\begin{itemize}
\item $(x=\ell)\to (\alpha_\ell=\ell)$
\item $(y=\ell)\to (\beta_\ell=\ell)$.
\item $(\gamma_\ell\neq \ell)$
\item $(\gamma_\ell=\ell+1)\to (\alpha_\ell=\ell+1)$
\item $(\gamma_\ell=\ell+2)\to (\beta_\ell=\ell+2)$
\end{itemize}

\noindent
Note that addition is modulo 3. Observe that $x$ and $y$ cannot both take the value $\ell$ in the solution: then $\alpha_\ell$ and $\beta_\ell$ would both have value $\ell$ as well, which contradicts both $\gamma_\ell=\ell+1$ and $\gamma_\ell=\ell+2$. On the other hand, if the values of $x$ and $y$ are not both $\ell$, then it is easy to see that one can choose values for $\alpha_\ell$, $\beta_\ell$, and $\gamma_\ell$.

Given an instance of \textsc{3-Colouring} (that is, a graph $G$), we introduce a variable for every vertex of $G$. For every edge of $G$, we introduce a copy of the gadget described above, introducing nine new variables and 15 new constraints per edge. It is clear that the constructed instance has a satisfying assignment if and only if $G$ is 3-colourable.
\end{proof}

\noindent
With the hardness of $\textup{CSP}(\overline{\mathcal{R}})$ at hand, we can prove Theorem~\ref{t-newhard}.
\begin{proof}[Proof (of Theorem~\ref{t-newhard})]
The proof is by reduction from $\textup{CSP}(\overline{\mathcal{R}})$. Consider an instance $I$ of $\textup{CSP}(\overline{\mathcal{R}})$ with $n$ variables and $m_1$ unary constraints and $m_2$ binary constraints. We may assume that each  variable appears in at least one binary constraint; otherwise, the role of that variable is trivial. We construct a graph $G$ and a set $T$ of terminal pairs as follows (see Figure~\ref{fig:np}).

\begin{figure}
\centering
\includegraphics[width=0.9\linewidth, page=10]{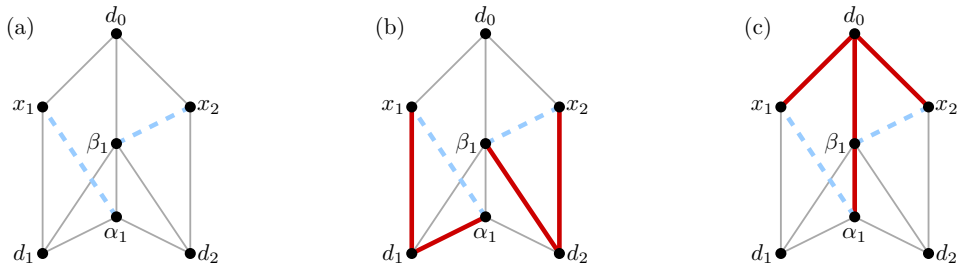}
\caption{An example of the reduction in Theorem~\ref{t-newhard}. (a) The graph corresponding to the $\textup{CSP}(\overline{\mathcal{R}})$ instance $(x_1\neq 1) \wedge (x_2\neq 2) \wedge ((x_1=0)\to(x_2=0))$. The dashed blue lines show the two terminal pairs in the set $T$. (b) A \textsc{Steiner Forest} solution  corresponding to $x_1=2$, $x_2=1$. (c) A \textsc{Steiner Forest} solution corresponding to $x_1=x_2=0$.}\label{fig:np}
\end{figure}
\begin{itemize}
    \item We introduce three vertices $d_0$, $d_1$, $d_2$.
    \item For every variable $x_i$, we introduce a vertex $x_i$ and, for $\ell=0,1,2$, connect $x_i$ to $d_\ell$ \emph{unless} the constraint $(x_i\neq \ell)$ exists.
    \item For every binary constraint $c_j$ of the form $(x_{i_1}=\ell)\to (x_{i_2}=\ell)$, we introduce two adjacent vertices $\alpha_j$ and $\beta_j$, and we make both of them adjacent to $d_0$, $d_1$, $d_2$ \emph{except} that $\alpha_j$ is not adjacent to $d_\ell$. Furthermore, we add the terminal pairs $(x_{i_1},\alpha_j)$ and $(x_{i_2},\beta_j)$ to $T$.
    \end{itemize}
    
\noindent    
This completes the construction of $G$ and $T$. Observe that if we delete the vertices $d_0$, $d_1$, and $d_2$ from $G$, then every component is of the form $\{\alpha_j,\beta_j\}$, that is, consisting only of two vertices. Thus $G$ has 2-deletion number at most~$3$. 
Also, $G$ has treewidth~$3$, since the graph $G'=G \cup \{d_0d_1,d_1d_2,d_0d_2\}$ is a chordal completion of $G$ and has clique number 4. To see this, it suffices to observe that each vertex $\alpha_j$ is simplicial with degree 3 in $G'$, and after removing all of these, each vertex $\beta_j$ is again simplicial with degree 3.
We claim that $I$ has a satisfying assignment if and only if the constructed \textsc{Steiner Forest} instance has a solution with $k=n+2m_2$ edges.

For the forward direction, consider a satisfying assignment $f$ of $I$. The solution $S$ of \textsc{Steiner Forest} consists of the following edges. For every variable $x_i$, let us add the edge $x_id_{f(x_i)}$ into $S$. Consider a constraint $c_j$ of the form $(x_{i_1}=\ell)\to (x_{i_2}=\ell)$ and the corresponding two vertices $\alpha_j$ and $\beta_j$. First, let us add the edge $\beta_jd_{f(x_{i_2})}$ into $S$. Then the pair $(x_{i_2},\beta_j)$ in $T$ is satisfied, as the two vertices are connected via $d_{f(x_{i_2})}$. If $f(x_{i_1})\neq \ell$, then similarly we add the edge $\alpha_jd_{f(x_{i_1})}$ into $S$, satisfying the pair $(x_{i_1},\alpha_j)$. However, if $f(x_{i_1})=\ell$, then the edge $\alpha_jd_{f(x_{i_1})}$ does not exist in $G$. In this case the constraint $(x_{i_1}=\ell)\to (x_{i_2}=\ell)$ ensures that $f(x_{i_2})=\ell$ and hence the edge $\beta_jd_{f(x_{i_2})}=\beta_jd_\ell$ was already added to $S$. Thus by adding the edge $\alpha_j\beta_j$, we can connect $\alpha_j$ and $x_{i_1}$ via $\beta_j$ and $d_\ell$. Note that we have introduced a total of $n+2m_2$ edges into $S$, as required. 

For the reverse direction, consider a solution $S$ of the \textsc{Steiner Forest} instance with at most $n+2m_2$ edges. For each variable $x_i$, there has to be at least one edge of $S$ incident to the vertex $x_i$, otherwise it cannot be connected to any $\alpha_j$ or $\beta_j$ vertex (here we use the assumption that $x_i$ appears in at least one binary constraint, and thus at least one pair of $T$ involves $x_i$). Furthermore, for every constraint $c_j$, the introduced $\alpha_j$ and $\beta_j$ vertices need to be adjacent to at least two edges of $S$, otherwise at least one of them is not connected to any of $\{d_0,d_1,d_2\}$. This way, we can identify $n+2m_2$ edges of $S$. Observe that an edge of $S$ cannot play two different roles (e.g., being incident to both $x_{i_1}$ and $x_{i_2}$), hence all these edges are distinct. As $S$ has size at most $n+2m_2$, this means that this way we identify each edge of $S$ precisely once: there is exactly one edge incident to $x_i$ and for every binary constraint $c_j$, there are precisely two edges incident to $\{\alpha_j,\beta_j\}$.

We claim that $d_0$, $d_1$, and $d_2$ are in different components of $S$. Indeed, they cannot be connected via $x_i$, as there is only a single edge incident to $x_i$. Furthermore, they cannot be connected via $\{\alpha_j,\beta_j\}$: the only way this could be possible is if the two edges incident to $\{\alpha_j,\beta_j\}$ are both incident to, say, $\alpha_j$, but then $\beta_j$ would become an isolated vertex and the pair of $T$ containing $\beta_j$ would not be satisfied. Hence, $d_0$, $d_1$, and $d_2$ are in different components of $S$.

Let $f(x_i)=\ell$ for the unique $\ell$ such that $x_i$ is in the same component of $S$ as $d_\ell$. We claim that $f$ is a satisfying assignment of the $\textup{CSP}(\overline{\mathcal{R}})$ instance $I$. The fact that the edge $x_id_\ell$ exists in $G$ implies that $f$ satisfies the unary constraints. Consider now a binary constraint~$c_j$ of the form $(x_{i_1}=\ell)\to (x_{i_2}=\ell)$ and the two pairs $(x_{i_1},\alpha_j)$ and $(x_{i_2},\beta_j)$ introduced into~$T$. Let us assume $f(x_{i_1})=\ell$; otherwise, the constraint is satisfied. This means that  $x_{i_1}$ is in the component of $S$ containing $d_\ell$. To satisfy the pair $(x_{i_1},\alpha_j)$, vertex $\alpha_j$ also has to be in the same component of $S$ as $d_\ell$. In particular, there has to be an edge in $S$ connecting $d_\ell$  and $\{\alpha_j,\beta_j\}$. By construction, the only such edge is $\beta_jd_\ell$. Thus the only way $\alpha_j$ can be in the component of $d_\ell$ if $S$ contains both $\alpha_j\beta_j$ and $\beta_jd_\ell$, implying that $\beta_j$ is in the component of $d_\ell$. Now to satisfy the pair $(x_{i_2},\beta_j)$, vertex $x_{i_2}$ has to be also in the component of $S$ containing~$d_\ell$. This means that $f(x_{i_2})=\ell$, satisfying the constraint $(x_{i_1}=\ell)\to (x_{i_2}=\ell)$.
\end{proof}

\bibliography{steinerforest}
\end{document}

%% file: figures/lemons.tex
\begin{tikzpicture}
    \node[vertex] (v1) at (0,0){};
    \node[vertex] (v2) at (3,0){};

    \node[vertex] (u1) at (1,0.7){};
    \node[vertex] (u2) at (2,0.7){};
    \node[vertex] (y1) at (1,1.1){};
    \node[vertex] (ym) at (1.5,1.35){};
    \node[vertex] (y2) at (2,1.1){};
    \node[vertex] (z1) at (1,-0.5){};
    \node[vertex] (z2) at (2,-0.5){};

    \node[vertex] (w) at (1.5, 0.3){};
    %\node[vertex] (x) at (1.5, -0.5){};

    \draw[edge] (v1) -- (v2);
    \draw[edge] (v1) -- (u1) -- (u2) -- (v2);
    \draw[edge] (v1) -- (y1) -- (ym) -- (y2) -- (v2);
    \draw[edge] (y1) -- (y2);
    \draw[edge] (v1) -- (z1) -- (z2) -- (v2);
    \draw[edge] (v1) -- (w) -- (v2);
    %\draw[edge] (v1) -- (x) -- (v2);
    \draw[edge] (z1) -- (v2);
    \draw[edge] (z2) -- (v1);
    
\begin{scope}[shift = {(5,0)}, scale = 0.5]
        \node[vertex, label = below: $u_1$] (v1) at (0,0){};
    \node[vertex] (v2) at (3,0){};

    \node[vertex] (u1) at (1,1){};
    \node[vertex] (u2) at (2,1){};
    \node[vertex] (y1) at (1,1.5){};
    \node[vertex] (y2) at (2,1.5){};

    \node[vertex] (w) at (1.5, 0.5){};
    \node[vertex] (x) at (1.5, -0.5){};

    \draw[edge] (v1) -- (v2);
    \draw[edge] (v1) -- (u1) -- (u2) -- (v2);
    \draw[edge] (v1) -- (y1) -- (y2) -- (v2);
    \draw[edge] (v1) -- (w) -- (v2);
    \draw[edge] (v1) -- (x) -- (v2);

    \begin{scope}[shift = {(3,0)}]
        \node[vertex, label = below: $u_2$] (v1) at (0,0){};
        \node[vertex] (v2) at (2,0){};
        \node[vertex] (w) at (1, 0.5){};
        \node[vertex] (x) at (1, -0.5){};
        \draw[edge] (v1) -- (w) -- (v2);
        \draw[edge] (v1) -- (x) -- (v2);
    \end{scope}

    \begin{scope}[shift = {(5,0)}]
        \node[vertex] (v1) at (0,0){};
        \node[vertex] (v2) at (1,0){};
        \draw[edge] (v1) -- (v2);
    \end{scope}

    \begin{scope}[shift = {(6,0)}]
        \node[vertex, label = below: $u_3$] (v1) at (0,0){};
        \node[vertex, label = below: $u_4$] (v2) at (3,0){};
        \node[vertex] (y1) at (0.8,1){};
        \node[vertex] (ym) at (1.5,1.5){};
        \node[vertex] (y2) at (2.2,1){};

        %\draw[edge] (v1) -- (v2);
    \draw[edge] (v1) -- (y1) -- (ym) -- (y2) -- (v2);
        
    \end{scope}
    
\end{scope}
    
\end{tikzpicture}

%% file: figures/s2ab.tex
\begin{tikzpicture}
\begin{scope}[scale = 0.7]
    \foreach \i in {1,..., 9}{
        \node[vertex](v\i) at (\i, 0){};
        
    }
    \draw[redge] (v4) to [bend left = 40] (v7);
    \draw[redge] (v1) -- (v2);
    \draw[redge] (v2) -- (v3);
    \draw[redge] (v3) -- (v4);
    \draw[redge] (v4) -- (v5);
    \draw[redge] (v5) -- (v6);
    \draw[edge] (v6) -- (v7);
    \draw[redge] (v7) -- (v8);
    \draw[redge] (v8) -- (v9);

    \draw[br] (3,-0.15) -- (1,-0.15);
    \node[](t) at (2, -0.5){$a$};

    \draw[br] (9,-0.15) -- (7,-0.15);
    \node[](t) at (8, -0.5){$b$};

    \node[label=below:$u_1$] (t) at (4,0){};
    \node[label=below:$u_2$] (t) at (7,0){};

\end{scope}

\begin{scope}[scale = 0.7, shift = {(10,0)}]

        \foreach \i in {1,..., 9}{
        \node[vertex](v\i) at (\i, 0){};
        
    }
    \draw[redge] (v3) to [bend left = 40] (v5);
    \draw[redge] (v5) to [bend left = 40] (v7);
    \draw[redge] (v4) to [bend right = 28] (v6);
    \draw[redge] (v1) -- (v2);
    \draw[redge] (v2) -- (v3);
    \draw[edge] (v3) -- (v4);
    \draw[redge] (v4) -- (v5);
    \draw[edge] (v5) -- (v6);
    \draw[edge] (v6) -- (v7);
    \draw[redge] (v7) -- (v8);
    \draw[redge] (v8) -- (v9);

    \draw[br] (3,-0.15) -- (1,-0.15);
    \node[](t) at (2, -0.5){$a$};

    \draw[br] (9,-0.15) -- (7,-0.15);
    \node[](t) at (8, -0.5){$b$};

    \node[label=below:$v_i$] (t) at (3,0){};
    \node[label=below:$v_{i+1}$] (t) at (4,0){};
    \node[label=below:$v_{i+2}$] (t) at (5,0){};
    \node[label=below:$v_{i+3}$] (t) at (6,0){};
    \node[label=below:$v_{i+4}$] (t) at (7,0){};

\end{scope}
\end{tikzpicture}

\begin{tikzpicture}
\begin{scope}[scale = 0.7]
        \foreach \i in {1,..., 9}{
        \node[vertex](v\i) at (\i, 0){};
        
    }

    \node[vertex](u) at (5,0.5){};
    \draw[redge] (v3) to [bend right = 28] (v5);
    %\draw[redge] (v5) to [bend left = 40] (v7);
    \draw[redge] (v4) to [bend left = 20] (u);
    \draw[edge] (u) to [bend left = 20] (v6);
    \draw[redge] (v1) -- (v2);
    \draw[redge] (v2) -- (v3);
    \draw[edge] (v3) -- (v4);
    \draw[redge] (v4) -- (v5);
    \draw[redge] (v5) -- (v6);
    \draw[redge] (v6) -- (v7);
    \draw[redge] (v7) -- (v8);
    \draw[redge] (v8) -- (v9);

    \draw[br] (3,-0.15) -- (1,-0.15);
    \node[](t) at (2, -0.5){$a$};

    \draw[br] (9,-0.15) -- (6,-0.15);
    \node[](t) at (7.5, -0.5){$b$};

    \node[label=below:$v_i$] (t) at (3,0){};
    \node[label=below:$v_{i+1}$] (t) at (4,0){};
    \node[label=below:$v_{i+2}$] (t) at (5,0){};
    \node[label=below:$v_{i+3}$] (t) at (6,0){};

\end{scope}
\end{tikzpicture}

%% file: figures/s2ab-second.tex
\begin{tikzpicture}
\begin{scope}[scale = 0.7]
    \foreach \i in {1,..., 9}{
        \node[vertex](v\i) at (\i, 0){};
        
    }
    \draw[redge] (v1) to [bend left = 40] (v5);
    %\draw[redge] (v5) to [bend left = 40] (v7);
    %\draw[redge] (v4) to [bend right = 40] (v6);
    \draw[redge] (v1) -- (v2);
    \draw[redge] (v2) -- (v3);
    \draw[edge] (v3) -- (v4);
    \draw[redge] (v4) -- (v5);
    \draw[redge] (v5) -- (v6);
    \draw[redge] (v6) -- (v7);
    \draw[redge] (v7) -- (v8);
    \draw[redge] (v8) -- (v9);

    \draw[br] (3,-0.15) -- (1,-0.15);
    \node[](t) at (2, -0.6){$a$};

    \draw[br] (9,-0.15) -- (6,-0.15);
    \node[](t) at (7.5, -0.6){$b$};

    % DCK additions
    \foreach \i in {-2,..., 0}{
        \node[vertex](v\i) at (\i, 0){};
           
    }
    \draw[edge] (v-2) -- (v-1);
    \draw[edge] (v-1) -- (v0);
    \draw[edge] (v0) -- (v1);
    \draw[br] (0,-0.15) -- (-2,-0.15);
    \node[](t) at (-1, -0.6){$\le a-1$};

    % \node[label=below:$v_i$] (t) at (3,0){};
    % \node[label=below:$v_{i+1}$] (t) at (4,0){};
    % \node[label=below:$v_{i+2}$] (t) at (5,0){};
    % \node[label=below:$v_{i+3}$] (t) at (6,0){};
    % \node[label=below:$v_{i+4}$] (t) at (7,0){};

\end{scope}
\end{tikzpicture}

%% file: figures/s227.tex
\begin{tikzpicture}
\begin{scope}[scale = 0.6]
    \foreach \i in {1,..., 14}{
        \node[vertex, label = below:$v_{\i}$](v\i) at (\i, 0){};
        
    }
    
    \draw[edge] (v1) -- (v2);
    \draw[edge] (v2) -- (v3);
    \draw[edge] (v3) -- (v4);
    \draw[edge] (v4) -- (v5);
    \draw[redge] (v5) -- (v6);
    \draw[redge] (v6) -- (v7);
    \draw[redge] (v7) -- (v8);
    \draw[redge] (v8) -- (v9);
    \draw[redge] (v9) -- (v10);
    \draw[redge] (v10) -- (v11);
    \draw[redge] (v11) -- (v12);
    \draw[redge] (v12) -- (v13);
    \draw[redge] (v13) -- (v14);

\node[vertex](u1) at (7,1){};
\node[vertex](u2) at (8,1){};

    \draw[redge](v7) -- (u1);
    \draw[redge](u1) -- (u2);

\end{scope}

\end{tikzpicture}

%% file: figures/s235.tex
\begin{tikzpicture}
\begin{scope}[scale = 0.6]
    \foreach \i in {1,..., 11}{
        \node[vertex, label = below:$v_{\i}$](v\i) at (\i, 0){};
        
    }
    
    \draw[redge] (v1) -- (v2);
    \draw[redge] (v2) -- (v3);
    \draw[redge] (v3) -- (v4);
    \draw[redge] (v4) -- (v5);
    \draw[redge] (v5) -- (v6);
    \draw[redge] (v6) -- (v7);
    \draw[redge] (v7) -- (v8);
    \draw[redge] (v8) -- (v9);
    \draw[edge] (v9) -- (v10);
    \draw[edge] (v10) -- (v11);

\node[vertex](u1) at (5.5,1){};
\node[vertex](u2) at (6.5,1){};

    \draw[redge](v4) -- (u1);
    \draw[redge](u1) -- (u2);
    
\begin{scope}[shift = {(12,0)}]
    \foreach \i in {1,..., 11}{
        \node[vertex, label = below:$v_{\i}$](v\i) at (\i, 0){};
        
    }
    
    \draw[redge] (v1) -- (v2);
    \draw[redge] (v2) -- (v3);
    \draw[redge] (v3) -- (v4);
    \draw[redge] (v4) -- (v5);
    \draw[redge] (v5) -- (v6);
    \draw[edge] (v6) -- (v7);
    \draw[edge] (v7) -- (v8);
    \draw[edge] (v8) -- (v9);
    \draw[redge] (v9) -- (v10);
    \draw[redge] (v10) -- (v11);

\node[vertex](u1) at (5.5,1){};
\node[vertex](u2) at (6.5,1){};

    \draw[redge](v3) -- (u1);
    \draw[redge](u1) -- (u2);
    \draw[redge] (u2) -- (v9);
    
\end{scope}

\end{scope}
\end{tikzpicture}

%% file: figures/s244-longp.tex
\begin{tikzpicture}

\begin{scope}[scale = 0.6]
    \foreach \i in {1,..., 12}{
        \node[vertex, label = below:$v_{\i}$](v\i) at (\i, 0){};
        
    }
    
    \draw[redge] (v1) -- (v2);
    \draw[redge] (v2) -- (v3);
    \draw[redge] (v3) -- (v4);
    \draw[redge] (v4) -- (v5);
    \draw[redge] (v5) -- (v6);
    \draw[redge] (v6) -- (v7);
    \draw[redge] (v7) -- (v8);
    \draw[redge] (v8) -- (v9);
    \draw[edge] (v9) -- (v10);
    \draw[edge] (v10) -- (v11);
    \draw[edge] (v11) -- (v12);

\node[vertex](u1) at (6,1){};
\node[vertex](u2) at (7,1){};

    \draw[redge](v5) -- (u1);
    \draw[redge](u1) -- (u2);

\begin{scope}[shift = {(12,0)}]
    \foreach \i in {1,..., 12}{
        \node[vertex, label = below:$v_{\i}$](v\i) at (\i, 0){};
        
    }
    
    \draw[redge] (v1) -- (v2);
    \draw[redge] (v2) -- (v3);
    \draw[redge] (v3) -- (v4);
    \draw[redge] (v4) -- (v5);
    \draw[redge] (v5) -- (v6);
    \draw[redge] (v6) -- (v7);
    \draw[edge] (v7) -- (v8);
    \draw[edge] (v8) -- (v9);
    \draw[redge] (v9) -- (v10);
    \draw[edge] (v10) -- (v11);
    \draw[edge] (v11) -- (v12);

\node[vertex](u1) at (6,1){};
\node[vertex](u2) at (7,1){};

    \draw[redge](v3) -- (u1);
    \draw[redge](u1) -- (u2);
    \draw[redge](u2) -- (v9);
\end{scope}

\end{scope}

\end{tikzpicture}

\begin{tikzpicture}
\begin{scope}[scale=0.6]

    \foreach \i in {1,..., 12}{
        \node[vertex, label = below:$v_{\i}$](v\i) at (\i, 0){};
        
    }
    
    \draw[edge] (v1) -- (v2);
    \draw[redge] (v2) -- (v3);
    \draw[redge] (v3) -- (v4);
    \draw[redge] (v4) -- (v5);
    \draw[redge] (v5) -- (v6);
    \draw[redge] (v6) -- (v7);
    \draw[redge] (v7) -- (v8);
    \draw[edge] (v8) -- (v9);
    \draw[redge] (v9) -- (v10);
    \draw[edge] (v10) -- (v11);
    \draw[edge] (v11) -- (v12);

\node[vertex](u1) at (6,1){};
\node[vertex](u2) at (7,1){};

    \draw[redge](v4) -- (u1);
    \draw[redge](u1) -- (u2);
    \draw[redge](u2) -- (v9);

\end{scope}
\end{tikzpicture}

%% file: figures/s244-concomp.tex
\begin{tikzpicture}

\begin{scope}[scale = 0.6]
    \foreach \i in {1,..., 11}{
        \node[vertex, label = below:$v_{\i}$](v\i) at (\i, 0){};
        
    }
    
    \draw[redge] (v1) -- (v2);
    \draw[redge] (v2) -- (v3);
    \draw[redge] (v3) -- (v4);
    \draw[redge] (v4) -- (v5);
    \draw[redge] (v5) -- (v6);
    \draw[redge] (v6) -- (v7);
    \draw[redge] (v7) -- (v8);
    \draw[redge] (v8) -- (v9);
    \draw[edge] (v9) -- (v10);
    \draw[edge] (v10) -- (v11);

\node[vertex](u1) at (5.5,1){};
\node[vertex](u2) at (6.5,1){};

    \draw[redge](v5) -- (u1);
    \draw[redge](u1) -- (u2);
   % \draw[redge](u2) -- (v9);
    
\end{scope}

\begin{scope}[scale = 0.6, shift = {(12,0)}]
    \foreach \i in {1,..., 11}{
        \node[vertex, label = below:$v_{\i}$](v\i) at (\i, 0){};
        
    }
    
    \draw[redge] (v1) -- (v2);
    \draw[redge] (v2) -- (v3);
    \draw[redge] (v3) -- (v4);
    \draw[redge] (v4) -- (v5);
    \draw[redge] (v5) -- (v6);
    \draw[redge] (v6) -- (v7);
    \draw[edge] (v7) -- (v8);
    \draw[edge] (v8) -- (v9);
    \draw[redge] (v9) -- (v10);
    \draw[edge] (v10) -- (v11);

\node[vertex](u1) at (5.5,1){};
\node[vertex](u2) at (6.5,1){};

    \draw[redge](v3) -- (u1);
    \draw[redge](u1) -- (u2);
    \draw[redge](u2) -- (v9);
    
\end{scope}
\end{tikzpicture}

\begin{tikzpicture}
\begin{scope}[scale = 0.6]
    \foreach \i in {1,..., 11}{
        \node[vertex, label = below:$v_{\i}$](v\i) at (\i, 0){};
        
    }
    
    \draw[edge] (v1) -- (v2);
    \draw[redge] (v2) -- (v3);
    \draw[redge] (v3) -- (v4);
    \draw[redge] (v4) -- (v5);
    \draw[redge] (v5) -- (v6);
    \draw[redge] (v6) -- (v7);
    \draw[redge] (v7) -- (v8);
    \draw[edge] (v8) -- (v9);
    \draw[redge] (v9) -- (v10);
    \draw[edge] (v10) -- (v11);

\node[vertex](u1) at (5.5,1){};
\node[vertex](u2) at (6.5,1){};

    \draw[redge](v4) -- (u1);
    \draw[redge](u1) -- (u2);
    \draw[redge](u2) -- (v9);

\end{scope}
\end{tikzpicture}

%% file: figures/s244-neighboursstar.tex
\begin{tikzpicture}
\begin{scope}[scale = 0.6]
    \foreach \i in {1,..., 11}{
        \node[vertex, label = below:$v_{\i}$](v\i) at (\i, 0){};
        
    }
    
    \draw[redge] (v1) -- (v2);
    \draw[redge] (v2) -- (v3);
    \draw[redge] (v3) -- (v4);
    \draw[edge] (v4) -- (v5);
    \draw[redge] (v5) -- (v6);
    \draw[redge] (v6) -- (v7);
    \draw[redge] (v7) -- (v8);
    \draw[edge] (v8) -- (v9);
    \draw[edge] (v9) -- (v10);
    \draw[edge] (v10) -- (v11);

\node[vertex](u1) at (5.5,1){};
\node[vertex](u2) at (6.5,1){};
\node[vertex](u3) at (6,1.66){};

    \draw[redge](v4) -- (u1);
    \draw[redge](v8) -- (u1);
    \draw[redge](u1) -- (u3);
    \draw[redge](u2) -- (u3);

\end{scope}
\end{tikzpicture}